\DeclareSymbolFont{cmsymbols}{OMS}{cmsy}{m}{n}  
\DeclareMathSymbol{\succ}{\mathrel}{cmsymbols}{"1F}
\DeclareMathSymbol{\prec}{\mathrel}{cmsymbols}{"1E}
\DeclareMathSymbol{\succeq}{\mathrel}{cmsymbols}{"17}
\DeclareMathSymbol{\preceq}{\mathrel}{cmsymbols}{"16}
\title
{Almost sure and moment convergence for triangular P\'olya urns}
\date{21 March, 2024}
\author{Svante Janson}
\thanks{Supported by the Knut and Alice Wallenberg Foundation
and
the Swedish~Research~Council~(VR)
}
\address{Department of Mathematics, Uppsala University, PO Box 480,
SE-751~06 Uppsala, Sweden}
\email{svante.janson@math.uu.se}
\newcommand\urladdrx[1]{{\urladdr{\def~{{\tiny$\sim$}}#1}}}
\subjclass[2020]{60F15, 60F25; 60C05, 60G44, 60J85} 
\numberwithin{equation}{section}
\renewcommand\le{\leqslant}
\renewcommand\ge{\geqslant}
\theoremstyle{plain}
\newtheorem{theorem}{Theorem}[section]
\newtheorem{lemma}[theorem]{Lemma}
\newtheorem{proposition}[theorem]{Proposition}
\newtheorem{corollary}[theorem]{Corollary}
\theoremstyle{definition}
\newcommand\xqed[1]{%
    \leavevmode\unskip\penalty9999 \hbox{}\nobreak\hfill
    \quad\hbox{#1}}
\newtheorem{exampleqqq}[theorem]{Example}
\newenvironment{example}{\begin{exampleqqq}}
  {\xqed{$\triangle$}\end{exampleqqq}}
\newtheorem{remarkqqq}[theorem]{Remark}
\newenvironment{remark}{\begin{remarkqqq}}
  {\xqed{$\triangle$}\end{remarkqqq}}
\newtheorem{definition}[theorem]{Definition}
\newtheorem{problem}[theorem]{Problem}
\newtheorem*{ack}{Acknowledgement}
\theoremstyle{remark}
\newcounter{dummy}
\newcommand\myitem[1][]{\item[#1]\refstepcounter{dummy}\def\@currentlabel{#1}}
\newenvironment{romenumerate}[1][-10pt]{
\addtolength{\leftmargini}{#1}\begin{enumerate}
 \renewcommand{\labelenumi}{\textup{(\roman{enumi})}}%
 \renewcommand{\theenumi}{\labelenumi}%
 }{\end{enumerate}}
\newenvironment{PXenumerate}[1][]{
\addtolength{\leftmargini}{-10pt}%
\begin{enumerate}
 \renewcommand{\labelenumi}{\textup{(#1\arabic{enumi})}}%
 \renewcommand{\theenumi}{\labelenumi}%
 }{\end{enumerate}}
\newcounter{Aenumi}
\newenvironment{Aenumerate}{
\begin{enumerate}
\setcounter{enumi}{\value{Aenumi}}     
 \renewcommand{\theenumi}{\textup{(A\arabic{enumi})}}%
 \renewcommand{\labelenumi}{{\bf\theenumi}}%
 }{\end{enumerate}\setcounter{Aenumi}{\value{enumi}}}
\newenvironment{PQenumerate}[1]{
\begin{enumerate}
 \renewcommand{\theenumi}{\textup{(#1)}}%
 \renewcommand{\labelenumi}{\bf\theenumi}%
 }{\end{enumerate}}
\newenvironment{alphenumerate}[1][-10pt]{
\addtolength{\leftmargini}{#1}\begin{enumerate}
 \renewcommand{\labelenumi}{\textup{(\alph{enumi})}}%
 \renewcommand{\theenumi}{\textup{(\alph{enumi})}}%
 }{\end{enumerate}}
\newcounter{thmenumerate}
\newenvironment{thmenumerate}
{\setcounter{thmenumerate}{0}%
 \def\item{\par
 \refstepcounter{thmenumerate}\textup{(\roman{thmenumerate})\enspace}}
}
{}
\newcounter{xenumerate}   
\newcommand\pfitemx[1]{\par#1:}
\newcommand\pfitemref[1]{\pfitemx{\ref{#1}}}
\newcounter{Case}
\newcounter{casex}
\newcounter{case}
\newcommand\resetCase{\setcounter{Case}{0}}
\newcommand\resetcase{\setcounter{case}{0}}
\newcommand\resetcasex{\setcounter{casex}{0}}
\newcommand\pfCase[1]{\medskip\noindent\refstepcounter{Case}%
\def\@currentlabel{\arabic{Case}}
\emph{Case \arabic{Case}: #1}. \noindent}
\newcommand\pfCaseY[2]{\medskip\noindent\emph{Case #1: #2.} \noindent}
\newcommand\pfcase[1]{\medskip\noindent\refstepcounter{case}%
\def\@currentlabel{(\roman{case})}
  (\roman{case}) \emph{#1}. \noindent}
\newcommand\pfcasex[1]{\medskip\noindent\refstepcounter{casex}%
\def\@currentlabel{(\roman{casex}${}'$)}
  (\roman{casex}${}'$) \emph{#1}. \noindent}
\newcommand\pfcasexx[1]{\medskip\noindent\refstepcounter{casex}%
\def\@currentlabel{(\roman{case})(\alph{casex})}
  \@currentlabel{} \emph{#1}. \noindent}
\newcounter{cases}
\newcommand\xcase[1]{\medskip\noindent\refstepcounter{cases}%
  \emph{Case \arabic{cases}}, \emph{#1}: \noindent}
\newcounter{steps}
\newcommand\stepx[1]{\medskip\noindent\refstepcounter{steps}%
 \emph{Step \arabic{steps}: #1}.\noindent}
\newcommand\stepxx{\medskip\noindent\refstepcounter{steps}%
 \emph{Step \arabic{steps}}. \noindent}
\newcommand\stepparti[1]{\stepx{#1, first part}}
\newcommand\resetsteps{\setcounter{steps}{0}}
\newcommand\resetstepx{\setcounter{steps}{0}}
\newcounter{claims}
\newcommand\claim[1]{\refstepcounter{claims}\medskip\noindent
\def\@currentlabel{(\roman{claims})}%
\@currentlabel{} 
\emph{#1}}
\newcommand{\refT}[1]{Theorem~\ref{#1}}
\newcommand{\refTs}[1]{Theorems~\ref{#1}}
\newcommand{\refL}[1]{Lemma~\ref{#1}}
\newcommand{\refLs}[1]{Lemmas~\ref{#1}}
\newcommand{\refR}[1]{Remark~\ref{#1}}
\newcommand{\refS}[1]{Section~\ref{#1}}
\newcommand{\refSs}[1]{Sections~\ref{#1}}
\newcommand{\refSS}[1]{Section~\ref{#1}}
\newcommand{\refStep}[1]{Step~\ref{#1}}
\newcommand{\refSteps}[1]{Steps \ref{#1}}
\newcommand{\refP}[1]{Problem~\ref{#1}}
\newcommand{\refD}[1]{Definition~\ref{#1}}
\newcommand{\refE}[1]{Example~\ref{#1}}
\newcommand{\refEs}[1]{Examples~\ref{#1}}
\newcommand{\refApp}[1]{Appendix~\ref{#1}}
\newcommand{\refCase}[1]{Case~\ref{#1}}
\xdef\klockan{\the\count1.0\the\count255}
\xdef\klockan{\the\count1.\the\count255}\fi
\newcommand{\sumno}{\sum_{n=0}^\infty}
\newcommand{\sumk}{\sum_{k=1}^\infty}
\newcommand{\sumn}{\sum_{n=1}^\infty}
\newcommand{\sumiq}{\sum_{i=1}^q}
\newcommand{\sumjq}{\sum_{j=1}^q}
\newcommand\set[1]{\ensuremath{\{#1\}}}
\newcommand\bigset[1]{\ensuremath{\bigl\{#1\bigr\}}}
\newcommand\xpar[1]{(#1)}
\newcommand\bigpar[1]{\bigl(#1\bigr)}
\newcommand\Bigpar[1]{\Bigl(#1\Bigr)}
\newcommand\lrpar[1]{\left(#1\right)}
\newcommand\bigsqpar[1]{\bigl[#1\bigr]}
\newcommand\sqpar[1]{[#1]}
\newcommand\cpar[1]{\{#1\}}
\newcommand\bigcpar[1]{\bigl\{#1\bigr\}}
\newcommand\abs[1]{\lvert#1\rvert}
\newcommand\bigabs[1]{\bigl\lvert#1\bigr\rvert}
\newcommand\Bigabs[1]{\Bigl\lvert#1\Bigr\rvert}
\newcommand\lrabs[1]{\left\lvert#1\right\rvert}
\def\rompar(#1){\textup(#1\textup)}    
\newcommand\xfrac[2]{#1/#2}
\newcommand\xqfrac[2]{#1/(#2)}
\newcommand\parfrac[2]{\lrpar{\frac{#1}{#2}}}
\def\xexp(#1){e^{#1}}
\newcommand\ceil[1]{\lceil#1\rceil}
\newcommand\ntoo{\ensuremath{{n\to\infty}}}
\newcommand\mtoo{\ensuremath{{m\to\infty}}}
\newcommand\ttoo{\ensuremath{{t\to\infty}}}
\newcommand\bmin{\land}
\newcommand\bmax{\lor}
\newcommand\norm[1]{\lVert#1\rVert}
\newcommand\bignorm[1]{\bigl\lVert#1\bigr\rVert}
\newcommand\Bignorm[1]{\Bigl\lVert#1\Bigr\rVert}
\newcommand\upto{\nearrow}
\newcommand\punkt{\xperiod}    
\newcommand\iid{i.i.d\punkt}    
\newcommand\ie{i.e\punkt}
\newcommand\eg{e.g\punkt}
\newcommand\cf{cf\punkt}
\newcommand{\as}{a.s\punkt}
\newcommand{\aex}{a.e\punkt}
\newcommand{\tend}{\longrightarrow}
\newcommand\dto{\overset{\mathrm{d}}{\tend}}
\newcommand\pto{\overset{\mathrm{p}}{\tend}}
\newcommand\asto{\overset{\mathrm{a.s.}}{\tend}}
\newcommand\eqd{\overset{\mathrm{d}}{=}}
\newcommand\bbR{\mathbb R}
\newcommand\bbZ{\mathbb Z}
\newcommand\bbZgeo{\mathbb Z_{\ge0}}
\newcounter{CC}
\newcounter{cc}
\newcommand\E{\operatorname{\mathbb E}{}} 
\renewcommand\P{\operatorname{\mathbb P{}}}
\newcommand\Var{\operatorname{Var}}
\newcommand\Exp{\operatorname{Exp}}
\newcommand\Po{\operatorname{Po}}
\newcommand\Be{\operatorname{Be}}
\newcommand\Gei{\operatorname{Ge}}
\newcommand\NBi{\operatorname{NegBin}}
\newcommand\ga{\alpha}
\newcommand\gb{\beta}
\newcommand\gd{\delta}
\newcommand\gD{\Delta}
\newcommand\gf{\varphi}
\newcommand\gam{\gamma}
\newcommand\gG{\Gamma}
\newcommand\kk{\kappa}
\newcommand\gl{\lambda}
\newcommand\gs{\sigma}
\newcommand\eps{\varepsilon}
\renewcommand\phi{\xxx}  
\newcommand\cA{\mathcal A}
\newcommand\cB{\mathcal B}
\newcommand\cC{\mathcal C}
\newcommand\cE{\mathcal E}
\newcommand\cF{\mathcal F}
\newcommand\cL{{\mathcal L}}
\newcommand\cN{\mathcal N}
\newcommand\cT{{\mathcal T}}
\newcommand\cU{{\mathcal U}}
\newcommand\cUp{{\cU}^+}
\newcommand\cV{\mathcal V}
\newcommand\cW{\mathcal W}
\newcommand\cX{{\mathcal X}}
\newcommand\cY{{\mathcal Y}}
\newcommand\cZ{{\mathcal Z}}
\newcommand\tB{\widetilde B}
\newcommand\tN{\widetilde N}
\newcommand\tX{{\widetilde X}}
\newcommand\tY{{\widetilde Y}}
\newcommand\tZ{{\widetilde Z}}
\newcommand\indic[1]{\boldsymbol1_{\cpar{#1}}}
\newcommand\etta{\boldsymbol1} 
\newcommand\smatrixx[1]{\left(\begin{smallmatrix}#1\end{smallmatrix}\right)}
\newcommand\matrixx[1]{\begin{pmatrix}#1\end{pmatrix}}
\newcommand\qw{^{-1}}
\newcommand\qww{^{-2}}
\newcommand\qq{^{1/2}}
\newcommand\intoi{\int_0^1}
\newcommand\intot{\int_0^t}
\newcommand\intox{\int_0^x}
\newcommand\intoo{\int_0^\infty}
\newcommand\oi{\ensuremath{[0,1]}}
\newcommand\ooo{[0,\infty)}
\newcommand\ooox{[0,\infty]}
\newcommand\dd{\,\mathrm{d}}
\newcommand\ddx{\mathrm{d}}
\newcommand{\pgf}{probability generating function}
\newcommand{\mgf}{moment generating function}
\newcommand{\gsf}{$\gs$-field}
\newcommand{\ui}{uniformly integrable}
\newcommand\lhs{left-hand side}
\newcommand\rhs{right-hand side}
\newcommand\GW{Galton--Watson}
\newcommand\GWp{\GW{} process}
\newcommand\xoo{_1^\infty}
\newcommand\xoon{_{n=1}^\infty}
\newcommand\nn{^{(n)}}
\newcommand\too{_{t\ge0}}
\newcommand\glx{\gl^*}
\newcommand\xglx{{\bar\gl}^*}
\newcommand\chgl{\widecheck\gl}
\newcommand\chglx{\widecheck\gl^*}
\newcommand\chxi{\widecheck\xi}
\newcommand\chX{\widecheck{X}}
\newcommand\chcX{\widecheck{\cX}}
\newcommand\tXX{\widetilde{X}^{**}}
\newcommand\tVV{\widetilde{V}^{**}}
\newcommand\tZZ{\widetilde{Z}^{**}}
\newcommand\tMM{\widetilde{M}^{**}}
\newcommand\xtXX{\widetilde{\overline X}^{**}}
\newcommand\tZc{\tZ^\dag}
\newcommand\tZcc{\tZ^\ddag}
\newcommand\tZcl{\tZ^\dag_\ell}
\newcommand\tZccl{\tZ^\ddag_\ell}
\newcommand\mmi{m}
\newcommand\kkk{^{(k)}}
\newcommand\kkkx{^{(k)*}}
\newcommand\WXi{\cX_i}
\newcommand\kko{\kappa_0}
\newcommand\hgl{\widehat\gl}
\newcommand\hkk{\widehat\kappa}
\newcommand\hkko{\widehat\kappa_0}
\newcommand\hcX{\widehat{\cX}}
\newcommand\hcY{\widehat{\cY}}
\newcommand\hcZ{\widehat{\cZ}}
\newcommand\hcN{\widehat{\cN}}
\newcommand\refAA{\ref{A0}--\ref{A+}}
\newcommand\refAAX{\ref{A0}--\ref{A2}}
\newcommand\refAAXZ{\ref{A0}--\ref{A2} and \ref{A-5}}
\newcommand\refAAZ{\ref{A0}--\ref{A2}, \ref{A-5}, and
  \ref{A-6}--\ref{A-7}}
\newcommand\refAAZZ{\ref{A0}--\ref{A2}, \ref{A-5}, and
  \ref{A-7}--\ref{A-8}}
\newcommand\refAAZC{\ref{A0}--\ref{A2}, \ref{A-5}, and
  \ref{A-7}}
\newcommand\TT{\widehat T}
\newcommand\mmZ{Z^{[\le m]}}
\newcommand\hB{\widehat B}
\newcommand\hc{\widehat c}
\newcommand\setq{\ensuremath{\set{1,\dots,q}}}
\newcommand\setr{\ensuremath{\set{1,\dots,r}}}
\newcommand\nuj{j}
\newcommand\abscont{absolutely continuous}
\newcommand\bC{\overline C}
\newcommand\sA{\mathsf A}
\newcommand\sD{\mathsf D}
\newcommand\sE{\mathsf E}
\newcommand\sL{\mathsf L}
\newcommand\sP{\mathsf P}
\newcommand\sQ{\mathsf Q}
\newcommand\sQx{\mathsf Q_*}
\newcommand\sQxm{\mathsf Q_{*-}}
\newcommand\sQm{\mathsf Q^-}
\newcommand\sQp{\mathsf Q^+}
\newcommand\sQmin{\mathsf Q_{\min}}
\newcommand\sDnk{\sD_\nu^\kk}
\newcommand\sLk{\sL_\kk}
\newcommand\fX{\mathfrak X}
\newcommand\fY{\mathfrak Y}
\newcommand\fZ{\mathfrak Z}
\newcommand\gda{{\gd/\ga}}
\newcommand\BESQ{\mathrm{BESQ}}
\newcommand\bx{\mathbf x}
\newcommand\bX{\mathbf X}
\newcommand\bY{\mathbf Y}
\newcommand\bcX{\boldsymbol{\cX}}
\newcommand\hbcX{\widehat{\bcX}}
\newcommand\ba{\mathbf a}
\newcommand\bxi{\boldsymbol\xi}
\newcommand\even{_{\textrm{even}}}
\newcommand\odd{_{\textrm{odd}}}
\newcommand\ellr{r}
\newcommand\ctime{con\-tin\-u\-ous-time} 
\newcommand\dtime{dis\-crete-time}
\newcommand\Zk{Z^{(k)}}
\newcommand\Zx[1]{Z^{(#1)}}
\newcommand\cZx[1]{\cZ^{(#1)}}
\newcommand\gbal{b}
\newcommand\iii{_{ii}}
\newcommand\kkx{\kk^*}
\newcommand\xxi{\bar{\xi}}
\newcommand\txi{\tilde{\xi}}
\newcommand\xX{\overline{X}}
\newcommand{\Polya}{P\'olya}
\newcommand\cadlag{c\`adl\`ag}
\begin{document}

\begin{abstract} 
We consider triangular Pólya urns and show under very weak conditions
a general strong limit theorem of the form $X_{ni}/a_{ni}\asto \mathcal{X}_i$,
where $X_{ni}$ is the number of balls of colour $i$ after $n$ draws;
the constants $a_{ni}$ are explicit and of the form $n^\ga\log^\gam n$;
the limit is a.s.\ positive, and may be either deterministic or random,
but is in general unknown.

The result extends to urns with subtractions under weak conditions, but a
counterexample shows that some conditions are needed.

For balanced urns we also prove moment convergence in the main results 
if the replacements have the corresponding moments.

The proofs are based on studying the corresponding continuous-time urn using
martingale methods, and showing corresponding results there.
In the main part of the paper, we assume for convenience that all
replacements have finite second moments; in an appendix this is relaxed to
$L^p$ for some $p>1$. 
\end{abstract}

\maketitle

\tableofcontents

\section{Introduction}\label{S:intro}

A (generalized) \Polya{} urn contains balls of different colours.
A ball is drawn at random from the urn, and is replaced by a set of balls
that depends on the colour of the drawn balls; more generally, the replacement
set may be random, with a distribution depending on the drawn colour.
We assume that the set $\sQ$ of colours is finite, and let $q:=|\sQ|$ be the
number of colours.
(See \eg{} \cite{JohnsonKotz}, \cite{Mahmoud}, 
\cite{SJ154} for the history and further references.)
It is often convenient to
assume that $\sQ=\set{1,\dots,q}$, 
but for us it will  be convenient not to assume this.

The \Polya{} urn process can be defined formally as follows.
The composition of the urn at time $n$ is given by the vector
$\bX_n=(X_{ni})_{i\in\sQ}\in \ooo^q$,
where $X_{ni} $ is the number of balls of colour $i$. 
The urn starts with a given vector $\bX_0$, 
and evolves according to a discrete-time Markov process. 
Each colour $i$ has an \emph{activity} $a_i\ge0$,
and a (generally random) 
\emph{replacement vector} $\bxi_i=(\xi_{ij})_{j\in\sQ}$.
At each time $ n+1\ge 1 $, 
the urn is updated by drawing
one ball at random from the urn, with the probability of any ball
proportional to its activity. 
(In many cases $a_i=1$ for all $i$, 
so all balls are drawn with equal probability;
the reader may concentrate on this case
until \refS{SpfT1}.)
Thus, the drawn ball has colour $i$  
with probability 
\begin{equation}
\label{urn}
 \frac{a_iX_{ni}}{\sum_{j}a_jX_{nj}}. 
\end{equation}
If the drawn ball has colour $i$, it is replaced
together with $\xi\nn_{ij}$ balls of colour $j$, $j\in\sQ$,
where the random vector 
$ \bxi\nn_{i}=(\xi\nn_{ij})_{j\in\sQ} $
is a copy of $ \bxi_i$ that is
independent of everything else that has happened so far.  
Thus, the urn is updated to 
\begin{align}\label{xin}
\bX_{n+1}=\bX_n+\bxi\nn_i.  
\end{align}

\begin{remark}
  Note that, as in many other papers on \Polya{} urns, we do not assume that
  $X_{ni}$ are integers; any real numbers $X_{ni}\ge0$ are allowed.
In general, it is thus a misnomer to call $X_{ni}$ the ``number'' of balls
of colour 
$i$; it is more precise to regard $X_{ni}$ as the amount of colour $i$ in
the urn. Nevertheless, we will continue to use the traditional terminology,
which thus has to be interpreted liberally by the reader.
\end{remark}

\begin{remark}
  Since the drawn ball is replaced, it is also really a misnomer to call
  $\bxi_i$ the ``replacement'' vector; it is really an \emph{addition} vector.
(The  replacements are really $\xi_{ij}+\gd_{ij}$.)
Nevertheless, we use the terminology above,  which is used in many papers.
\end{remark}

\begin{remark}
We allow the replacement vectors $\bxi_i$ to be random. 
(Some papers consider only the special case of deterministic $\bxi_i$,
which is an important special case that appears in many applications.)
We may say that the urn has 
\emph{deterministic (or non-random) replacements} if all $\xi_{ij}$
are deterministic, and otherwise 
\emph{random replacements}.
These terms should thus be interpreted as conditioned on (the colour of) the
drawn ball. 
\end{remark}

\begin{remark}\label{Rmatrix}
  It is often convenient to describe the replacements by the
\emph{replacement matrix}
$(\xi_{ij})_{i,j\in\sQ}$.
Note, however, that unless the replacements are deterministic, this may be
somewhat misleading, since the rows should be regarded as separate random
vectors, not necessarily defined on the same probability space; there is
 no need for a joint distribution of different rows.
\end{remark}

\begin{remark}\label{Rsub}
  In the first part of the paper we assume that the replacements $\xi_{ij}\ge0$ 
(Condition \ref{A+} below), meaning that we only add balls to the urn and
never remove any. In \refS{S-}, and also usually in later sections, we more
generally  allow also that balls may be removed from the urn 
(assuming some hypotheses).
\end{remark}

\begin{remark}
  We allow some activities $a_i$ to be 0; this means that balls of colour
  $i$ never are drawn. 
See \refS{SpfT1} for an important example of this.
(If all $a_i>0$, we may reduce to the standard case
  $a_i=1$ by considering the urn $(a_iX_{ni})_i$, with corresponding replacement
  vectors $(a_j\xi_{ij})_j$, but we will not use this.)
\end{remark}

\begin{remark}
We assumed tacitly above that the denominator 
$\sum_j a_j X_{nj}$
in \eqref{urn}
is $>0$ for every $n\ge0$, so that the definition makes sense. 
This holds, for example, under assumptions \ref{A0} and \ref{A+} below.
(Urns that do not satisfy this, and therefore may stop at some finite time,
have also been studied, but they will not be considered here.)
\end{remark}

We are interested in asymptotic properties of $\bX_n$ as \ntoo.

In the present paper, we study \emph{triangular urns}, \ie, \Polya{} urns 
such that, for a suitable labelling of the colours by $1,\dots,q$, we have
$\xi_{ij}=0$ when $i>j$. (See also \refSS{SSQ}.)
This includes the original \Polya{} urns studied by \citet{Markov1917},
\citet{EggPol}, and \citet{Polya} (all for $q=2$), where the replacement
matrix is 
\emph{diagonal}: 
$\xi_{ij}=0$ when $i\neq j$, but also many other interesting cases.
See \refS{Sex} for some examples.

There are  many previous papers on triangular urns; we mention here
only a few that are particularly relevant to the present paper;
see also the references in the examples in \refS{Sex}.
\citet{Athreya1969} studied diagonal urns with random replacements
and showed \as{} convergence of the proportions of different colours,
using the embedding mthod of \citet{AthreyaKarlin} that is also the basis of
the present paper.
\citet{Gouet89, 
Gouet93} 
proved (in particular)
an \as{} convergence result 
for triangular urns with 2 colours and deterministic replacements,
assuming also that the urn is \emph{balanced}, 
meaning $\sum_j\xi_{ij}=\gbal$ for some constant $\gbal$
(and all $a_i=1$, see further \refS{Sbalanced}).
\citet{SJ169} studied triangular urns with 2 colours and deterministic
$\bxi_i$, and proved convergence in distribution (but not \as)
of the components $X_{ni}$
after suitable normalizations; there are several cases, and the limits are
sometimes normal and sometimes not.
This was partially extended 
by \citet{Aguech}, who also studied triangular urns with 2 colours, 
but allowed random replacements $\bxi_i$
(under some hypotheses, see \refE{E2X}); moreover, he proved
convergence \as, and not just in distribution.
\citet{BoseDM} 
(and \cite{BoseDM1} for $q=2$)
studied triangular urns with an arbitrary (finite) number of
colours;
they assumed that the replacements are deterministic, and that 
the urn is balanced, and then, under some further assumptions,
showed convergence \as{} of the components $X_{ni}$,
suitably normalized, see \refE{EBose}.

The main purpose of the present paper is to extend these results by
\citet{Gouet89,Gouet93}, 
\citet{Aguech}, and
\citet{BoseDM}, and show \as{} convergence for triangular urns with any
(finite) number of colours, allowing replacements $\bxi_i$ that are both
random and unbalanced.
Our main result is the following, using the technical assumptions \refAA{}
in \refSS{SSA} and the notation defined in \eqref{gli}--\eqref{gam} 
in \refSS{SSnot2} below. See also the extensions \refTs{T1-} 
and \ref{Tp} where the technical assumptions are weakened
(allowing urns with subtractions and reducing our moment assumption,
respectively); since the proof of the theorem is rather long and 
we want to focus on the main ideas, we use first
the conditions \refAA{} (which suffice for many applications), and 
add later the extra arguments needed for the extensions.

\begin{theorem}\label{T1}
Let $(X_{ni})_{i\in\sQ}$ be a triangular \Polya{} urn satisfying the
conditions \refAA{} below. 
Then, for every colour $i\in\sQ$, there exists a random variable $\hcX_i$
with $0<\hcX_i<\infty$ \as{} such that as \ntoo:
\begin{romenumerate}
  
\item \label{T1+}
If\/ $\hgl>0$, then
\begin{align}\label{t1b}
  \frac{X_{ni}}{ n^{\glx_i/\hgl}\log^{\gam_i}n}
\asto \hcX_i
.\end{align}

\item \label{T10}
If\/ $\hgl=0$, then
\begin{align}\label{t1c}
  \frac{X_{ni}}{n^{\kk_i/\hkko}}
\asto \hcX_i
.\end{align}
\end{romenumerate}
\end{theorem}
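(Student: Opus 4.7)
The plan is to use the Athreya--Karlin embedding to recast the \dtime{} urn as a \ctime{} multitype branching process $(Z_i(t))_{i\in\sQ}$, analyse that process by induction on $i$, and then transfer the limits back to discrete time via the strong law for the jump times $\tau_n$. In \ctime, each ball of colour $i$ waits an independent $\Exp(a_i)$ time and then adds a fresh copy of $\bxi_i$, independently of everything else. Because the replacement matrix is upper triangular, a colour-$i$ ball only produces balls of colours $j\ge i$; hence for every $i$ the truncated process $(Z_1(t),\ldots,Z_i(t))$ is autonomous, and this is precisely what enables the recursive argument.

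The heart of the proof is to establish, in \ctime,
\[
e^{-\glx_i t}\, t^{-\gam_i}\, Z_i(t) \asto \hcX_i
\]
(when $\hgl>0$), together with the analogous polynomial statement when $\hgl=0$. The base case $i=1$ is classical: $Z_1(t)$ is an autonomous single-type branching process with Malthusian parameter $\gl_1=\glx_1$, and under the second-moment assumption $e^{-\gl_1 t}Z_1(t)$ is an $L^2$-bounded martingale converging almost surely (and in $L^2$) to a strictly positive limit $\hcX_1$, in the spirit of Kesten--Stigum. For $i>1$, I would decompose the compensated stochastic increments of $Z_i(t)$ into an ``own-reproduction'' contribution from colour-$i$ balls at rate $\gl_i$, and an ``input'' contribution from colours $j<i$ at their effective rates $\glx_j$ (known by induction). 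The exponent $\glx_i$ then emerges as the maximum of $\gl_i$ and of the $\glx_j$ over colours $j<i$ that can reach $i$ through the replacement matrix, while the logarithmic exponent $\gam_i$ counts the number of ``resonant'' upstream colours whose growth rate matches $\glx_i$.

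For each $i$, I would construct an explicit $L^2$-bounded martingale $M_i(t)$ built from $e^{-\glx_i t}t^{-\gam_i}Z_i(t)$ plus correction terms arising from integration by parts against the exponential weights, and identify its almost sure and $L^2$ limit with $\hcX_i$. Strict positivity of $\hcX_i$ follows from non-degeneracy of the input from the senior colour realising $\glx_i$, together with the structural conditions built into \refAA{}. To return to discrete time, note that $\sum_j a_j Z_j(t)$ itself satisfies $e^{-\hgl t}\sum_j a_j Z_j(t)\asto W$ for some $W>0$ a.s.; hence $\tau_n/\log n \asto 1/\hgl$, and substituting $t=\tau_n$ in the \ctime{} limit produces \eqref{t1b} as \ntoo. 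The case $\hgl=0$ follows the same scheme, but with $\tau_n$ growing polynomially in $n$, and the powers $\kk_i/\hkko$ in \eqref{t1c} arise from the corresponding polynomial asymptotics of $Z_i(t)$ and of $\tau_n$.

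The main obstacle will be the inductive step in the resonant case, where $\gl_i$ equals some $\glx_j$ ($j<i$) and a genuine logarithmic factor $\gam_i>\gam_j$ is produced. Here the martingale weight $t^{-\gam_i}$ must be chosen with precisely the right exponent: too small an exponent and the martingale has divergent variance, too large an exponent and its limit is identically zero. Pinning down $\gam_i$ and showing that the resulting limit $\hcX_i$ is strictly positive requires careful iterated integration by parts and a delicate variance computation that tracks how the resonances cascade from senior to junior colours. Once this is carried out cleanly in \ctime, the discrete-time transfer, the $\hgl=0$ case, and later extensions to subtractions and to $L^p$ moments are expected to follow along more routine lines.
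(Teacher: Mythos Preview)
Your overall architecture (Athreya--Karlin embedding, induction on colours in continuous time, martingale control, then transfer via the jump times) is the same as the paper's, but there is a genuine gap in how you connect continuous and discrete time, and it stems from a confusion between two different exponents.

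In continuous time the correct statement is
\[
t^{-\kk_i}\,e^{-\glx_i t}\,X_i(t)\asto\cX_i,
\]
with the \emph{nonnegative} integer $\kk_i$ of \eqref{kk}, not $\gam_i$. Your informal description of the log-exponent as ``counting the number of resonant upstream colours'' is in fact the definition of $\kk_i$; the discrete-time exponent $\gam_i=\kk_i-\hkk\,\glx_i/\hgl$ is a different quantity, depends on the global parameter $\hkk$, and can be negative (the paper notes this explicitly). If you normalise by $t^{-\gam_i}$ in continuous time, the limit is $0$ or $\infty$ whenever $\gam_i\neq\kk_i$, and your proposed martingale $e^{-\glx_i t}t^{-\gam_i}Z_i(t)$ will not have the behaviour you want.

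Relatedly, the transfer step is too coarse. You write $e^{-\hgl t}\sum_j a_jZ_j(t)\asto W$, but in general the total activity grows like $t^{\hkk}e^{\hgl t}$, not $e^{\hgl t}$. Consequently $\tau_n/\log n\asto 1/\hgl$ is only the first-order term; to recover the factor $\log^{\gam_i}n$ you need the refined expansion
\[
\TT_n=\hgl^{-1}\bigl(\log n-\hkk\log\log n+\hkk\log\hgl-\log\cX_0+o(1)\bigr),
\]
so that $\TT_n^{\kk_i}e^{\glx_i\TT_n}\sim C\,n^{\glx_i/\hgl}(\log n)^{\kk_i-\hkk\glx_i/\hgl}$. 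The paper obtains this cleanly by adjoining a dummy colour $0$ (activity $0$, incremented by $1$ at every draw), applying the continuous-time theorem to colour $0$ with $\glx_0=\hgl$ and $\kk_0=\hkk$, and reading off the asymptotics of $\TT_n$ from $X_0(\TT_n)=n$. Without this second-order information your substitution $t=\tau_n$ cannot produce the correct discrete-time normalisation.
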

Note that the exponent $\gam_i$ may be both positive and negative;
see the examples in \refS{Sex}.
Note also that the various exponents in \eqref{t1b} and \eqref{t1c} are
explicitly given in \refSS{SSnot2}, but the limiting random variables $\hcX_i$
are known only in some special cases; in general they are unfortunately unknown.

The virtue of \refT{T1} is that it is very general,
but as discussed in the remaks below, 
more precise results are known in some special cases.

In \refT{T1}, the main case is
\ref{T1+},   $\hgl>0$, and the reader should focus on this case.
The case $\hgl=0$ is more special and of less interest for applications, but
it is included for completeness; by \eqref{hgl} and \eqref{gli}, this case
occurs when $\xi_{ii}=0$ for every $i\in\sQ$. (We might call such urns
\emph{strictly triangular}.) 

Another major result in the present paper (\refT{TMD})
says in particular that in the special case of balanced triangular urns,
if the replacements have finite  moments, then
the \as{} limits in \refT{T1} hold also in
$L^p$ for any $p<\infty$, and thus moments converge.
(It is an open problem whether this extends to some class of unbalanced
triangular urns.)

\begin{remark}\label{Rfluct}
\refT{T1}  describes the first-order asymptotics of the urn.
We will see in \refS{Sdeg} that
the limiting random variable $\hcX_i$ is deterministic (\ie, a constant) in
some cases, but not in general.
In cases where \refT{T1} yields a limit $\hcX_i$ that
is deterministic (and perhaps also otherwise), 
it is interesting to study fluctuations
(\ie, second order terms)  and try to find limits 
(\eg{} in distribution, after a suitable normalization)
for the difference of the two sides of \eqref{t1b} or \eqref{t1c}.
Such results in some cases are given in \cite{Gouet93},
\cite{SJ169} and \cite{Aguech}; 
see the examples in \refS{Sex}, but we will not pursue this problem here,
and leave it as an open problem.
\end{remark}

\begin{remark}\label{Rdistr}
Convergence almost surely implies convergence in distribution. Thus, as a
corollary, \eqref{t1b} holds also with convergence in distribution.
However, our proof does not seem to 
provide a method to find the limit distribution,
\ie{} the distribution of $\hcX_i$, except in some very simple cases.
Moreover, the limits $\hcX_i$ are (in general) dependent.

 For $q=2$ and deterministic $\xi_{ij}$, limit distributions were
given in \cite{SJ169}. (Sometimes degenerate, sometimes not.) The
results there thus describe the distribution of $\hcX_i$ in this case,
although the descriptions in some cases are complicated.
Some further examples of known limit distributions are given in some examples
in \refS{Sex}.
We leave the general case as another open problem.
\end{remark}

The proof of \refT{T1} is given in \refSs{S1}--\ref{SpfT1}
below, after some preliminaries in \refS{Snot}.
The proof is based on the embedding by \citet{AthreyaKarlin}
of a \Polya{} urn into a continuous-time multitype
branching process (\refSS{SSCT});
we then apply martingale methods to obtain a continuous-time version of
\refT{T1} (\refT{TC}); finally, this implies results for the embedded
discrete-time urn.
The proof is generalized to urns with subtraction in \refS{S-}, and to urns
with a weaker moment condition in \refApp{ALp}.
Since the proofs are rather long and technical, we prefer to first present
the proof in the basic case \refT{T1} (which is enough for most applications)
and later discuss the modifications required for the extensions,
instead of proving the most general results immediately.

\begin{remark}\label{Rmart}
\citet{Gouet89,Gouet93} and
\citet{BoseDM}, in  special cases (see \refE{E2} and \refE{EBose}), 
instead study $X_{ni}$ directly
and use martingale metods in discrete time. It seems that this approach
(also used by several authors for non-triangular urns) works well for
balanced urns, but that the embedding into continuous time works better for
unbalanced urns.
\end{remark}

\begin{remark}\label{Rnontri1}
We consider in this paper only triangular \Polya{} urns.
Another important class of urns consists of the irreducible urns.
In this case \as{} convergence (under some technical conditions)
was shown by
\citet{AthreyaKarlin}, see also
\cite[Section V.9.3]{AN} and
\cite[Theorem 3.21]{SJ154}.

It might be possible to combine the methods of the present paper and the
methods for irreducible urns to obtain  results on \as{} convergence for all 
types of \Polya{} urns (under some technical conditions), 
see \refR{Rnontri2},
but the present
paper is long as it is and we leave this as a speculation for future
research. (Note also the counterexamples \refE{E+-} and \ref{E--}, showing
that some conditions are needed even in the triangular case.)
\end{remark}

\subsection{Contents}
\refS{Snot} contains preliminaries, including some definitions and notation.
\refS{S1} consists of a series of lemmas that comprise the main technical
part of our proofs. They lead to the main theorem for continuous time in
\refS{STC}, which in turn is used to prove \refT{T1} in \refS{SpfT1}.
\refSs{Sdep} (continuous time) and \ref{Sdeg} (discrete time) contain results
on whether the limit random variable are degenerate (i.e., constant) or not,
and some related results.

\refS{S-} extend the previous results to urns with subtracion, where we
allow $\xi_{ii}=-1$. With some extra technical conditions, the previous
results hold in this case too, with only minor modifications of the proofs.

The following sections contain some complements.
\refS{Smean} is a short section comparing random and non-random replacements
with the same means.

\refS{Sbalanced} contains some general results on balanced urn,
mainly as preliminaries to the following sections.

\refS{Sdraw} considers the number of times a given colour is drawn;
it is shown that the results of earlier sections extend to this case.

\refS{Smoments} contains results on convergence in $L^2$ and in
$L^p$, and closely related results on convergence of moments
in, for example, \refT{T1}.
For the main result (\dtime), we have to assume that urn is balanced,
and we state an open problem for more general urns.

\refS{Smer} discusses briefly another open problem (rates of convergence).

\refS{Sex} contains a number of examples that illustrate the results and
their limitations, and also give connections to previous literature.

Finally, \refApp{AA} contains some simple general lemmas on absolute continuity
that we believe are known, but for which we were unable to find references.
\refApp{ALp} gives proofs of $L^p$ versions of $L^2$ estimates used in the
main part of the paper; this  yields both the extension of \refT{T1}
mentioned above, and a proof of the results in \refS{Smoments}.
\refApp{Ajb} gives a rather technical proof of one claim in \refE{E+-}.

\section{Some notation and other preliminaries}\label{Snot}

We use throughout the paper the notation $\sQ$, $q$,
$\bX_n=(X_{ni})_{i\in\sQ}$, $a_i$, and $\bxi_i=(\xi_{ij})_{j\in\sQ}$
introduced in the introduction.

\subsection{Standing assumptions}\label{SSA}

In the rest of the paper we assume
\setcounter{Aenumi}{-1}
\begin{Aenumerate}
\item\label{Atri}   
 The \Polya{} urn is triangular.
\end{Aenumerate}
(Unless we explicitly say so, for example when we discuss this property in
\refSS{SSQ}.)
For the central part of the paper (\refSs{Snot}--\ref{Sdeg})
we make also some standing technical  assumptions:

\begin{Aenumerate}
  
\item \label{A0}
The initial urn $\bX_0$ is non-random. 
Moreover, each $X_{0i}\ge0$ and $\sum a_iX_{0i}>0$.
(The results may be extended to random $\bX_0$ by conditioning on $\bX_0$.)

\item \label{A00}
If $a_i=0$, then $\bxi_i=0$, \ie, $\xi_{ij}=0$ for every $j\in\sQ$.
(This is without loss of generality, since $a_i=0$ means that balls of
colour $i$ never are drawn, and thus $\bxi_i$ does not matter.)

\item \label{A3}
For every $i\in\sQ$, either $X_{0i}>0$, or there exists $j\neq i$ such that
$\P(\xi_{ji}>0)>0$ (or both).
(This too is without loss of generality, since otherwise balls of colour $i$
can never appear, so $X_{ni}=0$ \as{} for all $n$, and we may remove the
colour $i$ from $\sQ$.)

\item \label{A2}\xdef\ALnude{A\arabic{enumi}}
$\E\xi_{ij}^2<\infty$ for all $i,j\in\sQ$.

\item \label{A+} \xdef\Anude{A\arabic{enumi}}
$\xi_{ij}\ge0$ (a.s.)\ for all $i,j\in\sQ$.
\end{Aenumerate} 

Note that \ref{A+} implies that every $X_{ni}$ is (weakly) increasing in
$n$. In particular, the urn never gets empty. Combined with \ref{A0} we see
that $\sum_i a_i X_{ni}>0$ for every $n$, and thus the 
probabilities \eqref{urn} and the urn process are well
defined.
We discuss extensions to urns not satisfying \ref{A+} (\ie, urns with
subtractions) in \refS{S-}; see \refTs{T1-}--\ref{TC-}.

\begin{remark}\label{Rmom}
The second moment condition \ref{A2} is for technical convenience.
In fact, the results (including \refT{T1}) hold  
assuming only $\E\xi_{ij}^p<\infty$ for some $p>1$.
However, this adds further arguments to an already long proof, so
we assume second moments in the main part of the paper and show the
extension to $p>1$ in \refApp{ALp}.

It seems possible that the results could extend further by
assuming only that $\E\xi_{ij}\log\xi_{ij}<\infty$,
as for diagonal urns in \cite{Athreya1969}
using  related results for (single-type) branching processes
\cite[Theorem III.7.2]{AN}.
(The results do not extend without modifications to cases
without this assumption, see \refE{ELlogL}.)
We have not pursued this, and leave it as an open problem.
\end{remark}

\subsection{General notation}

As usual, we ignore events of probability 0. We often write ``almost
surely'' or ``a.s.''\ for emphasis, but we may also tacitly omit this.

We use 
``increasing'' in the weak sense. Similarly, ``positive function'' 
means in
the weak sense, \ie{} $\ge0$.
(However, ``positive constant'' always means strictly positive; we sometimes
add ``strictly'' for emphasis, but not always.)

We let $s\land t:=\min\set{s,t}$
and $s\lor t:=\max\set{s,t}$.

We let $\bbZgeo:=\set{0,1,2,\dots}$ and $\bbZ_+:=\set{1,2,\dots}$.

We use $\asto$, $\pto$, and $\dto$ to denote
convergence almost surely, in probability,  and in distribution, respectively.

``Absolutely continuous'' 
(for a probability distribution in $\bbR$ or $\bbR^d$)
means with respect to Lebesgue measure. 
(Except in \refApp{AA} when a reference measure is explicitly specified.)
We may say that a random variable is \abscont{} when its distribution is.

We use some standard probability distributions:
$\Exp(\gl)$ is an exponential distribution with mean $\gl>0$;
we may also say with \emph{rate} $1/\gl$.
$\gG(\ga,b)$ is a Gamma distribution. 
(Thus $\Exp(\gl)=\gG(1,\gl)$.) 
$\Be(p)$ is a Bernoulli distribution.
$\NBi(r,p)$ is a negative binomial distribution. 
$\Gei(p)$ is a geometric distribution on \set{1,2,\dots}.  

For a random variable $W$, we let
$\cL(W)$ denotes the distribution of $W$, and, for any $p>0$,
\begin{align}\label{norm2}
  \norm{W}_p:=\bigpar{\E|W|^p}^{1/p}
.\end{align}
$L^p$ denotes the set of all random variables $W$ such that $\norm{W}_p<\infty$.

$C$ denotes unspecified constants that may vary from one occurrence to the
next. 
They may depend on
the activities $a_i$ and the distributions of the replacements
$\bxi_i$, and perhaps on other parameters clear from the context, 
but they never depend on $n$ or $t$. 

Let
\begin{align}\label{rij}
  r_{ij}:=\E\xi_{ij}.
\end{align}
Thus the matrix $(r_{ij})_{i,j\in\sQ}$ is the mean replacement matrix.
Note that $r_{ij}$ exists and is finite by \ref{A2}.
By \ref{A+}, we have $r_{ij}\ge0$, and $r_{ij}=0\iff\xi_{ij}=0$ a.s.

We occasionally 
(when we discuss two different urns at the same time)
denote a \Polya{} urn by $\cU$;
we then (somewhat informally) mean both the
urn process $\bX_{n}$ and its \ctime{} version $\bX(t)$ defined below, 
and also the
colour set $\sQ$ and the replacement matrix $(\xi_{ij})$.

\subsection{The colour graph}\label{SSQ}
(In this subsection, the urn does not have to be triangular.)
Recall that $\sQ$ is the set of colours. As said in the introduction, we
allow $\sQ$ to be any finite set, although it is possible to assume
$\sQ=\set{1,\dots,q}$ without loss of generality when this is convenient.

We regard the set $\sQ$ of colours as a directed graph, 
called  the \emph{colour  graph}, 
where for any distinct $i,j\in\sQ$  there is an edge $i\to j$ 
if and only if $\P(\xi_{ij}\neq0)>0$. 
In other words, $i\to j$ means that if a ball of colour $i$ is drawn, it is
possible (with positive probability) that some balls of colour $j$ are
added. Note that by \ref{A00},  $i\to j$ entails $a_i>0$, so balls of
colour $i$ may (and will) actually be drawn;
furthermore,
by \ref{A+}, 
\begin{align}
  \label{itoj}
i\to j \iff r_{ij}>0 \text{ and } i\neq j.
\end{align}

We say, again for two distinct colours $i,j\in\sQ$, that $i$ is
an \emph{ancestor} of $j$, and $j$ a \emph{descendant} of $i$,
if there exists a directed path in $\sQ$ from $i$ to $j$; we denote this by
$i\prec j$.
In other words, $i\prec j$ means that if we start the urn with only a ball
of colour $i$, then it is possible that balls of colour $j$ are added at
some later time.

If $i\in\sQ$, let $\sP_i:=\set{j\in\sQ:j\to i}$, the set of colours 
different from $i$ whose drawings may cause addition of balls of colour $i$.
We say that the colour $i$ is \emph{minimal}
if $\sP_i=\emptyset$.
We denote  the set of minimal colours by $\sQmin$.
Note that \ref{A3} can be formulated as: $X_{0i}>0$ for every $i\in\sQmin$.

We say that the urn is \emph{triangular}, if there exists a
(re)labelling of the colours by $1,\dots,q$ that makes the matrix
$(\xi_{ij})_{i,j\in \sQ}$ triangular a.s.
(Assuming \ref{A+}, this is equivalent to
the mean replacement matrix $(r_{ij})_{i,j\in \sQ}$ being triangular, \cf{}
\eqref{itoj}.)  
In other words, the urn is triangular if there exists a total ordering $<$
of the colours such that
\begin{align}\label{q1}
  i>j \implies \xi_{ij}=0 \quad \text{a.s.}
\end{align}

Using the definitions above to rewrite \eqref{q1},
we see that the urn is triangular if and only there
exists a total ordering $<$ such that, for $i,j\in\sQ$,
\begin{align}\label{q2}
  i\to j \implies i < j.
\end{align}
Furthermore, this is equivalent to
\begin{align}\label{q3}
  i\prec j \implies i < j.
\end{align}

\begin{proposition}\label{PT}
  The following are equivalent.
  \begin{romenumerate}
    
  \item\label{PT1} The urn is triangular.
  \item\label{PT2} The colour graph is acyclic.
  \item\label{PT3} The relation $\prec$ on $\sQ$ is a partial order.
  \end{romenumerate}
\end{proposition}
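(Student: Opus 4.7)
The plan is to prove the cycle of implications $(i)\Rightarrow(ii)\Rightarrow(iii)\Rightarrow(i)$. The proposition is essentially graph-theoretic: once \eqref{itoj} has been invoked to identify the colour graph with the support of the off-diagonal mean replacement matrix, nothing from \refAA{} beyond this identification will enter, and the proof reduces to standard facts about directed acyclic graphs and topological sorts.

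For $(i)\Rightarrow(ii)$, I would take a total order $<$ witnessing triangularity in the form \eqref{q2}. A directed cycle $i_1\to i_2\to\cdots\to i_k\to i_1$ in the colour graph would then force $i_1<i_2<\cdots<i_k<i_1$, which is impossible; hence the colour graph is acyclic. For $(ii)\Rightarrow(iii)$, transitivity of $\prec$ is immediate by concatenation of directed paths, and irreflexivity is built in since $\prec$ is only imposed on distinct pairs. For asymmetry: if both $i\prec j$ and $j\prec i$, concatenating the two witnessing paths gives a closed directed walk through $i$, which must contain a directed cycle, contradicting acyclicity.

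For $(iii)\Rightarrow(i)$, I would use the standard fact that any strict partial order on a finite set admits a linear extension: iteratively pick a $\prec$-minimal vertex of what remains of $\sQ$, label it $1,2,\dots$ in turn, and delete it. Finiteness of $\sQ$ ensures termination and produces a total order $<$ on $\sQ$ with $i\prec j\Rightarrow i<j$, which is precisely \eqref{q3}; by the equivalence of \eqref{q1}--\eqref{q3} recorded just before the proposition, this yields triangularity. There is no substantive obstacle: one has only to keep track of which of \eqref{q1}--\eqref{q3} is being used at each step and to remember that the colour graph has no loops, so the irreflexivity needed for $\prec$ to be a partial order comes for free from the definition.
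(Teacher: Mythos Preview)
Your proof is correct and follows essentially the same approach as the paper: the key step in both is that a (strict) partial order on a finite set admits a linear extension, giving \eqref{q3} and hence triangularity. The only cosmetic difference is that the paper proves $(i)\Rightarrow(ii)$ and $(i)\Rightarrow(iii)$ directly from \eqref{q2}--\eqref{q3}, notes $(ii)\Leftrightarrow(iii)$, and then does $(iii)\Rightarrow(i)$, whereas you organize the same ideas as a cycle $(i)\Rightarrow(ii)\Rightarrow(iii)\Rightarrow(i)$.
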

\begin{proof}
  \ref{PT1} implies \ref{PT2} and \ref{PT3} 
as a consequence of \eqref{q2} and \eqref{q3}.

\ref{PT2}$\iff$\ref{PT3} is easily seen. 

Finally, any partial order can be extended to a total order.
Thus, if \ref{PT3} holds, we may extend $\prec$ to a total order $<$, which
means that \eqref{q3} holds.
\end{proof}

Note that if the urn is triangular,
a colour $i$ is minimal if and only if it is minimal
in the partial order $\prec$.

\begin{remark}\label{Rnontri2}
  We may also note that a \Polya{} urn is irreducible if and only if
its colour graph is strongly connected.
Given any \Polya{} urn, we may decompose its
colour graph into its strongly
connected components, which are linked by the remaining edges in an acyclic way.
Hence the urn can be regarded as an acyclic directed network of
irreducible urns. This suggests, as mentioned in \refR{Rnontri1},
that the methods in the present paper perhaps might be combined with methods for
irreducible urns to obtain  results for general urns.
\end{remark}

\subsection{More notation}\label{SSnot2}
Let, for  $i\in\sQ$,
\begin{align}\label{gli}
  \gl_i:=a_ir_{ii}=a_i\E\xi_{ii}\ge0  
.\end{align}
In the continuous-time version introduced below, 
$\gl_i$ is the rate of additions to colour
$i$ by drawings of the same colour.
Since colours also may be added by drawings of another colour, we define
further
\begin{align}
  \label{glx}
\glx_i:=\max\bigset{\gl_j:j\preceq i}
\ge0
.\end{align}
In other words, $\glx_i$ is the largest $\gl_j$ for a colour $j$ such that
there exists a path (possibly of length 0) in $\sQ$ from $j$ to $i$.
Such a path may contain several colours $k$ with the same, maximal, $\gl_k$,
and we denote the largest number of them in a single path by $1+\kk_i$; i.e.,
\begin{align}\label{kk}
  \kk_i:=\max\bigset{\kk:\exists i_1\prec i_2\prec \dots\prec i_{\kk+1}\preceq i
\text{ with } \gl_{i_1}=\dots=\gl_{i_{\kk+1}}=\glx_i}
\ge0
.\end{align}

Define further
\begin{align}\label{hgl}
  \hgl&:=\max\set{\gl_i:i\in\sQ}\ge0
,\\
\label{hkk}
\hkk&:=\max\set{\kk_i:i\in\sQ \text{ and }\glx_i=\hgl}
\notag\\&\phantom:
=\max\bigset{\kk:\exists i_1\prec i_2\prec \dots\prec i_{\kk+1}
\text{ with } \gl_{i_1}=\dots=\gl_{i_{\kk+1}}=\hgl}
\ge0.
\intertext{
If $\hgl=0$ (i.e., if $\gl_i=0$ for every $i\in\sQ$), let further}
\label{hkk0}
  \hkko&:=
1+\max\bigset{\kk_i: i\in\sQ \text{ with }a_{i}>0}
\ge1
.\end{align}

If $\hgl>0$, define also
\begin{align}\label{gam}
  \gam_i:=\kk_i-\hkk\glx_i/\hgl,
\qquad i\in\sQ.
\end{align}

\subsection{Stochastic processes}
All our continuous-time stochastic processes are defined on $\ooo$ and are
assumed to be \cadlag{} (right-continuous with left limits).

We consider martingales without explicitly specifying the filtration; this
will always be the natural filtration
$(\cF_t)_{t\ge0}$, where  $\cF_t$ is 
generated by ``everything that has happened up to time $t$''.
If $T$ is a stopping time, then $\cF_{T}$ denotes the corresponding \gsf{}
generated by all events up to time $T$.

Given a stochastic process $W=(W(t))\too$, we define its maximal process by
\begin{align}\label{w*}
  W^*(t):=\sup_{0\le s\le t}|W(s)|,
\qquad 0\le t\le \infty.
\end{align}
(We consider only $s<\infty$, also when $t=\infty$.)
%
We further define
\begin{align}\label{nn3}
  \gD W(t) := W(t) - W(t-),
\qquad 0\le t<\infty,
\end{align}
where $W(t-)$ is the left limit at $t$, with $W(0-):=0$.

If $W$ is a process with locally bounded variation, we may define its
\emph{quadratic variation}
by
\begin{align}\label{nn4}
  [W,W]_t := \sum_{0\le s\le t} \bigabs{\gD W(s)}^2,
\qquad 0\le t\le\infty,
\end{align}
(summing over $s<\infty$ if $t=\infty$), 
noting that the sum always really is countable.
(We have no need for the definition for general semimartingales, see \eg{}
\cite[p.~519 and Theorem 26.6]{Kallenberg}
or \cite[Section II.6]{Protter}.)
Recall 
\cite[Corollary 3 to Theorem II.6.27, p.~73]{Protter}
that if $M$ is a local martingale and $\E[M,M]_t<\infty$ for some $t<\infty$,
then
\begin{align}\label{nn5}
  \E |M(t)|^2 = \E [M,M]_t,
\end{align}
and as a consequence, if $\E[M,M]_\infty<\infty$, then
$M$ is an $L^2$-bounded martingale and thus $M(\infty):=\lim_{\ttoo}M(t)$ exists
\as, and \eqref{nn5} holds for all $t\le\infty$.

Recall also Doob's inequality \cite[Proposition 7.16]{Kallenberg}
which as a special case yields, combined with \eqref{nn5},
\begin{align}\label{nn6}
\E M^*(t)^2 \le C \E |M(t)|^2 = C\E [M,M]_t,
\end{align}
provided $\E [M,M]_t<\infty$.
(Here  $C=4$, but we will not use this.)

\subsection{Continuous-time urn}\label{SSCT}
We will use the standard method of embedding the discrete-time urn in a
continuous-time process, due to \citet{AthreyaKarlin}, 
see also \cite[\S V.9]{AN}, 
and after them used in many papers.
We thus define the \emph{continuous-time urn} as a vector valued Markov
process $\bX(t)=\xpar{(X_i(t)}_{i\in\sQ}$ with given initial value
$\bX(0):=\bX_0$ such
that, for each $i\in \sQ$,  
``a ball of colour $i$ is drawn'' 
with intensity $a_iX_i(t)$; 
when a ball of colour $i$ is drawn, we add to $\bX(t)$ 
a copy of $\bxi_i$ (independent of the history). 
In the classical case (\eg{} \cite{AthreyaKarlin}) when each $X_i(t)$ is
integer valued, $\bX(t)$ is a
multitype continuous-time Markov branching process;
in general (allowing any real $X_i(t)\ge0$),
$\bX(t)$ is a (vector-valued) 
continuous-time
continuous-state branching process 
(abbreviated \emph{CB process})
as defined by \citet{Jirina}, see also \eg{} \citet{Li}.
(The process $\bX(t)$ is of jump-type, as in \cite[Section 3]{Jirina}.)
Note that \ref{A2} implies $\E\xi_{ij}<\infty$ for all $i,j\in\sQ$, which is
a well-known sufficient condition for non-explosions; i.e., there exists 
such a process $\bX(t)$ with $\bX(t)$ finite for all $t\in\ooo$.

Since $a_iX_i(0)>0$ for some $i$ by \ref{A0}, and $X_i(t)$ is
increasing by \ref{A+}, there will \as{} be infinitely many draws in the urn.
We let $\TT_n$ be the $n$th time that a ball is drawn, with $\TT_0:=0$. 
Then the discrete-time urn $\bX_n$ in \refS{S:intro} can be realized as
\begin{align}\label{ct}
  \bX_n := \bX(\TT_n).
\end{align}
We assume \eqref{ct} throughout the paper.

Since $\bX(t)$ does not explode, there is \as{} 
only a finite number of draws up to
any finite time $t$, and thus $\TT_n\asto\infty$ as \ntoo.
Note that we denote the discrete-time urn by $\bX_n=(X_{ni})_i$ and 
the continuous-time urn by $\bX(t)=(X_i(t))_i$.

Of course, the \ctime{} urn can also be studied for its own sake,
see for example \cite{ChenMahmoud}.

\section{Analysis of one colour}\label{S1}

In this section, we study one fixed colour $i\in\sQ$ in the continuous-time urn.

Recall that $X_i(t)$ is the number (amount) of balls of colour $i$ at time $t$
in the continuous-time  urn.
There are several possible sources of these balls:
some may be there from the beginning, some may be added when a ball of some
other colour $j$ is drawn (with $j\to i$ and thus $j\prec i$), and in both
cases these balls of colour $i$ may later be drawn and produce further
generations of balls of colour $i$ (provided $\gl_i>0$).

We begin by considering, in the following two subsections,
two simpler special cases. We will then in the final subsection, rather easily, 
treat the general case 
by combining these  special cases.

We first state some simple general results; these are more or less known,
see \eg{} \cite[Lemma 9.3]{SJ154} for a related result,
but for convenience we give full proofs.

Fix a colour $j$. 
Let $0<T_1<T_2<\dots$ be the times that a ball of
colour $j$ is drawn, and let $N(t):=\bigabs{\set{k:T_k\le t}}$ denote the
corresponding counting process; 
i.e., $N(t)$ is the number of draws of colour $j$ up to time $t$.
%
These draws occur with intensity $a_j X_j(t)$, which means that
\begin{align}\label{j7}
  \tN(t):=N(t)-a_j\intot X_j(s)\dd s,
\qquad t\ge0,
\end{align}
is a local martingale. In fact, $\tN(t)$ is a martingale, which we verify by
the following simple lemma. Recall the notation \eqref{w*}.
\begin{lemma}\label{LtN}
Suppose that $\E X_{j}(t)<\infty$ for some $t\in\ooo$.
Then,
  \begin{align}\label{ltn0}
\E N(t)&<\infty,
\\\label{ltn}
    \E \tN^*(t)&
=\E\sup_{s\le t}|\tN(s)|
<\infty.
  \end{align}
In particular, if\/ $\E X_{j}(t)<\infty$ for every $t<\infty$,
then the local martingale $\tN(t)$ is a martingale.
\end{lemma}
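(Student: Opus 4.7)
The plan is to establish the two estimates in turn, both resting on the monotonicity of $X_j$ supplied by \ref{A+}, and then to deduce the martingale property by a standard local martingale criterion.

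For \eqref{ltn0}, write $A(t):=a_j\intot X_j(s)\dd s$ for the compensator of $N$ in \eqref{j7}. Since $X_j$ is increasing by \ref{A+}, $A(t)\le a_j t\,X_j(t)$, hence $\E A(t)\le a_j t\,\E X_j(t)<\infty$ by hypothesis. Now $\tN=N-A$ is only known a priori to be a local martingale, so I pick a localizing sequence $\gt_k\upto\infty$ (for instance $\gt_k:=\inf\set{s\ge0:N(s)\ge k}$) for which $\tN(\cdot\bmin\gt_k)$ is a genuine martingale starting at $0$. Taking expectations gives $\E N(t\bmin\gt_k)=\E A(t\bmin\gt_k)\le\E A(t)$, and monotone convergence in $k$ applied to the increasing counting process $N$ yields $\E N(t)\le\E A(t)<\infty$.

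For \eqref{ltn}, the bound is immediate: since both $N$ and $A$ are increasing and non-negative, $|\tN(s)|\le N(s)+A(s)\le N(t)+A(t)$ for every $s\le t$, so $\tN^*(t)\le N(t)+A(t)$, and both summands are integrable by the previous step.

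For the final statement, assume $\E X_j(t)<\infty$ for every $t<\infty$; then $\E\tN^*(t)<\infty$ for every $t$, so on each compact time interval the local martingale $\tN$ is dominated by an integrable random variable. This is the standard sufficient condition for a local martingale to be a true martingale (apply optional stopping along $\gt_k\bmin t$ and pass to the limit by dominated convergence using $\tN^*(t)$ as the dominant). The only genuine subtlety in the argument is the localization needed in the first step to identify $\E N(t)$ with $\E A(t)$ without circularity; everything else is monotone convergence together with a crude pointwise bound.
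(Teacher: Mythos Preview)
The proposal is correct and follows essentially the same route as the paper's proof: localize, take expectations to bound $\E N(t)$ by $\E A(t)\le a_j t\,\E X_j(t)$ via monotone convergence, then bound $\tN^*(t)$ crudely by $N(t)+A(t)$, and conclude the martingale property from integrability of the maximal process. The only cosmetic difference is that the paper passes to the limit on both sides of $\E N(t\bmin\gt_k)=\E A(t\bmin\gt_k)$ to get equality $\E N(t)=\E A(t)$, whereas you keep only the inequality $\E N(t)\le\E A(t)$; either suffices here.
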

\begin{proof}
  By the definition of local martingale, there exists an increasing sequence
  of stopping times $\tau_m$, $m\ge1$, such that $\tau_m\upto\infty$ \as{} 
as  \mtoo, and $\tN(t\land \tau_m)$ is a martingale for each $m$.
In particular, since $\tN(0)=0$,
\begin{align}\label{ltn1}
\E N\xpar{t\land\tau_m} 
=\E \tN\xpar{t\land\tau_m} + a_j\E \int_0^{t\land \tau_m} X_j(s )\dd s
= a_j\E \int_0^{t\land \tau_m} X_j(s )\dd s.
\end{align}
Since $N(t)$ and $X_j(t)$ are increasing 
positive 
functions of $t$, we may use monotone convergence and let \mtoo{} to obtain
\begin{align}\label{ltn2}
  \E N(t) = a_j\E \int_0^{t} X_j(s )\dd s
\le a_j \E \bigsqpar{tX_j(t)} <\infty.
\end{align}
The monotonicity of $N(t)$ further implies
\begin{align}\label{ltn3}
\tN^*(t)\le  N(t)+a_j\intot X_j(s)\dd s
\le  N(t)+a_jt X_j(t),
\end{align}
and thus \eqref{ltn} follows by \eqref{ltn2}.

The final statement follows since a local martingale with integrable maximal
function is a martingale.
\end{proof}

\begin{lemma}\label{L9+}
  Suppose that $\E X_j(t)<\infty$ for some $t\in\ooo$.
  \begin{romenumerate}
  \item \label{L9+a}
Let $f$ be a positive or bounded measurable function on $[0,t]$.
Then
\begin{align}\label{l9+}
  \E \sumk \indic{T_k\le t}f(T_k)
=\E \intot f(s)\dd N(s)
= a_j\E\intot f(s)X_j(s)\dd s.
\end{align}

\item \label{L9+b}
Let also $(\eta_k)\xoo$  be a sequence of identically distributed
random variables with finite mean $\E\eta_1$
such that $\eta_k$ is independent of $T_k$.
Then
\begin{align}\label{l9+b}
  \E \sumk \indic{T_k\le t}f(T_k)\eta_k
= a_j\E\eta_1\E\intot f(s)X_j(s)\dd s.
\end{align}
  \end{romenumerate}
\end{lemma}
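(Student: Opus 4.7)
The plan is to prove (i) by a standard measure-theoretic extension of the martingale identity implicit in \refL{LtN}, and then to derive (ii) by combining (i) with the independence hypothesis via Fubini's theorem.

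For (i), the first equality is merely the definition of integration against the counting measure $N$, whose atoms of mass $1$ sit at the times $T_k$, so only the second equality requires argument. I would first verify it for $f=\etta_{(a,b]}$ with $0\le a<b\le t$: the \lhs{} then reduces to $\E[N(b)-N(a)]$, and repeating the localization-plus-monotone-convergence calculation from \eqref{ltn1}--\eqref{ltn2} on the interval $[a,b]$ in place of $[0,t]$ yields $\E[N(b)-N(a)]=a_j\E\int_a^b X_j(s)\dd s$; here $\E X_j(s)\le\E X_j(t)<\infty$ for $s\le t$ by monotonicity of $X_j$ under \ref{A+}. The identity then extends by linearity to simple functions, by monotone convergence to all positive measurable $f$ on $[0,t]$, and by splitting into positive and negative parts to bounded measurable $f$.

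For (ii), since $f$ is deterministic and $\eta_k$ is independent of $T_k$, I have
\begin{align*}
\E\bigsqpar{\indic{T_k\le t}f(T_k)\eta_k}=\E\eta_1\cdot\E\bigsqpar{\indic{T_k\le t}f(T_k)}
\end{align*}
for each $k$. Summing over $k$ and using Fubini's theorem to interchange sum and expectation gives
\begin{align*}
\E\sumk\indic{T_k\le t}f(T_k)\eta_k=\E\eta_1\cdot\E\sumk\indic{T_k\le t}f(T_k),
\end{align*}
whereupon a second application of (i) to the \rhs{} yields the claimed identity \eqref{l9+b}. The absolute integrability required by Fubini follows from (i) applied to $|f|$: for bounded $f$ one obtains
$\E\sumk\indic{T_k\le t}\abs{f(T_k)}\abs{\eta_k}\le\norm{f}_\infty\cdot\E\abs{\eta_1}\cdot\E N(t)<\infty$
by \eqref{ltn0}, while in the all-positive case one invokes Tonelli's theorem instead.

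The main obstacle is purely bookkeeping: ensuring that $\tilde N$ is a genuine martingale on $[0,t]$ (not merely a local one) so that the interval identity underlying (i) is valid, and tracking integrability carefully enough to apply Fubini in (ii). Neither step presents any conceptual difficulty once \refL{LtN} is in hand.
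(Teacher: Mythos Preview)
Your approach is essentially the paper's own: verify \eqref{l9+} for indicators $\etta_{(a,b]}$ via the martingale identity from \refL{LtN}, then extend, and derive (ii) from (i) by factoring via independence. One small gap: ``by linearity to simple functions, by monotone convergence to all positive measurable $f$'' does not quite work as stated, because linearity from interval indicators only gives you step functions, and monotone limits of step functions do not yield arbitrary positive measurable functions. You need an intermediate monotone-class (or $\pi$--$\lambda$) step to pass from indicators of intervals to indicators of all Borel sets in $[0,t]$; the paper inserts exactly this step before taking linear combinations. Once that is in place, your argument is complete and matches the paper's.
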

\begin{proof}
\resetsteps
\stepxx
The left and middle terms in \eqref{l9+} are the same, since 
$\intot f(s)\dd N(s)=\sumk \indic{T_k\le t}f(T_k)$ by the definition of $N$.

\stepxx 
Suppose that $f(s)=\indic{a< s\le b}$, the indicator function of an
interval $(a,b]\in[0,t]$.
Then, by \eqref{j7}, 
\begin{align}\label{l9+1}
  \intot f(s)\dd N(s) - a_j\intot f(s)X_j(s)\dd s
=\intot f(s)\dd\tN(s)  
=\tN(b)-\tN(a),
\end{align}
and $\E\sqpar{\tN(b)-\tN(a)}=0$ by \refL{LtN}; hence \eqref{l9+} holds for
such $f$.

\stepxx
The monotone class theorem \cite[Theorem 1.2.3]{Gut} now shows that \eqref{l9+}
holds for the indicator function $f(s)=\indic{s\in A}$ of any Borel set
$A\in[0,t]$.

\stepxx
  By linearity, \eqref{l9+} holds for any positive simple function $f$. 
Then, by monotone convergence, \eqref{l9+} holds for any 
positive measurable function, and by linearity again for any bounded
measurable function.
This proves \ref{L9+a}.

\stepxx
In \ref{L9+b}, we may decompose $\eta_k$ into its positive and negative
parts; thus it suffices to consider $\eta_k\ge0$. Then the sum in
\eqref{l9+b}
is well-defined, and
\begin{align}\label{l9+bb}
  \E \sumk \indic{T_k\le t}f(T_k)\eta_k
=  \sumk \E\bigsqpar{\indic{T_k\le t}f(T_k)\eta_k}
=  \E\eta_1  \sumk \E\bigsqpar{\indic{T_k\le t} f(T_k)},
\end{align}
and \eqref{l9+b} follows from \eqref{l9+}.
\end{proof}

\subsection{A colour not influenced by others}\label{SS11}
In this subsection, we assume that  $\xi_{ji}=0$ \as{}
for all colours $j\neq i$.  
Equivalently, $j\not\to i$ for $j\in\sQ$, i.e., $\sP_i=\emptyset$;
in other words,  $i$ is a minimal colour.
This means that $X_i(t)$ is affected only by
draws of the same colour $i$, and we may thus ignore all other colours
and regard $X_i(t)$ as a continuous-time urn with a single colour.
In other words, $X_i(t)$ is a one-dimensional
CB process, 
starting at some given $X_i(0)$ and
adding copies of $\xi_{ii}$ with intensity $a_iX_i(t)$.
We write  $X_i(0)=x_0$.
Note that our assumption \ref{A3} means $x_0>0$, but for completeness  
we allow also the trivial case $x_0=0$ in the present subsection.
(In this subsection, we really use only the assumptions
\ref{A0}, \ref{A2}, and \ref{A+}, and only for the colour $i$.)
Note also that $\glx_i=\gl_i$ by \eqref{glx}.

We will only need a few simple facts about CB processes;
see further \eg{}
\cite{Jirina,Lamperti-BAMS,Grey,Bingham,Li} 
where many more results are given.

Recall that $\gl_i=a_ir_{ii}$. If $\gl_i=0$, then there are no additions at
all, and $X_i(t)=X_i(0)=x_0$ is constant.

It is easy to see that since $r_{ii}<\infty$, the CB
process $X_i(t)$ is well defined and non-explosive (i.e., finite for all $t$),
for any given $x_0\ge0$. 
Moreover
\cite{Grey}, \cite[Proposition 2.2]{Bingham},
\begin{align}
  \label{cb4}
\E X_i(t)=e^{\gl_i t}x_0
.\end{align}
This implies by conditioning and the Markov property that
\cite[Theorem III.7.1]{AN}
\begin{align}\label{cb3}
{e^{-\gl_i t}X_i(t)}
\quad  \text{is a martingale}
.\end{align}

The following result too well known, but we include a proof for
completeness, and since we discuss modifications of it later. 
See \eg{} 
\cite[Theorem V.8.2]{AN} 
and \cite[Lemma 9.5]{SJ154} 
for a vector-valued extension.

\begin{lemma}\label{LM}
Suppose that $\xi_{ji}=0$ \as{}
for all $j\neq i$. 
Then
$
  e^{-\gl_i t} X_i(t)
$ 
is an $L^2$-bounded martingale,
and thus 
\begin{align}\label{lm1}
    e^{-\gl_i t} X_i(t)\asto \WXi
\end{align}
for some random variable $\WXi$.
Furthermore, with $x_0=X_i(0)$,
\begin{align}\label{lm2}
\E \WXi&= x_0,
\\\label{lm3}
  \Var \WXi&
=  x_0 \frac{a_i}{\gl_i} \E \xi_{ii}^2,
\end{align}
where we interpret $\frac{0}{0}$ as $0$.
Hence
\begin{align}\label{lm3b}
\Var\bigsqpar{e^{-\gl_i t} X_i(t)}
&\le  \Var \WXi
\le C x_0 
,
\\
\label{lm3a}
\E\Bigabs{\sup_{t\ge0} e^{-\gl_i t} X_i(t)}^2 
&\le C \E \WXi^2 \le C(x_0+x_0^2).  
\end{align}
Furthermore, if $x_0>0$, then $0<\WXi<\infty$ a.s.
\end{lemma}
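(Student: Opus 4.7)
The plan is to first dispose of the degenerate cases. If $x_0=0$ then $X_i(t)\equiv 0$ and $\WXi=0$ makes all assertions trivial. If $\gl_i=0$ then by \ref{A+} either $a_i=0$ (no draws of colour $i$) or $r_{ii}=0$ (forcing $\xi_{ii}=0$ a.s.); in either case $X_i(t)\equiv x_0$, giving $\WXi=x_0$ with variance $0$, consistent with \eqref{lm2}--\eqref{lm3} under the convention $0/0=0$. I may thus assume $\gl_i>0$ and $x_0>0$.

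Set $M(t):=e^{-\gl_i t}X_i(t)$, which is a martingale by \eqref{cb3}. It is of finite variation, with jumps only at the drawing times $T_k$ of colour $i$, of size $e^{-\gl_i T_k}\xi_{ii}^{(k)}$, where the $\xi_{ii}^{(k)}$ are \iid{} copies of $\xi_{ii}$ and each is independent of $T_k$. Hence by \eqref{nn4},
\[
[M,M]_t=\sum_{k:T_k\le t}e^{-2\gl_i T_k}\bigpar{\xi_{ii}^{(k)}}^2.
\]
Applying \refL{L9+}\ref{L9+b} with $j=i$, $f(s)=e^{-2\gl_i s}$ and $\eta_k=(\xi_{ii}^{(k)})^2$ (which has finite mean by \ref{A2}), and using \eqref{cb4}, yields
\[
\E[M,M]_t=a_i\E\xi_{ii}^2\int_0^t e^{-2\gl_i s}\cdot x_0 e^{\gl_i s}\dd s=\frac{x_0 a_i\E\xi_{ii}^2}{\gl_i}\bigpar{1-e^{-\gl_i t}},
\]
uniformly bounded in $t$. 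Then \eqref{nn5} shows $M$ is $L^2$-bounded, so martingale convergence provides $\WXi$ with $M(t)\asto\WXi$ and $M(t)\to\WXi$ in $L^2$, proving \eqref{lm1}. The $L^1$ convergence yields $\E\WXi=M(0)=x_0$, giving \eqref{lm2}, while $\Var\WXi=\E[M,M]_\infty=x_0 a_i\E\xi_{ii}^2/\gl_i$ gives \eqref{lm3}. The bound \eqref{lm3b} is immediate since $\Var M(t)=\E[M,M]_t$ is increasing in $t$, and \eqref{lm3a} follows from Doob's inequality \eqref{nn6} together with $\E\WXi^2=\Var\WXi+x_0^2\le C(x_0+x_0^2)$.

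The main obstacle is the strict positivity $\WXi>0$ a.s.\ when $x_0>0$: monotonicity of $X_i$ does not help, since $M(t)\ge x_0 e^{-\gl_i t}\to 0$. My plan is to exploit the branching structure of the CB process. Let $q(y):=\P(\WXi=0\mid X_i(0)=y)$; splitting the initial $y_1+y_2$ balls into two independent sub-populations of sizes $y_1$ and $y_2$ gives $\WXi^{(y_1+y_2)}\eqd\WXi^{(y_1)}+\WXi^{(y_2)}$ with independent non-negative summands, so $q(y_1+y_2)=q(y_1)q(y_2)$. Since $q$ is decreasing in $y$, this forces $q(y)=e^{-cy}$ for some $c\in[0,\infty]$, and $\E\WXi=x_0>0$ gives $q(x_0)<1$, hence $c>0$. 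Next, since drawings of colour $i$ occur with intensity $\ge a_i x_0>0$, a coupling with a homogeneous Poisson process yields $N(t)\asto\infty$, and then the SLLN applied to $X_i(t)=x_0+\sum_{k=1}^{N(t)}\xi_{ii}^{(k)}$ (with $\E\xi_{ii}=r_{ii}>0$) gives $X_i(t)\asto\infty$. Finally, the Markov property at time $s$ gives $\P(\WXi=0\mid\cF_s)=q(X_i(s))=e^{-cX_i(s)}\asto 0$, while \Levy's $0$--$1$ law identifies this conditional probability with $\indic{\WXi=0}$ in the limit. Hence $\P(\WXi=0)=0$, completing the proof.
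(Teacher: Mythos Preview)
Your proof is correct and follows essentially the same route as the paper's: compute $[M,M]_t$ from the jumps at the drawing times, apply \refL{L9+}\ref{L9+b} together with \eqref{cb4} to obtain $L^2$-boundedness, and then use the CB/branching property to derive the multiplicative Cauchy equation for the extinction probability and conclude $\WXi>0$ via $X_i(t)\to\infty$.

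Two minor remarks. First, by the paper's convention \eqref{nn3}--\eqref{nn4} (with $W(0-):=0$), the quadratic variation includes the term $|\gD M(0)|^2=x_0^2$, so that \eqref{nn5} reads $\E M(t)^2=x_0^2+\sum_{T_k\le t}\E\bigsqpar{e^{-2\gl_iT_k}(\xi_{ii}^{(k)})^2}$; your formula for $[M,M]_t$ omits this term, which is why you get $\Var\WXi$ directly rather than $\E\WXi^2$. The arithmetic is consistent either way, but your citation of \eqref{nn5} does not quite match. Second, your endgame via \Levy's 0--1 law is a slight variant of the paper's argument, which instead takes expectations in $\P(\WXi=0\mid X_i(t))=e^{-cX_i(t)}$ and passes to the limit by dominated convergence; both are equally valid.
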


\begin{proof}
The case $\gl_i=0$ is trivial, with $\WXi=x_0$. 
The same holds if $x_0=0$.
We may thus assume $\gl_i>0$ and  $x_0>0$.

We argue as in \cite[Section 9]{SJ154}.
As above (now taking $j=i$),
let $0<T_1<T_2<\dots$ be the times that a ball of colour $i$ is
drawn, and
let $N(t):=\bigabs{\set{i:T_i\le t}}$ denote the corresponding counting process-
Furthermore, let $\eta_k:=\gD X_i(T_k)$ be the number of balls of colour $i$
added at the $k$-th draw.
Thus $\eta_1,\eta_2,\dots$ is a sequence of independent copies of
$\xi_{ii}$.

Let $M(t):=e^{-\gl_i t}X_i(t)$. Then $M$ is a martingale by \eqref{cb3}, and
its quadratic variation is by \eqref{nn4} given by
\begin{align}\label{lm4}
  [M,M]_t=\sum_{0\le s\le t}|\gD M(s)|^2
= x_0^2+\sum_{T_k\le t}e^{-2\gl_iT_k}|\gD X_i(T_k)|^2
= x_0^2+\sum_{T_k\le t}e^{-2\gl_iT_k}\eta_k^2.
\end{align}
Hence, it follows from \refL{L9+}\ref{L9+b} that
\begin{align}\label{lm5}
\E  [M,M]_t
= x_0^2+ a_i\E[\xi_{ii}^2]\intot e^{-2\gl_is}\E X_i(s) \dd s.
\end{align}
Hence, writing $\gb:=\E\xi_{ii}^2$, \eqref{cb4} yields
\begin{align}\label{lm6}
\E  [M,M]_t
= x_0^2+a_i \gb x_0\intot e^{-\gl_is} \dd s
= x_0^2+ x_0 \frac{a_i}{\gl_i} \gb \bigpar{1- e^{-\gl_it}}.
\end{align}
This shows by \eqref{nn5} that $M$ is an $L^2$-bounded martingale; thus
\eqref{lm1} holds  for some $\WXi=M(\infty)$. Clearly $\WXi\ge0$.  
We have $\E M(\infty)=\E M_0=x_0$, which yields \eqref{lm2}.
Furthermore, \eqref{lm3} holds by \eqref{nn5} and \eqref{lm6} again,
together with \eqref{lm2}; this yields also  \eqref{lm3b}.
Doob's inequality \eqref{nn6} yields \eqref{lm3a}.

Finally, the distribution of $\WXi$ depends on $x_0$; thus let us denote
$\WXi$ by 
$\WXi(x_0)$. The CB property implies that if $x,y\ge0$, then
$\WXi(x+y)\eqd \WXi(x)+\WXi'(y)$, where $\WXi'(y)$ is a copy of $\WXi(y)$
independent of 
$\WXi(x)$. Let $p(x):=\P\bigpar{\WXi(x)=0}\in\oi$. 
It follows that for any $x,y\ge0$,
\begin{align}\label{lm7}
  p(x+y)=p(x)p(y),
\end{align}
and thus there exists $c\in\ooox$ such that
\begin{align}\label{lm8}
\P(\WXi(x)=0)= p(x)=e^{-cx}, \qquad x>0.
\end{align}
We must have $c>0$, since otherwise $\WXi=0$ \as{} for any $x_0$, which
contradicts \eqref{lm2}. 
The Markov property and \eqref{lm8} yield, for any
$t\in\ooo$, 
\begin{align}\label{lm9}
  \P\bigpar{\WXi=0\mid X_i(t)} = e^{-c X_i(t)}.
\end{align}
Thus, taking the expectation,
\begin{align}\label{lm10}
  \P\bigpar{\WXi=0} = \E e^{-c X_i(t)}.
\end{align}
It is clear that if $x_0>0$ and
$\gl_i>0$, so $a_i>0$ and $\E\xi_{ii}>0$, then there is \as{}
an infinite number of draws, and the (standard) law of large numbers shows
that $X_i(t)\asto\infty$.  Consequently, letting \ttoo{} in \eqref{lm9} yields,
by dominated convergence,
\begin{align}\label{lm11}
  \P(\WXi=0)=0.
\end{align}
\end{proof}

By \eqref{lm3}, the limit $\cX_i$ is degenerate only in the trivial cases when
$x_0=0$ or $a_i=0$.  We note that except in these cases, 
the limit has an \abscont{} distribution.

\begin{lemma}\label{LM<}
  In \refL{LM}, suppose further that $\gl_i>0$ and $x_0>0$.
Then the distribution of $\cX_i$ is \abscont.
\end{lemma}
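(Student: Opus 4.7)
The plan is to derive a multiplicative distributional fixed-point equation for $\WXi$ of the form $\WXi\eqd U(\WXi'+V)$, in which the multiplier $U$ is absolutely continuous and independent of the (positive) second factor, and then to conclude using a standard product-of-independent-variables argument.

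The first and main step is the fixed-point equation. Let $T_1\sim\Exp(a_ix_0)$ be the time of the first draw of colour $i$, and let $\eta_1\eqd\xi_{ii}$ be the number of balls added at that draw, independent of $T_1$. By the strong Markov property at $T_1$, the shifted process $(X_i(T_1+s))_{s\ge0}$ is, conditionally on $\eta_1=y$, a CB process starting from $x_0+y$, independent of $T_1$. The branching/additivity property of CB processes decomposes it (in distribution, given $\eta_1$) into the sum of two independent CB processes starting from $x_0$ and from $\eta_1$, respectively. Applying \refL{LM} to the $e^{-\gl_is}$-scaled limits in each factor, multiplying by $e^{-\gl_iT_1}$, and using the a.s.\ convergence $e^{-\gl_it}X_i(t)\to\WXi$ from \refL{LM} on the left, yields
\begin{equation*}
\WXi\eqd e^{-\gl_iT_1}\bigpar{\WXi'+V},
\end{equation*}
where $\WXi'\eqd\WXi$, where $V$ is distributed as $\WXi(\eta_1)$ (built from an independent copy of the process), and where $T_1$, $\WXi'$, and $V$ are mutually independent.

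Once this identity is established, the conclusion is short. Since $a_i,x_0,\gl_i>0$, the multiplier $U:=e^{-\gl_iT_1}$ is supported on $(0,1)$ with CDF $u\mapsto u^{a_ix_0/\gl_i}$, hence is absolutely continuous. By the last assertion of \refL{LM}, $\WXi'>0$ a.s., so $Y:=\WXi'+V>0$ a.s. Because $U$ is absolutely continuous and independent of the positive random variable $Y$, the product $UY$ is absolutely continuous: for any Lebesgue-null Borel set $A\subset\bbR$ and each $y>0$, the set $A/y$ is null, so conditioning gives $\P(UY\in A\mid Y=y)=\P(U\in A/y)=0$, and integrating yields $\P(\WXi\in A)=0$. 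This is precisely the type of product-measure statement collected in \refApp{AA}.

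The main obstacle is the first step: one must use the branching property carefully to decompose the post-$T_1$ evolution as the sum of two independent CB processes (so that the limit splits as $\WXi(x_0)+\WXi(\eta_1)$), and to verify that $T_1$ is genuinely independent of both summand limits. Once the fixed-point equation is in place, the remaining steps are essentially routine.
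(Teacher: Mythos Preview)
Your proof is correct and follows essentially the same route as the paper: both stop at the first jump time $T_1$ and write $\cX_i = e^{-\gl_i T_1}\cY$ with $\cY>0$ independent of $T_1$, then conclude from the absolute continuity of $e^{-\gl_i T_1}$. Your further branching decomposition $\cY\eqd\cX_i'+V$ is correct but unnecessary --- the paper simply takes $\cY$ to be the limit of the shifted process $e^{-\gl_i s}X_i(T_1+s)$ without splitting it.
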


\begin{proof}
  Let $T_1$ be the first time that a ball of colour $i$ is drawn; then
  $T_1\in\Exp(1/(a_ix_0))$. 
The distribution of the balls added at $T_1$ is independent of $T_1$, and
thus $X_i(T_1)$ is independent of $T_1$.
It follows by the strong Markov property that the stochastic process
$Y(t):=X_i(T_1+t)$, $t\ge0$, is independent of $T_1$.
By \eqref{lm1}, we have as \ttoo,
\begin{align}\label{oc1}
  e^{-\gl_i t}Y(t)=e^{\gl_i T_1}e^{-\gl_i(T_1+t)}X_i(T_1+t)\asto 
\cY:=e^{\gl_i T_1}\cX_i,
\end{align}
and it follows that $\cY$ is independent of $T_1$. Consequently,
\begin{align}\label{oc2}
  \cX_i=e^{-\gl_i T_1}\cY,
\end{align}
where the two factors on the \rhs{} are independent. 
The result follows from \eqref{oc2} since
$e^{-\gl_i T_1}$ has an absolutely continuous distribution
and
$\cY>0$ \as{} (because $\cX_i>0$ \as{} by \refL{LM}).
\end{proof}

\subsection{A colour only produced by one other colour}\label{SS12}
We continue to consider a fixed colour $i$.
In this subsection we assume that $|\sP_i|=1$; thus
there is exactly one colour $j\in\sQ$ such that $j\to i$.
We assume also that $X_i(0)=0$, so there are initially no balls of colour $i$.
Recall that the assumption $j\to i$ means $r_{ij}>0$, and thus implicitly
$a_j>0$.

Since the colour graph is acyclic, there is no feedback from colour $i$ to
colour $j$; thus we may regard the entire 
process $X_j(t)$, $t\in\ooo$, as known, and consider only its
effect on $X_i(t)$. 

We may regard $X_i(t)$ as a CB process with immigration.
Let again $0<T_1<T_2<\dots$ be the times that a ball of colour $j$ is
drawn, and 
let $\eta_1,\eta_2,\dots$ be the corresponding number (amount) of balls of
colour $i$ that are added.
In case there is only a finite number $K$ of times that a ball of colour $j$ is
drawn, we define for completeness $T_k=\infty$ for $k>K$, and pick $\eta_k$
with the correct distribution and independent of everything else.
We regard each $\eta_k$ as an immigrant coming at $T_k$. 
(Some $\eta_k$ may be 0; this is no problem.)
We can separate the descendants of each of these immigrants, and write
\begin{align}\label{j3}
  X_i(t)=\sum_{k:T_k\le t} Y_k(t-T_k)
=\sumk \indic{T_k\le t}Y_k(t-T_k)
\end{align}
where each $Y_k(t)$, conditioned on $\eta_k$, 
is a copy of the single-colour CB process in \refSS{SS11}
with $Y_k(0)=\eta_k$. Furthermore, conditionally on $(\eta_k)_k$, 
the processes $Y_k(t)$ are independent.
Recall that
the additions $\eta_1,\eta_2,\dots$  
are \iid{} copies of $\xi_{ji}$; in particular, they have expectations
$\E\eta_k=r_{ji}$. 
The times $T_k$ are stopping times; moreover, the process $Y_k$ is
independent of $T_k$ and the \gsf{} $\cF_{T_k}$.

The following lemma contains much of the technical parts of our argument.
It makes an assumption  on the growth of $X_j(t)$
that later will be justified by an induction argument.

We define, for  $\mmi\in\sQ$,
\begin{align}\label{j4}
\tX_\mmi(t)&:=t^{-\kk_\mmi}e^{-\glx_\mmi t} X_\mmi(t),
\qquad 0<t<\infty,
\\ \label{j4*}
\tXX_\mmi&:= \sup_{t\ge0}\bigcpar{(t+1)^{-\kk_\mmi}e^{-\glx_\mmi t} X_\mmi(t)}.
\end{align}
We use powers of $t$ in \eqref{j4} and of $t+1$ in \eqref{j4*} for technical
convenience below (and we therefore use a notation with ${}^{**}$ instead of
${}^*$); 
the difference is not important since we are mainly
interested in limits as \ttoo.

\begin{lemma}\label{L12}
Let\/ $i\in\sQ$.
Suppose that 
there is exactly one colour $j\in\sQ$ such that $j\to i$, 
and that $X_i(0)=0$.
Suppose further that
\begin{align}\label{j1}
 \tX_j(t) \asto \cX_j
\qquad \text{as }\ttoo,
\end{align}
for some random variable $\cX_j\ge0$, and
\begin{align}\label{j2}
  \bignorm{\tXX_j}_2 <\infty.
\end{align}
Then
\begin{align}\label{ji1}
 \tX_i(t) \asto \cX_i \qquad \text{as }\ttoo,
\end{align}
for some $\cX_i\ge0$ and
\begin{align}\label{ji2}
  \bignorm{\tXX_i}_2 <\infty.
\end{align}
Furthermore, \as,
if\/ $\cX_j>0$, then $\cX_i>0$.

Moreover, if $\gl_i\le\glx_j$, and thus $\glx_i=\glx_j$, then
\begin{align}\label{ji3}
  \cX_i=
  \begin{cases}
    \frac{a_j r_{ji}}{\glx_j-\gl_i} \cX_j, & \gl_i<\glx_j,
\\
    \frac{a_j r_{ji}}{\kk_i} \cX_j, & \gl_i=\glx_j.
  \end{cases}
\end{align}
\end{lemma}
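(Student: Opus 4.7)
The plan is to use the immigration decomposition \eqref{j3} together with \refL{LM} applied to each $Y_k$. The key identity is the martingale decomposition
\begin{align*}
e^{-\gl_i t} X_i(t) = Z(t) + a_j r_{ji}\, I(t),
\qquad I(t) := \intot e^{-\gl_i s} X_j(s) \dd s,
\end{align*}
where $Z(t)$ is a local martingale with $Z(0)=0$. This arises by compensating the two sources of jumps of $X_i$: self-draws contribute $\gl_i X_i(s)\dd s$ to the drift and draws of colour $j$ contribute $a_j r_{ji} X_j(s)\dd s$. Applying \refL{L9+}\ref{L9+b} to the jump sizes yields the quadratic variation
\begin{align*}
\E[Z,Z]_t = a_i \E\xi_{ii}^2 \intot e^{-2\gl_i s}\E X_i(s)\dd s + a_j \E\xi_{ji}^2 \intot e^{-2\gl_i s}\E X_j(s)\dd s.
\end{align*}

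Next I would analyse $I(t)$ using $X_j(s) = s^{\kk_j} e^{\glx_j s}\tX_j(s)$ and the a.s.\ limit \eqref{j1}. A L'Hospital argument gives three regimes: if $\gl_i>\glx_j$, then $I(t)\asto I(\infty)<\infty$; if $\gl_i=\glx_j$ (so $\kk_i=\kk_j+1$, $\glx_i=\glx_j$), then $t^{-\kk_i} I(t)\asto \cX_j/\kk_i$; if $\gl_i<\glx_j$ (so $\kk_i=\kk_j$, $\glx_i=\glx_j$), then $t^{-\kk_i}e^{(\gl_i-\glx_j) t} I(t) \asto \cX_j/(\glx_j-\gl_i)$. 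Combined with the normalisation $\tX_i(t) = t^{-\kk_i}e^{(\gl_i-\glx_i)t}[e^{-\gl_i t}X_i(t)]$, the drift term produces a.s.\ the right-hand side of \eqref{ji3} in the cases $\gl_i\le\glx_j$, and a finite random variable when $\gl_i>\glx_j$. The hypothesis $\norm{\tXX_j}_2<\infty$ provides $\E X_j(s)\le Cs^{\kk_j}e^{\glx_j s}$, which bootstraps via the mean of the main identity to $\E X_i(s)\le Cs^{\kk_i}e^{\glx_i s}$.

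The core estimate is that $Z(t)$ is negligible relative to the drift. Plugging the two moment bounds into $\E[Z,Z]_t$ and computing the resulting elementary integrals, one checks in each regime that $(t^{-\kk_i}e^{(\gl_i-\glx_i)t})^2\E[Z,Z]_t\to 0$; Doob's inequality \eqref{nn6} then yields both $t^{-\kk_i}e^{(\gl_i-\glx_i)t}Z(t)\asto 0$ and the maximal bound \eqref{ji2}. The case $\gl_i>\glx_j$ is simplest (here $\E[Z,Z]_\infty<\infty$, so $Z$ is already an $L^2$-bounded martingale converging directly). The main technical obstacle is the boundary case $\gl_i=\glx_j$, where the drift and the $L^2$-norm of $Z$ differ only by a polynomial factor in $t$; the identity $\kk_i=\kk_j+1$ is exactly what makes the comparison go through.

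Finally, for the positivity of $\cX_i$ when $\cX_j>0$: in the cases $\gl_i\le\glx_j$ the explicit formula \eqref{ji3} gives it at once; in the remaining case $\gl_i>\glx_j$, I would observe that $\cX_j>0$ forces $X_j(t)\to\infty$, hence there are a.s.\ infinitely many immigration times $T_k$. Since $r_{ji}>0$ makes $\P(\eta_k>0)>0$, Borel--Cantelli yields infinitely many indices with $\eta_k>0$, and for each such $k$ the single-colour \refL{LM} gives $\WXi_k>0$ a.s., so the non-negative series representation $\cX_i=\sum_k e^{-\gl_i T_k}\WXi_k$ produced by the decomposition of Step~1 is strictly positive.
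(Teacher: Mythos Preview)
Your decomposition $e^{-\gl_i t}X_i(t)=Z(t)+a_jr_{ji}I(t)$ is the paper's, with the three local martingales $Z_1,Z_2,Z_3$ of \eqref{jz1}--\eqref{jz3} merged into one $Z$; your drift $I(t)$ is the paper's $Z_4$, and your asymptotics for it match \eqref{j43}--\eqref{j46=}. The positivity argument in the case $\gl_i>\glx_j$ is essentially \eqref{j61}--\eqref{j62}.

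There is, however, a real gap at the step where you assert that Doob's inequality \eqref{nn6} ``yields both $t^{-\kk_i}e^{(\gl_i-\glx_i)t}Z(t)\asto 0$ and the maximal bound \eqref{ji2}''. Doob bounds $\E Z^*(t)^2$ by $C\E[Z,Z]_t$; together with your estimate $(t^{-\kk_i}e^{(\gl_i-\glx_i)t})^2\E[Z,Z]_t\to 0$ this gives convergence of the normalised martingale to $0$ in $L^2$, not a.s. In the sub-regime $\glx_j<2\gl_i$ you are safe because $\E[Z,Z]_\infty<\infty$ and the martingale convergence theorem applies directly, but when $\glx_j\ge 2\gl_i$ (with $\glx_j>0$, or in the degenerate case $\glx_j=\gl_i=0$) the quadratic variation blows up and a further argument is needed. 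The paper closes this by slicing: set $\tZc(n):=\sup_{n-1\le t\le n}(t+1)^{-\kk_i}e^{(\gl_i-\glx_i)t}|Z(t)|$, apply Doob on each slice to get $\E\tZc(n)^2\le Cn^Ae^{-cn}$ from your bound on $\E[Z,Z]_n$, and conclude $\sum_n\E\tZc(n)^2<\infty$; this simultaneously gives $\tZc(n)\asto0$ and $\norm{\tZZ}_2<\infty$ via $(\tZZ)^2\le\sum_n\tZc(n)^2$. See \eqref{j53}--\eqref{j56} and the dyadic variant \eqref{j3c1}--\eqref{j56b}. Your exponential bounds are exactly what makes the slices summable, so the fix is short, but it is not Doob alone.

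A minor secondary point: merging the three martingales forces a term $a_i\E\xi_{ii}^2\int_0^t e^{-2\gl_i s}\E X_i(s)\dd s$ into the quadratic variation, which you then handle by bootstrapping $\E X_i(s)\le Cs^{\kk_i}e^{\glx_i s}$ from the mean equation. This works, but the paper avoids the self-reference by isolating $Z_1$ and bounding its quadratic variation through the single-colour variance estimate \eqref{lm3b} of \refL{LM}, so that everything is controlled by $\E X_j$ alone.
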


\begin{proof}
Using \eqref{j3}, and recalling $Y_k(0)=\eta_k$, 
we make the decomposition
\begin{align}
e^{-\gl_i t} X_i(t)
&=\sumk  \indic{T_k\le t}e^{-\gl_i  T_k}\bigpar{e^{-\gl_i(t-T_k)}Y_k(t-T_k)-Y_k(0)}
\label{jz1}\\&\hskip4em
+ \sumk  \indic{T_k\le t}e^{-\gl_i T_k}\xpar{\eta_k-r_{ji}}
\label{jz2}\\&\hskip4em
+ r_{ji}\Bigpar{\sumk
 \indic{T_k\le t}e^{-\gl_i T_k}-\intot e^{-\gl_i s} a_j X_j(s)\dd s}
\label{jz3}\\&\hskip4em
+ r_{ji}a_j \intot e^{-\gl_i s} X_j(s)\dd s
\label{jz4}\\&\label{jzz}
=: Z_1(t) + Z_2(t) + r_{ji} Z_3(t)+ a_jr_{ji} Z_4(t)
,\end{align}
say.
We consider the four processes $Z_\ell(t)$ in \eqref{jzz} separately
(initially, at least).
Moreover, 
recalling \eqref{glx} and \eqref{kk},
we consider sometimes separately the  three cases:
\begin{romenumerate}
  
\item \label{gli>}
$\gl_i > \glx_j$: then $\glx_i=\gl_i$ and $\kk_i=0$.
\item \label{gli=}
$\gl_i = \glx_j$:
then $\glx_i=\gl_i$, and $\kk_i= \kk_j+1$. 
\item \label{gli<}
$\gl_i < \glx_j$:
then $\glx_i=\glx_j$, and $\kk_i= \kk_j$. 
\end{romenumerate}

We define, in analogy with \eqref{j4}--\eqref{j4*} (but note the different
exponents $\gl_i-\glx_i$),
for $\ell=1,2,3,4$,
\begin{align}\label{j4z}
\tZ_\ell(t)&:=t^{-\kk_i}e^{(\gl_i-\glx_i) t} Z_\ell(t),
\\ \label{j4z*}
\tZZ_\ell&:= \sup_{t\ge0}\bigcpar{(t+1)^{-\kk_i}e^{(\gl_i-\glx_i) t} |Z_\ell(t)|}.
\end{align}
Then \eqref{jzz} yields
\begin{align}\label{jxz}
  \tX_i(t)&= \tZ_1(t)+\tZ_2(t)+ r_{ji} \tZ_3(t) + a_jr_{ji}\tZ_4(t),
\\\label{jxz*}
  \tXX_i &\le \tZZ_1+\tZZ_2+ C \tZZ_3 + C \tZZ_4.
\end{align}
We will prove, for $\ell=1,\dots,4$ and some $\cZ_\ell$,
\begin{align}\label{jtz}
  \tZ_\ell(t)&\asto \cZ_\ell
\qquad\text{as }\ttoo,
\\\label{jtz*}
\norm{\tZZ_\ell}_2&<\infty,
\end{align}
and then \eqref{ji1}--\eqref{ji2} follow from \eqref{jxz}--\eqref{jxz*}.

We treat $Z_1(t),\dots,Z_4(t)$ in \eqref{jz4} in reverse order,
partly because $Z_4(t)$ will turn out to be the main term 
(sometimes at least).

\resetsteps
\stepx{$Z_4$} 
By \eqref{jz4} and \eqref{j4*},
\begin{align}\label{j41}
0\le  Z_4(t)&\le \tXX_j\intot (s+1)^{\kk_j}e^{(\glx_j-\gl_i) s} \dd s
\notag\\&
\le
  \begin{cases}
    C \tXX_j= C(t+1)^{\kk_i} \tXX_j, & \gl_i>\glx_j,
\\
    (t+1)^{\kk_j+1} \tXX_j = (t+1)^{\kk_i} \tXX_j
, & \gl_i=\glx_j,
\\
    C(t+1)^{\kk_j}e^{(\glx_j-\gl_i)t} \tXX_j, & \gl_i<\glx_j
.  \end{cases}
\end{align}
It follows that in all three cases \ref{gli>}--\ref{gli<},
$\tZZ_4\le C\tXX_j$. In particular, \eqref{j2} implies
\begin{align}\label{j42}
 \bignorm{\tZZ_4}_2<\infty.
\end{align}

For \eqref{jtz} we treat the three cases \ref{gli>}--\ref{gli<} separately,
in each case using \eqref{j4z} and \eqref{j4}--\eqref{j4*}.
First, in case \ref{gli>}, we let \ttoo{} in \eqref{jz4} and \eqref{j41} and
conclude that
\as{}
\begin{align}\label{j43}
\tZ_4(t)=Z_4(t)\to Z_4(\infty):=\intoo e^{-\gl_i s}X_j(s)\dd s \le C \tXX_j<\infty.
\end{align}
Hence
\begin{align}\label{j44}
  \tZ_4(t)\asto \cZ_4=Z_4(\infty).
\end{align}
Note that if $\cX_j>0$, then $X_j(t)>0$ for large $t$, and thus 
$\cZ_4=Z_4(\infty)>0$ by
\eqref{j43}.

In case \ref{gli=}, we have for $t\ge1$,
using the change of variables $s=xt$,
\begin{align}\label{j45}
  \tZ_4(t)&
=t^{-\kk_i}Z_4(t)
= t^{-\kk_j-1}\intot s^{\kk_j}\tX_j(s)\dd s
\notag\\&
= t^{-\kk_j-1}\intoi s^{\kk_j}\tX_j(s)\dd s
+ \int_{1/t}^1 x^{\kk_j}\tX_j(xt)\dd x
\notag\\&
\to
 0
+ \int_{0}^1 x^{\kk_j}\cX_j\dd x
=(\kk_j+1)\qw\cX_j
=\kk_i\qw\cX_j
,\end{align}
by \eqref{j1} and dominated convergence, which applies since
\eqref{j4}--\eqref{j4*} show that 
 $\sup_{s\ge1}\tX_j(s)\le
2^{\kk_j}\tXX_j<\infty$ a.s.

In case \ref{gli<}, similarly, now with $s=t-u$,
\begin{align}\label{j46a}
  \tZ_4(t)&
= t^{-\kk_j}e^{(\gl_i-\glx_j)t}\intot s^{\kk_j}e^{(\glx_j-\gl_i)s}\tX_j(s)\dd s
\notag\\&
= \intot
  \Bigpar{\frac{t-u}t}^{\kk_j}e^{-(\glx_j-\gl_i)u}\tX_j(t-u)\dd u
\notag\\&
\to
\intoo e^{-(\glx_j-\gl_i)u}\cX_j\dd u
=
(\glx_j-\gl_i)\qw\cX_j,
\end{align}
using dominated convergence again, 
justified by, for $t\ge1$,
\begin{align}\label{j46b}
  \Bigpar{\frac{t-u}t}^{\kk_j}\tX_j(t-u)
\le
  \Bigpar{\frac{t-u+1}t}^{\kk_j}\tXX_j
\le 2^{\kk_j}\tXX_j<\infty
.\end{align}

We have thus shown \eqref{jtz} and \eqref{jtz*} for $\ell=4$ in all cases,
with $\cZ_4>0$ when $\cX_j>0$.
Furthermore,  in cases \ref{gli=} and \ref{gli<}, we have
\begin{align}\label{j46=}
  \cZ_4 =
  \begin{cases}
    \kk_i\qw\cX_j, & \gl_i=\glx_j,
\\
(\glx_j-\gl_i)\qw\cX_j, & \gl_i<\glx_j.
  \end{cases}
\end{align}

\stepparti{$Z_3$} 
We have, recalling \eqref{j7},
\begin{align}\label{j31}
  Z_3(t)
= \intot e^{-\gl_i s} \dd \tN(s),
\end{align}
which is a local martingale since $\tN(t)$ is;
in fact it is a martingale since 
\begin{align}\label{j32}
  Z_3^*(t)\le N(t) + a_j\intot e^{-\gl_i s} X_j(s)\dd s 
\le N(t) + t X_j(t),
\end{align}
and thus $\E Z_3^*(t)<\infty$ by \refL{LtN}, noting that the assumption
\eqref{j2} implies $\E X_j(t)<\infty$ for all $t\ge0$.
Since $Z_3(t)$ has locally bounded variation and jumps only at the times
$T_k$, 
with $\gD Z_3(T_k)=e^{-\gl_i T_k}$,
its quadratic variation is 
\begin{align}\label{j33}
  [Z_3,Z_3]_t &= 
\sumk \indic{T_k\le t}e^{-2\gl_i T_k}
= \intot e^{-2\gl_i s}\dd N(s).
\end{align}
The draws $T_k$ occur with rate $a_jX_j(t)$, 
and it follows by \eqref{j33} and \refL{L9+}
that, recalling \eqref{j4*},
\begin{align}\label{j35}
\E |Z_3(t)|^2&
=\E  [Z_3,Z_3]_t 
= a_j\E\intot e^{-2\gl_is}X_j(s)\dd s
\notag\\&
\le a_j\E\tXX_j\intot (s+1)^{\kk_j}e^{(\glx_j-2\gl_i)s}\dd s
.\end{align}
The rest of the argument will be the same for $Z_2$ and $Z_1$, so we first
consider them.

\stepparti{$Z_2$} 
Each term in the sum \eqref{jz2} is a martingale, since $T_k$ is a stopping
time, and $\eta_k-r_{ji}$ has mean 0 and 
is independent of the \gsf{} $\cF_{T_k}$.
Hence, the sum $Z_2(t)$ is at least a local martingale (since stopping at
any $T_k$ yields a finite sum and thus a martingale).
Since $Z_2(t)$ jumps only at the times $T_k$, and is constant in between,
its quadratic variation is given by
\begin{align}\label{j21}
  [Z_2,Z_2]_t 
=\sumk \indic{T_k\le t}e^{-2\gl_i T_k}(\eta_k-r_{ji})^2.
\end{align}
Hence, again using the independence between $\eta_k$ and $T_k$,
\begin{align}\label{j22}
\E  [Z_2,Z_2]_t &
=\sumk \E\bigsqpar{ \indic{T_k\le t}e^{-2\gl_i T_k}}
\E \bigsqpar{(\eta_k-r_{ji})^2}
\notag\\&
=\Var[\eta_1]\E\sumk { \indic{T_k\le t}e^{-2\gl_i T_k}}.
\end{align}
We recognize the final sum from \eqref{j33}, and conclude
\begin{align}\label{j23}
\E  [Z_2,Z_2]_t &
=C \E  [Z_3,Z_3]_t. 
\end{align}

\stepparti{$Z_1$}\label{stepZ1}
We write the sum in \eqref{jz1} as $Z_1(t)=\sumk Z_1\kkk(t)$, with
\begin{align}\label{j11}
Z\kkk_1(t):= 
\indic{t\ge T_k}e^{-\gl_i  T_k}\bigpar{e^{-\gl_i(t-T_k)}Y_k(t-T_k)-Y_k(0)}
.\end{align}
It is easily seen that
each $Z\kkk_1(t)$ 
is a martingale,
since $T_k$ is a stopping time
and $e^{-\gl_i t}Y_k(t)-Y_k(0)$ is a martingale starting at 0,
which furthermore is independent of $\cF_{T_k}$.
Hence, for every finite $m\ge1$, the finite sum
\begin{align}\label{j12}
\mmZ_1(t):=\sum_{k=1}^m Z\kkk_1(t)  
\end{align}
is a martingale, and thus
$Z_1(t\bmin T_m)=\mmZ_1(t\bmin T_m)$ is a martingale.
Consequently, $Z_1$ is a local martingale.
Furthermore, conditioned on all $T_k$ and $\eta_k$, the 
processes $Z_1\kkk(t)$ are independent  and thus \as{}
they jump at different times. (Note that  $Z_1\kkk(0)=0$.)
Hence, by \eqref{nn4},
\begin{align}\label{j5a}
  [Z_1,Z_1]_t = \sumk [Z_1\kkk,Z_1\kkk]_t
\end{align}
and thus, using \eqref{j11} and \eqref{nn5} again together with \eqref{lm3b},
\begin{align}\label{j5b}
\E \bigsqpar{  [Z_1,Z_1]_t\mid (T_k,\eta_k)\xoo}& = 
\sumk \E\bigsqpar{[Z_1\kkk,Z_1\kkk]_t\mid (T_k,\eta_k)\xoo}
\notag\\&
= \sumk\indic{T_k\le t} e^{-2\gl_i  T_k}
  \Var\bigsqpar{e^{-\gl_i(t-T_k)}Y_k(t-T_k)\mid T_k,\eta_k}
\notag\\&
\le \sumk\indic{T_k\le t} e^{-2\gl_i T_k}C\eta_k.
\end{align}
This yields, by taking the expectation,
\begin{align}\label{j6}
\E[Z_1,Z_1]_t
\le 
C \E \sumk\indic{T_k\le t} e^{-2\gl_i T_k}.
\end{align}

This is the same sum as in \eqref{j33}, and we conclude
\begin{align}\label{j16}
\E  [Z_1,Z_1]_t &
\le 
C \E  [Z_3,Z_3]_t. 
\end{align}

\stepx{$Z_1,Z_2,Z_3$, final part}\label{stepZ123b}
Let $\ell\in\set{1,2,3}$.
 In all three cases, 
$Z_\ell(t)$ is a local martingale such that,
by \eqref{j23}, \eqref{j16}, and \eqref{j35}, 
for $t\in\ooo$,
\begin{align}\label{j51}
\E  [Z_\ell,Z_\ell]_t 
\le C\E  [Z_3,Z_3]_t 
\le C\E\tXX_j\intot (s+1)^{\kk_j}e^{(\glx_j-2\gl_i)s}\dd s
.\end{align}
In particular, \eqref{j51} shows that 
$\E [Z_\ell,Z_\ell]_t <\infty$ for every $t<\infty$, 
and thus $Z_\ell(t)$ is a square
integrable martingale,
and $\E|Z_\ell(t)|^2=\E[Z_\ell,Z_\ell]_t$.

We now consider another partition into three separate cases for $\glx_j$ and
$\gl_i$.

\pfcasex{$\glx_j<2\gl_i$} \label{step5-i'}
Then we can let \ttoo{} in \eqref{j51} and obtain 
$\E[Z_\ell,Z_\ell]_\infty<\infty$. Consequently, $Z_\ell(t)$ is an $L^2$-bounded
martingale, and thus 
\begin{align}\label{j52}
Z_\ell(t)\asto Z_\ell(\infty)<\infty,
\qquad \text{as \ttoo}, 
\end{align}
and
$Z_\ell^*(\infty):=\sup_{t\ge0}|Z_\ell(t)|\in L^2$.
Furthermore (as always), $\glx_i\ge\gl_i$, and thus 
\eqref{j4z*} implies $\tZZ_\ell\le Z^*_\ell(\infty)$; consequently,
$\norm{\tZZ_\ell}_2<\infty$. 

We consider two subcases, recalling the cases discussed at the beginning of
the proof:
\begin{enumerate}
 \renewcommand{\labelenumi}{\textup{(\roman{casex}${}'$\alph{enumi})}}%
 \renewcommand{\theenumi}{\labelenumi}%
\item \emph{Case \ref{gli>}, $\gl_i>\glx_j$.}
Then $\glx_i=\gl_i$ and $\kk_i=0$;
hence \eqref{j4z}
yields $\tZ_\ell(t)=Z_\ell(t)$ and thus \eqref{j52} yields 
$\tZ_\ell(t)\to\cZ_\ell:=Z_\ell(\infty)$ a.s.

\item \emph{Case \ref{gli=} or \ref{gli<}, $\gl_i\le\glx_j$.}
Then $\glx_i>\gl_i$ or $\kk_i\ge1$ and thus
\eqref{j4z} and \eqref{j52} yield
$\tZ_\ell(t)\to \cZ_\ell:=0$ a.s.
\end{enumerate}

\pfcasex{$\glx_j\ge2\gl_i$ and $\glx_j>0$}\label{step5-ii'} 
Let, for $n\ge1$,
\begin{align}\label{j53}
  \tZcl(n)
&:=\sup_{n-1\le t\le n}(t+1)^{-\kk_i}e^{(\gl_i-\glx_i) t} |Z_\ell(t)|
.\end{align}
It follows from  Doob's inequality \eqref{nn6} and \eqref{j51} that,
with $\bC:=C\E\tXX_j$,
\begin{align}\label{j54}
\E \tZcl(n)^2&
\le C n^{-2\kk_i}e^{2(\gl_i-\glx_i) n}
\E Z^*_\ell(n)^2
\notag\\&
\le C n^{-2\kk_i}e^{2(\gl_i-\glx_i) n}
\E[Z_\ell,Z_\ell]_{n}
\notag\\&
\le \bC n^{-2\kk_i}e^{2(\gl_i-\glx_i) n}
\int_0^{n} (s+1)^{\kk_j}e^{(\glx_j-2\gl_i)s}\dd s
\notag\\&
\le \bC n^{-2\kk_i}e^{2(\gl_i-\glx_i) n}n^{\kk_j+1}e^{(\glx_j-2\gl_i)n}
\notag\\&
= \bC n^{\kk_j+1-2\kk_i}e^{(\glx_j-2\glx_i) n}
.\end{align}
We have always $\glx_i\ge\glx_j$, and thus, in the present case, 
$\glx_j-2\glx_i\le -\glx_j<0$.
Hence \eqref{j54} yields
\begin{align}\label{j55}
\E\sumn \tZcl(n)^2&
=\sumn\E \tZcl(n)^2
<\infty
.\end{align}
Consequently, \as{} $\tZcl(n)\to0$ as \ntoo, which by \eqref{j4z} and
\eqref{j53}
means $\tZ_\ell(t)\to0$ as \ttoo.
Moreover, \eqref{j4z*} implies
\begin{align}\label{j56}
\bigpar{\tZZ_\ell}^2\le 
\sumn  \tZcl(n)^2,
\end{align}
and thus \eqref{j55} implies also $\norm{\tZZ_\ell}_2<\infty$.

\pfcasex{$\glx_j=\gl_i=0$}\label{step5-iii'}
In this case we have $\glx_i=0$ and $\kk_i=\kk_j+1$.  Let, for $n\ge1$,
\begin{align}\label{j3c1}
  \tZccl(n)
&:=\sup_{2^{n-1}\le t\le 2^n}t^{-\kk_i}e^{(\gl_i-\glx_i) t} |Z_\ell(t)|
=\sup_{2^{n-1}\le t\le 2^n}t^{-\kk_j-1} |Z_\ell(t)|
.\end{align}
Similarly to \eqref{j54}, 
it follows from  Doob's inequality and \eqref{j51} that
\begin{align}\label{j3c2}
\E \tZccl(n)^2&
\le \bC 2^{-2(\kk_j+1)n}\int_0^{2^n} (s+1)^{\kk_j}\dd s
\le \bC 2^{-(\kk_j+1)n}
.\end{align}
Hence,
\begin{align}\label{j38b}
\E\sumn \tZccl(n)^2&
=\sumn\E \tZccl(n)^2
<\infty
,\end{align}
and the rest of the argument is as in the preceding case,
now using
\begin{align}\label{j56b}
\bigpar{\tZZ_\ell}^2\le 
{Z_\ell^*(1)}^2 +
\sumn  \tZccl(n)^2
\end{align}
and noting that $\E {Z_\ell^*(1)}^2 <\infty$ by \eqref{nn6} and
\eqref{j51}. 

We have shown that \eqref{jtz} and \eqref{jtz*} hold for $\ell\le3$ in all
cases,
with $\cZ_\ell=0$ except in the case \ref{gli>}.

\stepx{Conclusion}\label{stepVI}
We have shown that \eqref{jtz}--\eqref{jtz*} hold for every $\ell$;
consequently, \eqref{ji1}--\eqref{ji2} hold by \eqref{jxz}--\eqref{jxz*}.

Moreover, in cases \ref{gli=} and \ref{gli<}, $\cZ_\ell=0$ for $\ell=1,2,3$,
and thus $\cX_i=a_jr_{ji}\cZ_4$; this 
yields \eqref{ji3} by  \eqref{j46=}, which in particular  shows
that $\cX_i>0$ when $\cX_j>0$ in these cases.

It remains to show $\cX_i>0$ when $\cX_j>0$
in the case \ref{gli>}, \ie, when $\gl_i>\glx_j$.
In this case $\glx_i=\gl_i$ and $\kk_i=0$, 
and thus by \eqref{j4} and \eqref{j3}, 
\begin{align}\label{j61}
\tX_i(t)=e^{-\gl_i t}X_i(t)
=\sumk e^{-\gl_i t}\indic{T_k\le t}Y_k(t-T_k)
.\end{align}
Moreover, if $\cX_j>0$, then $\liminf_\ttoo X_j(t)>0$ and thus \as{} there
is an infinite number of draws of colour $j$, and thus all $T_k$ are finite.
Let $K$ be the (random) smallest $k$ such that $\eta_k>0$; such $k$ exist
\as{} since 
$\E\eta_k=r_{ji}>0$ by assumption.
Then \eqref{j61} implies
\begin{align}\label{j62}
\tX_i(t)
\ge e^{-\gl_i T_K}\indic{t\ge T_K}e^{-\gl_i (t-T_K)}Y_K(t-T_K),
\end{align}
which 
\as{} has a strictly positive limit
by conditioning on $K$ and applying  \refL{LM}  to $Y_K$.
We have already shown that the limit $\cX_i$ in \eqref{ji1} exists \as, and
\eqref{j62} then shows $\cX_i>0$ a.s.
\end{proof}

If $\gl_i\le\glx_j$, then $\cX_i$ is determined by $\cX_j$, see
\eqref{ji3}. 
On the contrary, if $\gl_i>\glx_j$, then  $\cX_i$
is not determined by $\cX_j$, as shown in the following lemma.

\begin{lemma}\label{LND}
If\/ $\gl_i>\glx_j$
  in \refL{L12},
then the distribution of $\cX_i$
conditioned on $\cX_j$ is non-degenerate.
In fact, then
the conditional distribution $\cL(\cX_i\mid\cX_j)$ is \as{} absolutely
continuous. 
\end{lemma}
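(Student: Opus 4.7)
The plan is to use the immigration--branching decomposition $X_i(t)=\sum_k\indic{T_k\le t}Y_k(t-T_k)$ underlying \refL{L12} and to isolate a single lineage whose limit is \abscont{} by \refL{LM<}. I work on the event $\{\cX_j>0\}$. On this event, \refL{L12} gives $X_j(t)$ of order $t^{\kk_j}e^{\glx_j t}$, so $\intoo a_j X_j(s)\dd s=\infty$ and all draw times $T_k$ of colour $j$ are \as{} finite. Since the immigrants $\eta_1,\eta_2,\dots$ are \iid{} copies of $\xi_{ji}$, independent of $(T_k)$ and with $\E\eta_k=r_{ji}>0$, \as{} $\eta_k>0$ for infinitely many $k$; set $K:=\min\set{k\ge1:\eta_k>0}$, which is \as{} finite.

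Split $X_i(t)=\tX_i(t)+\indic{t\ge T_K}Y_K(t-T_K)$ with $\tX_i(t):=\sum_{k\neq K}\indic{T_k\le t}Y_k(t-T_k)$. In the present case $\gl_i>\glx_j$ gives $\glx_i=\gl_i$ and $\kk_i=0$, hence $e^{-\gl_i t}X_i(t)\asto\cX_i$ by \refL{L12}. Meanwhile $Y_K$, conditional on $\eta_K$, is a single-colour CB-process starting at $\eta_K>0$ and independent of $\cF_{T_K}$ and of $(Y_k)_{k\neq K}$, so $e^{-\gl_i(t-T_K)}Y_K(t-T_K)\asto\cY_K$ by \refL{LM}. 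Subtracting gives that
\begin{align*}
R:=\cX_i-e^{-\gl_i T_K}\cY_K=\lim_\ttoo e^{-\gl_i t}\tX_i(t)
\end{align*}
exists \as. Let $\cH:=\sigma\bigpar{(X_j(t))\too,\,(T_k,\eta_k)_{k\ge1},\,(Y_k)_{k\neq K}}$. Then $K$, $T_K$, $\eta_K$, and $R$ are all $\cH$-measurable, while $Y_K$ is, conditional on $\eta_K$, independent of $\cH$.

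By \refL{LM<} applied to $Y_K$ (with parameter $\gl_i>0$ and initial state $\eta_K>0$), the conditional law $\cL(\cY_K\mid\cH)$ is \abscont. Multiplying by the strictly positive $\cH$-measurable factor $e^{-\gl_i T_K}$ and translating by the $\cH$-measurable quantity $R$ preserves absolute continuity, so $\cL(\cX_i\mid\cH)$ is \as{} \abscont. Since $\sigma(\cX_j)\subseteq\sigma(X_j(\cdot))\subseteq\cH$, averaging the regular conditional distributions yields that $\cL(\cX_i\mid\cX_j)$ is \as{} \abscont{} on $\{\cX_j>0\}$, as required.

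The main technical obstacle is to show that $R$ exists as an \as{} limit and is $\cH$-measurable, \ie, that the normalised ``sub-urn'' process $e^{-\gl_i t}\tX_i(t)$ obtained by suppressing the $K$-th immigrant converges \as. One repeats the four-term martingale decomposition \eqref{jz1}--\eqref{jz4} from the proof of \refL{L12}, omitting the $K$-th summand in the analogues of the immigrant sums: all the $L^2$-bounds derived there are sums over $k$, so removing one term only decreases them, and the \as{} convergence goes through unchanged. A secondary point is that $Y_K$ is independent of $\cH$ given $\eta_K$; this follows from the facts (already used in \refSS{SS12}) that $K$ is a measurable function of $(\eta_k)_k$ and that the lineages $(Y_k)_k$ are conditionally independent of $\bigpar{(X_j(t))\too,(T_k)_{k\ge1}}$ given $(\eta_k)_k$.
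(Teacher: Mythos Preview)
Your argument is correct, but it takes a more laborious route than the paper's. The paper conditions on $(X_j(t))_{t\ge0}$, on all $T_k$, and on all $\eta_k$ --- but not on any of the $Y_k$. Under this conditioning, the summands in
\[
e^{-\gl_i t}X_i(t)=\sumk \indic{T_k\le t}\,e^{-\gl_i T_k}\cdot e^{-\gl_i(t-T_k)}Y_k(t-T_k)
\]
are independent one-colour processes, each of which converges (by \refL{LM}) to $e^{-\gl_i T_k}\cY_k$; by \refL{LM<} this limit is \abscont{} whenever $\eta_k>0$. Since \as{} this happens for infinitely many $k$, the conditional limit $\cX_i$ is an (infinite) sum of independent random variables at least one of which is \abscont, hence \abscont. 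Finally \refL{LX} coarsens the conditioning from the large \gsf{} down to $\cX_j$.

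Your approach instead isolates a single lineage $Y_K$ and must therefore establish separately that the ``sub-urn'' obtained by deleting it converges --- the technical obstacle you flag. This does work (your sketch of removing one term from each of the sums in \eqref{jz1}--\eqref{jz4} is valid), but it essentially re-runs the proof of \refL{L12}. The paper sidesteps this entirely: by not conditioning on any $Y_k$, all terms are conditionally independent and each converges individually by \refL{LM}, so no sub-urn argument is needed. Your route buys nothing extra here, though the idea of isolating one \abscont{} summand and treating the rest as a measurable shift is a perfectly standard and reusable device.

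One small remark: you explicitly restrict to $\{\cX_j>0\}$; under the standing assumptions \ref{A0}--\ref{A+} of \refS{S1} this event has full probability in the applications (and the paper's proof implicitly uses the same fact, via ``\as{} $\eta_k>0$ for infinitely many $k$'').
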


\begin{proof}
Since $\gl_i>\glx_j$, we have \eqref{j61}. Condition on the entire process
$(X_j(t))_{t\ge0}$ and on all $T_k$ and $\eta_k$;
then  the terms in the sum in \eqref{j61}
are independent, and, as \ttoo, each term converges \as{} 
by \refL{LM} to
a limit that by 
\refL{LM<} has an \abscont{} distribution 
when $\eta_k>0$.
Since \as{} $\eta_k>0$ for infinitely many $k$,
it follows that 
the limit is \as{} (conditionally) \abscont.
Since $(X_j(t))_t$ determines $\cX_j$, 
this implies (by \refL{LX})
that the distribution \as{} is \abscont{} also if we condition
only on $\cX_j$.
\end{proof}

\subsection{The general case for a single colour}\label{SSLG}

We have in the preceding subsections considered two special cases. We
consider now a single colour $i$ in a general triangular urn.
We continue to use the notations \eqref{j4}--\eqref{j4*}.
Recall also that $\sP_i:=\set{j\in\sQ:j\to i}$.

\begin{lemma}\label{LG}
Let $i\in\sQ$, and assume that for every $j\in\sP_i$,
we have
\begin{align}\label{g1}
 \tX_j(t) \asto \cX_j
\qquad \text{as }\ttoo,
\end{align}
for some random variable $\cX_j\ge0$, and
\begin{align}\label{g2}
  \bignorm{\tXX_j}_2 <\infty.
\end{align}
Then
\begin{align}\label{g3}
 \tX_i(t) \asto \cX_i \qquad \text{as }\ttoo,
\end{align}
for some $\cX_i\ge0$ and
\begin{align}\label{g4}
  \bignorm{\tXX_i}_2 <\infty.
\end{align}
Furthermore, \as,
if\/ $\cX_j>0$ for every $j\in\sP_i$,
then $\cX_i>0$.
\end{lemma}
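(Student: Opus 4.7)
The plan is to reduce the general case to a combination of the two special cases already treated, namely \refL{LM} in \refSS{SS11} and \refL{L12} in \refSS{SS12}, by splitting the $i$-coloured population at each time $t$ according to the origin of each ball. Concretely, I would write
\begin{align*}
X_i(t) = X_i^{(0)}(t) + \sum_{j\in\sP_i} X_i^{(j)}(t),
\end{align*}
where $X_i^{(0)}(t)$ collects the initial balls of colour $i$ together with all descendants produced through self-draws of colour $i$, and $X_i^{(j)}(t)$ collects balls of colour $i$ added at draws of colour $j$, together with their self-generated $i$-descendants. By the branching structure of the continuous-time urn recalled in \refSS{SSCT}, these sub-populations evolve independently of one another given the driving processes $(X_k(t))_{k\in\sP_i}$ and the immigration data; in particular each $X_i^{(j)}$ is a CB-process-with-immigration exactly of the form analysed in \refSS{SS12}, and $X_i^{(0)}$ is a one-type CB process as in \refSS{SS11}.

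Each summand is then handled by an earlier lemma. \refL{LM} gives $e^{-\gl_i t}X_i^{(0)}(t)\asto W^{(0)}$ with $W^{(0)}\in L^2$, and $W^{(0)}>0$ a.s.\ iff $X_i(0)>0$. For each $j\in\sP_i$, \refL{L12} applies to $X_i^{(j)}$ viewed as a single-parent subprocess with immigration driven by $X_j$: the hypotheses \eqref{g1}--\eqref{g2} supply exactly what that lemma requires, so I get an \as{} limit $\cX_i^{(j)}$ and the corresponding $L^2$-maximal bound. Summing the decomposition and applying the triangle inequality in $L^2$ to the maximal functions delivers both \eqref{g3} and \eqref{g4}; the limit $\cX_i$ is a (nonnegative) linear combination of $W^{(0)}$ and the $\cX_i^{(j)}$, where only the summands whose local growth rate matches the global normalization $t^{\kk_i}e^{\glx_i t}$ survive and the rest contribute~$0$.

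For the positivity claim, assuming $\cX_j>0$ for every $j\in\sP_i$, I would produce at least one surviving summand that is strictly positive. If $\gl_i=\glx_i$ and $\kk_i=0$, then either $X_i(0)>0$ (which by \ref{A3} occurs whenever $\sP_i=\emptyset$) so $W^{(0)}>0$ already does the job, or else every $j\in\sP_i$ must satisfy $\glx_j<\gl_i$ (otherwise $\kk_i\ge 1$), so the case $\gl_i>\glx_j$ of \refL{L12} applies and yields $\cX_i^{(j)}>0$. If instead $\gl_i<\glx_i$ or $\kk_i\ge 1$, then \eqref{glx}--\eqref{kk} provide a chain $i_1\prec\dots\prec i_{\kk_i+1}\preceq i$ with all $\gl_{i_k}=\glx_i$; picking $j^*\in\sP_i$ lying on this chain, one checks that $\glx_{j^*}=\glx_i$ and $\kk_{j^*}$ equals $\kk_i$ or $\kk_i-1$ according as $\gl_i<\glx_i$ or $\gl_i=\glx_i$, which is exactly the regime in which the explicit formula \eqref{ji3} of \refL{L12} holds and writes $\cX_i^{(j^*)}$ as a strictly positive multiple of $\cX_{j^*}>0$.

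The main obstacle I anticipate is the bookkeeping around normalizations: one has to verify, for each $j\in\sP_i$, how the exponents $(\glx_i,\kk_i)$ defined globally by \eqref{glx}--\eqref{kk} compare with the pair produced when \refL{L12} is applied to the single-parent subprocess $X_i^{(j)}$, and then classify parents as either dominant (contributing to $\cX_i$) or sub-dominant (contributing $0$ after global normalization). Once this combinatorial sorting is done, the analytic content is entirely encapsulated in \refLs{LM} and \ref{L12} together with the $L^2$ triangle inequality.
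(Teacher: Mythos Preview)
Your proposal is correct and follows essentially the same approach as the paper: decompose $X_i(t)$ into the self-sustaining piece $X_i^{(0)}$ handled by \refL{LM} and the single-parent immigration pieces $X_i^{(j)}$ handled by \refL{L12}, then compare each subcolour's normalization $(\glx_{i_j},\kk_{i_j})$ to the global $(\glx_i,\kk_i)$ to see which summands survive in the limit. Your positivity argument is organized by the leader/non-leader dichotomy rather than the paper's direct appeal to \eqref{g5} and \eqref{kk}, but the content is the same and your case analysis is sound.
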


\begin{proof}
If $\sP_i=\emptyset$, we are in the situation of \refL{LM}. Moreover, in this
case $X_i(0)>0$ by our standing assumption \ref{A3}.
Consequently, in this case the result follows from \refL{LM}.
(Note that $\glx_i=\gl_i$ and $\kk_i=0$ by \eqref{glx} and \eqref{kk}.)

In general,
  we separate the balls of colour $i$ according to their original reason for
existing. 
Formally, we split the colour $i$ and replace it by several
``subcolours'' (or shades); we define one subcolour labelled $i_0$, 
and an additional subcolour $i_j$ for each $j\in \sP_i$.
These subcolours have the same replacement vector $\bxi_i$ as $i$, with the
modification that new balls of colour $i$ always get the same subcolour as the
drawn ball. 
Also, balls of colour $i$ that are added when a ball of some 
other colour $j$ is drawn get the subcolour $i_j$.
Furthermore, all balls of colour $i$ at time 0 get subcolour $i_0$.
In other words, $i_0$ is used for
descendants of the balls with colour $i$ in the urn at the beginning, and
$i_j$ are used for balls of colour $i$ that eventually descend from a ball
of colour $j$. Note that $i_0$ is minimal, while $\sP_{i_j}=\set j$.

We thus have 
\begin{align}\label{ca1}
  X_i(t) = X_{i_0}(t)+ \sum_{j\in \sP_i} X_{i_j}(t).
\end{align}
Moreover, in the modified urn with subcolours instead of colour $i$,
the subcolour $i_0$ is of the type in \refSS{SS11} (possibly with $x_0=0$), and
each subcolour ${i_j}$ ($j\in \sP_i$)
is of the type in \refSS{SS12}.
Hence, \refLs{LM} and \ref{L12} apply, using our assumptions
\eqref{g1}--\eqref{g2}.
It follows from \eqref{glx} that
\begin{align}\label{gli0}
  \glx_{i_0}&=\gl_i,
\\\label{glij}
  \glx_{i_j}&=\gl_i\bmax\glx_j,\qquad j\in\sP_i.
\end{align}
In particular, 
for every $j\in\sP_i\cup\set0$, 
we have $\glx_{i_j}\le\glx_i$.
Furthermore, if $\sP_i\neq\emptyset$, then
\begin{align}\label{g5}
  \glx_i=\gl_i \bmax \max\set{\glx_j:j\in \sP_i}
=
\max\set{\glx_{i_j}:j\in \sP_i}.
\end{align}
Moreover, \eqref{kk} yields $\kk_{i_0}=0$, and implies also that if
$\glx_{i_j}=\glx_i$, then $\kk_{i_j}\le \kk_i$.
Hence, 
for every $j\in\sP_i\cup\set0$, 
and for all large $t$ (at least),
\begin{align}  \label{g55}
t^{-\kk_{i}}e^{-\glx_it} \le t^{-\kk_{i_j}}e^{-\glx_{i_j}t}.
\end{align}
Consequently, it follows from
\eqref{ca1} and the definition \eqref{j4} that,
at least for large $t$,
\begin{align}\label{g6}
  \tX_i(t) \le \tX_{i_0}(t)+\sum_{j\in \sP_i}\tX_{i_j}(t),
\end{align}
and furthermore, using also \eqref{lm1} and \eqref{ji1},
\begin{align}\label{g7}
\tX_i(t)\asto\cX_i:= 
\sum_{\nuj\in \sP_i\cup\set0}\cX_{i_\nuj}
  \indic{(\glx_{i_\nuj},\kk_{i_\nuj})=(\glx_i,\kk_i)}.  
\end{align}
This shows \eqref{g3}. 

Similarly, 
$(t+1)^{-\kk_{i}}e^{-\glx_it} \le C(t+1)^{-\kk_{i_j}}e^{-\glx_{i_j}t}$
for every $j\in\sP_i\cup\set0$ and every $t\ge0$,
and thus \eqref{ca1} and \eqref{j4*} imply
\begin{align}\label{g66}
  \tXX_i \le C\tXX_{i_0}+C\sum_{j\in \sP_i}\tXX_{i_j}.
\end{align}
Hence,
\eqref{g4} follows from \eqref{lm3a} and \eqref{ji2}.

Finally, assume $\cX_j>0$ for every $j\in \sP_i$.
If $\sP_i=\emptyset$, then, as remarked above, 
$\cX_i>0$ \as{} by \ref{A3} and \refL{LM}.
On the other hand,  if $\sP_i\neq\emptyset$, 
then $\cX_{i_j}>0$ for every $j\in\sP_i$ by \refL{L12}.
By \eqref{g5} and \eqref{kk}, there exists some $j\in \sP_i$ such that
$(\glx_{i_j},\kk_{i_j})=(\glx_i,\kk_i)$, and thus \eqref{g7} implies
\begin{align}
  \cX_i\ge \cX_{i_j}>0
\end{align}
a.s., which completes the proof.
\end{proof}

\begin{remark}\label{Rg7}
  If $\gl_i>\glx_j$ for every $j\in\sP_i$, then 
$\glx_{i_j}=\gl_i=\glx_i$ and $\kk_{i_j}=0=\kk_i$
for every $j\in \sP_i\cup\set0$, and thus
  \eqref{g7} yields 
\begin{align}\label{gg7}
\cX_i= \sum_{\nuj\in \sP_i\cup\set0}\cX_{i_\nuj}.
\end{align}

On the other hand, suppose that
$\gl_i\le \glx_j$ for some $j\in\sP_i$. Then either
$\glx_i>\gl_i=\glx_{i_0}$, or $\gl_i=\glx_i=\glx_j$ for some $j\in \sP_i$, 
and in the
latter case $\kk_i\ge 1+\kk_j>\kk_{i_0}=0$. Hence,  in both cases,
$(\glx_{i_0},\kk_{i_0})\neq(\glx_i,\kk_i)$, and thus
the sum in
\eqref{g7} is really only over (some) $\nuj\in \sP_i$.
\end{remark}

\section{The main theorem for continuous time}\label{STC}
\begin{theorem}\label{TC}
Let $(X_{i}(t))_{i\in\sQ}$ be a continuous-time
triangular \Polya{} urn satisfying the 
conditions \refAA. 
Then, for every colour $i\in\sQ$,
\begin{align}\label{tc1}
  t^{-\kk_i}e^{-\glx_i t} X_i(t) \asto \cX_i
\qquad \text{as }\ttoo,
\end{align}
for some random variable $\cX_i$ with $\cX_i>0$ a.s.
\end{theorem}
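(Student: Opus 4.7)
The plan is to prove \refT{TC} by induction on the colours, ordered compatibly with the triangular structure, invoking \refL{LG} at each step to lift the conclusion from the ancestors of a colour to the colour itself. By \ref{Atri} and Proposition~\ref{PT}, the colours may be enumerated as $1,\dots,q$ so that $j\prec i$ implies $j<i$; equivalently, $\sP_i\subseteq\set{1,\dots,i-1}$ for every $i$.

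The induction hypothesis at step $i$ will be that, for every $j\preceq i$,
\begin{equation*}
\tX_j(t)\asto \cX_j \text{ as }\ttoo,\qquad \bignorm{\tXX_j}_2<\infty,\qquad \cX_j>0 \text{ a.s.}
\end{equation*}
The $L^2$-bound on $\tXX_j$ is not part of the statement of \refT{TC}, but it is needed to feed the hypothesis into \refL{LG}; strict positivity must likewise be carried along, because the positivity clause of \refL{LG} propagates positivity only from positive ancestors. For the base case, the smallest colour in the enumeration has $\sP_i=\emptyset$, hence is minimal, and \ref{A3} gives $X_i(0)>0$. Here $\glx_i=\gl_i$ and $\kk_i=0$, so the claim reduces to the $\sP_i=\emptyset$ instance of \refL{LG}, which in turn reduces to \refL{LM}: this furnishes convergence, the $L^2$-bound via \eqref{lm3a}, and strict positivity from the last sentence of \refL{LM}.

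For the inductive step, fix $i$ and assume the hypothesis for every $j<i$. Every $j\in\sP_i$ satisfies $j<i$ and therefore verifies the hypotheses \eqref{g1}--\eqref{g2} of \refL{LG}. That lemma then delivers \eqref{tc1} together with $\bignorm{\tXX_i}_2<\infty$; since $\cX_j>0$ a.s.\ for every $j\in\sP_i$ by the induction hypothesis, the final assertion of \refL{LG} yields $\cX_i>0$ a.s., closing the induction. All the analytic content has already been absorbed into \refL{LM}, \refL{L12}, and \refL{LG}, so \refT{TC} itself amounts to a topological-order traversal of the colour DAG; the only subtlety is the need to carry both the auxiliary $L^2$-bound and strict positivity through the induction, which the form of \refL{LG} is precisely designed to accommodate.
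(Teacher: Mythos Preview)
Your proof is correct and follows the same approach as the paper: order the colours compatibly with $\prec$ and apply \refL{LG} inductively along this order, carrying the $L^2$-bound \eqref{g4} and strict positivity through the induction. The paper's proof is more terse (it does not separate out the base case), but the content is identical.
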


\begin{proof}
  We may choose a total order $<$ of the colours such that \eqref{q2} holds.
Taking the colours in this order, we see by \refL{LG} and induction that
\eqref{g3} and \eqref{g4} hold for every $i\in\sQ$, with  $\cX_i>0$ a.s.
Recalling \eqref{j4}, we see that \eqref{g3} is the same as \eqref{tc1}.
\end{proof}

The limits $\cX_i$ are strictly positive.
We will see in \refT{Tac} that they are
non-degenerate when $\glx_i>0$;
however, in general some of them will be
linear combinations of others.
We discuss dependencies between the limits in \refS{Sdep}.

\section{Proof of the main theorem for discrete time}
\label{SpfT1}

\begin{proof}[Proof of \refT{T1}]
We modify the urn by adding a new colour, 0 say, to $\sQ$;
let $\sQp:=\sQ\cup\set0$ be the new set of colours.
All activities $a_i$ and replacements
$\xi_{ij}$ with $i,j\in\sQ$ remain unchanged.
Balls of colour 0 have activity $a_0:=0$, and thus they are never drawn; in
accordance with \ref{A00} we define  $\xi_{0j}:=0$ for every $j\in\sQp$.
We further let $\xi_{i0}:=1$ for every $i\in\sQ$ with $a_i>0$. 
For $i\in\sQ$ with $a_i=0$ we let $\xi_{i0}:=0$, again in accordance with
\ref{A00}.
We let $X_{00}=X_0(0):=0$, so there are initially no balls of colour 0.
Note that the extended urn too satisfies \refAA.

Since balls of colour 0 never are drawn, we may ignore them and recover the
original urn. In other words,
the new urn will be the original urn with ``dummy balls'' of colour 0
added.
We may assume that the old and new urn are coupled in
this way, which means that for $i\in\sQ$, the number of balls of colour $i$
at any time  is the same in both urns, so we may unambiguously use
the notations $X_{ni}$ and $X_i(t)$ for both urns. 
Moreover, there are  initially no balls of colour $0$, but we add exactly 1
ball of colour 0 every time a ball is drawn. Hence,
in the discrete-time version, the number of dummy balls at time
$n$ equals $n$; thus $X_{n0}=n$.  
In the continuous-time model, $X_0(t)$ equals the number of
draws up to time $t$ , and in particular, cf.\ \eqref{ct},
\begin{align}\label{hh2}
  X_0(\TT_n)=X_{n0}=n.
\end{align}

The new urn is obviously also triangular, with $i\to0$ for every $i\in\sQ$
with $a_i>0$.
We have $\gl_0=0$.
Since every $\gl_i\ge0$, and we have $i\to 0$ when $\gl_i>0$ 
(and thus $a_i>0$ by \ref{A00}), it follows from \eqref{glx} and
\eqref{hgl} that
\begin{align}
  \label{hh1}
\glx_0=\max\set{\gl_j:j\in\sQ} = \hgl.
  \end{align}
If $\hgl>0$, then 
is further easily seen from \eqref{kk} and \eqref{hkk} that
\begin{align}\label{kk01}
\kk_0=
\max\set{\kk_i:i\in\sQ \text{ and }\glx_i=\hgl}
=\hkk.
\end{align}
If $\hgl=0$, which means that $\gl_i=0$ for every $i$, we have by
\eqref{kk} and \eqref{hkk0} instead
\begin{align}\label{kk0}
  \kk_0&=\max\bigset{\kk:\exists i_1\prec i_2\prec \dots\prec i_{\kk+1}=0 
\text{ in }\sQp}
\notag\\&
=
\max\bigset{\kk:\exists i_1\prec i_2\prec \dots\prec i_{\kk}
\text{ with }a_{i_\kk}>0
\text{ in }\sQ}
\notag\\&
=
1+\max\bigset{\kk_j: j\in\sQ \text{ with }a_{j}>0}
\notag\\&
=\hkko
.\end{align}

We apply \refT{TC} to the new urn.
In particular, 
taking $i=0$ in \eqref{tc1} yields,
using \eqref{hh2} and \eqref{hh1}--\eqref{kk0}
and  recalling that $\TT_n\to\infty$ a.s., 
\begin{align}\label{hh3}
  \TT_n^{-\kk_0}e^{-\hgl \TT_n} n 
=  \TT_n^{-\kk_0}e^{-\hgl \TT_n} X_0(\TT_n) 
\asto \cX_0,
\qquad \text{as }\ntoo.
\end{align}
Recall that $0<\cX_0<\infty$ a.s.

\pfCaseY1{$\hgl>0$}
Consider first the case $\hgl>0$; then $\kk_0=\hkk$ by \eqref{kk01}.
It follows from \eqref{hh3} that \as, as \ntoo,
\begin{align}\label{hh4}
\log n-\hgl \TT_n-\hkk \log \TT_n
\to \log\cX_0
\end{align}
which yields, since $\TT_n\to\infty$,
\begin{align}\label{hh5}
\frac{\log n}{\TT_n} &\to \hgl,
\end{align}
and thus
\begin{align}\label{hh5b}
   \log\log n - \log \TT_n &\to \log\hgl.
\end{align}
Using \eqref{hh5b} in \eqref{hh4} yields, \as,
\begin{align}\label{hh6}
  \TT_n 
= \frac{1}{\hgl}\bigpar{\log n-\hkk\log\log n+\hkk\log\hgl-\log\cX_0+o(1)}
.\end{align}
Now let $i\in\sQ$. Then \eqref{hh6} yields, as \ntoo, \as,
\begin{align}\label{hh7}
  \TT_n^{\kk_i}e^{\glx_i \TT_n} 
\sim \hgl^{-\kk_i+\hkk\glx_i/\hgl}
\cX_0^{-\glx_i/\hgl}n^{\glx_i/\hgl} (\log n)^{\kk_i-\hkk\glx_i/\hgl} 
.\end{align}
Consequently, taking $t=\TT_n$ in \eqref{tc1} yields, as \ntoo, 
using the notation \eqref{gam},
\begin{align}\label{hh8}
  \frac{X_{ni}}{ n^{\glx_i/\hgl} \log^{\gam_i}n}
= \frac{X_{i}(\TT_n)}{ n^{\glx_i/\hgl} (\log n)^{\kk_i-\hkk\glx_i/\hgl}}
\asto \hcX_i
\end{align}
with
\begin{align}\label{hcx}
  \hcX_i:= \hgl^{-\kk_i+\hkk\glx_i/\hgl}\cX_0^{-\glx_i/\hgl}\cX_i
= \hgl^{-\gam_i}\cX_0^{-\glx_i/\hgl}\cX_i
.\end{align}
This shows \eqref{t1b}.

\pfCaseY2{$\hgl=0$}
In the case $\hgl=0$, we have instead $\kk_0=\hkko$ by \eqref{kk0}.
Moreover,  \eqref{hh3} now yields that as \ntoo, \as,
\begin{align}\label{hh10}
  \TT_n \sim \cX_0^{-1/\hkko}n^{1/\hkko}
\end{align}
and thus \eqref{tc1} yields
\begin{align}\label{hh11}
  \frac{X_{ni}}{n^{\kk_i/\hkko}}
= \frac{X_{i}(\TT_n)}{n^{\kk_i/\hkko}}
\asto \hcX_i
:= \cX_0^{-\kk_i/\hkko}\cX_i.
\end{align}
This shows \eqref{t1c}, and completes the proof of \refT{T1}.
\end{proof}

The limits $\hcX_i$ are all strictly positive.
We return to the question whether they are degenerate in \refS{Sdeg}.

\begin{remark}\label{Rel}
  Suppose that $\hgl>0$, so that \eqref{hh8} holds. As a sanity check, we
  note that \eqref{hh8} and \eqref{hcx} imply, 
recalling \eqref{hgl} and \eqref{hkk},
that 
if we define
\begin{align}\label{el2}
\sQx:=\set{i\in\sQ:\glx_i=\hgl \text{ and } \kk_i=\hkk},
\end{align}
which is nonempty,
then
  \begin{align}\label{el1}
    \frac{X_{ni}}n\asto
    \begin{cases}
      \hcX_i = \cX_i/\cX_0 & \text{if } i\in\sQx,
\\
0 &\text {otherwise}.
    \end{cases}
  \end{align}
Hence, the total number of balls grows linearly, as should be expected;
moreover, the distribution of colours in the urn is asymptotically
concentrated on $\sQx$.

Note also that \eqref{g7} 
for the dummy colour 0 yields,  
recalling \eqref{hh1}--\eqref{kk01} and using \eqref{ji3}, 
\begin{align}\label{el3}
  \cX_0 = \sum_{j\in\sQx}\cX_{0_j}
=\sum_{j\in\sQx}\frac{a_j}{\glx_j}\cX_j
=\hgl\qw \sum_{j\in\sQx}a_j\cX_j
.\end{align}
By \eqref{el1} and \eqref{el3}, we have for the total activity in the urn  
\begin{align}\label{el4}
  \frac{1}n\sum_{i\in\sQ}a_iX_{ni} 
\asto  \frac{1}{\cX_0}\sum_{i\in\sQx}a_i\cX_i
=\hgl.
\end{align}
\end{remark}

\begin{remark}\label{REL0}
If we instead suppose $\hgl=0$, which implies $\gl_i=\glx_i=0$ for every $i$,
then \eqref{hh11} similarly shows that 
if we now define
  \begin{align}\label{el02}
\sQx:=\set{i: \kk_i=\hkko},
  \end{align}
then \eqref{el1} still holds.
Note that it is quite possible that $\sQx$ is empty; 
this happens precisely when $\hkk_0>\hkk$, which by \eqref{kk0} happens when
some $i$ with $\kk_i=\hkk$ has activity $a_i>0$.
(Such a colour cannot have any descendants, since all colours have the same
$\glx_j=0$; hence   we must have $\bxi_i=0$ a.s., which means that such colours
will be drawn, but nothing happens to the urn at these draws.)
In this case the total
number of balls is \as{} $o(n)$, and the colour distribution is
asymptotically concentrated on the colours $i$ such that
$\kk_i=\hkk=\hkko-1$.

On the other hand, if $\sQx\neq\emptyset$, which means that $\hkko=\hkk$, 
and thus by \eqref{kk0} $a_i=0$ for every $i$ with $\kk_i=\hkk$,
let $\sQxm:=\set{j:\kk_j=\hkk-1}$.
Then \eqref{g7} and \eqref{ji3} yield, 
since $j\to0$ if and only if $a_j>0$, and then $\kk_{0_j}=\kk_j+1$,
\begin{align}\label{el03}
  \cX_0 = \sum_{j\in\sP_0\cap\sQxm}\cX_{0_j}
=\sum_{j\in\sQxm}\frac{a_j}{\hkk}\cX_j
=\hkk\qw \sum_{j\in\sQxm}a_j\cX_j.
\end{align}
\end{remark}

\section{Dependencies between the limits}\label{Sdep}
The limits $\cX_i$ in \refT{TC} are non-degenerate except in extreme cases, 
as shown below,
but there are frequently linear dependencies between them.
To explore this, we introduce more terminology.

We say that a colour $i$ is a \emph{leader} if $\gl_j<\gl_i$ for every
$j\prec i$,
and a \emph{follower} otherwise.
(In particular, a minimal colour $i$ is a leader.)
We have, recalling \eqref{glx},
\begin{align}\label{lead}
i\text{ is a leader}&
\iff \Bigpar{j\prec i \implies \gl_j<\gl_i}
  \iff \Bigpar{j\to i \implies \glx_j <\gl_i},
\\\label{follow}
i\text{ is a follower}&
\iff \Bigpar{\exists j\prec i \text{ with } \gl_j\ge\gl_i}
  \iff \Bigpar{\exists j\to i \text{ with } \glx_j \ge\gl_i}.
\end{align}
By \eqref{glx}--\eqref{kk}, 
\begin{align}\label{lead2}
i \text{ is a leader} 
\iff 
\glx_i=\gl_i \text{ and } \kk_i=0
.\end{align}

If $i$ is a follower,
let $\sA_i$ be the set
of ancestors of $i$ that have maximal $\gl_j$, 
\ie, $\gl_j=\glx_i$,
and such that furthermore there is a path from $j$ to $i$ with the
maximum number $\kk_i+1$ of colours $\ell$ with $\gl_\ell=\glx_i$.
(Recall \eqref{glx} and \eqref{kk}.) Thus
\begin{align}\label{sA}
  \sA_i:=\set{j\prec i: 
\exists i_1=j\prec i_2\prec \dots\prec i_{\kk_i+1}\preceq i
\text{ with } \gl_{i_1}=\dots=\gl_{i_{\kk+1}}=\glx_i}
.\end{align}
Note that $\sA_i$ is non-empty for every follower $i$.
Moreover, it is easily seen from \eqref{lead}--\eqref{follow} and \eqref{sA}
that every $j\in\sA_i$ is a leader.
We may say that $i$ follows the leaders in $\sA_i$; note that a follower may
follow several leaders.
For completeness, we define $\sA_i:=\set i$ when $i$ is a leader.
Note that, by \eqref{lead2} and \eqref{sA}, 
\begin{align}\label{glai}
\gl_j=  \glx_j=\glx_i
\qquad \text{for every $j\in\sA_i$}.
\end{align}

We show first that the variables $\cX_i$ are determined by the ones for
leaders $i$.
\begin{lemma}\label{LNDB}
  If\/ $i\in \sQ$,
then $\cX_i$ is a linear combination
\begin{align}\label{lndb}
  \cX_i = \sum_{k\in \sA_i} c_{ik}\cX_k
\end{align}
with strictly positive coefficients $c_{ik}$.
\end{lemma}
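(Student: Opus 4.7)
The plan is to induct on the colours $i$ along a total order $<$ compatible with $\prec$, which is available by triangularity. The base case and the leader case coincide: if $i$ is a leader, then $\sA_i=\set{i}$ by definition, and \eqref{lndb} holds with $c_{ii}=1$.

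For the inductive step, I would assume $i$ is a follower, so by \eqref{lead2} $(\glx_i,\kk_i)\neq(\gl_i,0)=(\glx_{i_0},\kk_{i_0})$. Then by \refR{Rg7} the subcolour $i_0$ drops out of the representation \eqref{g7}, leaving
\begin{align*}
\cX_i=\sum_{j\in\sP_i^\sharp}\cX_{i_j},\qquad
\sP_i^\sharp:=\bigset{j\in\sP_i:(\glx_{i_j},\kk_{i_j})=(\glx_i,\kk_i)}.
\end{align*}
For $j\in\sP_i^\sharp$, the identity $\glx_{i_j}=\gl_i\bmax\glx_j=\glx_i$ forces $\glx_j=\glx_i\ge\gl_i$, so \refL{L12}\eqref{ji3} applies and $\cX_{i_j}$ is a strictly positive multiple of $\cX_j$. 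Since $j\prec i$, the induction hypothesis gives $\cX_j=\sum_{k\in\sA_j}c_{jk}\cX_k$ with $c_{jk}>0$, and combining expresses $\cX_i$ as a positive linear combination of the $\cX_k$ for $k\in\bigcup_{j\in\sP_i^\sharp}\sA_j$.

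The hard part, which I expect to be the main obstacle, is verifying the combinatorial identity $\bigcup_{j\in\sP_i^\sharp}\sA_j=\sA_i$; given this, collecting like terms produces \eqref{lndb} with strictly positive coefficients. I would split into the subcases $\gl_i<\glx_i$ and $\gl_i=\glx_i$. Because $\sP_{i_j}=\set{j}$ in the modified urn used to define the subcolour $i_j$, a direct inspection of \eqref{kk} gives $\kk_{i_j}=\kk_j$ in the first subcase and $\kk_{i_j}=\kk_j+1$ in the second, so $\sP_i^\sharp=\set{j\in\sP_i:\glx_j=\glx_i,\,\kk_j=\kk_i}$ or $\set{j\in\sP_i:\glx_j=\glx_i,\,\kk_j=\kk_i-1}$, respectively. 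In either subcase, the inclusion $\sA_j\subseteq\sA_i$ for $j\in\sP_i^\sharp$ is immediate from \eqref{sA}: a witness path for $k\in\sA_j$ already witnesses $k\in\sA_i$ in the first subcase, and does so after being extended by $i$ in the second subcase (legal since $\gl_i=\glx_i$). For the reverse inclusion, given $k\in\sA_i$ with a witness path of length $\kk_i+1$: in the second subcase that path must terminate at $i$ itself (otherwise appending $i$ would produce a longer valid path, contradicting maximality) and its penultimate vertex supplies $j\in\sP_i^\sharp$ with $k\in\sA_j$; in the first subcase the path terminates at some $k'\prec i$, and any $j\in\sP_i$ lying on a connecting path from $k'$ to $i$ satisfies $\glx_j=\glx_i$ (via the witness path through $k'$) and $\kk_j=\kk_i$ (since $\kk_j\ge\kk_i$ by that same witness, while $\kk_j\le\kk_i$ because $j\prec i$), placing $j\in\sP_i^\sharp$ and $k\in\sA_j$ as needed.
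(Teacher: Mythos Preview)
Your approach is the same as the paper's: induct on colours, drop $i_0$ via \refR{Rg7}, apply \eqref{ji3} to each $\cX_{i_j}$ for $j$ in the ``active'' set $\sP_i^\sharp$, and then handle the combinatorics of $\sA_i$. The organization differs slightly in that the paper first obtains $c_{ik}\ge0$ from the inclusion $\sA_j\subseteq\sA_i$ and only afterwards proves $c_{ik}>0$, whereas you aim directly at the set equality $\bigcup_{j\in\sP_i^\sharp}\sA_j=\sA_i$; both routes work.

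There are, however, two small gaps you should patch. First, the sentence ``the identity $\glx_{i_j}=\gl_i\bmax\glx_j=\glx_i$ forces $\glx_j=\glx_i\ge\gl_i$'' is not justified as stated when $\gl_i=\glx_i$: in that case $\gl_i\bmax\glx_j=\glx_i$ holds for any $\glx_j\le\glx_i$. The conclusion is still correct, but you must also invoke the condition $\kk_{i_j}=\kk_i\ge1$ (which rules out $\glx_j<\glx_i$, since that would give $\kk_{i_j}=0$); the paper does exactly this, splitting on $\glx_i>\gl_i$ versus $\kk_i>0$.

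Second, in the subcase $\gl_i=\glx_i$ your ``penultimate vertex'' $i_{\kk_i}$ of the witness chain need not lie in $\sP_i$: the chain has $i_{\kk_i}\prec i$, but $\prec$ is the ancestor relation, not the edge relation $\to$. The fix (which the paper uses, and which you yourself use in the other subcase) is to realize the chain as a directed path in the colour graph and take the penultimate vertex $j$ of \emph{that} path; then $j\to i$ gives $j\in\sP_i$, and from $i_{\kk_i}\preceq j\prec i$ the required properties $\glx_j=\glx_i$ and $\kk_j=\kk_i-1$ follow as before.
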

\begin{proof}
  Note first that \eqref{lndb} is trivial when $i$ is a leader, with
$c_{ii}=1$.
We may thus suppose that $i$ is a follower.

By induction on the colour $i$, we may assume that the formula
\eqref{lndb} holds for every colour that is an ancestor of $i$, 
and in particular for every  $\nuj\in\sP_i$.

By \refR{Rg7}, we only have to consider $\nuj\in \sP_i$ in \eqref{g7}.
Let 
\begin{align}\label{oc3}
\sP_i':=\bigset{ j\in \sP_i:(\glx_{i_\nuj},\kk_{i_\nuj})=(\glx_i,\kk_i)},   
\end{align}
so that the sum in \eqref{g7} really is over $j\in \sP_i'$.
By  \eqref{lead2}, since we now assume that $i$ is a follower,
either $\glx_i>\gl_i$ or $\kk_i>0$ (or both).
Hence,
if $j\in \sP_i'$, then either $\glx_{i_j}=\glx_i>\gl_i$ or $\kk_{i_j}=\kk_i>0$;
since $\glx_{i_j}=\gl_i\lor \glx_j$,
it follows  in both cases (using \eqref{kk} in the latter)
that $\glx_j=\glx_{i_j}\ge\gl_i$ and thus
 by \eqref{ji3}
\begin{align}\label{lndb2}
\cX_{i_j}=c'_{ij} \cX_j  
\end{align}
for some constant $c'_{ij}>0$.
Moreover, we have 
$\glx_j=\glx_j\bmax \gl_i=\glx_{i_j}=\glx_i$
and it follows from \eqref{oc3} and \eqref{kk} that
\begin{align}
  \label{oc4}
\kk_i=\kk_{i_j}=\kk_j+\indic{\gl_i=\glx_i}.
\end{align}
 If $j\in\sP'_i$ is a leader, so $\gl_j=\glx_j=\glx_i$ and $\kk_j=0$ by
 \eqref{lead2}, 
it follows from \eqref{sA} and \eqref{oc4} that  $\nuj\in\sA_i$. 
Similarly, if $\nuj\in\sP'_i$ is a follower, then it follows 
from \eqref{sA} and \eqref{oc4} that 
if $k\in\sA_\nuj$, then a chain from $k$ to $j$ of the (maximal) type 
in \eqref{sA} can be
extended to a chain from $k$ to $i$ of the same type, and thus $k\in\sA_i$;
consequently, if $\nuj\in\sP'_i$ is a follower, then $\sA_\nuj\subseteq \sA_i$. 
The result \eqref{lndb}, with some $c_{ij}\ge0$, now follows by \eqref{g7},
  \eqref{lndb2}, and induction.

Finally, if $k\in \sA_i$, 
let $i_1=k, \dots,i_{\kk_i+1}\preceq i$ be as in \eqref{sA}, choose a path
in $\sQ$ from $k$ to $i$ that contains all $i_\ell$, and let $j$ be the last
colour in this path before $i$. 
Then it follows from \eqref{sA} that $k\in\sA_j$, and thus by induction
$c_{jk}>0$, which implies $c_{ik}\ge c'_{ij}c_{jk}>0$.
\end{proof}

Motivated by \eqref{lndb}, we turn to considering $\cX_i$ for leaders $i$.
We first consider a trivial exceptional case.
\begin{lemma}\label{LL0}
  Let $i$ be a colour with $\gl_i=0$. Then $i$ is a leader if and only if
  $i$ is minimal, \ie, there is no colour $j$ with $j\to i$.
In this case $\cX_i=X_i(0)$ is a deterministic positive constant.
\end{lemma}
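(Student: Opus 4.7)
The plan is to verify both claims essentially from the definitions. For the equivalence of ``leader'' and ``minimal'' under the hypothesis $\gl_i=0$, recall from \eqref{gli} that every $\gl_j\ge 0$. Hence the leader condition in \eqref{lead} — namely $\gl_j<\gl_i$ for every $j\prec i$ — cannot be satisfied by any existing $j$ (since $\gl_j<0$ is impossible), so it holds if and only if no $j\prec i$ exists, i.e.\ $\sP_i=\emptyset$, which is the definition of minimality. Conversely, for a minimal $i$ the leader condition is vacuously true.

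For the second assertion, I would show that $X_i(t)$ is literally constant in $t$. Note that $\gl_i=a_i r_{ii}=0$ splits into two cases: either $a_i=0$, in which case colour $i$ is never drawn in the \ctime{} urn, or $a_i>0$ and $r_{ii}=\E\xi_{ii}=0$; combined with \ref{A+} this forces $\xi_{ii}=0$ a.s. In either case, draws of colour $i$ contribute nothing to $X_i(t)$. Minimality means no $j\neq i$ satisfies $j\to i$, so by \eqref{itoj} $r_{ji}=0$ for every such $j$, and another application of \ref{A+} yields $\xi_{ji}=0$ a.s.; thus draws of other colours also contribute nothing to colour $i$. Therefore $X_i(t)=X_i(0)=X_{0i}$ for every $t\ge 0$, and $X_{0i}>0$ by \ref{A3}. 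Finally, $\sP_i=\emptyset$ together with \eqref{glx} and \eqref{kk} gives $\glx_i=\gl_i=0$ and $\kk_i=0$, so the normalization in \eqref{tc1} is trivial, and the a.s.\ limit is $\cX_i=X_{0i}$, a deterministic positive constant.

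There is no real obstacle; the lemma is a direct bookkeeping consequence of the definitions. The only mildly delicate point is splitting $a_ir_{ii}=0$ into the two sub-cases and using \ref{A+} to upgrade $\E\xi_{ii}=0$ to $\xi_{ii}=0$ a.s.
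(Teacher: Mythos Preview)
Your proof is correct and follows essentially the same approach as the paper's own proof: both use \eqref{lead} together with the fact that all $\gl_j\ge 0$ to deduce minimality, and then observe that $X_i(t)$ is constant because neither draws of colour $i$ nor of other colours can add balls of colour $i$. Your version is slightly more explicit in splitting the case $a_ir_{ii}=0$ into $a_i=0$ versus $r_{ii}=0$ and in invoking \ref{A3} for positivity, but the substance is the same.
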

\begin{proof}
We have already remarked that a minimal colour is a leader.   
Conversely, 
if $i$ is a leader with $\gl_i=0$, then \eqref{lead} shows that $j\to i$
is impossible; hence $i$ is minimal.
In this case, no balls of colour $i$ are added by draws of balls
  of other colours. Furthermore,
since $\gl_i=0$, also no balls of colour $i$ are added by
  draws of colour $i$. Consequently, $X_i(t)=X_i(0)$ for all $t\ge0$,
and \eqref{g1} holds trivially with $\tX_i(t)=X_i(t)$ and $\cX_i=X_i(0)$.
\end{proof}

We next show that except for the case in \refL{LL0}, the distribution of
$\cX_i$ for a leader $i$ is absolutely continuous, and thus non-degenerate.

\begin{lemma}\label{LNDA}
  \begin{thmenumerate}
    
  \item \label{LNDA1}
  Let $i$ be a leader with $\gl_i>0$, and let $\sE$ be a set of colours 
  that contains neither $i$ nor any descendant of $i$, \ie,
if $j\in \sE$ then $j\not\succeq i$. Then 
the conditional distribution $\cL\xpar{\cX_i\mid \cX_j, j\in\sE}$ is
\as{} \abscont.
In particular, the distribution of $\cX_i$ is \abscont.

\item \label{LNDA2}
More generally, let $\sL$ be a (non-empty) set of leaders with $\gl_{i}>0$ for
every $i\in\sL$, and 
suppose that $\sL$ is an anti-chain in $\sQ$, \ie,  $i\not\prec j$ when 
$i,j\in\sL$.
Let further $\sE$ be a set of colours such that if $j\in\sE$,
then $j\not\succeq i$ for every $i\in\sL$.
Then the joint conditional distribution 
$\cL\xpar{(\cX_{i})_{i\in\sL}\mid  \cX_j, j\in\sE}$ is \as{} \abscont.
(This is a distribution in $\bbR^{|\sL|}$.)
  \end{thmenumerate}
\end{lemma}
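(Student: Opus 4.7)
The plan is to exploit the branching-process structure of the continuous-time embedding to decompose $\cX_i$ (and more generally the family $(\cX_i)_{i \in \sL}$) into conditionally independent pieces, most of which have absolutely continuous conditional distributions by \refL{LM<} applied to single-colour CB processes.

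For part \ref{LNDA1}, I would distinguish whether $i$ is minimal. If $i$ is minimal, then in the continuous-time embedding the process $\bigpar{X_i(t)}_{t\ge0}$ is a self-contained CB process whose exponential clocks are attached only to balls of colour $i$, and is therefore fully independent of $\bigpar{X_k(t)}_{k\ne i}$. Consequently $\cX_i$ is independent of $(\cX_j)_{j\in\sE}$, and its unconditional distribution is absolutely continuous by \refL{LM<}; this settles the minimal case. When $i$ is not minimal, I would use the subcolour decomposition from the proof of \refL{LG}. The leader assumption together with \eqref{lead}--\eqref{lead2} forces $\glx_j<\gl_i$ for every $j\in\sP_i$, so by \eqref{glij} we have $\glx_{i_\nuj}=\gl_i=\glx_i$ and $\kk_{i_\nuj}=0=\kk_i$ for every $\nuj\in\sP_i\cup\set0$; then \eqref{gg7} yields
\begin{align*}
\cX_i=\cX_{i_0}+\sum_{j\in\sP_i}\cX_{i_j}.
\end{align*}

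Next I would condition on the $\gs$-field $\cG$ generated by the processes $\bigpar{X_k(t)}_{t\ge0}$ for all colours $k$ with $k\not\succeq i$. This $\cG$ contains $\gs\bigpar{\cX_j:j\in\sE}$ (since $j\in\sE$ implies $j\not\succeq i$) and, for every $j\in\sP_i$, records the full sequence of draw times $T_k^{(j)}$ and colour-$i$ additions $\eta_{j,k}$. Given $\cG$, the single-colour subtrees making up $X_i(t)$ (the descendants of the initial $X_i(0)$ balls and of each packet of $\eta_{j,k}$ fresh balls) are driven by disjoint independent collections of exponential clocks, so the corresponding martingale limits $W_0$ and $W_{j,k}$ from \refL{LM} are conditionally independent given $\cG$, and
\begin{align*}
\cX_i = W_0 + \sum_{j\in\sP_i}\sum_{k}e^{-\gl_i T_k^{(j)}}W_{j,k}.
\end{align*}
By \refL{LM<}, each $W_{j,k}$ with $\eta_{j,k}>0$ has an a.s.\ absolutely continuous conditional distribution given $\cG$, as does $W_0$ whenever $X_i(0)>0$. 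Assumption \ref{A3} together with \refT{TC} forces at least one such absolutely continuous term to be present a.s.: if $X_i(0)=0$, then some $j_0\in\sP_i$ has $\P(\xi_{j_0,i}>0)>0$, and $\cX_{j_0}>0$ a.s.\ yields infinitely many draws of colour $j_0$ and hence, by a law-of-large-numbers argument, infinitely many positive $\eta_{j_0,k}$. Therefore $\cL(\cX_i\mid\cG)$ is a.s.\ absolutely continuous, and \refL{LX} then reduces from $\cG$ down to the smaller $\gs$-field $\gs\bigpar{\cX_j:j\in\sE}$, yielding \ref{LNDA1}.

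For \ref{LNDA2}, I would repeat the construction with $\cG$ replaced by the $\gs$-field generated by $\bigpar{X_k(t)}_{t\ge 0}$ for all $k$ such that $k\not\succeq i$ for every $i\in\sL$. The anti-chain hypothesis on $\sL$ ensures that for distinct $i,i'\in\sL$, the colour-$i$ and colour-$i'$ subtrees are disjoint collections of balls with independent exponential clocks; hence given $\cG$ the processes $\bigpar{X_i(t)}_{i\in\sL}$ are conditionally independent. Applying the part \ref{LNDA1} argument to each coordinate separately then shows that $\cL\bigpar{(\cX_i)_{i\in\sL}\mid\cG}$ is a.s.\ a product of absolutely continuous one-dimensional conditional distributions, hence absolutely continuous on $\bbR^{|\sL|}$, and \refL{LX} finishes the argument. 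The main technical point to pin down carefully in both parts is the conditional-independence claim: one must identify exactly which pieces of the continuous-time randomness enter each $\cX_i$ and check that these pieces become genuinely independent once the ancestor dynamics have been fixed by $\cG$.
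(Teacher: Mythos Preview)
Your approach is essentially the same as the paper's: both decompose $\cX_i$ via \eqref{gg7}, condition on the ancestor dynamics, exhibit $\cX_i$ as a sum of conditionally independent pieces to which \refL{LM<} applies, invoke \ref{A3} to guarantee at least one absolutely continuous summand, and then descend via \refL{LX} to the smaller $\gs$-field. The paper streamlines by treating \ref{LNDA2} directly and noting \ref{LNDA1} is a special case, but the substance is identical.

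One technical point to tighten: your $\cG$, defined as $\gs\bigpar{(X_k(t))_{t\ge0}:k\not\succeq i}$, does \emph{not} in general record the draw times $T_k^{(j)}$ (if $\xi_{jj}=0$ a.s.\ there is no jump in $X_j$) nor the colour-$i$ additions $\eta_{j,k}$ (which are not visible from any $X_k$ with $k\not\succeq i$). The paper avoids this by explicitly enlarging the conditioning to include, for every $j\in\sE$, all draw times and all replacement vectors $\bxi_j\nn$; then the $T_k^{(j)}$ and $\eta_{j,k}$ are genuinely $\cG$-measurable and the conditional-independence argument goes through cleanly. You already flag this as the delicate step, so the fix is simply to enlarge $\cG$ accordingly.
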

\begin{proof}
It suffices to consider \ref{LNDA2}. 
The conclusion 
(conditional absolute continuity) is preserved if we reduce $\sE$ to a
smaller set, see \refL{LX} and \refR{RLX}. 
Thus  we may assume that $\sE$ is maximal, \ie, 
\begin{align}\label{oc68}
\sE=\set{j\in\sQ:  j\not\succeq i, \forall i\in\sL}.
\end{align}
Note that, 
if $i\in\sL$ and $j\prec i$, then we cannot have $j\succeq k$ for any
$k\in\sL$, since this would imply $k\prec i$, contradicting the assumption
that $\sL$ is an antichain.
Consequently, by \eqref{oc68}, if $i\in\sL$ and $j\prec i$, then $j\in\sE$.

We argue as in the proof of \refL{LND}, and condition on the entire
processes $(X_j(t))_{t\ge0}$, $j\in\sE$, and also on the times of all
draws of a color $j\in\sE$, and on all replacement vectors $\bxi_j\nn$
($n\ge1$,  $j\in\sE$).

Let  $i\in\sL$. If $j\in\sP_i$, so $j\to i$, then $\glx_j<\gl_i=\glx_i$ by the
assumption that $i$ is a leader.
We use again the decomposition \eqref{ca1}, and note that each 
$e^{-\gl_i t}X_{i_j}(t)$
($j\in \sP_i$) can be written as a sum as in \eqref{j61} with (conditionally)
independent terms, and it follows as in the proof of \refL{LND} that each
$\cX_{i_j}$ has (conditionally) an \abscont{} distribution.
Furthermore, $\cX_{i_0}$ is independent of conditioning on colours $j\in
\sE$, and if $X_i(0)>0$, then its distribution too is \abscont{} by
\refL{LM<}
(since we assume $\gl_i>0$).
Moreover (still conditionally), all $\cX_{i_j}$ and $\cX_{i_0}$ ($i\in\sL$,
$j\in \sP_i$) are independent. 

Since $i$ is a leader,
\refR{Rg7} shows that \eqref{gg7} holds. We have shown that (conditionally)
the  terms in the sum in \eqref{gg7} are independent, and all are \abscont{}
except $\cX_{i_0}$ when $X_i(0)=0$. 
Thus our assumption \ref{A3} implies that the sum contains at least one
\abscont{} summand,
and hence $\cX_i$ is (conditionally) \abscont.

Moreover, this argument shows that $\cX_i$ for different $i\in\sL$ are
(conditionally) independent, and since each has an \abscont{} distribution,
their joint distribution is (conditionally) \abscont.

Finally, \refL{LX} (again with \refR{RLX}) shows that
the same holds if we instead condition on 
$\cX_j$, $j\in\sE$.
\end{proof}

\begin{theorem}\label{Tac}
  Let $\sL$ be a set of leaders such that $\gl_i>0$ for every $i\in\sL$.
Then the joint distribution $\cL\bigpar{(\cX_i)_{i\in\sL}}$
is \abscont.
\end{theorem}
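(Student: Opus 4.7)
The plan is to induct on $|\sL|$, peeling off one $\prec$-maximal leader at a time, using \refL{LNDA}\ref{LNDA1} to get conditional absolute continuity and then a Fubini-type statement to upgrade it to joint absolute continuity. The base case $|\sL|=1$ is immediate from \refL{LNDA}\ref{LNDA1} applied with $\sE:=\emptyset$: a single leader $i\in\sL$ with $\gl_i>0$ has \abscont{} distribution for $\cX_i$.

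For the inductive step with $|\sL|=m\ge 2$, I would first pick a $\prec$-maximal element $i_0\in\sL$ and set $\sL':=\sL\setminus\set{i_0}$. By the inductive hypothesis, the joint distribution $\cL\xpar{(\cX_j)_{j\in\sL'}}$ is \abscont{} on $\bbR^{m-1}$. Since $i_0$ is $\prec$-maximal in $\sL$, no $j\in\sL'$ satisfies $j\succeq i_0$ (indeed $j\neq i_0$, and $j\succ i_0$ would contradict maximality since $j\in\sL$), so the hypothesis of \refL{LNDA}\ref{LNDA1} is met with $\sE:=\sL'$. That lemma then yields that the conditional distribution $\cL\xpar{\cX_{i_0}\mid \cX_j,\,j\in\sL'}$ is \as{} \abscont.

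The remaining step is to combine these two facts into joint absolute continuity of $(\cX_i)_{i\in\sL}$ on $\bbR^m$. This is a standard Fubini-type statement: if a random vector $\bY=(Y_1,\dots,Y_{m-1})$ has \abscont{} joint distribution on $\bbR^{m-1}$ and the conditional distribution $\cL(Y_m\mid\bY)$ is \as{} \abscont{} on $\bbR$, then $(\bY,Y_m)$ has \abscont{} joint distribution on $\bbR^m$ (any Lebesgue-null set in $\bbR^m$ has null $y$-slice for a.e.\ $y\in\bbR^{m-1}$ by Fubini, hence null $\bY$-slice \as, hence total mass $0$). I expect that this combination lemma is among the general absolute continuity results assembled in \refApp{AA} and can simply be cited.

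The only non-routine point is checking the hypothesis of \refL{LNDA}\ref{LNDA1} at each step, which is immediate from the choice of a $\prec$-maximal element; everything else is formal induction plus the Fubini step, so I do not anticipate a real obstacle. An alternative that handles a whole chain-level at once would be to apply \refL{LNDA}\ref{LNDA2} to the antichain of $\prec$-maximal elements of $\sL$ with $\sE$ equal to the lower elements of $\sL$, and then induct on the truncated set; but peeling off one maximal element at a time is cleaner and uses only part \ref{LNDA1} of \refL{LNDA}.
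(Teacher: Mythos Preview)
Your proof is correct. The combination lemma you anticipate is precisely \refL{LY} in \refApp{AA}: if $\cL(Y)$ is \abscont{} and the regular conditional distribution $\cL(X\mid Y)$ is \as{} \abscont{}, then $\cL(X,Y)$ is \abscont{} with respect to the product Lebesgue measure.

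Your route differs from the paper's in the choice of decomposition. You peel off a single $\prec$-maximal leader at each step and invoke \refL{LNDA}\ref{LNDA1}; the paper instead partitions $\sL$ by the value of $\gl_i$ into levels $\sL_1,\dots,\sL_r$ (with $\gl_{(1)}<\dots<\gl_{(r)}$), observes that each $\sL_\ell$ is an antichain (since two leaders with equal $\gl$ cannot be $\prec$-comparable by \eqref{lead}) and that no element of $\sL_\ell$ lies $\succeq$ any element of a lower level, applies \refL{LNDA}\ref{LNDA2} to condition $(\cX_i)_{i\in\sL_\ell}$ on $(\cX_j)_{j\in\sL_{\le\ell-1}}$, and then inducts via \refL{LY}. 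Both arguments rest on the same conditional-absolute-continuity lemma and the same Fubini step; yours is a bit more elementary in that it needs only part \ref{LNDA1} of \refL{LNDA}, at the price of a longer induction ($|\sL|$ steps rather than $r$ levels). The paper's grouping by $\gl$-value is the natural one given how \refL{LNDA}\ref{LNDA2} is phrased, but your one-at-a-time version is entirely adequate here.
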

\begin{proof}
  Order the elements of the set $\set{\gl_i:i\in\sL}$ as
  $\gl_{(1)}<\dots<\gl_{(r)}$. Let  
  \begin{align}
\sL_\ell&:=\set{i\in\sL:\gl_i=\gl_{(\ell)}}, \qquad 1\le \ell\le r,    
\\
\sL_{\le\ell}&:=\bigcup_{k:=1}^\ell\sL_k, \qquad 0\le \ell\le r.
  \end{align}
(Thus, $\sL_{\le0}=\emptyset$.)
Let $\ell\in\setr$.
If $i,j\in\sL_\ell$, then \eqref{lead} shows that $j\not\prec i$, and thus
$\sL_\ell$ is an antichain.
Furthermore, if $i\in\sL_\ell$ and $j\in\sL_{\le\ell-1}$, then
$\glx_j=\gl_j<\gl_i$, and thus $i\not\preceq j$.
Consequently, \refL{LNDA}\ref{LNDA2} shows that the conditional distribution
of $(\cX_i)_{i\in\sL_\ell}$ given $(\cX_j)_{j\in\sL_{\le\ell-1}}$ is a.s.\ \abscont.

It now follows from \refL{LY} by induction that the distribution of
$(\cX_i)_{i\in\sL_{\le\ell}}$ is \abscont{} for $\ell=1,\dots,r$.
Taking $\ell=r$ yields the result, since $\sL_r=\sL$.
\end{proof}

\begin{theorem}\label{TD}
Let $i\in\sQ$ be any colour.
\begin{romenumerate}
  
\item \label{TD1}
If $\glx_i=0$, then $\cX_i$ is a deterministic positive constant.
\item \label{TD2}
If $\glx_i>0$, then $\cX_i$ has an \abscont{} distribution.
\end{romenumerate}
\end{theorem}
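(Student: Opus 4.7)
The plan is to derive both parts directly from \refL{LNDB}, \refL{LL0}, and \refT{Tac}, so most of the work is already done. By \refL{LNDB}, for any colour $i$ we can write
\begin{align*}
\cX_i=\sum_{k\in\sA_i} c_{ik}\cX_k
\end{align*}
with strictly positive constants $c_{ik}$, where $\sA_i$ is a nonempty set of leaders with $\gl_k=\glx_i$ for every $k\in\sA_i$ by \eqref{glai}.

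For part \ref{TD1}, I would argue: if $\glx_i=0$, then $\gl_k=0$ for every $k\in\sA_i$. Since each such $k$ is a leader with $\gl_k=0$, \refL{LL0} applies and gives $\cX_k=X_k(0)$, a deterministic strictly positive constant. Hence $\cX_i$ is a strictly positive linear combination of deterministic positive constants, so it is itself a deterministic positive constant.

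For part \ref{TD2}, I would argue: if $\glx_i>0$, then $\gl_k=\glx_i>0$ for every $k\in\sA_i$, so the set $\sL:=\sA_i$ satisfies the hypothesis of \refT{Tac}. That theorem gives that the joint distribution of $(\cX_k)_{k\in\sA_i}$ is absolutely continuous on $\bbR^{|\sA_i|}$. It remains to see that a nontrivial linear combination of jointly absolutely continuous random variables has an absolutely continuous marginal distribution. This is the only nonautomatic step, and it is straightforward: pick any $k_0\in\sA_i$ (so $c_{ik_0}>0$) and perform a linear change of variables replacing $\cX_{k_0}$ by $\cX_i=\sum c_{ik}\cX_k$; the Jacobian equals $c_{ik_0}\neq 0$, so the image measure has a density, and marginalizing over the other coordinates preserves absolute continuity.

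The main (minor) obstacle is the last observation about linear combinations; everything else is bookkeeping with the results already in hand. In particular, no new probabilistic analysis of the CB processes is required, since \refT{Tac} and \refL{LNDB} have already encapsulated all the work about independence and absolute continuity of the limits for leaders, and about the affine dependence of follower limits on leader limits.
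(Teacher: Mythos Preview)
Your proof is correct and follows essentially the same approach as the paper: both parts reduce via \refL{LNDB} and \eqref{glai} to the leaders in $\sA_i$, then invoke \refL{LL0} for \ref{TD1} and \refT{Tac} for \ref{TD2}. The only cosmetic difference is that the paper cites \refL{LZ} for the last step (absolute continuity of a surjective linear image), whereas you supply the Jacobian argument directly.
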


\begin{proof}
  \pfitemref{TD1}
Every $k\in\sA_i$ is a leader with $\gl_k=\glx_i=0$. Thus \refL{LL0} shows
that $\cX_k$ is deterministic for $k\in\sA_i$. Consequently, \eqref{lndb} 
shows that $\cX_i$ too is deterministic.
\pfitemref{TD2}
Every $k\in\sA_i$ is a leader with $\gl_k=\glx_i>0$. Thus \refT{Tac} shows
that the joint distribution of $(\cX_k)_{k\in\sA_i}$ is \abscont.
It follows from \eqref{lndb} that the distribution of $\cX_i$ is \abscont.
(See \refL{LZ}.)
\end{proof}

\begin{corollary}
If $\glx_i>0$, then
  the coefficients $c_{ik}$ in \refL{LNDB} are uniquely determined.
\end{corollary}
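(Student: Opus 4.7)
The plan is to deduce uniqueness from the absolute continuity established in \refT{Tac}. By the definition \eqref{sA}, every $k\in\sA_i$ is a leader, and by \eqref{glai} every such $k$ satisfies $\gl_k=\glx_i>0$ under our hypothesis. Therefore $\sA_i$ is a (nonempty) set of leaders to which \refT{Tac} applies, and the joint distribution $\cL\bigpar{(\cX_k)_{k\in\sA_i}}$ is absolutely continuous on $\bbR^{|\sA_i|}$.

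Next I would argue that an absolutely continuous joint distribution rules out any nontrivial linear relation among its coordinates. Indeed, if $(\cX_k)_{k\in\sA_i}$ has a density with respect to Lebesgue measure, then for any nonzero vector $(b_k)_{k\in\sA_i}$ the hyperplane $\cpar{(x_k):\sum_k b_k x_k=0}\subset\bbR^{|\sA_i|}$ has Lebesgue measure zero, so
\begin{align*}
\PP\Bigpar{\sum_{k\in\sA_i}b_k\cX_k=0}=0.
\end{align*}
In particular, the only choice of coefficients $(b_k)$ for which $\sum_{k\in\sA_i}b_k\cX_k=0$ holds \as{} is $b_k=0$ for all $k\in\sA_i$.

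Now suppose $(c_{ik})_{k\in\sA_i}$ and $(c'_{ik})_{k\in\sA_i}$ are two families of coefficients both representing $\cX_i$ as in \eqref{lndb}. Subtracting the two representations yields
\begin{align*}
\sum_{k\in\sA_i}(c_{ik}-c'_{ik})\,\cX_k=0\quad\text{a.s.},
\end{align*}
and by the preceding paragraph this forces $c_{ik}=c'_{ik}$ for every $k\in\sA_i$.

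The step that does any real work is the absolute continuity of the joint distribution of $(\cX_k)_{k\in\sA_i}$, but that is already supplied by \refT{Tac}; once it is in hand the uniqueness is immediate from the measure-theoretic fact that a proper linear subspace of $\bbR^d$ is Lebesgue-null. No further obstacle is anticipated.
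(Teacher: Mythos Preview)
Your proof is correct and follows essentially the same route as the paper: assume two representations, subtract, and argue that the resulting linear relation among $(\cX_k)_{k\in\sA_i}$ contradicts the absolute continuity of their joint law. The only cosmetic difference is that you invoke \refT{Tac} directly, whereas the paper cites \refT{TD} (whose proof of part \ref{TD2} itself appeals to \refT{Tac} for exactly this joint absolute continuity); your citation is arguably the more precise one.
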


\begin{proof}
Suppose, on the contrary, that \eqref{lndb} holds a.s.\ for two different
sets of coefficients $(c_{ik})_k$  and $(c_{ik}')_k$. 
Let $b_k:=c_{ik}-c'_{ik}$.
Then $\sum_{k\in\sA_i} b_k \cX_k=0$ \as, and thus $(\cX_k)_{k\in\sA_i}$
\as{} lies in the certain hyperplane orthogonal to $(b_k)$.
However, this contradicts \refT{TD} which says that the distribution of
$(\cX_k)_{k\in\sA_i}$ is \abscont.
(See also \refL{LZ}.)
This contradiction proves the claim.
\end{proof}

The coefficients $c_{ik}$ in \eqref{lndb} can be found by the recursive
procedure in the proof of \refL{LNDB}.
We proceed to show that they also can be found as eigenvectors of suitable
submatrices of
the weighted mean replacement matrix $(a_ir_{ij})_{i,j\in \sQ}$.
In the sequel, we let $c_{ik}$ be the coefficient given by the inductive
proof of \refL{LNDB}; this determines $c_{ik}$ uniquely for $k\in\sA_i$
also when $\glx_i=0$.
We further define $c_{ik}:=0$ if $k\notin \sA_i$.

Let $\nu$ be a leader, and let 
\begin{align}\label{sD}
\sD_\nu&:=\set{i:\glx_i=\gl_\nu},
\\\label{sDk}
  \sD_\nu^\kk&:=\set{i\in\sD_\nu:\kk_i=\kk},
\qquad \kk=0,1,\dots.
\end{align}
(The sets $\sD_\nu^\kk$ are empty from some $\kk$ on; we only consider $\kk$ with
$\sD_\nu^\kk\neq\emptyset$. Note also that $\sD_\nu=\sD_{\nu'}$ if $\nu'$ is
another leader with $\gl_{\nu'}=\gl_\nu$.)
Note that if $c_{i\nu}>0$, then $\nu\in\sA_i$, and thus $i\in\sD_\nu$.



Let $i\in\sD_\nu^\kk$, and suppose first that $\gl_i<\glx_i=\gl_\nu$. 
Then it
follows from \refR{Rg7} that in \eqref{g7}, we only have to sum over
$j\in\sP_i$. Moreover, if $j\in\sP_i$ and $\glx_{i_j}=\glx_i$, then
$\glx_j=\glx_{i_j}=\glx_i$ and $\kk_{i_j}=\kk_j$. Hence, in \eqref{g7} we
only have to sum over $j\in\sP_i\cap\sD_\nu^\kk$.
Since $j\in\sP_i\iff j\to i\iff r_{ji}>0$ and $j\neq i$ by \eqref{itoj},
it follows from \eqref{g7}
and \eqref{ji3}
that
\begin{align}\label{eq2}
  \cX_i = \sum_{j\in\sD_\nu^\kk\cap\sP_i}\cX_{i_j}
= \sum_{j\in\sD_\nu^\kk\setminus\set i}\frac{a_jr_{ji}}{\glx_j-\gl_i}\cX_{j}
.\end{align}
Consequently, the coefficient $c_{i\nu}$ in \eqref{lndb} is given by
\begin{align}\label{eq2c}
  c_{i\nu}
= \sum_{j\in\sD_\nu^\kk\setminus\set i}\frac{a_jr_{ji}}{\glx_j-\gl_i}c_{j\nu}
.\end{align}

Since $j\in\sD_\nu^\kk$ implies $\glx_j=\gl_\nu$, \eqref{eq2c} yields
\begin{align}\label{eq3}
(\gl_\nu-\gl_i) c_{i\nu}
= \sum_{j\in\sD_\nu^\kk\setminus\set i}{a_jr_{ji}}c_{j\nu}
,\end{align}
for every $i\in\sD_\nu^\kk$ with $\gl_i<\glx_i$.

On the other hand, suppose that $i\in\sD_\nu^\kk$ with $\gl_i=\glx_i=\gl_\nu$.
If $j\in\sD_\nu\setminus\set i$ and $r_{ji}>0$, then $\glx_j=\gl_\nu=\glx_i$
and $j\to i$, 
and thus $\kk_i\ge \kk_j+1$, since a maximal chain in the definition
\eqref{kk} of $\kk_j$ can be extended by $i$. Hence, if $j\in\sD_\nu^\kk
\setminus\set i$, then $r_{ji}=0$. 
It follows that \eqref{eq3} holds trivially
for $i\in\sD_\nu^\kk$ with $\gl_i=\glx_i$.

Consequently, \eqref{eq3} holds for every $i\in\sD_\nu^\kk$, and thus,
recalling $\gl_i=a_ir_{ii}$,
\begin{align}\label{eq4}
\sum_{j\in\sD_\nu^\kk}{a_jr_{ji}}c_{j\nu}
=(\gl_\nu-\gl_i)c_{i\nu}+a_ir_{ii}c_{i\nu} = \gl_\nu c_{i\nu},
\qquad i\in\sD_\nu^\kk
.\end{align}

We summarize, and elaborate, this as a lemma.
We say that $i\in\sQ$ is a \emph{subleader} if $\gl_i=\glx_i$ but $i$ is not
a leader;
recalling \eqref{lead2} we thus have
\begin{align}\label{sublead}
i \text{ is a subleader} 
\iff 
\glx_i=\gl_i \text{ and } \kk_i\ge1
.\end{align}

\begin{lemma}\label{LC}
  For any leader $\nu$, and any $\kk\ge0$ such that
  $D_\nu^\kk\neq\emptyset$, 
we have \eqref{eq4}, and thus
$(c_{i\nu})_{i\in\sD_\nu^\kk}$ is a left eigenvector of the
triangular matrix 
$(a_ir_{ij})_{i,j\in\sD_\nu^\kk}$ for its largest eigenvalue $\gl_\nu$.

For $\kk=0$,
the value $c_{i\nu}$ for a leader $i\in\sD_\nu^0$  is determined by
$c_{i\nu}=\gd_{i\nu}$ (i.e., $1$ when $i=\nu$ and $0$ otherwise),
and these values for the
leaders determine the eigenvector $(c_{i\nu})_{i\in\sD_\nu^\kk}$ uniquely.

For $\kk\ge1$, 
the value $c_{i\nu}$ for a subleader $i\in\sD_\nu^\kk$ is determined
recursively from the values $c_{j\nu}$ for $j\in\sD_\nu^{\kk-1}$ by
\begin{align}\label{subl}
  c_{i\nu}=\sum_{j\in\sD_\nu^{\kk-1}}\frac{a_jr_{ji}}{\kk}c_{j\nu},
\end{align}
and these values for the
subleaders determine the eigenvector $(c_{i\nu})_{i\in\sD_\nu^\kk}$ uniquely.
\end{lemma}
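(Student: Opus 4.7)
The eigenvector equation \eqref{eq4} has already been derived in the discussion preceding the lemma: for $i\in\sDnk$ with $\gl_i<\gl_\nu$ it comes from \eqref{eq3}, while for $i\in\sDnk$ with $\gl_i=\gl_\nu$ it holds trivially because then $r_{ji}=0$ for every $j\in\sDnk\setminus\set i$. I therefore take \eqref{eq4} as given; it says that $(c_{i\nu})_{i\in\sDnk}$ is a left eigenvector with eigenvalue $\gl_\nu$. Since the urn is triangular, $(a_ir_{ij})_{i,j\in\sDnk}$ is a triangular matrix whose spectrum is the set $\set{\gl_i:i\in\sDnk}$ of diagonal entries, and by definition of $\sD_\nu$ every such $\gl_i\le\gl_\nu$, so $\gl_\nu$ is the largest eigenvalue.

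For uniqueness I would read \eqref{eq3} as a recursion. Solving for $c_{i\nu}$ whenever $\gl_i<\gl_\nu$ gives
\begin{align*}
c_{i\nu}=\frac{1}{\gl_\nu-\gl_i}\sum_{j\in\sDnk,\,j\prec i}a_jr_{ji}c_{j\nu},
\end{align*}
where only $j\prec i$ contribute by triangularity. Processing the colours in the triangular order, this determines $c_{i\nu}$ inductively at every $i$ with $\gl_i<\gl_\nu$, while the eigenvector equation is vacuous at the remaining indices, namely those $i\in\sDnk$ with $\gl_i=\gl_\nu$. By \eqref{lead2} these ``free'' indices are exactly the leaders in $\sDnk$ when $\kk=0$ and exactly the subleaders in $\sDnk$ when $\kk\ge 1$ (a leader having $\kk_i=0$). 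Hence specifying the values at the boundary indices determines the whole eigenvector.

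For $\kk=0$ the boundary values are immediate: every leader $i\in\sD_\nu^0$ has $\sA_i=\set i$ by convention, whence $c_{i\nu}=\gd_{i\nu}$. For $\kk\ge 1$ and a subleader $i\in\sDnk$ I would retrace the inductive construction of \refL{LNDB}. By \refR{Rg7}, the sum \eqref{g7} for $\cX_i$ reduces to $\nuj\in\sP_i'$; using the identities $\glx_{i_\nuj}=\gl_i\lor\glx_\nuj$ and $\kk_{i_\nuj}=\kk_\nuj+\indic{\gl_i=\glx_i}$ established in the proof of \refL{LNDB}, one checks that for a subleader $i$ the set $\sP_i'$ equals $\sP_i\cap\sD_\nu^{\kk-1}$. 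The second case of \eqref{ji3} applies because $\gl_i=\glx_\nuj=\gl_\nu$ and $\kk_{i_\nuj}=\kk$, giving $\cX_{i_\nuj}=(a_\nuj r_{\nuj i}/\kk)\cX_\nuj$. Substituting into \eqref{g7} and reading off the coefficient of $\cX_k$ for $k\in\sA_\nuj$ via the inductive representation \eqref{lndb} yields $c_{i\nu}=\sum_{\nuj\in\sP_i'}(a_\nuj r_{\nuj i}/\kk)c_{\nuj\nu}$, which coincides with \eqref{subl} after extending the sum to $\sD_\nu^{\kk-1}$ (the extra terms vanish since $r_{\nuj i}=0$ for $\nuj\notin\sP_i$).

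The main obstacle will be the bookkeeping in the previous paragraph, in particular the identification $\sP_i'=\sP_i\cap\sD_\nu^{\kk-1}$ for subleaders and the appearance of the factor $1/\kk$ from the second case of \eqref{ji3}. An appealing feature of this route is that \eqref{subl} comes directly from the inductive \emph{definition} of the $c_{ik}$, so no appeal to linear independence of the $\cX_k$ is needed; this matters because when $\gl_\nu=0$ the $\cX_k$ are deterministic constants (by \refL{LL0}) and linear independence would not hold. Uniqueness is then a purely linear-algebra consequence of triangularity and works uniformly in $\gl_\nu$.
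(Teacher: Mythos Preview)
Your proof is correct and follows essentially the same route as the paper's. Both take \eqref{eq4} as already established, identify $\gl_\nu$ as the largest eigenvalue of the triangular submatrix, and derive \eqref{subl} by specializing \eqref{g7} via \refR{Rg7} and the second case of \eqref{ji3}. Your identification $\sP_i'=\sP_i\cap\sD_\nu^{\kk-1}$ for a subleader $i$ is exactly the content of the paper's last paragraph, just phrased more explicitly.

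The only difference is packaging of the uniqueness claim. The paper argues that the projection from the $\gl_\nu$-eigenspace onto the coordinates indexed by leaders/subleaders is surjective (via the recursion) and then invokes equality of dimensions to conclude it is a bijection. You instead argue directly that the eigenvector equation, read as a triangular recursion, determines every coordinate with $\gl_i<\gl_\nu$ from its predecessors, while being vacuous at the (sub)leader coordinates. These are two phrasings of the same linear-algebra fact; your version is arguably more elementary since it avoids the dimension count. Either way, no appeal to properties of the random variables $\cX_k$ is needed, so both arguments work uniformly for $\gl_\nu=0$ as well.
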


\begin{proof}
  We have shown that \eqref{eq4} holds, and thus
  $(c_{i\nu})_{i\in\sD_\nu^\kk}$ 
is a left eigenvector.

Since the matrix $A_\kk:=(a_jr_{ij})_{i,j\in\sD_\nu^\kk}$ is triangular, its
eigenvalues are its diagonal elements $a_jr_{jj}=\gl_j$, $j\in\sDnk$.
The definition \eqref{kk} of $\kk_i$ implies that if $\sDnk\neq\emptyset$,
then there exists $i\in\sDnk$ with $\gl_i=\glx_i$, i.e.\ a leader (if
$\kk=0$) or a subleader (if $\kk\ge1$). Since $\gl_i\le\glx_i=\gl_\nu$ for every
$i\in\sDnk$, it follows that $\glx_\nu$ is the largest eigenvalue, and that
its multiplicity equals the number of leaders or subleaders in $\sDnk$.

Let $\sLk$ be the set of leaders or subleaders in $\sDnk$, and consider
the projection $\Pi_\kk:\bbR^{{\sDnk}}\to\bbR^{\sLk}$ mapping
$(x_i)_{i\in\sDnk}\mapsto(x_i)_{i\in\sLk}$. Let $V_\kk$ be the left eigenspace of 
the matrix $A_\kk$ for its largest eigenvalue $\gl_\nu$. 
The recursion in the proof of \refL{LNDB} and the calculations
\eqref{eq2}--\eqref{eq3} show, more generally, that any vector  
$(x_i)_{i\in\sLk}$ can be extended to a vector 
$(x_i)_{i\in\sDnk}$ that belongs to $V_\kk$. In other words, the projection
$\Pi_\kk$ maps $V_\kk$ onto $\bbR^{\sLk}$.  These spaces
have the same dimension $|\sLk|$, and thus
$\Pi_\kk:V_\kk\to\bbR^{\sLk}$ is a bijection.
Consequently, an eigenvector is determined by its values for leaders or
subleaders. 

It is trivial that $c_{i\nu}=\gd_{i\nu}$ for a leader $i$.

For $\kk\ge1$, let $i$ be a subleader in $\sDnk$, so $\gl_i=\glx_i$.
Since $\kk_i=\kk>0$, it follows from \refR{Rg7} that $\gl_i=\glx_j$ for some
$j\in\sP_i$, and furthermore that $\kk_j+1\le \kk_i$ for every
such $j$. 
Since then also $\glx_{i_j}=\glx_j=\glx_i$ and $\kk_{i_j}=\kk_j+1$, it
follows that 
in \eqref{g7} we only have to sum over
$j\in\sD_\nu^{\kk-1}$, which together with \eqref{ji3} yields \eqref{subl}.
\end{proof}

\begin{remark}
It is well known that
the eigenvalues and eigenvectors of the
weighted mean replacement matrix $(a_ir_{ij})_{i,j\in \sQ}$
are important for the asymptotics of \Polya{} urns in general;
see for example 
\cite[Section V.9.3]{AN} and
\cite{SJ154} 
for irreducible urns.
\refL{LC}, where we consider eigenvectors of certain submatrices, is inspired
by a special case in \cite{BoseDM}, see \refE{EBose}.
\end{remark}

\section{Degenerate limits in discrete time}\label{Sdeg}
Consider now the limits $\hcX_i$ in \refT{T1} for the discrete-time urn.

\begin{theorem}\label{TEL}
Let $i\in\sQ$.
  \begin{romenumerate}
  \item\label{TEL0}     
If\/ $\glx_i=0$, then $\hcX_i$ is a positive constant.
  \item\label{TEL<}     
If\/ $0<\glx_i<\hgl$, then $\hcX_i$ has an \abscont{} distribution.
  \item\label{TEL=}     
If\/ $\glx_i=\hgl>0$, then $\hcX_i$ is either constant or 
has an \abscont{} distribution; both alternatives are possible.
  \end{romenumerate}
\end{theorem}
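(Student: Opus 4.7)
My plan is to read $\hcX_i$ off the formulas derived inside the proof of \refT{T1} and then deduce its structure from \refT{TD} together with the linear-dependence analysis of \refS{Sdep}. Recall that \eqref{hcx} gives $\hcX_i=\hgl^{-\gam_i}\cX_0^{-\glx_i/\hgl}\cX_i$ when $\hgl>0$, while \eqref{hh11} gives $\hcX_i=\cX_0^{-\kk_i/\hkko}\cX_i$ when $\hgl=0$; here $\cX_0$ is the limit for the dummy colour in the extended urn used to prove \refT{T1}. Each of the three cases of the theorem thus reduces to a question about the joint law of $\cX_i$ and $\cX_0$ in the extended continuous-time urn.

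Part \ref{TEL0} is immediate: if $\glx_i=0$ then $\cX_i$ is a positive constant by \refT{TD}\ref{TD1}. When $\hgl>0$ the exponent $\glx_i/\hgl$ equals $0$ and \eqref{hcx} collapses to $\hcX_i=\hgl^{-\gam_i}\cX_i$, a constant; when $\hgl=0$ the dummy colour also satisfies $\glx_0=\hgl=0$, so $\cX_0$ is constant by \refT{TD}\ref{TD1} applied inside the extended urn, and \eqref{hh11} again yields a constant.

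For part \ref{TEL<}, $0<\glx_i<\hgl$, the intended argument is to apply \refL{LNDA}\ref{LNDA2} inside the extended urn with $\sL$ a nonempty set of leaders at level $\hgl$ contributing to $\cX_0$, and $\sE$ the set of leaders at level $\glx_i$ entering the representation of $\cX_i$. The antichain and incomparability hypotheses are straightforward: if $j\in\sE$ and $k\in\sL$ satisfied $j\succeq k$ we would obtain $\glx_j\ge\gl_k=\hgl$, contradicting $\glx_j=\gl_j=\glx_i<\hgl$. The lemma yields that $(\cX_k)_{k\in\sL}$ is conditionally \abscont{} given $(\cX_j)_{j\in\sE}$, and the strictly positive representations \eqref{el3} and \eqref{lndb} propagate this to make $\cX_0$ conditionally \abscont{} given $\cX_i$; multiplying by the strictly positive $\cX_i$-measurable factor $\hgl^{-\gam_i}\cX_i$ and applying the smooth injection $y\mapsto y^{-\glx_i/\hgl}$ preserves absolute continuity, so $\hcX_i$ has an \abscont{} distribution.

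For part \ref{TEL=}, $\glx_i=\hgl>0$, both $\cX_i$ and $\cX_0$ are by \refL{LNDB} and \eqref{el3} nonnegative linear combinations of the common family $(\cX_k)_{k\in\sL_{\max}}$, where $\sL_{\max}$ denotes the set of leaders with $\gl_k=\hgl$; writing $\cX_i=\sum_k\alpha_k\cX_k$ and $\cX_0=\sum_k\beta_k\cX_k$,
\begin{equation*}
\hcX_i=\hgl^{-\gam_i}\frac{\sum_k\alpha_k\cX_k}{\sum_k\beta_k\cX_k}.
\end{equation*}
If the vectors $(\alpha_k)$ and $(\beta_k)$ are proportional, then $\hcX_i$ is a deterministic constant; otherwise the rational function $(x_k)\mapsto(\sum_k\alpha_kx_k)/(\sum_k\beta_kx_k)$ has nonvanishing gradient on the positive orthant, and combined with the \abscont{} joint law from \refT{Tac} this yields that $\hcX_i$ has an \abscont{} distribution. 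Both alternatives genuinely occur: the classical two-colour diagonal \Polya{} urn ($a_1=a_2=1$, $\bxi_i$ the $i$-th unit vector) realises the \abscont{} case with a Beta limit, whereas a simple triangular example such as $\bxi_1=(1,1)$, $\bxi_2=(0,1)$, $a_1=a_2=1$ has $\sL_{\max}=\set{1}$, forcing proportionality of $(\alpha_k)$ and $(\beta_k)$ and hence constant $\hcX_1$ and $\hcX_2$. The main obstacle I foresee is the step ``non-proportional coefficients plus \abscont{} joint law implies \abscont{} ratio,'' which requires a small absolute-continuity lemma along the lines of those collected in \refApp{AA}.
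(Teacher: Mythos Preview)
Your proof is correct and follows the same overall architecture as the paper's: read off $\hcX_i$ from \eqref{hcx} or \eqref{hh11}, then analyse the pair $(\cX_i,\cX_0)$ via the leader decompositions of \refS{Sdep}. Part~\ref{TEL0} matches the paper exactly.

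For parts~\ref{TEL<} and~\ref{TEL=}, however, the paper takes a shorter route that also dissolves the ``obstacle'' you flag. Rather than working conditionally via \refL{LNDA} (part~\ref{TEL<}) or computing the gradient of the ratio map (part~\ref{TEL=}), the paper argues as follows. In~\ref{TEL<}, the leaders entering $\cX_i$ sit at level $\glx_i$ and those entering $\cX_0$ at level $\hgl$; since these two sets are disjoint, \refT{Tac} gives that the full family of leaders is jointly \abscont, and the map to $(\cX_i,\cX_0)$ is a linear surjection onto $\bbR^2$, so \refL{LZ} makes $(\cX_i,\cX_0)$ \abscont{} in $\bbR^2$. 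In~\ref{TEL=}, both $\cX_i$ and $\cX_0$ are combinations of the \emph{same} leader family; if the coefficient vectors are not proportional they are linearly independent, so the map to $(\cX_i,\cX_0)$ is again onto $\bbR^2$ and \refL{LZ} applies. In either case, once $(\cX_i,\cX_0)$ is \abscont{} in $\bbR^2$, absolute continuity of $\hcX_i=\hgl^{-\gam_i}\cX_i\cX_0^{-\glx_i/\hgl}$ is immediate from the fact that $(x,y)\mapsto(xy^{-\theta},y)$ is a diffeomorphism of $(0,\infty)^2$. Your conditional argument in~\ref{TEL<} and your gradient check in~\ref{TEL=} are valid, but the paper's two-step ``\refT{Tac} then \refL{LZ}'' path is shorter and avoids both.
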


\begin{proof}
Consider the urn with an added dummy colour 0 as in \refS{SpfT1}.

  \pfitemref{TEL0}
In this case, \refT{TD} shows that $\cX_i$ is a positive constant.
If $\hgl>0$, then
\eqref{hcx} shows that  $\hcX_i=\hgl^{-\kk_i}\cX_i$ is a constant.
If $\hgl=0$, then \eqref{hh1} and \refT{TD} show that also $\cX_0$ is a
positive constant; thus \eqref{hh11} shows that $\hcX_i$ is a constant.

\pfitemref{TEL<}
By \eqref{hh1}, we have $\glx_0=\hgl>0$. 
By \refL{LNDB}, $\cX_i$ is a linear combination \eqref{lndb} of $\cX_k$ for
leaders $k$ with $\glx_k=\glx_i$, and $\cX_0$ is a similar linear combination,
now for leaders $k$ with $\glx_k=\glx_0=\hgl$.
The two sets of leaders are disjoint, and thus \refT{Tac} implies, using
\refL{LZ}, that the  distribution of
$(\cX_i,\cX_0)$ is \abscont{} in $\bbR^2$.
It is then easily seen from \eqref{hcx} that the distribution of $\hcX_i$ is
\abscont.

\pfitemref{TEL=}
In this case, \refL{LNDB} shows that both $\cX_i$ and $\cX_0$ are linear
combinations \eqref{lndb} of $\cX_k$ for 
leaders $k$ with $\glx_k=\hgl$.
If the two vectors of coefficients of these linear combinations are
proportional, then $\cX_i$ and $\cX_0$ are proportional; since now
\eqref{hcx} shows that $\hcX=c\cX_i/\cX_0$ for some constant $c$, it follows
that $\hcX$ is constant.
On the other hand, if the vectors of coefficients are not proportional, then
\refL{LZ} shows that the distribution of
$(\cX_i,\cX_0)$ is \abscont{} in $\bbR^2$, and as in \ref{TEL<}, it follows
easily from \eqref{hcx} that the distribution of $\hcX_i$ is \abscont.
\end{proof}

An example where $\glx_i=\hgl$ and $\hcX_i$ is \abscont{} is given by the
classical \Polya{} urn, see \refE{Eclassical}. Many examples with degenerate
$\hcX_i$ are provided by the following theorem;
see also the examples in \refS{Sex}. 

\begin{theorem}\label{Tone}
    Suppose that there is only one leader $\nu$ with $\gl_\nu=\hgl$.
(This is equivalent to $\gl_\nu=\hgl$ and that $i\succeq \nu$ for
every colour $i$ with $\gl_i=\hgl$.)
Then $\hcX_i$ is deterministic for every $i$ with $\glx_i=\hgl$.  
\end{theorem}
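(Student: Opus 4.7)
The plan is to reduce everything to \refL{LNDB} and the explicit formula \eqref{hcx}, exploiting the fact that the uniqueness hypothesis forces all relevant limits $\cX_i$ to be scalar multiples of a single $\cX_\nu$.

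First, I would verify the parenthetical equivalence. If $\nu$ is the unique leader with $\gl_\nu=\hgl$ and $i\neq\nu$ satisfies $\gl_i=\hgl$, then $i$ must be a follower, so by \eqref{follow} there is some $j\prec i$ with $\gl_j\ge\gl_i=\hgl$, hence $\gl_j=\hgl$; iterating (the chain must terminate since $\prec$ is a strict partial order by \refP{PT}) produces some leader $\mu\preceq i$ with $\gl_\mu=\hgl$, and $\mu=\nu$ by uniqueness, so $\nu\preceq i$. Conversely, any leader $\mu\neq\nu$ with $\gl_\mu=\hgl$ would by assumption satisfy $\nu\prec\mu$, violating \eqref{lead}.

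Next, the main step. Fix any $i\in\sQ$ with $\glx_i=\hgl$. By the definition \eqref{sA}, every $k\in\sA_i$ is a leader with $\gl_k=\glx_i=\hgl$, so by the uniqueness hypothesis $\sA_i\subseteq\set\nu$, and since $\sA_i$ is non-empty (see the discussion after \eqref{sA}; if $i$ is itself a leader then $\sA_i=\set i=\set\nu$), we conclude $\sA_i=\set\nu$. Applying \refL{LNDB} gives
\begin{align*}
  \cX_i=c_{i\nu}\cX_\nu,\qquad c_{i\nu}>0.
\end{align*}

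Finally I would apply this to the dummy colour $0$ introduced in \refS{SpfT1}. By \eqref{hh1} we have $\glx_0=\hgl$, so the same argument yields $\cX_0=c_{0\nu}\cX_\nu$ with $c_{0\nu}>0$. Assuming $\hgl>0$ (the case $\hgl=0$ is already covered by \refT{TEL}\ref{TEL0}), the explicit formula \eqref{hcx} gives, using $\glx_i/\hgl=1$,
\begin{align*}
  \hcX_i=\hgl^{-\gam_i}\cX_0^{-1}\cX_i
       =\hgl^{-\gam_i}\frac{c_{i\nu}}{c_{0\nu}},
\end{align*}
which is a deterministic positive constant, as claimed.

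The only mildly delicate point is the equivalence in parentheses, which requires the terminating chain argument using acyclicity of $\prec$; the rest is a direct bookkeeping consequence of \refL{LNDB} and \eqref{hcx}, with no analytic work needed beyond what was already done in \refSs{S1}--\ref{SpfT1}.
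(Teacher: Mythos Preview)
Your proposal is correct and follows essentially the same approach as the paper: reduce to \refL{LNDB} to write both $\cX_i$ and $\cX_0$ as multiples of $\cX_\nu$, then apply \eqref{hcx}. The paper does make one point explicit that you glide over: when passing to the extended urn $\sQp$, one should note that $\nu$ remains the unique leader with $\gl_\nu=\hgl$ there (since $\gl_0=0<\hgl$ and the ancestor relations among the original colours are unchanged), which is what justifies applying ``the same argument'' to the dummy colour $0$; this is immediate, but worth a sentence. Your added verification of the parenthetical equivalence is a nice bonus that the paper omits.
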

\begin{proof}
If $\hgl=0$, this follows from \refT{TEL}\ref{TEL0}.

Suppose now $\hgl>0$, and
consider the urn with an added dummy colour 0 as in \refS{SpfT1}.
Then 
$\glx_0=\hgl$ by \eqref{hh1}.
Furthermore, 
$\gl_\nu=\hgl>0$ and thus $a_\nu>0$; 
hence $\nu\to 0$ and consequently
$\nu$ is still the only leader 
$k$ with $\gl_k=\hgl$.
In particular, $\sA_0=\set\nu$.
\refL{LNDB} 
shows that $\cX_i=c_{i\nu}\cX_\nu$ for every $i\in\sQ$ with
$\glx_i=\hgl$, and also that $\cX_0=c_{0\nu}\cX_\nu$. Hence, 
the result follows from
\eqref{hcx}.
\end{proof}


Even when the limits $\hcX_i$ are not deterministic, there are
generally 
linear dependencies between them just as for $\cX_i$.
In fact, in the main case $\hgl>0$,
we have the following analogue of \refLs{LNDB} and \ref{LC}. 
\begin{lemma}  \label{LhC}
If\/ $\hgl>0$, then for every $i\in\sQ$
\begin{align}\label{hlndb}
  \hcX_i = \sum_{k\in \sA_i} \hc_{ik}\hcX_k,
\end{align}
where 
\begin{align}\label{hc}
\hc_{ik}= \hgl^{-\kk_i}c_{ik}>0.  
\end{align}
Moreover, \refL{LC} holds also for $\hc_{ik}$, with
the only difference that \eqref{subl} is replaced by
\begin{align}\label{hsubl}
  \hc_{i\nu}=\sum_{j\in\sD_\nu^{\kk-1}}\frac{a_jr_{ji}}{\kk\hgl}\hc_{j\nu}.
\end{align}
\end{lemma}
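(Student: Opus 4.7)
The plan is to deduce \refL{LhC} directly from \refL{LNDB} and \refL{LC} via the explicit formula \eqref{hcx} for $\hcX_i$ in terms of $\cX_i$ and $\cX_0$, established in the proof of \refT{T1}. Throughout, assume $\hgl>0$ so that \eqref{hcx} applies.

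The key observation is that if $k\in\sA_i$, then $k$ is a leader with $\gl_k=\glx_i$, so $\glx_k=\gl_k=\glx_i$ and $\kk_k=0$. Consequently $\gam_k=-\hkk\glx_i/\hgl$, and \eqref{hcx} gives
\begin{align*}
  \hcX_k = \hgl^{\hkk\glx_i/\hgl}\cX_0^{-\glx_i/\hgl}\cX_k,
\qquad
  \hcX_i = \hgl^{-\gam_i}\cX_0^{-\glx_i/\hgl}\cX_i.
\end{align*}
Dividing and using $\gam_i=\kk_i-\hkk\glx_i/\hgl$, the factors of $\cX_0$ cancel and we obtain $\hcX_i/\hcX_k = \hgl^{-\kk_i}\cX_i/\cX_k$. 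Substituting the expansion $\cX_i=\sum_{k\in\sA_i}c_{ik}\cX_k$ from \refL{LNDB} into the formula for $\hcX_i$ and replacing each $\cX_k$ by $\hgl^{-\hkk\glx_i/\hgl}\cX_0^{\glx_i/\hgl}\hcX_k$ then yields \eqref{hlndb} with $\hc_{ik}=\hgl^{-\kk_i}c_{ik}$, which is \eqref{hc}. Positivity and uniqueness of the $\hc_{ik}$ follow directly from the corresponding properties of $c_{ik}$ in \refL{LNDB}.

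For the remaining claims (the eigenvector description and the subleader recursion), observe that for any leader $\nu$ and any $\kk$ with $\sD_\nu^\kk\neq\emptyset$, every $i\in\sD_\nu^\kk$ satisfies $\kk_i=\kk$. Hence on this set, $\hc_{i\nu}=\hgl^{-\kk}c_{i\nu}$ is just a uniform rescaling of $c_{i\nu}$. Multiplying the eigenvector identity \eqref{eq4} by $\hgl^{-\kk}$ therefore gives
\begin{align*}
\sum_{j\in\sD_\nu^\kk} a_jr_{ji}\hc_{j\nu} = \gl_\nu \hc_{i\nu},
\end{align*}
so $(\hc_{i\nu})_{i\in\sD_\nu^\kk}$ is a left eigenvector of $(a_ir_{ij})_{i,j\in\sD_\nu^\kk}$ for the same eigenvalue $\gl_\nu$. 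The uniqueness assertions (determination by values on leaders for $\kk=0$, or on subleaders for $\kk\ge1$) follow verbatim from the argument in \refL{LC} applied to the rescaled vector, since the projection argument used there is unchanged.

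Finally, for the subleader recursion \eqref{hsubl}, take $i\in\sD_\nu^\kk$ with $\kk\ge1$. Multiply \eqref{subl} by $\hgl^{-\kk}$: the left side becomes $\hc_{i\nu}$, and each term on the right has $j\in\sD_\nu^{\kk-1}$, so $\hgl^{-\kk}c_{j\nu}=\hgl^{-1}\hc_{j\nu}$. This produces exactly the extra factor of $\hgl$ in the denominator of \eqref{hsubl}. The only issue to watch is bookkeeping of exponents and the fact that the index shift from $\kk$ to $\kk-1$ in \eqref{subl} is precisely what generates the single extra power of $\hgl\qw$; no substantive obstacle arises, as the entire lemma is a consequence of the uniform rescaling $\hc_{i\nu}=\hgl^{-\kk_i}c_{i\nu}$ within each layer $\sD_\nu^\kk$.
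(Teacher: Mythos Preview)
Your proof is correct and follows essentially the same approach as the paper's own proof, which is very terse: the paper simply says that \eqref{hlndb}--\eqref{hc} follow from \eqref{lndb} and \eqref{hcx} using $\glx_k=\glx_i$ and $\kk_k=0$ for $k\in\sA_i$, and that the final claim follows from \eqref{hc} and \refL{LC}. You have spelled out the computations that the paper leaves implicit---the cancellation of the $\cX_0$ factors via \eqref{hcx}, the uniform rescaling on each layer $\sD_\nu^\kk$, and the index shift in \eqref{subl} yielding the extra $\hgl\qw$---but the underlying idea is identical.
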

\begin{proof}
The expansion \eqref{hlndb}--\eqref{hc} follows from \eqref{lndb} and  
\eqref{hcx},
since $k\in\sA_i$ implies $\glx_k=\glx_i$ and $\kk_k=0$.
The final claim follows by \eqref{hc} and \refL{LC}.
\end{proof}

We leave the corresponding result for the less interesting case $\hgl=0$ to
the reader.

\section{Urns with subtractions}\label{S-}

We have so far assumed \ref{A+}: $\xi_{ij}\ge0$ for all $i,j\in\sQ$.
In many applications, there are  urns with subtractions, where we allow
$\xi_{ij}<0$. In the present paper, we are only interested in cases where the
urn allows an infinite number of drawings according to the rules in
\refS{S:intro}; in other words we want \eqref{urn} to make sense as
probabilities for all $n$, and thus we require that the urn is such
that $X_{ni}\ge0$ for all $n$, 
and also  $\sum_i a_i X_{ni}\neq 0$.
Such urns are often called \emph{tenable}.
(See \eg{} \cite{Mahmoud} for a discussion and examples.)
A standard way to ensure that
$X_{ni}\ge0$ without assuming $\xi_{ii}\ge0$ is to assume that $X_{0i}$
and every $\xi_{ji}$ always  is an integer, 
so that $X_{ni}$ is an integer,
and that $\xi_{ii}\ge-1$ while $\xi_{ji}\ge0$ for
$j\neq i$.
Typically, this is done for all $i$. (Thus $\bX_n\in\bbZgeo^q$; we may then
regard the urn process as drawings without replacement followed by adding
$\xi_{ij}+\gd_{ij}\ge0$ balls of each colour $j$.) However, we will be
more flexible and allow a combination with this assumption
for some colours $i$, and
$\xi_{ji}\ge0$ (as in earlier sections) for the others. We therefore assume:
\begin{PQenumerate}{\Anude${}'$}
\item \label{A-5}
For each colour $i\in\sQ$, we have either (or both)
\begin{enumerate}
 \renewcommand{\theenumii}{(\alph{enumii})}%
 \renewcommand{\labelenumii}{\theenumii}%
  \item\label{A-5a} %
$\xi_{ji}\ge 0$ \as{} for all $j\in\sQ$, 
or
\item\label{A-5b} 
$\xi_{ii}\in\bbZgeo\cup\set{-1}$ \as{} and
$\xi_{ji}\in\bbZgeo$ \as{} for all $j\neq i$, 
and $X_i(0)\in\bbZgeo$.
\end{enumerate}  
\end{PQenumerate}
Note that \ref{A+} is \ref{A-5a} for every $i\in\sQ$.
Note also that \ref{A-5} implies 
$\xi_{ji}\ge0$ \as{} for all $i,j\in\sQ$ with $i\neq j$.
We  let
\begin{align}\label{sQm}
    \sQm:=\bigset{i\in\sQ:\P(\xi_{ii}<0)>0}
=\bigset{i\in\sQ:\P(\xi_{ii}=-1)>0}
\end{align}
denote the set of colours where \ref{A-5b} but not \ref{A-5a} applies.
Note that \ref{A00} implies that $a_i>0$ for every $i\in\sQm$.

\begin{remark}\label{RA-5}
  More generally, we may assume that, for a given $i$,
$\xi_{ii}$ may take some
negative value $-b$, provided 
it does not take any other negative value, and $X_i(0)$ and all $\xi_{ji}$
($j\in\sQ$) \as{} are multiples of $b$.
This case is easily reduced to the case $b=1$ in \ref{A-5} by dividing
all $X_{ni}$, $X_i(t)$, and $\xi_{ji}$ by $b$ and multiplying the activity
$a_i$ by $b$. 
(See \refE{Epref-} for an example where we, however, use
a slightly different alternative.)
For convenience, and thus without real loss of generality, we will assume
\ref{A-5}, where the only allowed negative replacement is $-1$.
\end{remark}

\begin{remark}\label{RBP}
  If $i\in\sQm\cap\sQmin$, then $X_i(t)$ 
is not influenced by the other colours, 
and \ref{A-5b} implies that
$X_i(t)$ is integer-valued and a classical \ctime{}
Markov branching process of the
type studied in \eg{} \cite[Chapter III]{AN}.
(As noted above, we have $a_i>0$ by \ref{A00}.)
Recall that \cite[Section III.7]{AN}
in the subcritical and critical cases $\gl_i\le0$,
this process \as{} dies out, \ie, $X_i(t)=0$ for all sufficiently large $t$,
while in the supercritical case $\gl_i>0$,
the process survives for ever with positive probability 
(assuming $\E\xi_{ii}\log\xi_{ii}<\infty$, as we do); 
this probability
is strictly less than 1, since the process always may die out when
$\P(\xi_{ii}=-1)>0$.
\end{remark}

We continue to use the definitions above, in particular
\eqref{gli}--\eqref{gam}. 
Note that we now may have $r_{ii}<0$, and thus $\gl_i<0$;
we may also have $\gl_i=r_{ii}=0$ without $\xi_{ii}=0$ a.s.

We will see that the results in the previous sections
are valid with minor changes also if we
replace \ref{A+} by \ref{A-5},
at least under some further technical assumptions 
\ref{A-6}--\ref{A-8} given below.
However, in order to get more general results,
these  will be assumed only when needed.
Hence,
we  assume throughout this section \refAAXZ{}, with 
the following further assumptions
added explicitly when needed.
\begin{Aenumerate}
\item \label{A-6}
   $\sum_{i\in\sQ} a_i X_i(t)>0$ \as{} for all $t\ge0$.
\item\label{A-7} 
If $i\in\sQm$, then $\glx_i>0$. 
\item\label{A-8} 
If $i\in\sQ$ is a minimal colour, then $\xi_{ii}\ge0$ a.s.\
(i.e., $i\notin\sQm$).
\end{Aenumerate}

In other words, \ref{A-7} says
that if $i\in\sQm$, 
then either $\gl_i>0$ or there exists $j\in\sQ$ with  $j\prec i$ and $\gl_j>0$
(or both).
In particular, if $i$ is a minimal colour, then either $\gl_i>0$ or
$\xi_{ii}\ge0$ a.s.

Note that \ref{A-8} implies that $X_i(t)\ge X_i(0)$ when $i$ is minimal;
since \ref{A0}--\ref{A3} imply that there exists a minimal $i$ with $a_i>0$
and $X_i(0)>0$, \ref{A-8} together with \ref{A0}--\ref{A3} imply \ref{A-6}.

\subsection{Some motivation for \ref{A-6}--\ref{A-8}}
The assumption \ref{A-5} implies that the continuous-time urn 
$\bX(t)=\xpar{X_i(t)}_{i\in\sQ}$
is well-defined for $t\in\ooo$, with $X_i(t)\ge0$ for all $i\in\sQ$. 
However, the urn might possibly reach a state where $X_i(t)=0$ for all $i$
with $a_i>0$, and thus $\sum_i a_i X_i(t)=0$; such a state is absorbing and
no more draws will be made, and then the total number of draws is finite
and the discrete-time urn is not well defined for all $n$.
For results on the discrete-time urn, we therefore further assume
\ref{A-6}.
Since $\sum_i a_iX_i(t)$ is constant between the jumps,
\ref{A-6} implies that waiting times $\TT_{n+1}-\TT_n$ are finite \as,
and thus there is \as{} an infinite
number of draws; hence $\TT_n<\infty$ for every $n$, and the discrete-time
urn $(\bX_n)\xoo$ is well defined by \eqref{ct}.

Next, we note that a colour $i$ with $\glx_i<0$ will die out:
\begin{lemma}\label{L<0}
  If $\glx_i<0$, then \as{} $X_i(t)=0$ for all large $t$, and thus
  $X_{ni}=0$ for all large $n$.
\end{lemma}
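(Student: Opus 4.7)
The plan is to prove the continuous-time statement by induction on the partial order $\preceq$ restricted to the ancestor set $\set{j\in\sQ:j\preceq i}$, and then deduce the discrete-time conclusion from $\TT_n\asto\infty$. The starting observation is that $\glx_i<0$ forces $\gl_j<0$ for every $j\preceq i$: by transitivity $\glx_j\le\glx_i<0$, so in particular $\gl_j\le\glx_j<0$. Therefore $\E\xi_{jj}<0$, and since \ref{A-5a} would force $\xi_{jj}\ge0$, we must be in the alternative \ref{A-5b}, so $j\in\sQm$ and $\xi_{jj}\in\bbZgeo\cup\set{-1}$ a.s.; in particular every $X_j(t)$ with $j\preceq i$ is integer-valued.

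For the base case, let $i$ be minimal. Then $X_i(t)$ is unaffected by any other colour and is therefore, by \refR{RBP}, a classical integer-valued single-type continuous-time Markov branching process; since $\gl_i<0$, the subcritical part of that remark gives $X_i(t)=0$ for all sufficiently large $t$, a.s.

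For the inductive step take $i$ non-minimal and assume the claim for every proper ancestor $j\prec i$. Since $\sQ$ is finite, there exists a random $T<\infty$ with $X_j(t)=0$ for every $j\prec i$ and every $t\ge T$; moreover, $X_j$ cannot revive on $[T,\infty)$, because any $k$ whose draw could add to $X_j$ satisfies $k\to j$, hence $k\prec j\prec i$, and so is itself extinct past $T$. By the same argument, no ball of colour $i$ is added from any other colour after $T$: any $k\ne i$ with $\P(\xi_{ki}>0)>0$ satisfies $k\to i$, hence $k\prec i$, and such colours are extinct past $T$. Consequently $(X_i(T+s))_{s\ge0}$, given $X_i(T)$, evolves purely through draws of colour $i$ itself and is therefore exactly a classical single-type continuous-time Markov branching process with drift $\gl_i<0$; applying \refR{RBP} once more yields extinction in finite time, a.s.

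The only delicate point is the pathwise identification of $(X_i(T+s))_{s\ge0}$ as a standalone branching process even though $T$ is not \emph{a priori} a stopping time; this is harmless, since the only coordinates that could feed into $X_i$ are those with $k\preceq i$, and these are all frozen at zero (for $k\prec i$) or equal to $X_i$ itself, so the jump-type Markovian dynamics of $\bX$ projected onto the $i$-th coordinate are verbatim those of the claimed single-type branching process. Finally, the discrete-time conclusion follows immediately from $\TT_n\asto\infty$ and \eqref{ct}: the a.s.\ finite extinction time of $X_i$ is eventually exceeded by $\TT_n$, whence $X_{ni}=X_i(\TT_n)=0$ for all large $n$.
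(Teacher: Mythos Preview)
Your proof is correct and follows essentially the same approach as the paper's: induction along the ancestry of $i$, reducing to the subcritical single-type branching process via \refR{RBP} once all ancestors are extinct. Your version is more detailed—you explicitly verify that $j\preceq i$ forces $j\in\sQm$ (so the processes are integer-valued and \refR{RBP} applies), and you address the stopping-time issue for $T$—whereas the paper's proof is a brief three-sentence sketch inducting on $j\to i$ rather than $j\prec i$; the content is the same.
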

\begin{proof}
  Since every $j\to i$ has $\glx_j\le \glx_i<0$, we can use induction and
  assume that the statement holds for every $j\to i$.
Then there exists a.s.\ a (random) time $T$ such that no colour $j$ with
$j\to i$ exists for times $t\ge T$. Hence, for $t\ge T$, $X_i(t)$ evolves 
as a single colour urn with only colour $i$; 
we have $\gl_i\le\glx_i<0$, and thus, as in \refR{RBP}, 
this process is a subcritical Markov branching process, which dies out a.s.
\end{proof}

Hence, if $\glx_i<0$, we can wait until colour $i$ and all its ancestors have
disappeared, and we may then regard the urn as restarted at that time; 
then we have an urn with fewer colours. Consequently, we may without loss of
generality assume $\glx_i\ge0$ for every colour $i$.
Note that it follows from the definition \eqref{glx} that
\begin{align}\label{glge0}
  \glx_i\ge0 \text{ for every colour } i
\iff
\gl_j\ge0 \text{ for every minimal colour }j
.\end{align}

Moreover, also the case $\glx_i=0$ may be problematic when $i\in\sQm$,
and we will actually make a stronger assumption than \eqref{glge0}. There are
two reasons.

First, suppose that $i\in\sQm$ is a minimal colour (and thus $\glx_i=\gl_i$).
Then, as said in \refR{RBP}, 
$X_i(t)$ is a 
Markov branching process which dies out also in the critical case 
$\gl_i=\glx_i=0$,
so as above we may in this case 
wait until colour $i$ has disappeared and consider
an urn with fewer colours.

Secondly, and more importantly,
if $i\in\sQm$ is not minimal and $\glx_i=0$, there are examples where
(after normalization)
$X_i(t)$  converges in distribution but \emph{not} a.s.,
and similarly for $X_{ni}$; see \refEs{E+-}
and \ref{E--}. 
(In \refE{E--}, $X_{ni}$ does not even converge in distribution.)

We therefore exclude these cases and assume \ref{A-7}.
Note that if $i\notin\sQm$, then $\glx_i\ge\gl_i\ge0$.
Hence, \ref{A-7} implies that 
$\glx_i\ge0$ for every $i$, and thus
both conditions in \eqref{glge0} hold.
(So we do not have to assume this explicitly.)

Even with these assumptions,
one  complication remains.
If $i\in\sQm$ is a minimal colour, then, as noted in \refR{RBP}, $X_i(t)$ is a
branching process, and even in the supercritical case $\gl_i>0$, 
it dies out with positive probability.
We will accept this complication, but note that it may be eliminated by 
the additional assumption \ref{A-8}, which we only assume when needed.

\subsection{Main results for urns with subtractions}

With the assumptions above, our main theorems for discrete and continuous
time still hold:

\begin{theorem}\label{T1-}
Let $(X_{i}(t))_{i\in\sQ}$ be a discrete-time
triangular \Polya{} urn satisfying the 
conditions \refAAZZ. 
Then the conclusions of \refT{T1} hold.
\end{theorem}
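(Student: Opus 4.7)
\medskip

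The plan is to mirror the proof of \refT{T1} given in \refSs{S1}--\ref{SpfT1}, first establishing a \ctime{} analogue of \refT{TC} (namely \refT{TC-} alluded to in the statement) under the weakened condition \ref{A-5}, and then deriving the \dtime{} statement via the dummy-colour embedding of \refS{SpfT1}. The four blocks to revisit are the single-colour lemma \refL{LM} (\refSS{SS11}), the one-parent lemma \refL{L12} (\refSS{SS12}), the combination lemma \refL{LG} (\refSS{SSLG}), and the dummy-colour reduction of \refS{SpfT1}. The point is that the martingale calculations carried out there never really used $\xi_{ii}\ge0$; they only used $X_j(t)\ge0$ together with the second-moment assumption \ref{A2} and the fact that $\eta_k-r_{ji}$ has mean zero and finite variance. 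Condition \ref{A-5} preserves all of these, and \ref{A-8} combined with \ref{A0}--\ref{A3} yields \ref{A-6}, which ensures $\TT_n<\infty$ for every $n$ and $\TT_n\asto\infty$.

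First I would revisit \refL{LM}. For a minimal colour $i$, assumption \ref{A-8} guarantees $i\notin\sQm$, so $\xi_{ii}\ge0$ a.s.\ and the proof carries over verbatim. (For a non-minimal $i\in\sQm$, the relevant single-colour object is the subcolour $i_0$ appearing in \refL{LG}; its evolution is a classical Markov branching process that may go extinct, but the identities \eqref{cb4}, \eqref{cb3} and the variance computation \eqref{lm5}--\eqref{lm6} use only $r_{ii}$ and $\E\xi_{ii}^2$, both finite, so $e^{-\gl_i t}X_{i_0}(t)$ is still an $L^2$-bounded martingale with limit $\WXi$ satisfying \eqref{lm2}--\eqref{lm3a}. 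The final ``$0<\WXi<\infty$'' claim can fail, but this is irrelevant for \refT{T1-}.) Next, I would re-examine \refL{L12}. The decomposition \eqref{j3} and the four-piece split \eqref{jz1}--\eqref{jzz} are formal algebraic identities and remain valid. The bounds on $Z_4$ depend only on $X_j\ge0$; the $Z_2$ term uses $\Var(\eta_k)<\infty$; the $Z_3$ term uses \refL{LtN} together with $\E X_j(t)<\infty$; and the $Z_1$ term uses, through \eqref{lm3b}, that $\Var[e^{-\gl_i t}Y_k(t)\mid\eta_k]\le C\eta_k$, which is exactly what the revised \refL{LM} supplies for $Y_k$. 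Hence the key inequality \eqref{j51} and the three subcases \ref{step5-i'}--\ref{step5-iii'} in \refStep{stepZ123b} go through unchanged, proving \eqref{ji1}--\eqref{ji2}.

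The one delicate point is the a.s.\ positivity of $\cX_i$ when $\cX_j>0$ for all $j\in\sP_i$. In the cases $\gl_i\le\glx_j$ the identity \eqref{ji3} together with $\cZ_4>0$ (which holds whenever $r_{ji}>0$ and $X_j(t)$ is eventually positive) gives the conclusion, so the only issue is case \ref{gli>} with $i\in\sQm$: then each individual $Y_k$ may die out with positive probability, so one cannot merely quote a single $Y_K$ as in \refStep{stepVI}. The hard part of the extension is precisely this step. The fix is to exploit \ref{A-7}: since $\glx_i>0$ for $i\in\sQm$ and we are in case \ref{gli>}, we have $\gl_i>\glx_j\ge0$, so $\gl_i>0$ and $X_j(t)\to\infty$ on $\cpar{\cX_j>0}$. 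Hence the draws of colour $j$ occur at infinitely many times $T_k$, and the $Y_k$ conditional on $(T_k,\eta_k)$ are independent branching processes; by \refL{LM} each has a strictly positive limit with a fixed positive conditional probability $1-e^{-c\eta_k}$ whenever $\eta_k>0$. Since $\sumk\indic{\eta_k>0}=\infty$ a.s.\ on $\cpar{\cX_j>0}$ (because $r_{ji}>0$), a conditional Borel--Cantelli argument gives that infinitely many $Y_k$ survive, so $\cX_i>0$ a.s.\ on $\cpar{\cX_j>0}$.

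With \refL{L12} in hand, \refL{LG} extends without change (the decomposition \eqref{ca1} and the inequalities \eqref{g6}, \eqref{g66} are purely arithmetic consequences of $X_i\ge0$, which \ref{A-5} preserves), and induction along a triangular ordering of $\sQ$ yields the continuous-time \refT{TC-}. Finally I would apply the dummy-colour construction of \refS{SpfT1}: adding colour $0$ with $a_0=0$, $\xi_{0j}=0$, $\xi_{i0}=\indic{a_i>0}$, and $X_0(0)=0$ respects \ref{A-5}, \ref{A-7}, \ref{A-8} trivially (colour $0$ is minimal with $\xi_{00}=0\ge0$), and since \ref{A-6} holds by the remark after \ref{A-8}, we have $\TT_n<\infty$ and $\TT_n\asto\infty$. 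The derivations \eqref{hh3}--\eqref{hh11} then apply verbatim, giving \eqref{t1b} when $\hgl>0$ and \eqref{t1c} when $\hgl=0$, with the limits $\hcX_i>0$ a.s.\ as in \refT{T1}.
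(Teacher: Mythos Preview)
Your overall architecture matches the paper's, but there is a real gap in your treatment of the single-colour and $Z_1$ estimates when $i\in\sQm$ and $\gl_i\le0$. You assert that $e^{-\gl_it}X_{i_0}(t)$ is still an $L^2$-bounded martingale because \eqref{lm5}--\eqref{lm6} only use $r_{ii}$ and $\E\xi_{ii}^2$; but \eqref{lm5} gives $\E[M,M]_t=x_0^2+a_i\E[\xi_{ii}^2]\,x_0\int_0^t e^{-\gl_is}\dd s$, and when $\gl_i\le0$ this diverges as $t\to\infty$. So $e^{-\gl_it}X_{i_0}(t)$ is \emph{not} $L^2$-bounded, \eqref{lm3b} fails, and the bound $\Var[e^{-\gl_it}Y_k(t)\mid\eta_k]\le C\eta_k$ that you feed into \eqref{j5b} is simply wrong. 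Hence \eqref{j51} for $\ell=1$ does \emph{not} go through unchanged: it acquires an extra factor $t$ when $\gl_i=0$ and a different exponential structure when $\gl_i<0$, and the summability checks in \refStep{stepZ123b} have to be redone separately. Since \ref{A-7} only forces $\glx_i>0$, not $\gl_i>0$, this case is unavoidable (take $\gl_1>0$, $1\to2$, $\xi_{22}\equiv-1$).

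The paper resolves this with separate variance estimates: for $\gl_i=0$ one has $\Var[Y_k(t)\mid\eta_k]=C\eta_kt$ and for $\gl_i<0$ one has $\Var[Y_k(t)\mid\eta_k]\le C\eta_ke^{\gl_it}$, leading to modified versions of \eqref{j51} that still suffice because $\glx_i>0$. There is also a case you miss entirely: if $\gl_i\le0$ and some parent $j$ has $\glx_j=0$, then the subcolour $i_j$ has $\glx_{i_j}=0$ and even the modified one-parent lemma does not apply; the paper handles this piece by a coarse comparison with a perturbed urn in which $\xi_{ii}$ is bumped up so that the new $\gl_i$ lies in $(0,\glx_i)$. (A minor slip: colour $0$ is not minimal, since every $i$ with $a_i>0$ has $i\to0$; the reason \ref{A-8} survives the extension is that the set of minimal colours is unchanged.)
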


We do not explicitly assume \ref{A-6} in \refT{T1-}, but it is implicit
since it follows from \ref{A-8} and the other assumptions, as noted above.

Without \ref{A-8}, we get a more complicated partial result
(where $\cX_0$ is as in \refS{SpfT1}); it is particularly useful
in cases where $\cX_0>0$ \as\ for some other reason, for example by
\refL{LBB5} below in the balanced case.
(See \refE{Epref-}.)

\begin{theorem}\label{T1--}
Let $(X_{i}(t))_{i\in\sQ}$ be a discrete-time
triangular \Polya{} urn satisfying the 
conditions \refAAZ. 
Then the conclusions of \refT{T1} hold
\as{} on the event $\set{\cX_0>0}$
(which has positive probability),
except that $\hcX_i=0$ is possible, 
with $0\le\P(\hcX_i=0\mid\cX_0>0)<1$.
\end{theorem}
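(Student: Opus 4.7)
The plan is to follow the proof of \refT{T1} step by step, granting the continuous-time analogue of \refT{TC} for urns with subtractions under \refAAZ{} (without \ref{A-8}): namely, the existence of limits $\cX_j\in[0,\infty)$ for each $j\in\sQ$, where the extinction events $\set{\cX_j=0}$ may now carry positive mass. Under \ref{A-6}, the urn has a.s.\ infinitely many draws, so $\TT_n\asto\infty$. First I would adjoin the dummy colour $0$ exactly as in \refS{SpfT1}; this preserves \refAAZ, gives $X_0(\TT_n)=n$, and yields a limit $\cX_0\in[0,\infty)$.

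On the event $\set{\cX_0>0}$, $\cX_0$ is a finite strictly positive random variable, and the arithmetic of the proof of \refT{T1} carries through unchanged: from \eqref{hh3} one derives the asymptotics \eqref{hh6} (or \eqref{hh10} if $\hgl=0$) for $\TT_n$, and then \eqref{tc1} evaluated at $t=\TT_n$ yields
\begin{align*}
\frac{X_{ni}}{n^{\glx_i/\hgl}\log^{\gam_i}n}\asto \hcX_i:=\hgl^{-\gam_i}\cX_0^{-\glx_i/\hgl}\cX_i
\end{align*}
(with the obvious analogue when $\hgl=0$). Since $\cX_0>0$ and the exponents are finite, the equivalence $\hcX_i=0\iff\cX_i=0$ holds on this event, so the conclusions of \refT{T1} are secured modulo the single caveat that $\hcX_i$ may vanish with positive probability.

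The two probabilistic claims $\P(\cX_0>0)>0$ and $\P(\hcX_i=0\mid\cX_0>0)<1$ would both be extracted from a single positive-probability event $E:=\bigcap_{k\in\sQmin}\set{\cX_k>0}$. For a minimal colour $k\notin\sQm$, $\cX_k>0$ a.s.\ by (the subtraction analogue of) \refL{LM}. For a minimal colour $k\in\sQm$, \ref{A00} gives $a_k>0$ and \ref{A-7} forces $\glx_k=\gl_k>0$, so $X_k(t)$ is a classical supercritical Markov branching process surviving with strictly positive probability; since distinct minimal colours evolve as mutually independent processes, $\P(E)>0$. The heart of the argument is the \emph{inductive claim} that on $E$, $\cX_j>0$ a.s.\ for every $j\in\sQ\cup\set{0}$, which yields $E\subseteq\set{\cX_0>0,\hcX_i>0}$ and closes both probabilistic claims.

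The inductive step, run along a total order of $\sQ$ realising the triangularity, will be the main obstacle. Given a non-minimal $j$ and the subcolour split of \refSS{SSLG}, one picks $k\in\sP_j\cup\set{0}$ for which $(\glx_{j_k},\kk_{j_k})=(\glx_j,\kk_j)$ (such a $k$ exists by the definition \eqref{kk}), so that the subprocess $X_{j_k}$ contributes to $\cX_j$ via \eqref{g7}. In the cases $\gl_j\le\glx_k$, positivity $\cX_{j_k}>0$ on $\set{\cX_k>0}$ follows from the deterministic formula \eqref{ji3}, which is unaffected by the subtraction structure; in the case $\gl_j>\glx_k$, the subprocess has infinitely many independent descendant families, each a supercritical branching process with positive survival probability by \ref{A-7}, and a Borel--Cantelli argument gives survival of at least one family, whence $\cX_{j_k}>0$ a.s. The delicate point is to rework the $L^2$ martingale estimates of \refL{L12} and \refL{LM} in the subtraction regime (allowing $\xi_{ii}=-1$), verifying in particular that the bound $\Var\WXi\le C x_0$ and the quadratic-variation computation \eqref{j51} still apply; these facts are standard in the theory of supercritical Markov branching processes, but have to be confirmed in order to close the induction and with it the proof.
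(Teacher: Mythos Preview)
Your proposal is correct and follows essentially the same route as the paper: add the dummy colour $0$, invoke the continuous-time result \refT{TC-} for the extended urn, and rerun the arithmetic of \refS{SpfT1} on the event $\set{\cX_0>0}$. The paper's proof is just two sentences because it offloads everything to \refT{TC-}, whose proof already contains the positivity argument you spell out (independence of the minimal-colour processes, then induction via \refL{ZLG}).

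One small redundancy worth noting: you grant the continuous-time analogue (i.e., \refT{TC-}) at the outset, but then in your final paragraph you worry about reworking the $L^2$ martingale estimates of \refLs{LM} and \ref{L12} in the subtraction regime. Those reworkings (\refLs{ZLM}, \ref{ZL12}, \ref{ZL1200}, \ref{ZLG}) are precisely the content of \refT{TC-}, so once you have granted it there is nothing further to check. In particular, the implication ``$\cX_j>0$ for all $j\in\sP_i$ $\Rightarrow$ $\cX_i>0$ a.s.'' is already the last sentence of \refL{ZLG}, so your inductive step can simply cite that rather than re-derive the case split on $\gl_j$ versus $\glx_k$.
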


\begin{theorem}  \label{TC-}
Let $(X_{i}(t))_{i\in\sQ}$ be a continuous-time
triangular \Polya{} urn satisfying the 
conditions \refAAZC. 
Then the conclusions of \refT{TC} hold, 
except that $\cX_i=0$ is possible, 
with $0\le\P(\cX_i=0\mid\cX_0>0)<1$;
moreover, we have $\P(\cX_i>0\;\forall i\in\sQ)>0$.

If furthermore \ref{A-8} holds, then the all conclusions of \refT{TC} hold.
\end{theorem}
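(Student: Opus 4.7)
The plan is to re-run the inductive proof of \refT{TC} through \refLs{LM}, \ref{L12}, and \ref{LG}, checking at each step that the martingale/$L^2$ estimates remain valid under \ref{A-5} (instead of \ref{A+}) and carefully tracking when the limits $\cX_i$ may vanish. The $L^2$ bounds in the three lemmas only use independence, linearity, and $\E\xi_{ij}^2<\infty$, and are insensitive to the sign of $\xi_{ii}$, so they survive unchanged; the genuine new issue is a.s.\ positivity of $\cX_i$ at those colours where $\xi_{ii}=-1$ has positive probability, i.e.\ for $i\in\sQm$.

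For \refL{LM}, if $i$ is minimal and $i\notin\sQm$ nothing changes. If $i\in\sQm$, then $i$ minimal forces $\glx_i=\gl_i$, and \ref{A-7} gives $\gl_i>0$; thus $X_i(t)$ is a classical supercritical continuous-time Markov branching process, and the standard Kesten--Stigum result \cite[Theorem III.7.2]{AN} (applicable under \ref{A2}) yields $e^{-\gl_i t}X_i(t)\asto \cX_i$ with $\cX_i\ge0$ and $\P(\cX_i>0)=1-q_i^{X_i(0)}>0$, where $q_i\in[0,1)$ is the single-particle extinction probability. The variance formula and Doob bound in \eqref{lm3}--\eqref{lm3a} are unchanged. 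For \refL{L12}, the algebraic decomposition \eqref{jz1}--\eqref{jz4} and all the quadratic-variation computations for $Z_1,Z_2,Z_3,Z_4$ are still valid (note that \ref{A-5} still forces $\xi_{ji}\ge0$ for $j\ne i$, so only the $Y_k$ dynamics are affected). The positivity argument must be revisited only in case \ref{gli>}: we have $\gl_i>\glx_j\ge0$, so each $Y_k$ is supercritical, and on $\{\cX_j>0\}$ the integrated rate $\int_0^\infty a_jX_j(s)\dd s=\infty$ produces a.s.\ infinitely many draws and hence infinitely many $\eta_k>0$; since each such $Y_k$ survives with positive probability independently of the others, Borel--Cantelli gives $\cX_i>0$ a.s.\ on $\{\cX_j>0\}$. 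In cases \ref{gli=} and \ref{gli<}, formula \eqref{ji3} still exhibits $\cX_i$ as a strictly positive deterministic multiple of $\cX_j$, so positivity propagates automatically.

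Feeding these into \refL{LG} and inducting along the triangular order then shows that $\cX_i>0$ a.s.\ on the event $E:=\bigcap_{j\in\sQmin\cap\sQm}\{\text{the branching process at colour }j\text{ survives}\}$, and $\P(E)\ge\prod_{j\in\sQmin\cap\sQm}(1-q_j^{X_j(0)})>0$, which establishes $\P(\cX_i>0\;\forall i)>0$; the conditional statement $\P(\cX_i=0\mid\cX_0>0)<1$ follows by adjoining the dummy colour $0$ exactly as in \refS{SpfT1} and noting that $\cX_0>0$ already requires every minimal colour outside $\sQm$ to contribute positively, leaving only the (strictly positive) branching-survival probabilities in $\sQmin\cap\sQm$ as the remaining obstruction. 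Under the additional hypothesis \ref{A-8} we have $\sQmin\cap\sQm=\emptyset$, so every minimal colour is handled by the original \refL{LM} (or the trivial $\gl_i=0$ case), giving $\cX_j>0$ a.s.\ at each minimal $j$; the induction then yields $\cX_i>0$ a.s.\ for every $i\in\sQ$, recovering \refT{TC} verbatim. The main obstacle is the positivity analysis for non-minimal $i\in\sQm$ in case \ref{gli>}, where branching extinction must be handled together with the random immigration from the ancestor — but the combination of supercriticality with infinitely many immigrations on $\{\cX_j>0\}$ makes the Borel--Cantelli argument clean once it is set up.
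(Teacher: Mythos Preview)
Your overall strategy (rerun the induction via analogues of \refLs{LM}, \ref{L12}, \ref{LG}, and track where positivity can fail) matches the paper's, and your positivity analysis is essentially correct. However, your claim that ``the $L^2$ bounds \dots\ are insensitive to the sign of $\xi_{ii}$, so they survive unchanged'' is false, and this is a genuine gap.

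The problem is in the $Z_1$ term. The bound \eqref{j5b} on $\E[Z_1,Z_1]_t$ relies on the uniform variance estimate \eqref{lm3b}, namely $\Var\bigl[e^{-\gl_i s}Y_k(s)\mid\eta_k\bigr]\le C\eta_k$ for all $s\ge0$. When $i\in\sQm$ and $\gl_i\le0$ this fails: for $\gl_i=0$ one has $\Var[Y_k(s)\mid\eta_k]=C\eta_k s$, and for $\gl_i<0$ one has $\Var[e^{-\gl_i s}Y_k(s)\mid\eta_k]\le C\eta_k e^{-\gl_i s}$, both unbounded in $s$. Under \ref{A-7} this situation genuinely arises: if $\gl_i<\glx_j$ (your case \ref{gli<}) then $\glx_i=\glx_j>0$ is guaranteed but $\gl_i$ may well be $\le0$. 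The paper handles this by proving the modified variance estimates \eqref{zlm02} and \eqref{zlm-2} (\refL{ZLM}\ref{ZLMb}) and then redoing the $Z_1$ bound separately for $\gl_i=0$ and $\gl_i<0$ in the proof of \refL{ZL12}\ref{ZL12b}; the resulting estimates \eqref{zj51} and \eqref{zzj51} acquire extra factors (of $t$, or $e^{-\gl_i t}$) which then have to be absorbed in \refStep{stepZ123b}.

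There is a second, subtler issue you have missed. In \refL{LG} one splits $i$ into subcolours $i_0$ and $i_j$ ($j\in\sP_i$), and applies \refL{L12} to each $i_j$. But $\glx_{i_j}=\gl_i\lor\glx_j$, so if $\gl_i\le0$ and $\glx_j=0$ (possible when $j\notin\sQm$ with $\gl_k=0$ for all $k\preceq j$) then $\glx_{i_j}=0$, and neither \refL{L12} nor \refL{ZL12}\ref{ZL12b} applies to $i_j$. The paper covers this via the coarse estimate \refL{ZL1200}, proved by modifying $\xi_{ii}$ to make $\gl_i$ slightly positive and then using monotonicity; this gives only $e^{-\gd t}X_{i_j}(t)\asto0$ for any $\gd>0$, but since $\glx_i>0$ that suffices in \eqref{g6}--\eqref{g66}. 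Without this piece your induction through \refL{LG} does not close.
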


\begin{remark}\label{RcX=0}
  In \refTs{T1--} and \ref{TC-}, if the limit $\hcX_i=0$ or $\cX_i=0$, then
  the result \eqref{t1b}, \eqref{t1c} or \eqref{tc1} does not give the
  correct growth of $X_{ni}$ or $X_i(t)$. In such cases, it might be
  possible to find more precise results using our methods on suitable
  subsets of the colours;
it seems that this can be done very generally, at least on a
case-by-case basis. We give one example in \refE{EcX=0} but do not attempt
to state any general theorem.
\end{remark}

\begin{remark}\label{Rdep-}
Also the results in \refS{Sdep}--\ref{Sdeg} still hold
(with the same proofs)
under the assumptions \refAAZZ, as the reader might verify.

If we do not assume \ref{A-8} and assume only \refAAZ, then the results on
absolute continuity do not hold as stated, since typically the limits may be
0 with positive probability. (We conjecture that these results might hold
conditioned on the variables being non-zero, but we have not pursued this.)
The remaining results in \refS{Sdep}--\ref{Sdeg} still hold.
\end{remark}

Note that \ref{A+} implies \ref{A-5}, \ref{A-7}, and \ref{A-8}
(with $\sQm=\emptyset$), and thus \refAA{} imply also \ref{A-6};
hence these theorems (strictly) extend \refTs{T1} and \ref{TC}.

We prove \refTs{T1-}--\ref{TC-} in the remainder of  this section.
As in \refS{S1}, we first study a single colour, and as there we split the
discussion into several subsections.

First note that  we used that $X_j(t)$ is
increasing in \refL{LtN} and its proof.
This is no longer necessarily true, but 
we may replace $X_j(t)$ by $X^*_j(t)$ with no further
consequences. Hence, \refLs{LtN} and \ref{L9+} hold with the assumption 
modified to $\E X_j^*(t)<\infty$.

\subsection{A colour not influenced by others}\label{ZSS11}
Consider the situation in \refSS{SS11}, where $\xi_{ji}=0$ for $j\neq i$,
\ie, $i\in\sQmin$.
If $i\in\sQm$,
then $X_i(t)$ is, as noted in \refR{RBP}, a  Markov branching process.

\refL{LM} extends to this case, with some modifications.

\begin{lemma}\label{ZLM}
Suppose that \refAAXZ{} hold and that $i\in\sQmin$.
\begin{alphenumerate}[-10pt]
  
\item\label{ZLMa} 
If\/ 
$i\notin\sQm$ (\ie, $\xi_{ii}\ge 0$ a.s.),
then \refL{LM} still holds.

\item\label{ZLMb} 
If\/ 
$i\in\sQm$, 
then  \refL{LM} holds with the following modifications:
\end{alphenumerate}
  \begin{romenumerate}[0pt]
    
  \item\label{ZLM+} 
If\/ $\gl_i>0$,
the only difference is that $\cX_i=0$  is possible with positive
probability. If\/ $x_0>0$, then $0<\P(\cX_i=0)<1$.
  \item\label{ZLM0} 
If\/ $\gl_i=0$, then $X_i(t)$ is a martingale with
    \begin{align}\label{zlm01}
      X_i(t)&\asto \cX_i=0,
\qquad\text{as \ttoo},
\\\label{zlm00}
\E\bigsqpar{X_i(t)}&= x_0,
\\\label{zlm02}
\Var\bigsqpar{X_i(t)}&= Cx_0t,
\\\label{zlm03}
\E \bigpar{\sup_{0\le t\le u}X_i(t)}^2&\le Cx_0^2+Cx_0 u,
\qquad\text{ for every $u<\infty$}
.    \end{align}
Furthermore, for every $\gd>0$,
\begin{align}\label{zlm04}
\E \bigpar{\sup_{ t\ge0}\cpar{e^{-\gd t}X_i(t)}}^2&<\infty.
\end{align}

  \item\label{ZLM-} 
If\/ $\gl_i<0$, then
    \begin{align}\label{zlm-1}
      X_i(t)&\asto \cX_i=0,
\qquad\text{as \ttoo},
\\\label{zlm-2}
\Var\bigsqpar{X_i(t)}&\le C x_0 e^{\gl_i t} 
,
\\\label{zlm-3}
\E \bigpar{\sup_{0\le t<\infty}X_i(t)}^2&\le Cx_0^2.
    \end{align}
  \end{romenumerate}
\end{lemma}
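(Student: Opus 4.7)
The plan is to handle case \ref{ZLMa} trivially and treat case \ref{ZLMb} by invoking classical \ctime{} Markov branching process theory (as in \refR{RBP}), subdivided according to the sign of $\gl_i$. Case \ref{ZLMa} is immediate: the assumption $\xi_{ji}\ge0$ for every $j$ is exactly what was used in the proof of \refL{LM}, so there is nothing to prove.

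For case \ref{ZLMb}\ref{ZLM+}, I would follow the proof of \refL{LM} line by line: the martingale identity \eqref{cb3}, the quadratic-variation computation \eqref{lm4}--\eqref{lm6}, and the functional-equation argument \eqref{lm7}--\eqref{lm10} all use only the first and second moments of $\xi_{ii}$, which are finite by \ref{A2}. The only step that fails is \eqref{lm11}, where we previously used $X_i(t)\asto\infty$; instead, standard supercritical branching theory gives $X_i(t)\to\infty$ on the survival event and $X_i(t)\equiv 0$ eventually on extinction. Letting \ttoo{} in \eqref{lm9} then yields $\cX_i>0$ on survival and $\cX_i=0$ on extinction, both events having positive probability when $x_0>0$, since $\gl_i>0$ ensures $\P(\text{survival})>0$ and $\P(\xi_{ii}=-1)>0$ ensures $\P(\text{extinction})>0$.

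For case \ref{ZLMb}\ref{ZLM0}, the process $M(t):=X_i(t)$ is itself a martingale by \eqref{cb3}, so \eqref{zlm00} follows from \eqref{cb4}, and specializing \eqref{lm5}--\eqref{lm6} to $\gl_i=0$ gives \eqref{zlm02}. The \as{} convergence \eqref{zlm01} follows because a non-negative integer-valued martingale is eventually constant with $0$ being the only absorbing state. Bound \eqref{zlm03} is just Doob's inequality \eqref{nn6} combined with \eqref{zlm02}. For \eqref{zlm04}, I would decompose $\ooo$ into unit intervals: writing $S_n:=\sup_{n\le t\le n+1}e^{-\gd t}X_i(t)\le e^{-\gd n}\sup_{t\le n+1}X_i(t)$ and using $(\sup_t e^{-\gd t}X_i(t))^2\le\sum_n S_n^2$, apply \eqref{zlm03} with $u=n+1$ to get $\E S_n^2\le Ce^{-2\gd n}(x_0^2+x_0(n+1))$, which sums to a finite number.

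Finally, for case \ref{ZLMb}\ref{ZLM-}, the \as{} extinction \eqref{zlm-1} is the classical subcritical result. For \eqref{zlm-2}, compute $\E X_i(t)^2=e^{2\gl_i t}\E[M,M]_t$ directly from \eqref{lm6} and subtract $(\E X_i(t))^2=x_0^2 e^{2\gl_i t}$; the cross term telescopes to $Cx_0|\gl_i|^{-1}(e^{\gl_i t}-e^{2\gl_i t})\le Cx_0 e^{\gl_i t}$. The hard part will be \eqref{zlm-3}, because now $M(t)=e^{-\gl_i t}X_i(t)$ is no longer $L^2$-bounded (its quadratic variation diverges as \ttoo), so applying Doob to $M$ is useless. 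I would instead bound $\sup_t X_i(t)$ by the total progeny $T$ of the branching process with $x_0$ initial ancestors; writing $T=\sum_{k=1}^{x_0}T_k$ as a sum of independent copies of the single-ancestor total progeny and invoking the classical fact that $\E T_1^2<\infty$ in the subcritical regime when $\E\xi_{ii}^2<\infty$, Minkowski gives $\norm{T}_2\le x_0\norm{T_1}_2$ and hence $\E(\sup_t X_i(t))^2\le Cx_0^2$, as required.
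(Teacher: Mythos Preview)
Your proof is correct. Parts \ref{ZLMa}, \ref{ZLMb}\ref{ZLM+}, and \ref{ZLMb}\ref{ZLM0} match the paper's argument essentially line by line; the only cosmetic difference is that in \ref{ZLM+} the paper gets $\P(\cX_i=0)<1$ more directly from $\E\cX_i=x_0>0$ via \eqref{lm2}, rather than through \eqref{lm9} and the survival dichotomy, but your route is equally valid.

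The genuine divergence is in \ref{ZLMb}\ref{ZLM-}, for \eqref{zlm-3}. You correctly observe that $M(t)=e^{-\gl_i t}X_i(t)$ is not $L^2$-bounded, so a \emph{global} Doob inequality on $M$ is useless. Your remedy---bounding $\sup_t X_i(t)$ by the total progeny $T=\sum_{k=1}^{x_0}T_k$ of the embedded subcritical \GWp{} and invoking the classical fact that $\E T_1^2<\infty$ when the offspring distribution $1+\xi_{ii}$ has finite variance---is valid and clean; the integer hypothesis in \ref{A-5}\ref{A-5b} is exactly what makes the decomposition into $x_0$ independent single-ancestor copies legitimate. The paper instead applies Doob \emph{locally} on unit intervals: for each $m\ge1$,
\begin{align}
\E\Bigsqpar{\sup_{m-1\le t\le m}X_i(t)^2}
\le e^{2\gl_i(m-1)}\E M^*(m)^2
\le C e^{2\gl_i m}\bigpar{x_0^2+x_0 e^{-\gl_i m}}
\le C x_0^2 e^{\gl_i m},
\end{align}
and then sums over $m$. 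Your approach is more elementary and exploits the branching structure directly; the paper's is purely martingale-analytic, self-contained (no external GW moment facts needed), and foreshadows the interval-decomposition technique used repeatedly elsewhere in the paper, notably in Step~\ref{stepZ123b} of \refL{L12} and in the $L^p$ arguments of \refApp{ALp}.
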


\begin{proof}
\pfitemref{ZLMa}
We thus assume \ref{A+} = \ref{A-5a} for the colour $i$,
and then the proof of \refL{LM} still holds.
(Recall that for simplicity we had
\refAA{} as standing assumptions in \refS{S1},
including \ref{A+} for all colours;
however, 
as remarked before \refL{LM}, 
this is needed only for the colour $i$ in
that subsection, and in particular for \refL{LM} and its proof.)

\pfitemref{ZLMb}
Now assume \ref{A-5b}.
By \ref{A00}, we  also have $a_i>0$.

We argue as in the proof of \refL{LM}, with the following differences.
Note that $x_0$ is assumed to be an integer, 
and the case $x_0=0$ is trivial; we thus may assume $x_0\ge1$.
It is still true that $M(t):=e^{-\gl_i t}X_i(t)$ is a martingale, and
\eqref{lm6} holds if $\gl_i\neq0$.

Now, however, as said in \refR{RBP}, with positive probability,
$X_i(t)=0$ for all large $t$;
moreover, in the critical and subcritical cases \ref{ZLM0} and \ref{ZLM-},
this happens a.s.

We study the three cases separately:

  \pfitemref{ZLM+}
The proof of \refL{LM} still holds, except for the final part yielding
\eqref{lm11}. 

By \refR{RBP}, 
with positive probability
$X_i(t)$ dies out and thus $\cX_i=0$. 
Moreover,
by \eqref{lm2},
$\E\cX_i=x_0>0$.
Consequently, $0<\P(\cX_i=0)<1$. 

\pfitemref{ZLM0}
By \refR{RBP}, 
\as{} $X_i(t)=0$ for all large $t$, which gives \eqref{zlm01}.

 For $\gl_i=0$, \eqref{cb3} says that
$X_i(t)=M(t)$ is a martingale, which implies \eqref{zlm00}.
Furthermore, \eqref{lm6} in this case yields
\begin{align}\label{zlmgb}
  \E X_i(t)^2 = \E M(t)^2=\E[M,M]_t = x_0^2+a_i\gb x_0 t,
\end{align}
which gives \eqref{zlm02} and, together with Doob's inequality, \eqref{zlm03}.
To prove \eqref{zlm04}, we note that \eqref{zlm03} implies
\begin{align}\label{zlm03-4}
\E \Bigpar{\sup_{ t\ge0}\cpar{e^{-\gd t}X_i(t)}}^2&
\le \E\sumno e^{-2\gd n}\Bigpar{\sup_{n\le t\le n+1}X_i(t)}^2
\le \E\sumno e^{-2\gd n}O(n+1)
<\infty.
\end{align}

\pfitemref{ZLM-}
As for \ref{ZLM0}, 
\refR{RBP} shows  that
\as{} $X_i(t)=0$ for all large $t$, and thus \eqref{zlm-1} holds. 
Since $\gl_i<0$, we obtain from \eqref{lm6}
\begin{align}\label{zlm-4}
  \E M(t)^2 = \E [M,M]_t \le x_0^2 + C x_0 e^{-\gl_i t} 
\end{align}
and thus
\begin{align}\label{zlm-5}
  \Var \bigsqpar{X_i(t)} = e^{2\gl_i t}\Var \bigsqpar{M(t) }
\le C x_0 e^{\gl_i t} 
,\end{align}
showing \eqref{zlm-2}. Furthermore, \eqref{zlm-4} and Doob's inequality
yield, for any $m\ge1$,
\begin{align}
  \E \bigsqpar{\sup_{m-1\le t\le m}X_i(t)^2}
&\le  e^{2\gl_i (m-1)}\E\bigsqpar{\sup_{m-1\le t\le m}M(t)^2}
\le C e^{2\gl_i m}\E \bigsqpar{M(m)^2}
\notag\\&
\le C e^{\gl_i m}x_0^2.
\end{align}
Since $\gl_i<0$, the sum over all $m\ge1$ is $\le C x_0^2$, which implies
\eqref{zlm-3}. 
\end{proof}

\begin{remark}\label{Rdie}
In fact, it follows from \cite[Theorem III.7.2]{AN} (using \ref{A2}) that
if $i\in\sQmin\cap\sQm$, then
$\cX_i=0$ occurs \as{} exactly when 
$X_i(t)=0$ for large $t$ so that the branching process dies out in finite time.
Thus,  $\P(\cX_i=0)$ equals the
probability that the \ctime{} branching process $X_i(t)$ dies out.
Considering only the times that a ball of colour $i$ is drawn, we obtain an
embedded  
random walk with \iid{} increments distributed as $\xi_{ii}$;
thus $\P(\cX_i=0)$ also equals the probability that this random walk hits 0.
(It is easy to see that this also equals the probability of extinction for a 
\GWp{} with offspring distribution $1+\xi_{ii}$ started with $x_0$ individuals.)
\end{remark}

\subsection{A colour only produced by one other colour}\label{ZSS12}
Consider the situation in \refSS{SS12}, where 
there is a single colour $j$ such that $j\to i$.

We used that $X_j(t)$ is increasing in \eqref{j32};
this is no longer necessarily true, but
we may replace $X_j(t)$ by $X^*_j(t)$ there with no further
consequences. 

\refL{L12} holds with the following minor modifications.
We assume $\glx_j\ge0$, since otherwise $j$ eventually becomes extinct 
by \refL{L<0}, 
and after that $X_i(t)$ evolves as in \refL{ZLM}.
(We are not really interested in this case, as discussed earlier.)
Note that we also exclude the case $\glx_i=\glx_j=0$.
(For good reasons, see \refEs{E+-} and \ref{E--}).

\begin{lemma}\label{ZL12}
Suppose that \refAAXZ{} hold. Let $i\in\sQ$, and
suppose that 
there is exactly one colour $j\in\sQ$ such that $j\to i$, 
and that $X_i(0)=0$.
Suppose also that one of the following holds:
\begin{alphenumerate}
\item\label{ZL12a} 
$i\notin\sQm$ (\ie, $\xi_{ii}\ge 0$ a.s.), and $\glx_j\ge0$.

\item\label{ZL12b} 
$i\in\sQm$, 
$\glx_i>0$, and 
$\glx_j\ge0$.
\end{alphenumerate}
Then \refL{L12} still holds, i.e.,
if 
\eqref{j1}--\eqref{j2} hold,
then we have the conclusions 
\eqref{ji1}--\eqref{ji3} and $\cX_j>0\implies \cX_i>0$ a.s.
\end{lemma}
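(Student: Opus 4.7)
The plan is to follow the proof of \refL{L12} closely, substituting \refL{ZLM} for \refL{LM} where needed and incorporating the bookkeeping changes already flagged at the start of \refS{S-} (replacing $X_j(t)$ by $X_j^*(t)$ in \refLs{LtN} and \ref{L9+}, and in \eqref{j32}). Case \ref{ZL12a} is essentially immediate: since $i$ satisfies \ref{A-5a}, \refL{ZLM}\ref{ZLMa} coincides with \refL{LM}, so every estimate in the original proof transfers verbatim.

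For case \ref{ZL12b} I keep the decomposition \eqref{jz1}--\eqref{jzz}. The analyses of $Z_2$, $Z_3$, and $Z_4$ involve only $X_j$ and the draws of colour $j$, not the internal dynamics of colour $i$, so the bounds \eqref{j42}, \eqref{j23}, \eqref{j35} and the limit identity \eqref{j46=} carry over unchanged. The substantive modification is in Step $Z_1$, where each $Y_k$ is now a single-colour process of colour $i\in\sQm$ and one must invoke \refL{ZLM}\ref{ZLMb} in place of \refL{LM}. I split into three subcases by the sign of $\gl_i$. If $\gl_i>0$, then \refL{ZLM}\ref{ZLMb}\ref{ZLM+} still gives $\Var\bigsqpar{e^{-\gl_i s}Y_k(s)\mid\eta_k}\le C\eta_k$, so \eqref{j5b}--\eqref{j16} hold verbatim. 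If $\gl_i=0$, then \eqref{zlm02} yields $\Var\bigsqpar{Y_k(s)\mid\eta_k}\le C\eta_k(s+1)$, which costs only an extra factor $(t+1)$ in the bound on $\E[Z_1,Z_1]_t$; this is absorbed because $\gl_i=0$ together with $\glx_i>0$ forces $\glx_j=\glx_i>0$, so the exponent in the analogue of \eqref{j54} is $-\glx_i<0$ and summability \eqref{j55} persists. If $\gl_i<0$, then \eqref{zlm-2} yields $\Var\bigsqpar{e^{-\gl_i s}Y_k(s)\mid\eta_k}\le C\eta_k e^{-\gl_i s}$; combined with \refL{L9+}\ref{L9+b} and \eqref{j4*}, this produces
\begin{align*}
\E[Z_1,Z_1]_t\le C\,\E\tXX_j\,(t+1)^{\kk_j}e^{(\glx_j-2\gl_i)t},
\end{align*}
matching the original Case (ii') bound up to constants, and again $\glx_j=\glx_i>0$ delivers summability. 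In all three subcases we land in the Case (ii') regime of the original proof (or in Case (i') when $\glx_j<2\gl_i$ and hence $\gl_i>0$, which is handled unchanged); Case (iii') is excluded by $\glx_i>0$. Doob's inequality then gives $\tZ_1(t)\asto 0$ and $\norm{\tZZ_1}_2<\infty$, and assembling the four $\cZ_\ell$'s as in the conclusion of the proof of \refL{L12} yields \eqref{ji1}--\eqref{ji3}.

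For the final assertion $\cX_j>0\Rightarrow\cX_i>0$ \as, Cases \ref{gli=} and \ref{gli<} are trivial since \eqref{ji3} displays $\cX_i$ as a positive multiple of $\cX_j$. In Case \ref{gli>} we have $\gl_i>\glx_j\ge0$, so $\gl_i>0$ and \refL{ZLM}\ref{ZLMb}\ref{ZLM+} only guarantees $\P(\cY_k>0\mid\eta_k)=1-e^{-c\eta_k}$ for some $c>0$, not \as{} positivity of any individual $\cY_k$. The remedy is a conditional Borel--Cantelli argument: conditionally on the $\gs$-field generated by $(X_j(t))_{t\ge0}$ and all $(T_k,\eta_k)_k$, the events $\cpar{\cY_k>0}$ are independent with $\sum_k\P(\cY_k>0\mid\cdots)=\infty$ \as, since $\cX_j>0$ forces $T_k<\infty$ for all $k$ and the \iid{} sequence $(\eta_k)$ satisfies $\P(\eta_k>0)>0$ (because $\E\eta_k=r_{ji}>0$). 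Hence $\cY_k>0$ for infinitely many $k$ \as, and \eqref{j62} with any such $k$ in place of $K$ yields $\cX_i\ge e^{-\gl_i T_k}\cY_k>0$ \as.

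The main obstacle is the subcase $\gl_i<0$ of Step $Z_1$: the variance of $Y_k(s)$ grows exponentially in $s$, so one must carefully check that the prefactor $e^{-2\gl_iT_k}$ in \eqref{j5b}, combined with the polynomial-times-exponential control of $X_j(s)$ coming from \eqref{j4*} via \refL{L9+}\ref{L9+b}, keeps the bound on $\E[Z_1,Z_1]_t$ of the same order as in Case (ii') of the original proof. Once this single computation is recorded, the remainder is a direct transfer of the \refL{L12} argument.
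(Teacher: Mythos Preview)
Your proposal is correct and follows essentially the same approach as the paper's proof: the same case split on the sign of $\gl_i$, the same use of \refL{ZLM}\ref{ZLMb} to modify the variance bound in \eqref{j5b}, and the same recognition that the hypothesis $\glx_i>0$ forces $\glx_j>0$ whenever $\gl_i\le0$, placing the argument in Case~(ii${}'$) of \refStep{stepZ123b}. The only cosmetic difference is in the final positivity claim: the paper simply lets $K$ be the smallest $k$ with $\cY_k>0$ and tacitly uses independence to see that $K<\infty$ \as, whereas you spell this out via conditional Borel--Cantelli; these are the same argument at different levels of explicitness.
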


\begin{proof}
\pfitemref{ZL12a}
Recall again that we had 
\refAA{} as standing assumptions in \refS{S1},
including \ref{A+} for all colours;
however, it is easily verified that 
the proof of \refL{L12} does not use this for other
colours than $i$, except
in  \eqref{j32}, where we now replace $X_j(t)$ by $X^*_j(t)$ as discussed
above, and to see that $\glx_j\ge0$.
Hence, in the present setting where we assume \refAAXZ, and also explicitly
assume $\glx_j\ge0$, 
if $\xi_{ii}\ge0$ a.s.,
then the proof of \refL{L12} still holds.

\pfitemref{ZL12b}
We now assume \ref{A-5b} for $i$.
Most of the proof of \refL{L12} remains the same.
The main difference in the proof comes in
\refSteps{stepZ1}--\ref{stepZ123b}, where
we used \refL{LM} in \eqref{j5b}, but now instead use \refL{ZLM}\ref{ZLMb}.
We consider three cases:
\begin{PXenumerate}
  
\item 
\emph{$\gl_i>0$}: Then \eqref{j5b} 
still holds by \refL{ZLM},
and thus all estimates
in \refSteps{stepZ1}--\ref{stepZ123b} hold.

\item 
$\gl_i=0$: Then \eqref{j5b} is replaced by, using \eqref{zlm02},
\begin{align}\label{zj5b}
\E \bigsqpar{  [Z_1,Z_1]_t\mid (T_k,\eta_k)\xoo}&  
= \sumk\indic{T_k\le t}
  \Var\bigsqpar{Y_k(t-T_k)\mid T_k,\eta_k}
\notag\\&
= \sumk\indic{T_k\le t} C(t-T_k)\eta_k
.\end{align}
Thus
\begin{align}\label{zj6}
\E   [Z_1,Z_1]_t&  
= C\E\sumk\indic{T_k\le t}(t-T_k) 
\le C t \E N(t).
\end{align}
Using \refL{L9+} (or \refL{LtN}),
it follows that \eqref{j51} for $\ell=1$ is replaced by (with $\gl_i=0$)
\begin{align}\label{zj51}
\E  [Z_1,Z_1]_t &
\le C t \E\intot X_j(s)\dd s
\le C\E\tXX_jt\intot (s+1)^{\kk_j}e^{(\glx_j-2\gl_i)s}\dd s
,\end{align}
with an extra factor $t$.
We assume $0<\glx_i=\gl_i\bmax\glx_j$ and $\gl_i=0$, and thus
$\glx_j>0=\gl_i$, so we are in Case \ref{step5-ii'} of \refStep{stepZ123b};
\eqref{j54} now holds with an  insignificant extra factor $n$
and thus \eqref{j55}--\eqref{j56} still hold and the conclusions 
of \refStep{stepZ123b}
remain the same.

\item 
$\gl_i<0$: Then we obtain instead of \eqref{j5b}, now using
\eqref{zlm-2},
\begin{align}\label{zzj5b}
\E \bigsqpar{  [Z_1,Z_1]_t\mid (T_k,\eta_k)\xoo}& 
\le \sumk\indic{T_k\le t} e^{-2\gl_i T_k}Ce^{-\gl_i(t-T_k)}\eta_k
\notag\\&
=C \sumk\indic{T_k\le t} e^{-\gl_i t -\gl_i T_k}\eta_k
.\end{align}
Thus, by \refL{L9+}\ref{L9+b},
\begin{align}\label{zzj6}
\E {[Z_1,Z_1]_t}&
\le C \E\sumk\indic{T_k\le t} e^{-\gl_i t -\gl_i T_k}\eta_k
=Ce^{-\gl_i t}\E\intot e^{-\gl_i s}X_j(s)\dd s
.\end{align}
It follows that \eqref{j51} for $\ell=1$ is replaced by
\begin{align}\label{zzj51}
\E  [Z_1,Z_1]_t &
\le C\E\tXX_je^{-\gl_i t}\intot (s+1)^{\kk_j}e^{(\glx_j-\gl_i)s}\dd s
.\end{align}
However, 
for $t\le1$, \eqref{zzj51} is trivially equivalent to \eqref{j51},
and for $t\ge1$, \eqref{zzj51} implies,
using $\glx_j-\gl_i\ge-\gl_i>0$,
\begin{align}\label{zzj52}
\E  [Z_1,Z_1]_t &
\le C\E\tXX_j(t+1)^{\kk_j}e^{(\glx_j-2\gl_i)t}
\notag\\&
\le C\E\tXX_j\int_{t-1}^t (s+1)^{\kk_j}e^{(\glx_j-2\gl_i)s}\dd s
.\end{align}
Hence \eqref{j51} holds for all $\ell\le3$ also in the present setting.
By our assumption $\glx_i=\gl_i\bmax\glx_j>0$ we have $\glx_j>0$, 
so we are again in Case \ref{step5-ii'} of \refStep{stepZ123b};
\eqref{j54}--\eqref{j56} still hold and the conclusions 
of \refStep{stepZ123b}
remain the same.
\end{PXenumerate}

In all cases, the conclusions \eqref{ji1}--\eqref{ji3}  of \refStep{stepVI}
follow as for \refL{L12}.

Finally, as for \refL{L12}, if $\gl_i\le\glx_j$, then \eqref{ji3} shows that
$\cX_j>0\implies\cX_i>0$. If $\gl_i>\glx_j$ and $\cX_j>0$, then we use again
\eqref{j61}, where still a.s.\ all $T_k$ are finite.
Let $\cY_k:=\lim_\ttoo e^{-\gl_i t}Y_k(t)$. 
In contrast to \refL{L12}, $\cY_k=0$ is now possible also if $\eta_k>0$, but
$0<\P(\cY_k=0)<1$ by \refL{ZLM}. We now let $K$ be the smallest $k$ such that 
$\cY_k>0$, and conclude as in \refS{S1} that $\cX_i>0$.
\end{proof}

\begin{remark}
In \refL{ZL12}\ref{ZL12b}
we have excluded the case
$\gl_i=\glx_j=0$, since then $\glx_i=0$. In this case
we  are in Case \ref{step5-iii'} of \refStep{stepZ123b}
of the proof of \refL{L12}.
As mentioned above, there is now an extra factor $t$ in \eqref{j51}, and
therefore
we obtain instead of \eqref{j3c2}
\begin{align}\label{zj3c2}
\E \tZccl(n)^2&
\le \bC 2^{-\kk_jn}
.\end{align}
Hence,  if furthermore $\kk_j\ge1$, then \eqref{j38b} still holds, and the rest
of the proof of \refL{L12} applies.
Consequently, \refL{ZL12}\ref{ZL12b} holds also in the case $\gl_i=\glx_j=0$ and
$\kk_i\ge1$. 

\refE{E+-} gives an example with $\gl_i=\glx_j=0=\kk_i$
where the conclusion of \refL{ZL12} does not hold.
For simplicity we have in \refL{ZL12}\ref{ZL12b}  excluded all cases
with $\gl_i=\glx_j=0$ (and thus $\glx_i=0$).

Similarly, \refE{E--}
gives an example with $\gl_i<0=\glx_j=\kk_i$
where the conclusion of \refL{ZL12} does not hold.
We have excluded all cases 
with $\gl_i<\glx_j=0$; it seems likely that \refL{ZL12}\ref{ZL12b} 
might hold also in
this case if $\kk_i\ge1$, but we have not investigated this further.
\end{remark}

We need also a coarse estimate in the case excluded in \refL{ZL12}\ref{ZL12b}.
\begin{lemma}\label{ZL1200}
Suppose that \refAAXZ{} hold and that $i\in\sQ$ is  such that 
$\P(\xi_{ii}=-1)>0$.
Assume that 
there is exactly one colour $j\in\sQ$ such that $j\to i$, 
and that $X_i(0)=0$.
Assume also that $\glx_i=\glx_j=0$.

Assume further that
\eqref{j1}--\eqref{j2} hold.
Then, for every $\gd>0$,
\begin{align}\label{zl1200a}
  e^{-\gd t}X_i(t)\asto0,
\end{align}
and
\begin{align}\label{zl1200b}
\Bignorm{\sup_{t\ge0} \bigcpar{e^{-\gd t}X_i(t)}}_2<\infty.  
\end{align}
\end{lemma}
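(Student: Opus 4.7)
The plan is to adapt the decomposition of \refL{L12}, using the much cruder exponential bounds supplied by \refL{ZLM}\ref{ZLMb}. Since $\glx_i=\glx_j=0\ge\gl_i$ and $\gd>0$, each descendant family of colour $i$ is critical or subcritical and eventually vanishes (\refR{RBP}), which is what makes the conclusion possible. Write, as in \eqref{j3},
\begin{align*}
X_i(t)=\sum_k \indic{T_k\le t}\,Y_k(t-T_k),
\end{align*}
where each $Y_k$ is a single-colour Markov branching process for colour $i$ started at $Y_k(0)=\eta_k$; the $\eta_k$ are iid copies of $\xi_{ji}$ with $\eta_k\perp T_k$. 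Setting $V_k:=\sup_{s\ge0} e^{-\gd s}Y_k(s)$, the estimate \eqref{zlm04} (when $\gl_i=0$) or \eqref{zlm-3} (when $\gl_i<0$) applied to $Y_k$ gives $\E[V_k^2\mid\eta_k]\le C(\gd)(\eta_k^2+\eta_k)$, whence $\E V_k^2<\infty$ by \ref{A2}. We then obtain the pointwise bound
\begin{align*}
e^{-\gd t}X_i(t)\le \sum_k \indic{T_k<\infty}\,e^{-\gd T_k}V_k=:W,
\end{align*}
reducing both claims to proving $W\in L^2$ and applying dominated convergence.

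To bound $\E W^2$, condition on $\cG:=\sigma\bigpar{(\eta_k,T_k)_k}$; given $\cG$, the $V_k$ are independent with conditional mean $\mu(\eta_k)$ and variance $\sigma^2(\eta_k)$. Split $W=S_\mu+M_V$ where $S_\mu:=\sum_k\indic{T_k<\infty}e^{-\gd T_k}\mu(\eta_k)$ and $M_V:=W-S_\mu$. Since $\E[M_V\mid\cG]=0$ and $\sigma^2(\eta_k)\perp T_k$ with iid marginals, \refL{L9+}\ref{L9+b} yields
\begin{align*}
\E M_V^2=\E\sigma^2(\eta_1)\cdot a_j\E\intoo e^{-2\gd s}X_j(s)\dd s,
\end{align*}
which is finite because \eqref{j2} gives $X_j(s)\le (s+1)^{\kk_j}\tXX_j$ with $\tXX_j\in L^2$, while $\gd>0$ ensures $\intoo e^{-2\gd s}(s+1)^{\kk_j}\dd s<\infty$. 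For $S_\mu$, since $\mu(\eta_k)$ is drawn freshly at $T_k$ (independent of $\cF_{T_k-}$), the process $\tS_\mu(t):=S_\mu(t)-\bar\mu\,a_j\intot e^{-\gd s}X_j(s)\dd s$ (with $\bar\mu:=\E\mu(\eta_1)$) is a local martingale whose quadratic variation satisfies $\E[\tS_\mu,\tS_\mu]_\infty=\E\mu(\eta_1)^2\cdot a_j\E\intoo e^{-2\gd s}X_j(s)\dd s<\infty$; hence $\tS_\mu(\infty)\in L^2$. The compensator is bounded by $C\tXX_j\in L^2$, so $S_\mu(\infty)\in L^2$ and therefore $W\in L^2$, proving \eqref{zl1200b}.

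For \eqref{zl1200a}: since $\gl_i\le 0$, \refR{RBP} gives $Y_k(s)=0$ for all sufficiently large $s$ a.s., so each summand $\indic{T_k\le t}e^{-\gd T_k}e^{-\gd(t-T_k)}Y_k(t-T_k)$ tends to $0$ as $t\to\infty$. These summands are dominated by the summable sequence $e^{-\gd T_k}V_k$ (with $W<\infty$ a.s.), and the dominated convergence theorem for series yields $e^{-\gd t}X_i(t)\to 0$ a.s. The main technical obstacle is the $L^2$-bound for $W$: naive expansion of $\E W^2$ produces infinitely many positive cross-terms, and the $V_k$ are not independent of the full sequence $(T_\ell)_\ell$ because components of the replacement vector $\bxi_j$ drawn at time $T_k$ may be jointly correlated and thereby influence later draw times. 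The conditioning on $\cG$, combined with centering by $\mu(\eta_k)$, isolates this dependence into the predictable part $S_\mu$, to which a standard compensator argument then applies.
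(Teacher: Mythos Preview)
Your argument is correct, and it takes a genuinely different route from the paper's.

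The paper does not redo the martingale analysis. Instead it modifies the urn: it replaces $\xi_{ii}$ by $\chxi_{ii}:=\xi_{ii}+2\zeta$ with $\zeta\sim\Be(p)$ for a $p$ chosen so that $\chgl_i=a_i(r_{ii}+2p)\in(0,\gd)$. The modified urn then has $\chglx_i=\chgl_i>0$, so \refL{ZL12}\ref{ZL12b} applies directly and yields $e^{-\chgl_it}\chX_i(t)\asto\chcX_i$ with $\sup_{t}e^{-\chgl_it}\chX_i(t)\in L^2$. A simple coupling gives $\chX_i(t)\ge X_i(t)$ for all $t$, and since $\chgl_i<\gd$ the two conclusions for $X_i$ follow immediately. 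This is a three-line reduction to an already established case.

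Your approach is a direct decomposition: you dominate $e^{-\gd t}X_i(t)$ by $W=\sum_k e^{-\gd T_k}V_k$ with $V_k:=\sup_s e^{-\gd s}Y_k(s)$, and then control $\E W^2$ by conditioning on $\cG=\sigma((\eta_k,T_k)_k)$ and splitting into a centred part $M_V$ (handled via conditional independence of the $V_k$) and a predictable part $S_\mu$ (handled by a $Z_2$/$Z_3$-style compensator argument). The steps are sound: the bound $\E[V_k^2\mid\eta_k]\le C(\gd)(\eta_k^2+\eta_k)$ does follow from tracking $x_0$ through the proofs of \eqref{zlm03}--\eqref{zlm04} and \eqref{zlm-3}; the identity $\E M_V^2=\E\sigma^2(\eta_1)\cdot a_j\E\intoo e^{-2\gd s}X_j(s)\dd s$ is a correct application of \refL{L9+}\ref{L9+b}; and the dominated-convergence argument for \eqref{zl1200a} is clean. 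Your last paragraph correctly identifies the only subtlety (possible correlation between $\eta_k$ and later $T_\ell$ via the joint law of $\bxi_j$) and correctly resolves it.

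What each buys: the paper's coupling trick is short, reuses \refL{ZL12} wholesale, and avoids all new estimates. Your argument is longer and essentially replays the machinery of \refL{L12} at the level of the dominating variable $W$, but it is self-contained and does not require inventing the modified urn; it also makes the source of the $L^2$ bound completely explicit.
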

\begin{proof}
We have $\gl_i\le\glx_i=0$ and, by \ref{A00}, $a_i>0$. 
Thus $r_{ii}=\gl_i/a_i\le0$.
Furthermore, $\xi_{ii}\ge-1$ a.s., and thus
$r_{ii}=\E\xi_{ii}\ge-1$. 
Choose $p\in(0,1)$ such that $0<r_{ii}+2p<\gd/a_i$.
 
 Modify the urn by increasing $\xi_{ii}$ to $\chxi_{ii}:=\xi_{ii}+2\zeta$,
  where $\zeta\in\Be(p)$ 
is independent of
  $\xi_{ii}$;
  all other $\xi_{k\ell}$ and initial conditions $X_k(0)$ remain the same.
We denote quantities for the new urn by adding $\widecheck{\ }$.
The modification does not affect any ancestors of $i$; thus
$\chX_j(t)=X_j(t)$ and $\chglx_j=\glx_j=0$.
On the other hand, by our choice of $p$,
\begin{align}\label{chgl}
\chgl_i=a_i\E\chxi_{ii} = a_i(\E\xi_{ii}+2\E\zeta)
=a_i(r_{ii}+2p)
\in(0,\gd).
\end{align}
Consequently, $\chglx_i=\chgl_i\bmax\chglx_j=\chgl_i>0$
and thus \refL{ZL12}\ref{ZL12b} applies to the modified urn.
(The other conditions of \refL{ZL12}\ref{ZL12b} obviously hold.)

The modification adds extra balls of colour $i$, and these may get
descendants of colour $i$ and they may disappear again, but we may separate
these extra balls of colour $i$ from the original ones, and thus couple the
two versions such that $\chX_i(t)\ge X_i(t)$ for all $t\ge1$.
Consequently, noting that 
 $\chglx_i=\chgl_i$ and $\widecheck\kk_i=0$,
\refL{ZL12} yields, from \eqref{ji1}--\eqref{ji2} and \eqref{j4}--\eqref{j4*}, 
\begin{align}
\limsup_\ttoo e^{-\chgl_i t}X_i(t)
\le
\lim_\ttoo e^{-\chgl_i t}\chX_i(t) = \chcX_i<\infty
\quad\text{a.s.},
\end{align}
and 
\begin{align}
\Bignorm{\sup_{t\ge0} \bigcpar{e^{-\chgl_i t}X_i(t)}}_2
\le
\Bignorm{\sup_{t\ge0} \bigcpar{e^{-\chgl_i t}\chX_i(t)}}_2
<\infty.  
\end{align}
The results follow, since $\chgl_i<\gd$ by \eqref{chgl}.
\end{proof}

\subsection{The general case for a single colour}\label{ZSSLG}

\begin{lemma}\label{ZLG}
Suppose that \refAAZC{} hold.
Then Lemma \ref{LG} still holds,
except for the last sentence.

The last sentence remains valid if $i\notin\sQm$, and also if $i\notin\sQmin$.
(In particular, it is valid if  \ref{A-8} holds.)
In the remaining case, when $i\in\sQm$ and is minimal, then $0<\P(\cX_i>0)<1$.
\end{lemma}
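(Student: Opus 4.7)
My plan is to mirror the proof of \refL{LG}, decomposing colour $i$ into subcolours $i_0$ (descendants of the initial balls) and $i_j$ for $j\in\sP_i$ (descendants of balls of colour $i$ added when a ball of colour $j$ is drawn), so that $X_i(t)=X_{i_0}(t)+\sum_{j\in\sP_i}X_{i_j}(t)$ as in \eqref{ca1}. Each subcolour inherits the replacement vector $\bxi_i$, and in particular inherits whether it belongs to $\sQm$. I will then treat each subcolour with the appropriate single-colour lemma: subcolour $i_0$ falls under \refSS{ZSS11} and is handled by \refL{ZLM}, while each subcolour $i_j$ ($j\in\sP_i$) falls under \refSS{ZSS12} and is handled either by \refL{ZL12} or, in a borderline regime, by the coarse bound \refL{ZL1200}.

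For the convergence \eqref{g3} and the $L^2$ bound \eqref{g4}, an induction on the colour order lets me assume \eqref{g1}--\eqref{g2} for each $j\in\sP_i$. If $i\notin\sQm$, part \ref{ZLMa} of \refL{ZLM} and part \ref{ZL12a} of \refL{ZL12} apply and the proof of \refL{LG} goes through verbatim. If $i\in\sQm$, then \ref{A-7} gives $\glx_i>0$, and I apply \refL{ZLM}\ref{ZLMb} to $i_0$; when $\gl_i\le 0$ the tail estimates \eqref{zlm03}--\eqref{zlm04} and \eqref{zlm-3} give $\cX_{i_0}=0$ a.s.\ together with an $L^2$-bounded suitably normalized supremum. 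For each $j\in\sP_i$ with $\glx_{i_j}=\gl_i\lor\glx_j>0$, \refL{ZL12}\ref{ZL12b} applies. The only genuinely new situation is when $\gl_i\le 0$ and some $j\in\sP_i$ has $\glx_j=0$, so that $\glx_{i_j}=0$; here I invoke \refL{ZL1200} with $\gd=\glx_i/2>0$, which furnishes both a.s.\ convergence of $e^{-\gd t}X_{i_j}(t)$ to $0$ and an $L^2$-bounded supremum. Since $\glx_i>\gd$, such a subcolour contributes negligibly to $\tX_i(t)$ and to $\tXX_i$, so the domination \eqref{g6}--\eqref{g66} and the limit \eqref{g7} still hold.

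For the positivity statements I split into four cases. If $i\notin\sQm$ and $i\in\sQmin$, then \ref{A3} gives $X_i(0)>0$ and \refL{ZLM}\ref{ZLMa} yields $\cX_i=\cX_{i_0}>0$ a.s. If $i\notin\sQm$ and $i\notin\sQmin$, I pick, as in the proof of \refL{LG}, a $j\in\sP_i$ with $(\glx_{i_j},\kk_{i_j})=(\glx_i,\kk_i)$; \refL{ZL12}\ref{ZL12a} then gives $\cX_{i_j}>0$ whenever $\cX_j>0$. If $i\in\sQm$ and $i\notin\sQmin$, the same choice of $j$ works with \refL{ZL12}\ref{ZL12b} in place of \ref{ZL12a}; the coarse-estimate subcolours with $\glx_{i_j}=0$ do not enter the leading-order sum in \eqref{g7}, so $\cX_i\ge\cX_{i_j}>0$. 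Finally, if $i\in\sQm\cap\sQmin$, then $X_i(t)=X_{i_0}(t)$ is a classical \ctime{} Markov branching process with $\gl_i=\glx_i>0$ by \ref{A-7}, and \refL{ZLM}\ref{ZLMb}\ref{ZLM+} yields $0<\P(\cX_i=0)<1$, proving the final assertion.

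The main obstacle is the sub-case $i\in\sQm$ with $\gl_i\le 0$, because some subcolours $i_j$ can then fall into the borderline regime $\glx_{i_j}=0$ where \refL{ZL12} is not available; indeed \refE{E+-} and \refE{E--} show that convergence can genuinely fail in the regime $\glx_i=0$. \refL{ZL1200} is designed precisely to absorb these subcolours into the error term, exploiting the fact that \ref{A-7} ensures $\glx_i>0$ so that a dominating exponent $\gd\in(0,\glx_i)$ is available.
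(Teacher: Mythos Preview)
Your proposal is correct and follows essentially the same route as the paper's proof: the same subcolour decomposition, the same case split on $i\in\sQm$ versus $i\notin\sQm$, the same use of \refL{ZLM}\ref{ZLMb} for $i_0$ and of \refL{ZL12}\ref{ZL12b} or \refL{ZL1200} for the $i_j$, and the same positivity analysis. The only cosmetic differences are that the paper takes $\gd=\glx_i$ rather than $\gd=\glx_i/2$ in \refL{ZL1200} (either works), and that for $\cX_{i_0}=0$ when $\gl_i\le0$ the paper cites \eqref{zlm01} and \eqref{zlm-1} for the a.s.\ limit and \eqref{zlm04}, \eqref{zlm-3} for the $L^2$ bound, whereas you conflate these slightly.
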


\begin{proof}
Recall that the assumption \ref{A-7} implies that $\glx_j\ge0$ for every
$j\in\sQ$.

We follow the proof of \refL{LG} and split the colour $i$ into subcolours
$i_0$ and   $i_j$, $j\in\sP_i$
(also if $i$ is minimal so $\sP_i=\emptyset$); 
then \eqref{ca1} holds, and also
\eqref{gli0}--\eqref{g6} and \eqref{g66}. 

If $i\notin\sQm$, 
then the proof is exactly as for \refL{LG}, now using
\refLs{ZLM}\ref{ZLMa} and \ref{ZL12}\ref{ZL12a}
instead of \refLs{LM} and \ref{L12}.

Assume thus in the sequel of the proof that instead 
$i\in\sQm$.
(Thus, \ref{A-5b} holds.) 
Then
by \ref{A-7}, we have $\glx_i>0$.
However, it is possible that $\gl_{i_0}=\gl_i\le0$ or that
$\glx_{i_j}=\gl_i\bmax\glx_j=0$ for some $j\in\sP_i$.

For $i_0$, we now apply \refL{ZLM}\ref{ZLMb}.
If $\gl_i>0$, then 
\refL{ZLM}\ref{ZLMb}\ref{ZLM+}
shows that $X_{i_0}$ can be treated as before in \eqref{g6} and \eqref{g66}.
If $\gl_i\le0$, we replace, as we may, $\tX_{i_0}(t)$ by $e^{-\glx_i t}X_{i_0}(t)$
in the estimate \eqref{g6}, cf.\ \eqref{ca1} and \eqref{j4}.
By \eqref{zlm01} and \eqref{zlm-1}, this term contributes 0 to the limit
\eqref{g3}.
 Similarly, in \eqref{g66} we replace $\tXX_{i_0}$ 
by $\sup_{t\ge0}\cpar{e^{-\glx_i t}X_{i_0}(t)}$,
which gives a finite contribution to \eqref{g4} by
\eqref{zlm04} and \eqref{zlm-3}.

Similarly,
for each $i_j$ ($j\in\sP_i$), we  now apply 
\refL{ZL12}\ref{ZL12b} if $\glx_{i_j}>0$;
otherwise, i.e., if $\gl_i\le0$ and $\glx_j=0$,
we apply \refL{ZL1200} (with $\gd:=\glx_i>0$).
In the latter case we replace
$\tX_{i_j}(t)$ by $e^{-\glx_i t}X_{i_j}(t)$
in the estimate \eqref{g6} and use \eqref{zl1200a},
and we replace
$\tXX_{i_j}$ by $\sup_{t\ge0}\cpar{e^{-\glx_i t}X_{i_j}(t)}$
 in  \eqref{g66} and use \eqref{zl1200b}.

This proves, in all cases, that \eqref{g3} and \eqref{g4} hold.
Moreover, for each $j\in\sP_i\cup\set0$ such that we apply 
\refL{ZLM}\ref{ZLMb}\ref{ZLM0},\ref{ZLM-} or \refL{ZL1200} to $i_j$,
the contribution to the limit \eqref{g3} is 0,
and so is the contribution to the \rhs{} of \eqref{g7} since
in these cases $\glx_{i_j}\le0<\glx_i$ by \eqref{gli0}--\eqref{glij}.
Hence, \eqref{g7} still holds.

It remains to consider the final sentence in \refL{LG}.
First, if $\sP_i\neq\emptyset$, 
then it follows as in the proof of \refL{LG} that if
$\cX_j>0$ for every $j\in\sP_i$, then $\cX_i>0$.
In the remaining case, $i$ is minimal. 
Note that then $X_i(0)>0$ by \ref{A3}.
If $i$ is minimal and $i\notin\sQm$, 
then \as{} $\cX_i>0$  by \refL{LM}.
On the other hand,
if $i$ is minimal and $i\in\sQm$, then 
\refL{ZLM}\ref{ZLMb}\ref{ZLM+}
shows that $\P(\cX_i=0) \in(0,1)$.
\end{proof}

\subsection{Proofs of \refTs{T1-}--\ref{TC-}}\label{Spf-}

\begin{proof}[Proof of \refT{TC-}]
As in \refS{STC}, now using \refL{ZLG}; we use again induction on the colour
$i$ and conclude that \eqref{g3} and \eqref{g4} hold for every $i\in\sQ$,
although now $\cX_i=0$ is possible.

 The processes $X_i(t)$ for $i\in\sQmin$
are independent, and thus so are their limits $\cX_i$.
Each is strictly positive with positive probability, by
\refL{ZLM} or \ref{ZLG}, and thus
there is a positive probability that $\cX_i>0$ for every minimal $i$.
Moreover, if \ref{A-8} holds, then this probability is 1.
Finally, it follows by \refL{ZLG} by induction on the colour $i$ 
that  \as{} if $\cX_i>0$ for every minimal $i$, then
$\cX_i>0$ for every  $i$.
\end{proof}

\begin{proof}[Proof of \refTs{T1-} and \ref{T1--}]
As in \refS{SpfT1}, now using \refT{TC-} and considering only the event
$\set{\cX_0>0}$. 
Note that \refT{TC-}, applied to the extended urn, shows that
with positive probability $\cX_0>0$ and $\cX_i>0$ for all $i\in\sQ$;
furthermore, in \refT{T1-}, where we assume \ref{A-8}, this holds a.s.

Note also that since we assume \ref{A-6}, we have \as{} $\TT_n<\infty$ for
every $n$, and thus the 
\dtime{} process $\bX_n$ is well-defined.
\end{proof}

\section{Random vs non-random replacements}\label{Smean}

Consider a \Polya{} urn $\cU$
with a random replacement matrix $(\xi_{ij})_{i,j\in\sQ}$.
For simplicity we study only the case when all $\xi_{ij}\ge0$, and we thus
assume \refAA.
Consider also another urn $\cU'$ with the same colours $\sQ$, 
the same initial vector $\bX_0$,
and the same
activities $a_i$, but non-random replacements $r_{ij}=\E\xi_{ij}$.
We thus replace the replacement matrix by its mean, and we may call $\cU'$
the  \emph{mean urn} corresponding to $\cU$.
Both urns have the same $r_{ij}$, and thus the same colour graph, $\gl_i$,
$\glx_i$, $\kk_i$, $\hgl$, $\hkk$ and $\gam_i$,
see \eqref{itoj} and \eqref{gli}--\eqref{gam}.
The new urn $\cU'$ 
also satisfies \refAA, and \refTs{T1} and \ref{TC} show that we
have the same qualitative asymptotic behaviour for both urns, with the same
normalization factors. However,
the limits $\hcX_i$ and $\cX_i$ are in
general \emph{not} the same for the two urns
$\cU$ and $\cU'$,
and thus the asymptotic
distributions may be be different; see  \refE{E2p}.

We note that the constants $c_{ik}$ in \refS{Sdep} by \refL{LC} are the same
for the two urns.
This yields one simple case. (We assume $\hgl>0$ for simplicity, and leave a
study of the case $\hgl=0$ to the reader.)
\begin{theorem}\label{Tmean}
Assume $\hgl>0$.
  If\/ $i$ is a colour such that $\hcX_i$ is deterministic for one of the two
  urns, then $\hcX_i$ is the same constant for both urns.
\end{theorem}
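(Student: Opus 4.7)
The plan is to reduce everything to shared algebraic data. Both urns have the same activities $a_i$, the same initial vector $\bX_0$, and the same mean replacements $r_{ij}$, hence the same colour graph, the same exponents $\gl_i,\glx_i,\kk_i,\hgl,\hkk,\gam_i$, the same relation $\preceq$, and the same sets $\sA_i$ of leaders that a given colour follows. Crucially, the coefficients $c_{ik}$ (and hence $\hc_{ik}$) from \refL{LC} also agree for $\cU$ and $\cU'$, because they are uniquely determined by the rule $c_{k\nu}=\gd_{k\nu}$ on leaders together with the subleader recursion~\eqref{subl}, both of which involve only $a_i$ and $r_{ij}$.

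With this in hand, I would split according to \refT{TEL}. If $\glx_i<\hgl$, then parts \ref{TEL0}--\ref{TEL<} of that theorem force $\glx_i=0$ whenever $\hcX_i$ is deterministic. In that subcase every $k\in\sA_i$ is a minimal leader with $\gl_k=0$, so \refL{LL0} gives $\cX_k=X_k(0)$, and hence by \refL{LNDB} combined with \eqref{hcx}
\begin{align*}
  \hcX_i=\hgl^{-\kk_i}\cX_i=\hgl^{-\kk_i}\sum_{k\in\sA_i}c_{ik}X_k(0),
\end{align*}
which is the same deterministic number for both urns.

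If $\glx_i=\hgl$, the argument in the proof of \refT{TEL}\ref{TEL=} reduces \eqref{hcx} to $\hcX_i=\hgl^{\hkk-\kk_i}\cX_i/\cX_0$, where both $\cX_i=\sum_k c_{ik}\cX_k$ and $\cX_0=\sum_k c_{0k}\cX_k$ are expansions over the leaders $k$ with $\gl_k=\hgl>0$. By \refT{Tac} the joint law of these $\cX_k$ is absolutely continuous, so $\hcX_i$ is deterministic iff there exists a scalar $c$ with $c_{ik}=c\,c_{0k}$ for every such $k$, in which case $\hcX_i=\hgl^{\hkk-\kk_i}c$. This proportionality is a condition on the shared coefficients alone; hence it holds for $\cU$ iff it holds for $\cU'$, and then the constant $c$, and thus $\hcX_i$, is the same for both urns.

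The main obstacle is this second case: one must explain why the value of $\hcX_i$ depends only on $(c_{ik})$ and $(c_{0k})$, and not on the urn-specific joint distribution of the $\cX_k$. This is precisely what absolute continuity in \refT{Tac} supplies: it forces any a.s.\ proportionality $\cX_i=c\cX_0$ to be an identity of coefficient vectors, so $c$ is read off algebraically and does not see the replacement distributions beyond their means.
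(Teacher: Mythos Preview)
Your proof is correct and follows essentially the same route as the paper's. Both arguments use \refT{TEL} to reduce to the two cases $\glx_i=0$ and $\glx_i=\hgl$, dispose of the first via \refL{LL0} and \refL{LNDB} (expressing $\hcX_i=\hgl^{-\kk_i}\sum_{k\in\sA_i}c_{ik}X_k(0)$ as a number depending only on shared data), and handle the second by writing $\cX_i$ and $\cX_0$ as linear combinations over the leaders with $\gl_k=\hgl$, then using the absolute continuity from \refT{Tac} to turn the a.s.\ relation $\hcX_i=\text{const}$ into a proportionality of coefficient vectors, which is an urn-independent condition.
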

\begin{proof}
By \refT{TEL}, there are two cases: either $\glx_i=0$, or $\glx_i=\hgl$.

If $\glx_i=0$,
then \eqref{glai} and
\refL{LL0} show that 
for every $j\in\sA_i$, 
$\cX_j=X_j(0)$ is a constant, the same  for both urns.
Hence,  \refLs{LNDB} shows, 
since the constants $c_{ik}$ are the same for both urns,
that $\cX_i$ is the same constant for both urns.
Consequently, \eqref{hcx} shows that
$\hcX_i=\hgl^{-\kk_i}\cX_i$ is the same constant for both urns.

Assume now $\glx_i=\hgl$. If we add dummy balls to both urns as in
\refS{SpfT1}, then, by \refL{LNDB} and \eqref{hh1}, for any of the urns,
both $\cX_i$ and $\cX_0$ are linear combinations 
$\sum_{j\in J}c_{ij}\cX_j$ and $\sum_{j\in J}c_{0j}\cX_j$,
where $J$ is the set of leaders with $\glx_j=\hgl$.
The joint distribution of $(\cX_j)_{\in J}$ is \abscont{} by \refT{Tac}, and
since \eqref{hcx} yields $\hcX_i=\hgl^{-\gam_i}\cX_i/\cX_0$,
it follows that if $\hcX_i$ is deterministic in one of the two urns, then 
the vectors $(c_{ij})_{j\in J}$ and $(c_{0j})_{j\in J}$ are proportional.
Since these vectors are the same for both urns, it follows that then
$\hcX_i$ is the same constant in both urns.
\end{proof}

\section{Balanced urns}\label{Sbalanced}
Many papers on \Polya{} urns assume that urn is balanced (as defined below);
while this is quite restrictive, it is partly justified by the fact that
\Polya{} urns that appear in applications often are balanced.
We have not needed this assumption in the preceding sections, but it will be
used for some results in the following sections (in particular
\refS{Smoments}). 
The present section contains the definition and some
simple results for later use. 

In the standard case when all activities $a_i=1$, a \Polya{} urn is said to
be \emph{balanced} if we add the same total number of balls at each draw;
in other words, if the replacement matrix $(\xi_{ij})_{i,j\in\sQ}$ satisfies
$\sum_j\xi_{ij}=\gbal$ a.s., for some $\gbal\in\bbR$ and every $i\in\sQ$.
In general, with possibly different activities $a_i$, it is not the total
number of balls but their total activity that is important, and we make the
following general definition.
(\refD{DB} and \refL{LB1} -- \refR{RBT}
below apply to all \Polya{} urns, triangular or not.)

\begin{definition}\label{DB}
A \Polya{} urn is \emph{balanced} if there exists a constant $\gbal\in\bbR$
  (called the \emph{balance}) such that for every $i\in\sQ$ with $a_i>0$, we
  have
  \begin{align}
    \label{ume1}
\sum_{j\in\sQ} a_j\xi_{ij}=\gbal
\qquad\text{a.s.}
  \end{align}
\end{definition}

With $\ba:=(a_j)_{j\in\sQ}$, the vector of activities, \eqref{ume1} may be
written $\ba\cdot\bxi_i=\gbal$ for all $i$ with $a_i>0$.

Suppose that the urn is balanced as in \refD{DB}.
It follows from \eqref{xin} (since colours $i$ with $a_i=0$ will never be drawn)
that a.s.
\begin{align}
  \label{ume3}
\ba\cdot\bX_n=\sum_{j\in\sQ} a_jX_{nj}=\ba\cdot\bX_0+n\gbal.
\end{align}
Hence, the denominator in \eqref{urn} is deterministic.
This has been used in several ways in many papers by different authors.
(In particular, it makes it possible to use martingale methods
for the \dtime{} urn, 
similar to the \ctime{} arguments in the present paper;
see \refR{Rmart} and \eg{} \cite{BoseDM}.)
Here we note some simple consequences.

Note first that if the balance $\gbal<0$, then \eqref{ume3} shows that
the activity $\ba\cdot\bX_n$ becomes negative for large $n$; this is clearly
impossible and shows that the \dtime{} urn process must stop and cannot be
continued for ever. In the present paper, we are not interested in this
case, so we must have $\gbal\ge0$, and we thus assume this
in the sequel (whether it is said explicitly or not).
In this case, there are no problems.

\begin{lemma}\label{LB1}
  A balanced urn with balance $\gbal\ge0$ which satisfies \ref{A0} 
and \ref{A+} or (more generally) \ref{A-5} is well-defined for all
discrete or continuous times, and thus satisfies \ref{A-6}.
\end{lemma}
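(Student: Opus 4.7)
The plan is to exploit the fact that balance makes the total weighted activity $\ba\cdot\bX$ increase by exactly $\gbal\ge0$ at every draw, so that positivity of the denominator in \eqref{urn} is automatic and all one really has to check is non-explosion in continuous time.

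In discrete time, since only colours $i$ with $a_i>0$ are ever drawn and \eqref{ume1} holds for such $i$, iterating \eqref{xin} yields the identity \eqref{ume3}, namely $\ba\cdot\bX_n=\ba\cdot\bX_0+n\gbal$. By \ref{A0} we have $\ba\cdot\bX_0>0$, so (because $\gbal\ge0$) $\ba\cdot\bX_n\ge\ba\cdot\bX_0>0$ for every $n\ge0$, the probabilities in \eqref{urn} are always legitimate, and $(\bX_n)$ is defined for all $n$. Under \ref{A+} each $X_{ni}$ is monotone nondecreasing, and under the more general \ref{A-5} the integrality of $X_{0i}$ for $i\in\sQm$ together with $\xi_{ii}\in\bbZgeo\cup\set{-1}$ rules out negative coordinates (a draw can decrement $X_{ni}$ only when $X_{ni}\ge1$, since otherwise the intensity $a_iX_{ni}$ of drawing colour $i$ vanishes).

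For continuous time, I would set $S(t):=\ba\cdot\bX(t)$ and let $N(t)$ count the draws in $[0,t]$, defined on $\set{t<\tau_\infty}$ where $\tau_\infty$ is the explosion time. The same bookkeeping gives $S(t)=S(0)+\gbal N(t)$ on that event, hence $S(t)\ge S(0)>0$ whenever defined; this is precisely \ref{A-6}, provided one knows $\tau_\infty=\infty$ a.s.

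The only nonroutine step is therefore non-explosion. The jump intensity of $N$ equals $S(t-)$, so conditionally on the history up to the $k$-th jump the next inter-arrival time is exponential with rate $S(0)+\gbal k$. Thus $\tau_\infty$ has the same law as $\sum_{k\ge0}E_k/(S(0)+\gbal k)$ with $E_k$ i.i.d.\ standard exponentials. Whether $\gbal=0$ (so the rates are all $S(0)$ and $N$ is Poisson of rate $S(0)$) or $\gbal>0$ (so the rates grow linearly in $k$), the deterministic series $\sum_{k\ge0}(S(0)+\gbal k)\qw$ diverges, hence $\tau_\infty=\infty$ a.s.\ by the classical non-explosion criterion for pure-birth processes. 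This closes the argument; the only ``hard'' part is the pure-birth estimate, which is textbook and presents no real obstacle.
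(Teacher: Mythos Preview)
Your argument is correct and follows the same core idea as the paper: use the balance identity \eqref{ume3} to see that $\ba\cdot\bX_n\ge\ba\cdot\bX_0>0$ for all $n$, hence the total activity never vanishes. The paper's proof is just that one line and leaves everything else implicit; you have added two pieces the paper does not spell out, namely the nonnegativity of coordinates under \ref{A-5} and non-explosion in continuous time via the pure-birth criterion $\sum_k(\ba\cdot\bX_0+\gbal k)\qw=\infty$. Both additions are correct and make the argument more self-contained (the paper elsewhere remarks that finite first moments of the replacements suffice for non-explosion, but does not invoke that here).
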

\begin{proof}
  By \eqref{ume3}, we have, for every $n$, \as{}
$\ba\cdot\bX_n\ge\ba\cdot\bX_0>0$.
Hence the \dtime{} process $\bX_n$ is well-defined, and also
$\ba\cdot\bX(t)>0$ for every $t$.
\end{proof}

We assume \ref{A0} and \ref{A+} or \ref{A-5} below; thus \ref{A-6} holds.

\begin{remark}\label{RB0}
  The case $\gbal=0$ is trivial. First, if \ref{A+} holds, so there are no
  subtractions, then the only possibility is $\xi_{ij}=0$ \as{} for all
  $i,j\in\sQ$; in other words, we draw from the urn but nothing happens. 
If we allow subtractions as in \ref{A-5b}, there may be a few
  initial draws that wipe out some colours, but nothing more will happen;
  furthermore, such urns violate \ref{A-7}.
\end{remark}

\begin{lemma}
  \label{LB}
If a \Polya{} urn is balanced, then the random processes $(\bX_n)\xoon$ and
$(\TT_n)\xoon$ are independent.
\end{lemma}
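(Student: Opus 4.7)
The plan is to exploit the standard ``uniformization'' description of the continuous-time urn, noting that in the balanced case the holding-time rates become deterministic.

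First I would recall (or re-derive) the joint construction of $(\bX_n,\TT_n)$: starting from $\bX_0$, iteratively generate a pair $(E_{n+1},J_{n+1})$ where $E_{n+1}$ is, conditional on $\bX_n$, exponentially distributed with rate $\ba\cdot\bX_n=\sum_{j\in\sQ}a_jX_{nj}$, and $J_{n+1}$ is the colour drawn at time $\TT_{n+1}:=E_1+\dots+E_{n+1}$, chosen with probabilities $a_iX_{ni}/(\ba\cdot\bX_n)$ as in \eqref{urn}, and independent of $E_{n+1}$ given $\bX_n$; then $\bX_{n+1}=\bX_n+\bxi^{(n)}_{J_{n+1}}$ with $\bxi^{(n)}_{J_{n+1}}$ an independent copy of $\bxi_{J_{n+1}}$. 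This is exactly the description of the embedded jump chain and holding times of a continuous-time Markov jump process, so it reproduces the process $(\bX(t))$ and the draw times $(\TT_n)$ defined in \refSS{SSCT}.

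The key observation is \eqref{ume3}: in a balanced urn we have $\ba\cdot\bX_n=\ba\cdot\bX_0+n\gbal$ a.s., which is a \emph{deterministic} constant depending only on $n$. Hence $E_{n+1}$ is, conditional on $\bX_n$, exponential with a deterministic rate $\ba\cdot\bX_0+n\gbal$; this conditional distribution does not depend on $\bX_n$ at all. By the construction, moreover, $E_{n+1}$ is conditionally independent of $J_{n+1}$ and of $\bxi^{(n)}_{J_{n+1}}$ given $\bX_n$, and hence conditionally independent of $\bX_{n+1}$ given $\bX_n$.

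From this I would conclude by induction on $n$ that the sequence $(E_1,\dots,E_n)$ is independent of $(\bX_0,\bX_1,\dots,\bX_n)$ for every $n$: at the inductive step, $E_{n+1}$ has a fixed exponential law not depending on anything that came before, and $\bX_{n+1}$ is a measurable function of $\bX_n$ and the independent variables $(J_{n+1},\bxi^{(n)}_{J_{n+1}})$, which in turn are independent of $(E_1,\dots,E_{n+1})$. Since $\TT_n=E_1+\dots+E_n$ is a measurable function of $(E_k)_{k=1}^\infty$, the sequence $(\TT_n)_{n\ge1}$ is independent of $(\bX_n)_{n\ge1}$, which is the claim.

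There is no real obstacle here beyond carefully setting up the joint construction; the content of the lemma is simply that the balance condition makes the holding-time rates state-independent, so that the clock and the jump chain decouple. As a by-product, the argument also shows that the $E_k$ are themselves mutually independent exponentials with the explicit rates $\ba\cdot\bX_0+(k-1)\gbal$, a fact that will be useful later.
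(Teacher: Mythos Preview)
Your proof is correct and takes essentially the same approach as the paper: both use \eqref{ume3} to observe that in the balanced case the holding-time rate $\ba\cdot\bX_n$ is deterministic, so each waiting time $\TT_{n+1}-\TT_n$ is exponential with a fixed rate independent of the past, and hence the clock and the jump chain decouple. The paper's version is somewhat terser, but the argument is the same.
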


\begin{proof}
  Let $n\ge0$. 
Stop the \ctime{} process $\bX(t)$ at $\TT_n$, \ie, at the $n$th draw.
Conditionally on everything that has happened so far, 
the  waiting time $\TT_{n+1}-\TT_n$ until the next draw is an exponential
random variable with rate $\sum_i a_iX_{ni}$, which by \eqref{ume3} is the
constant $\ba\cdot\bX_0+n\gbal$.
Hence, $\TT_{n+1}-\TT_{n}$ is independent of $\bX_n$,
and thus also of the colour of the drawn ball, say $i$, 
and  of $\bX_{n+1}-\bX_n=\bxi_i\nn$, recall \eqref{xin}.
Consequently, the processes $(\TT_n)_n$ and $(\bX_n)_n$ evolve independently.
\end{proof}

\begin{lemma}  \label{LBT}
If a \Polya{} urn is balanced, then the 
distribution of the random sequence $(\TT_n)\xoon$
depends only on the balance $\gbal$ and the initial activity $\ba\cdot\bX_0$.
\end{lemma}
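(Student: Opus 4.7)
The plan is to read off the result directly from the argument already used in the proof of \refL{LB}. Since the urn is balanced with balance $\gbal$, equation \eqref{ume3} gives that the total activity after $n$ draws is deterministic:
\begin{align}
\sum_{i\in\sQ} a_i X_{ni} = \ba\cdot\bX_0 + n\gbal.
\end{align}
In the continuous-time embedding, at time $\TT_n$ the next ball is drawn after an exponential waiting time with rate equal to the current total activity; by the display above, this rate is the deterministic constant $\ba\cdot\bX_0 + n\gbal$, regardless of the history up to $\TT_n$.

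Consequently, I would argue that the increments $\gD_n := \TT_{n+1}-\TT_n$ are independent, with
\begin{align}
\gD_n \sim \Exp\bigpar{\ba\cdot\bX_0 + n\gbal},
\qquad n\ge 0,
\end{align}
where $\TT_0=0$. The independence follows since, conditionally on $\cF_{\TT_n}$, the variable $\gD_n$ is exponential with a deterministic rate depending only on $n$, $\ba\cdot\bX_0$, and $\gbal$; hence $\gD_n$ is independent of $\cF_{\TT_n}$, and in particular of $\gD_0,\dots,\gD_{n-1}$. Since $\TT_n=\sum_{k=0}^{n-1}\gD_k$, the joint distribution of $(\TT_n)_{n\ge1}$ is determined by the joint distribution of the independent exponentials $(\gD_n)_{n\ge0}$, which depends only on $\ba\cdot\bX_0$ and $\gbal$.

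This is essentially a one-line consequence of the computation already done in \refL{LB}, so I do not expect any real obstacle; the only care needed is to note (as in \refR{RB0} and \refL{LB1}) that under our standing assumptions the rates $\ba\cdot\bX_0 + n\gbal$ are strictly positive for all $n\ge0$, so the exponential distributions are well-defined and $\TT_n<\infty$ a.s.
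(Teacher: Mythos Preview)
Your proof is correct and follows exactly the same argument as the paper's one-line proof: the increments $\TT_{n+1}-\TT_n$ are independent exponentials with rate $\ba\cdot\bX_0+n\gbal$, so the joint law of $(\TT_n)_n$ is determined by $\gbal$ and $\ba\cdot\bX_0$. One small notational slip: in this paper's convention $\Exp(\gl)$ denotes the exponential distribution with \emph{mean} $\gl$, so your display should read $\gD_n\sim\Exp\bigpar{1/(\ba\cdot\bX_0+n\gbal)}$.
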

\begin{proof}
  As in the proof of \refL{LB}, $\TT_{n+1}-\TT_n$ is exponential distributed
with rate $\ba\cdot\bX_0+n\gbal$, and these waiting times are independent.
\end{proof}

\begin{remark}\label{RBT}
  The distribution of $(\TT_n)\xoon$ is thus the same as for the single-colour
urn with initial value $x_0:=\ba\cdot\bX_o$, activity $a_1=1$, and
replacement matrix $(\gbal)$. 
For $\gbal>0$, this is the more or less trivial case $q=1$ of
\refE{Eclassical}, and  it is well known that $(\gbal\TT_n)\xoo$
are the jump times of a Yule process started with $x_0/\gbal$ individuals.
\end{remark}

In the remaining part of the section we consider, as in the rest of the paper,
triangular urns.

\begin{lemma}\label{LBB}
  Consider a triangular balanced urn with balance $\gbal\ge0$, 
and suppose that the  urn satisfies \refAAX.
Then $\hgl=\gbal$.
Furthermore, if $\gbal>0$, then $\hkk=0$.
\end{lemma}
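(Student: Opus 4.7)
Both parts follow from averaging the balance identity \eqref{ume1} over the replacement distribution. Taking expectations gives
\begin{align*}
\sum_{j\in\sQ} a_jr_{ij}=\gbal \qquad \text{for every $i\in\sQ$ with $a_i>0$.}
\end{align*}
Combining this with triangularity ($r_{ij}=0$ for $j<i$) and the nonnegativity of the off-diagonal entries (\ref{A+} or \ref{A-5} gives $\xi_{ij}\ge0$ a.s.\ whenever $i\ne j$, hence $r_{ij}\ge0$) yields the key identity
\begin{align*}
\gl_i + \sum_{j>i} a_jr_{ij} = \gbal, \qquad a_i>0,
\end{align*}
with all terms on the \lhs{} nonnegative.

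For $\hgl=\gbal$: the displayed identity immediately gives $\gl_i\le\gbal$ for every $i$ with $a_i>0$, and $\gl_i=a_ir_{ii}=0\le\gbal$ trivially when $a_i=0$, so $\hgl\le\gbal$. For the reverse inequality I would fix a total order on $\sQ$ witnessing triangularity and let $i^*$ be the largest colour with $a_{i^*}>0$; such $i^*$ exists by \ref{A0}. Then $a_j=0$ for every $j>i^*$, so every term in the sum in the displayed identity vanishes and $\gl_{i^*}=\gbal$. Hence $\hgl\ge\gbal$, finishing this part.

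For $\hkk=0$ when $\gbal>0$: I would argue by contradiction, assuming there is a chain $i_1\prec i_2$ in the colour graph with $\gl_{i_1}=\gl_{i_2}=\hgl=\gbal$. From $\gl_{i_1}=\gbal$ and the displayed identity, $a_jr_{i_1j}=0$ for every $j>i_1$; since all terms are nonnegative, whenever $i_1\to k$ (\ie, $r_{i_1 k}>0$ and $k\ne i_1$, which forces $k>i_1$ by triangularity) we must have $a_k=0$. But then \ref{A00} forces $\bxi_k=0$, so $k$ has no outgoing edges in the colour graph. Consequently, any directed path starting at $i_1$ has length exactly one and terminates at a vertex with zero activity. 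Applying this to the path realizing $i_1\prec i_2$ gives $a_{i_2}=0$, whence $\gl_{i_2}=0\neq\gbal>0$, a contradiction. Hence no such chain exists and $\hkk=0$.

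The whole argument is elementary and algebraic; there is no real obstacle beyond tracking which auxiliary assumptions (particularly \ref{A00} and the sign of the off-diagonal replacements) are in force in this section.
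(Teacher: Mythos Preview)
Your proof is correct and follows essentially the same route as the paper: take expectations in the balance identity to get $\gl_i+\sum_{j\neq i}a_jr_{ij}=\gbal$ for $a_i>0$, use nonnegativity of the off-diagonal $r_{ij}$ (from the section's standing assumption \ref{A+} or \ref{A-5}) to deduce $\gl_i\le\gbal$, exhibit a maximal colour in $\sQ':=\{i:a_i>0\}$ to get equality, and then observe that any colour achieving $\gl_i=\gbal$ can only point to colours with zero activity (hence, via \ref{A00}, dead ends), ruling out a chain $i_1\prec i_2$ with $\gl_{i_1}=\gl_{i_2}=\gbal$. The paper organizes this as a three-case trichotomy ($i\notin\sQ'$; $i$ maximal in $\sQ'$; $i$ non-maximal in $\sQ'$) rather than fixing a total order, but the content is the same.

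One small wording slip: you write ``with all terms on the \lhs{} nonnegative,'' but under \ref{A-5} the diagonal $r_{ii}$, and hence $\gl_i$, may be negative. This does not affect your argument, since you only need $\sum_{j>i}a_jr_{ij}\ge0$ to conclude $\gl_i\le\gbal$.
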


\begin{proof}
  Let $\sQ':=\set{i\in \sQ:a_i>0}$.
Then $\sQ'\neq\emptyset$ by \ref{A0}.
There are three possibilities for a colour $i\in\sQ$:
\begin{romenumerate}
  
\item \label{LBB1}
If $i\notin\sQ'$, then $a_i=0$ and thus $\gl_i=0$.
\item \label{LBB2}
If $i\in\sQ'$ and $i$ is maximal in $\sQ'$ for the order $\prec$, \ie,
$i\not\to j$ for every $j\in\sQ'$, 
then $\xi_{ij}=0$ \as{} when $j\in\sQ'\setminus\set i$, and
$a_j=0$ for every $j\notin\sQ'$; hence $a_j\xi_{ij}=0$ \as{} for all colours
$j\neq i$. Consequently,
\eqref{ume1} yields
\begin{align}
  a_i\xi_{ii} = \sum_{j\in\sQ} a_j\xi_{ij} = \gbal
\end{align}
a.s., and thus, taking the expectation,
\begin{align}
\gl_i=a_ir_{ii}=a_i\E\xi_{ii}=\gbal.  
\end{align}
\item \label{LBB3}
If $i\in\sQ'$ is not maximal in $\sQ'$, then there exists $j\in\sQ'$ with
$i\to j$ and thus $r_{ij}>0$.
Taking expectations in \eqref{ume1} yields
\begin{align}
  \gl_i=a_ir_{ii}=\gbal-\sum_{j\neq i} a_jr_{ij}
\le \gbal - a_jr_{ij}<\gbal.
\end{align}
\end{romenumerate}
It follows that $\hgl:=\max_i\gl_i=\gbal$.
Furthermore, if $\gbal>0$, then the maximum is attained precisely in Case
\ref{LBB2}, \ie, for $i$ that are maximal in $\sQ'$.
It follows that for every such $i$ we have $\gl_j<\gbal$ for all $j\in\sP_i$,
and thus $\kk_i=0$. Hence, $\hkk=0$.
\end{proof}

\begin{remark}\label{RBB}
The proof shows also that
if a balanced triangular urn has $\gbal>0$ and all activites $a_i=1$,
then $\xi_{ii}=r_{ii}=\gl_i=\gbal$ \as{} for every $i$ that is maximal in $\sQ'$, 
but
$\E\xi_{ii}=r_{ii}=\gl_i<\gbal$ for all other colours $i$.
\end{remark}

For urns with subtractions as in \refS{S-}, there are further
simplifications when the urn is balanced.

\begin{lemma}\label{LBB5}
  Consider a triangular balanced urn with balance $\gbal\ge0$,
and suppose that the
  urn satisfies \refAAZC.
Then, with notation as in \refS{SpfT1}, $\cX_0>0$ a.s.
\end{lemma}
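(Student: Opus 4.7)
The plan is to reduce the assertion to the one-dimensional result \refL{LM} applied to the \emph{total activity} of the extended urn.

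First I would check that the extension of the urn by the dummy colour $0$ (as in \refS{SpfT1}) preserves balance: for every $i\in\sQ$ with $a_i>0$ one has $\sum_{j\in\sQp} a_j\xi_{ij} = \sum_{j\in\sQ}a_j\xi_{ij}+a_0\cdot 1 = \gbal$, since $a_0=0$. By \refL{LBB} applied to the extended urn, together with \eqref{hh1} and \eqref{kk01} from the proof of \refT{T1}, we therefore have $\glx_0=\hgl=\gbal$, and $\kk_0=\hkk=0$ whenever $\gbal>0$.

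For the main case $\gbal>0$, I would set $A(t):=\sum_{i\in\sQp}a_iX_i(t)=\ba\cdot\bX(t)$. At each draw $A$ increases by exactly $\gbal$ by the balance equation \eqref{ume1}, and draws happen with intensity $A(t)$; furthermore $A(0)=a:=\ba\cdot\bX_0>0$ by \ref{A0}. Consequently $A(t)$ has precisely the law of a single-colour \ctime{} urn (a one-dimensional CB process) with initial value $a$, activity $1$, and deterministic replacement $\gbal$. Applying \refL{LM} --- which is available since $\gl=\gbal>0$ and $\E\gbal^2<\infty$ --- gives $e^{-\gbal t}A(t)\asto\cA$ with $\cA>0$ a.s. Since $A(t)=a+\gbal X_0(t)$, we conclude $e^{-\gbal t}X_0(t)\asto\cA/\gbal>0$ a.s., which by \refT{TC-} (with $\glx_0=\gbal$ and $\kk_0=0$) is exactly $\cX_0$.

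The case $\gbal=0$ is trivial: by \refR{RB0} the hypothesis \ref{A-7} excludes the subtractive possibility, leaving only $\xi_{ij}=0$ a.s.\ for every $i,j\in\sQ$; then $\bX(t)=\bX_0$ for all $t$, so the draws in the extended urn form a Poisson process of constant rate $a>0$. Hence $t^{-1}X_0(t)\asto a$, and since in this case $\hgl=0$ and $\kk_0=\hkko=1$, this limit is again $\cX_0>0$ a.s.

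The main obstacle --- and the key observation --- is simply recognizing that the scalar process $A(t)$ evolves as a single-colour CB process independently of the multi-colour structure, so \refL{LM} applies directly even though the multi-colour urn itself (lacking \ref{A-8}) need not enjoy a.s.\ positive limits for its individual components. After that, only the bookkeeping matching the exponents from \refL{LBB} and \refS{SpfT1} remains.
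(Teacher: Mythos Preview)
Your proof is correct and follows essentially the same idea as the paper's: both recognize that the total activity $A(t)=\ba\cdot\bX(t)$ evolves as a one-colour urn with deterministic replacement $\gbal$, and both deduce $\cX_0>0$ from the corresponding one-dimensional result. The paper phrases this via the draw times $\TT_n$ and \eqref{hh3} for the auxiliary urn $\cU_1$, but explicitly remarks parenthetically that one may ``simply define the content of $\cU_1$ as being the total activity in the urn $\cU$'', which is precisely your route via \refL{LM}. One small imprecision: in the case $\gbal=0$ you claim $\bX(t)=\bX_0$, but balls of inactive colours could still be added (the balance condition only forces $a_j\xi_{ij}=0$); nevertheless $A(t)$ is constant, the draws are Poisson with rate $a$, and your conclusion $\cX_0=a>0$ stands.
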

\begin{proof}
The case $\gbal=0$ is trivial by \refR{RB0}, so we may assume $\gbal>0$.

For convenience, denote the urn by $\cU$.
\refL{LBT} and \refR{RBT} show that $\TT_n$ are (jointly) distributed as
for the one-colour urn $\cU_1$ with replacement matrix $(\gbal)$,
activity $a=1$, and the same
initial total activity. Hence we may couple the urns such that they have the
same $\TT_n$. (Or we may simply define the content of
$\cU_1$ as being the total activity in the urn $\cU$.)
The  urn $\cU_1$ obviously satisfies \refAA, and thus \eqref{hh3} applies to
it, with $\cX_0>0$ a.s. Moreover, \eqref{hh3} applies also to the original
urn $\cU$ by the same argument in \refS{SpfT1}
(as in the proof of \refTs{T1-} and \ref{T1--} in \refSS{Spf-});
note that the two urns $\cU$ and $\cU_1$ have the same $\hgl=\gbal$ and
$\kko=0$ by \refL{LBB} and \eqref{kk01}. 
Consequently, the two urns $\cU$ and $\cU_1$ have the same  $\cX_0$.
\end{proof}
In fact, in \refL{LBB5}, $\cX_0$ has a Gamma distribution by 
the proof and \eqref{jw0}.

\section{The drawn colours}\label{Sdraw}
We have so far studied $\bX_n$ and $\bX(t)$, the numbers of balls of each
colour in the urn. It is also of interest to study the number of times each
colour is drawn. 
(See \refEs{Epref}--\ref{Eprefk} for an application.)
For $i\in\sQ$,
we denote the number of times that a ball of colour $i$ is drawn up to time $n$
in the \dtime{} urn by $N_{ni}$, and the number of times up to time $t$ in
the \ctime{} urn by $N_i(t)$; thus
\begin{align}\label{tn0}
  N_{ni}=N_i(\TT_n),
\qquad i\in\sQ,\; n\ge0.
\end{align}

We state first a \ctime{} and then a \dtime{}  result;
both are similar to the results for $\bX(t)$ and $\bX_n$ earlier,
but note that exponents change when $\glx_i=0$.
(Proofs are given later in this section.)

\begin{theorem}  \label{TNC}
Let $(X_{i}(t))_{i\in\sQ}$ be a continuous-time
triangular \Polya{} urn satisfying 
either
\refAA, or (more generally) \refAAZC.
Let\/ $i\in\sQ$.
\begin{romenumerate}
\item \label{TNCa}
If $\glx_i>0$, then, as \ttoo,
\begin{align}\label{tnc1}
  t^{-\kk_i}e^{-\glx_i t} N_i(t) \asto \cN_i:=\frac{a_i}{\glx_i}\cX_i
.
\end{align}

\item\label{TNCb}
If $\glx_i=0$, then, as \ttoo,
\begin{align}\label{tnc2}
  t^{-\kk_i-1} N_i(t) \asto \cN_i:=\frac{a_i}{\kk_i+1}\cX_i
.
\end{align}
\end{romenumerate}
 Furthermore, if \ref{A+} or \ref{A-8} holds, 
then $\cN_i>0$ a.s.
\end{theorem}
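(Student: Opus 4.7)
The plan is to exploit the compensator of the counting process $N_i(t)$. Since draws of colour $i$ occur with intensity $a_i X_i(t)$, the argument of \refL{LtN} (applied with $j = i$) shows that
\begin{align*}
  \tN_i(t) := N_i(t) - a_i \int_0^t X_i(s)\dd s
\end{align*}
is a square-integrable martingale, with $[\tN_i,\tN_i]_t = N_i(t)$ (unit jumps) and hence $\E \tN_i(t)^2 = a_i \int_0^t \E X_i(s) \dd s$; the required integrability is supplied by the $L^2$ bound $\norm{\tXX_i}_2 < \infty$ from \refL{LG} (or its subtraction counterpart \refL{ZLG}). The overall strategy is to split $N_i$ into this drift plus a negligible martingale, analyse the drift via the known asymptotics of $X_i$, and control the martingale by an $L^2$/Borel--Cantelli argument.

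First I would handle the drift. The a.s.\ convergence $\tX_i(s) \asto \cX_i$ from \refT{TC} (or \refT{TC-}), together with the a.s.\ bound $\tXX_i < \infty$, allows a change of variables $s = xt$ and a dominated convergence step imitating the $Z_4$ computation in \eqref{j45}--\eqref{j46a}, yielding almost surely
\begin{align*}
  \int_0^t X_i(s) \dd s
  \sim
  \begin{cases}
    \glx_i^{-1}\, \cX_i \, t^{\kk_i} e^{\glx_i t}, & \glx_i > 0, \\
    (\kk_i+1)^{-1}\, \cX_i \, t^{\kk_i+1}, & \glx_i = 0.
  \end{cases}
\end{align*}
After multiplying by $a_i$ and dividing by the announced normalisation, this matches exactly the limits $\cN_i$ stated in \eqref{tnc1} and \eqref{tnc2}.

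Next I would show that $\tN_i$ is of strictly smaller order than the drift almost surely. The identity $\E \tN_i(t)^2 = \E N_i(t)$ shows that $\norm{\tN_i(t)}_2$ grows at most like $t^{\kk_i/2} e^{\glx_i t/2}$ in case \ref{TNCa} and $t^{(\kk_i+1)/2}$ in case \ref{TNCb} -- in both cases only the square root of the drift's order. Applying Doob's inequality on the intervals $[n,n+1]$ in case \ref{TNCa} and the dyadic intervals $[2^n, 2^{n+1}]$ in case \ref{TNCb}, then dividing by the target normalisation and invoking Borel--Cantelli, one concludes that the normalised martingale tends to $0$ a.s. This is the same Doob/Borel--Cantelli pattern that appears in Cases \ref{step5-ii'} and \ref{step5-iii'} of the proof of \refL{L12}. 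Strict positivity of $\cN_i$ under \ref{A+} or \ref{A-8} follows from the positivity of $\cX_i$ in \refT{TC} or \refT{TC-}, since $\cN_i$ equals $\cX_i$ up to a strictly positive factor when $a_i > 0$; if $a_i = 0$ then $N_i \equiv 0$ and both sides of \eqref{tnc1} or \eqref{tnc2} are trivially zero.

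The step I expect to require the most care is the martingale control in case \ref{TNCb} when $\kk_i$ is small -- in particular $\kk_i = 0$, where the normalisation is only $t$ while $\norm{\tN_i(t)}_2$ grows like $t^{1/2}$. On unit-length intervals the second moments are not summable, so the switch to the geometric intervals $[2^n, 2^{n+1}]$ is essential to make Borel--Cantelli work; this is precisely the trick already used in Case \ref{step5-iii'} of \refL{L12}. The remaining computations are routine adaptations of the estimates already developed for $Z_3$ and $Z_4$ in that lemma.
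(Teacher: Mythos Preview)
Your compensator argument is correct and complete: the drift $a_i\int_0^t X_i(s)\,\dd s$ has the stated asymptotics by the same dominated-convergence computations as in \eqref{j45}--\eqref{j46a}, and the martingale $\tN_i$ is of strictly smaller order by the Doob/Borel--Cantelli scheme you describe (the dyadic blocking in case \ref{TNCb} is indeed necessary and works exactly as in Case \ref{step5-iii'} of \refL{L12}).

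The paper, however, takes a different and shorter route. Rather than redo any compensator analysis, it adds a single dummy colour $\iota$ with activity $a_\iota=0$, with $\xi_{i\iota}:=1$ and $\xi_{j\iota}:=0$ for $j\neq i$; then $N_i(t)=X_\iota(t)$ in the enlarged urn, and \refT{TC} (or \refT{TC-}) applies directly to $X_\iota$. Since $\sP_\iota=\{i\}$ and $\gl_\iota=0$, the constant in the limit is read off from \eqref{ji3} (the two cases there giving exactly $a_i/\glx_i$ and $a_i/(\kk_i+1)$). Your approach in effect re-derives the $Z_3$ and $Z_4$ estimates of \refL{L12} in this particular situation, while the paper packages those same estimates into a black-box call to the already-proved main theorem. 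The dummy-colour trick is more economical and makes transparent that draw-counts behave structurally like ball-counts (it is reused verbatim for the discrete-time \refT{TN}); your direct method is more self-contained and avoids verifying that the enlarged urn inherits all the standing hypotheses.
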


\begin{theorem}\label{TN}
Let $(X_{i}(t))_{i\in\sQ}$ be a discrete-time
triangular \Polya{} urn 
and suppose that it satisfies \refAA.
Alternatively, suppose that the urn satisfies 
\refAAZ,  and also either satisfies \ref{A-8} or is balanced.
Let $i\in\sQ$.
\begin{romenumerate}
  
\item\label{TNa}
If $\glx_i>0$, then as \ntoo,
\begin{align}\label{tna}
  \frac{N_{ni}}{ n^{\glx_i/\hgl}\log^{\gam_i}n}
\asto \hcN_i
:=\frac{a_i}{\glx_i}\hcX_i
.\end{align}

\item \label{TNb}
If $\glx_i=0$ and $\hgl>0$, then as \ntoo,
\begin{align}\label{tnb}
  \frac{N_{ni}}{\log^{\kk_i+1}n}
\asto \hcN_i
:=\frac{a_i}{(\kk_i+1)\hgl}\hcX_i
.\end{align}

\item \label{TNc}
If $\glx_i=\hgl=0$, then  as \ntoo,
\begin{align}\label{tnc}
  \frac{N_{ni}}{n^{(\kk_i+1)/\hkko}}
\asto \hcN_i
:=\frac{a_i}{\kk_i+1}\cX_0^{-1/\hkko}\hcX_i
.\end{align}
\end{romenumerate}
 Furthermore, if \ref{A+} or \ref{A-8} holds, then $\hcN_i>0$ a.s.
\end{theorem}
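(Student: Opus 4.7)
The plan is to prove the continuous-time version \refT{TNC} first, then deduce \refT{TN} by substituting $t=\TT_n$ and invoking the asymptotics of $\TT_n$ established in \refS{SpfT1}. The central observation for \refT{TNC} is that $N_i(t)$ is a unit-jump counting process with compensator $a_i\intot X_i(s)\dd s$; by \refL{LtN} (whose hypothesis $\E X_i(t)<\infty$ follows from $\norm{\tXX_i}_2<\infty$ in \refT{TC}/\refT{TC-} combined with \eqref{j4*}),
\begin{align*}
\tN_i(t):=N_i(t)-a_i\intot X_i(s)\dd s
\end{align*}
is a martingale with quadratic variation $[\tN_i,\tN_i]_t=N_i(t)$ (all jumps have size $1$), so \eqref{nn5} gives $\E|\tN_i(t)|^2=\E N_i(t)=a_i\E\intot X_i(s)\dd s$.

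The deterministic main term is easy: by \refT{TC}/\refT{TC-} we have $X_i(s)\sim s^{\kk_i}e^{\glx_i s}\cX_i$ a.s., and the domination $X_i(s)\le(s+1)^{\kk_i}e^{\glx_i s}\tXX_i$ (compare \eqref{j46b}) lets us apply dominated convergence to obtain $e^{-\glx_i t}t^{-\kk_i}\intot X_i(s)\dd s\asto\glx_i\qw\cX_i$ when $\glx_i>0$ and $t^{-\kk_i-1}\intot X_i(s)\dd s\asto(\kk_i+1)\qw\cX_i$ when $\glx_i=0$. To make the martingale term negligible after the same normalization, I would repeat verbatim the dyadic Doob argument of \refStep{step5-ii'}: setting $\tN_i^\dag(n):=\sup_{n-1\le t\le n}t^{-\kk_i}e^{-\glx_i t}|\tN_i(t)|$ (resp.\ $\sup_{2^{n-1}\le t\le 2^n}t^{-\kk_i-1}|\tN_i(t)|$ when $\glx_i=0$), Doob's inequality \eqref{nn6} combined with the $L^2$-bound on $\E N_i(n)$ yields $\sum_n\E \tN_i^\dag(n)^2<\infty$, hence $\tN_i^\dag(n)\asto 0$. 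Together with the main-term computation this proves \eqref{tnc1}--\eqref{tnc2}, and positivity of $\cN_i$ is immediate from positivity of $\cX_i$ in \refT{TC}/\refT{TC-}.

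Passing from \refT{TNC} to \refT{TN} is a direct substitution $N_{ni}=N_i(\TT_n)$ using the asymptotics of $\TT_n$ derived during the proof of \refT{T1}. In case \ref{TNa} ($\glx_i>0$), plugging \eqref{hh6} into \eqref{tnc1} proceeds exactly as the derivation of \eqref{hh7}--\eqref{hh8} and produces \eqref{tna} with $\hcN_i=(a_i/\glx_i)\hcX_i$. In case \ref{TNb} ($\glx_i=0$, $\hgl>0$) I use $\TT_n\sim\hgl\qw\log n$ together with the identity $\cX_i=\hgl^{\kk_i}\hcX_i$ read from \eqref{hcx} (valid since $\gam_i=\kk_i$ and $\cX_0^{-\glx_i/\hgl}=1$), which turns \eqref{tnc2} into \eqref{tnb}. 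In case \ref{TNc} ($\glx_i=\hgl=0$) I combine \eqref{tnc2} with \eqref{hh10}--\eqref{hh11}; the factor $\cX_0^{-1/\hkko}$ in $\hcN_i$ arises from raising $\TT_n$ to the $(\kk_i+1)$st power. Positivity of $\hcN_i$ reduces to positivity of $\hcX_i$, which is supplied by \refT{T1}/\refT{T1-}; in the balanced subtraction setting (\refAAZ\ without \ref{A-8}) one also invokes \refL{LBB5} to ensure $\cX_0>0$ a.s.

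The main obstacle is bookkeeping in the cases where $i\in\sQm$, so $\gl_i$ may be non-positive and $X_i(s)$ need not be monotone: \refL{LtN} must then be applied with $X_j(t)$ replaced by $X_j^*(t)$ as explained just before \refSS{ZSS11}, and one must verify that $\E\intot X_i(s)\dd s$ still grows like $t^{\kk_i}e^{\glx_i t}$ or $t^{\kk_i+1}$. This holds because \refT{TC-} still provides $\norm{\tXX_i}_2<\infty$, so the same domination and the same dyadic bound apply with only cosmetic changes. These are routine modifications of the type already handled in \refS{S-} and present no essential new difficulty.
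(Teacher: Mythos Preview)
Your approach is correct but different from the paper's. The paper proves both \refT{TNC} and \refT{TN} by a dummy-colour trick: it introduces a new colour $\iota$ with activity $a_\iota=0$, initial value $X_{0\iota}=0$, and $\xi_{i\iota}=1$, $\xi_{j\iota}=0$ for $j\neq i$. Then $N_i(t)=X_\iota(t)$ and $N_{ni}=X_{n\iota}$, and the result is read off by applying \refT{TC}/\refT{TC-} (for \refT{TNC}) or \refT{T1}/\refT{T1-}/\refT{T1--} (for \refT{TN}) to the extended urn, with \eqref{ji3} giving the constant of proportionality $\cX_\iota=(a_i/\glx_i)\cX_i$ or $\cX_\iota=(a_i/(\kk_i+1))\cX_i$.

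Your direct compensator approach is in effect what \refL{L12} does to $X_\iota$ once the dummy colour is in place: since $\xi_{i\iota}=1$ is deterministic, the martingales $Z_1$ and $Z_2$ in \eqref{jz1}--\eqref{jz2} vanish identically, leaving exactly your decomposition $N_i(t)=Z_3(t)+a_iZ_4(t)$ with $Z_3=\tN_i$ and $Z_4(t)=\intot X_i(s)\dd s$. So the two proofs coincide at the level of estimates. The dummy-colour route buys brevity and automatically inherits all the bookkeeping (positivity, the $\sQm$ case, the balanced-but-not-\ref{A-8} case) from the main theorems without having to redo it; your direct route is more transparent about the underlying mechanism and avoids having to check that the extended urn satisfies the hypotheses, but at the cost of repeating arguments already carried out inside the proof of \refL{L12}.
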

\begin{remark}
  If we assume only \refAAZ{} in \refT{TN}, then the results hold on the
  event $\set{\cX_0>0}$, by the same proof and \refT{T1--}.
\end{remark}

We can also state the results as the following 
simple \as{} limit results for the ratios 
$N_{ni}/X_{ni}$ and $N_i(t)/X_i(t)$; 
in particular, if $\glx_i>0$ (the main case), these ratios
converge \as{} to a positive constant.

\begin{theorem}\label{TN2}
Let $(X_{i}(t))_{i\in\sQ}$ be a discrete-time
triangular \Polya{} urn 
and suppose that it satisfies \refAA{} or (more generally) \refAAZZ.
 Let\/ $i\in\sQ$.
\begin{romenumerate}
\item \label{TN2a}
If $\glx_i>0$, then
\begin{align}\label{tn2a}
  \lim_\ntoo \frac{N_{ni}}{X_{ni}}=
  \lim_\ttoo \frac{N_i(t)}{X_i(t)}=
\frac{a_i}{\glx_i}
\qquad\textrm{a.s.}
\end{align}
\item \label{TN2b}
If $\glx_i=0$, then, as \ttoo,
\begin{align}\label{tn2b}
  \frac{N_i(t)}{tX_i(t)}\asto \frac{a_i}{\kk_i+1}.
\end{align}
\item \label{TN2c}
If $\glx_i=0$ and $\hgl>0$, then, as \ntoo,
\begin{align}\label{tn2c}
  \frac{N_{ni}}{X_{ni}\log n}\asto \frac{a_i}{(\kk_i+1)\hgl}.
\end{align}
\item \label{TN2d}
If $\glx_i=0$ and $\hgl=0$, then, as \ntoo,
\begin{align}\label{tn2d}
 \frac{N_{ni}}{X_{ni}n^{1/\kko}}\asto \frac{a_i}{\kk_i+1}\cX_0^{-1/\hkko}.
\end{align}
\end{romenumerate}
\end{theorem}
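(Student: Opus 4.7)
The plan is to derive \refT{TN2} as an immediate corollary of the asymptotics for $X_i$ and $N_i$ already established in \refTs{TC} (or \ref{TC-}), \ref{T1} (or \ref{T1-}), and \ref{TNC}--\ref{TN}. Under the stated hypotheses \refAA{} or \refAAZZ, these theorems provide matching normalizations for numerator $N$ and denominator $X$, and the limits $\cX_i$ and $\hcX_i$ are a.s.\ strictly positive (by \ref{A+} or \ref{A-8}), so we may divide. The only content is bookkeeping: tracking which normalization applies in each of the four cases and cancelling to read off the deterministic ratio.

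For \ref{TN2a}, suppose $\glx_i>0$. In continuous time, \refT{TC} (or \refT{TC-} under \ref{A-8}) gives
\begin{align*}
t^{-\kk_i}e^{-\glx_i t}X_i(t)\asto \cX_i>0,
\end{align*}
while \refT{TNC}\ref{TNCa} gives
\begin{align*}
t^{-\kk_i}e^{-\glx_i t}N_i(t)\asto \tfrac{a_i}{\glx_i}\cX_i.
\end{align*}
Taking the quotient (valid a.s.\ since $\cX_i>0$) yields $N_i(t)/X_i(t)\asto a_i/\glx_i$. The same argument works in discrete time: \refT{T1}\ref{T1+} (or \refT{T1-}) and \refT{TN}\ref{TNa} supply numerator and denominator with the same normalization $n^{\glx_i/\hgl}\log^{\gam_i}n$ and with limits $\hcX_i$ and $(a_i/\glx_i)\hcX_i$ respectively, and $\hcX_i>0$ a.s.

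For \ref{TN2b}, suppose $\glx_i=0$. Then \refT{TC} gives $t^{-\kk_i}X_i(t)\asto\cX_i>0$, while \refT{TNC}\ref{TNCb} gives $t^{-\kk_i-1}N_i(t)\asto (a_i/(\kk_i+1))\cX_i$; dividing proves $N_i(t)/(tX_i(t))\asto a_i/(\kk_i+1)$. For \ref{TN2c}, when additionally $\hgl>0$, the exponent in \refT{T1}\ref{T1+} specialises, via \eqref{gam}, to $\gam_i=\kk_i$, so $X_{ni}/\log^{\kk_i}n\asto\hcX_i$; combined with \refT{TN}\ref{TNb}, $N_{ni}/\log^{\kk_i+1}n\asto (a_i/((\kk_i+1)\hgl))\hcX_i$, and the ratio gives \eqref{tn2c}. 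For \ref{TN2d}, when $\hgl=0$, \refT{T1}\ref{T10} gives $X_{ni}/n^{\kk_i/\hkko}\asto\hcX_i$ and \refT{TN}\ref{TNc} gives $N_{ni}/n^{(\kk_i+1)/\hkko}\asto (a_i/(\kk_i+1))\cX_0^{-1/\hkko}\hcX_i$; once more a straight division delivers \eqref{tn2d}.

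There is essentially no obstacle beyond making sure the denominators do not vanish. The only thing to verify is that the hypotheses of \refT{TN2} suffice to invoke the quoted theorems with $\cX_i,\hcX_i>0$ a.s.; this is exactly what \ref{A+} (which gives \refT{TC}) and the combination \refAAZZ{} (which, via \ref{A-8}, yields the strict positivity clause of \refT{TC-} and hence of \refT{T1-}) are designed to secure. No additional estimates or martingale arguments are needed.
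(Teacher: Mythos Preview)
Your proof is correct and essentially matches the paper's approach: both obtain the ratios by dividing the normalized limits for $N_i$ (from \refT{TNC}/\refT{TN}) by those for $X_i$ (from \refT{TC}/\refT{T1}), using strict positivity of the limits. The paper's primary route for the discrete-time parts is to transfer the continuous-time ratio via the embedding $N_{ni}=N_i(\TT_n)$, $X_{ni}=X_i(\TT_n)$ together with the asymptotics \eqref{hh6} or \eqref{hh10} for $\TT_n$, but it explicitly remarks that your direct comparison of \refT{TN} with \refT{T1} works equally well.
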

Note that the limit in all cases is a strictly positive constant,
in case \ref{TN2d} by \refT{TD} (and \refR{Rdep-}).

\begin{proof}[Proof of \refT{TNC}]
We use (as in the proof of the corresponding result in \cite{SJ154})
dummy balls  similarly to the proof in \refS{SpfT1}, but now we use one
dummy ball for each colour.

It suffices to consider one colour at a time, so we fix $i\in\sQ$;
we assume $a_i>0$ since otherwise $N_{ni}=N_i(t)=0$ \as{} and the results
are trivial.
Denote the urn by $\cU$.
We consider one new colour, which we denote by $\iota$, and let
$\sQp:=\sQ\cup\set\iota$ be the new set of colours.
Balls of colour $\iota$ have activity $a_\iota:=0$ and are thus never drawn,
and we let $\xi_{\iota j}:=0$ for all $j\in\sQp$;
we further let $\xi_{i\iota}:=1$ and $\xi_{j\iota}:=0$ for every $j\neq i$,
and we start with $X_{0\iota}:=0$.

Consequently, the new urn, which we denote by $\cUp$,
differs from the old one $\cU$ only in that one
additional ball of colour $\iota$ is added each time a ball of colour $i$ is
drawn.
We may thus assume that the two urns are coupled such that they have the
same $X_j(t)$  for all $j\in\sQ$ and $t\ge0$, and then 
\begin{align}
  \label{qk}
N_i(t)=X_\iota(t) \qquad\text{and}\qquad N_{ni}=X_{n\iota}.
\end{align}
The new urn $\cUp$ is also triangular and satisfies \refAA, or \refAAZC, if
$\cU$ does.
In $\cUp$ we have $\gl_\iota=0$ and,
using \eqref{glx} and \eqref{kk}, 
\begin{align}\label{jkk0}
\glx_\iota&=\glx_i,
\\
\label{jkk}
  \kk_\iota&=
  \begin{cases}
    \kk_i, & \glx_i>0,
\\
\kk_i+1, & \glx_i=0.
  \end{cases}
\end{align}

\pfitemref{TNCa}
By \refT{TC} or \refT{TC-} applied to the new urn $\cUp$, 
we have \eqref{tc1} for
the colour $\iota$, and thus, using \eqref{qk} and \eqref{jkk0}--\eqref{jkk},
\begin{align}\label{job1}
t^{-\kk_i}e^{-\glx_i t} N_i(t)
= t^{-\kk_\iota}e^{-\glx_\iota t} X_\iota(t)
\asto\cX_\iota.
\end{align}
Furthermore, \refL{L12} or \refL{ZL12}\ref{ZL12a}
applies to $\iota\in\sQp$ (with $i$ replaced by $\iota$ and
$j$ by $i$), and thus \eqref{ji3} yields, since $\gl_\iota=0<\glx_i$,
\begin{align}\label{job2}
\cX_\iota=\frac{a_ir_{i\iota}}{\glx_i-\gl_\iota}\cX_i=\frac{a_i}{\glx_i}\cX_i.
\end{align}
Hence, \eqref{tnc1} follows, with $\cN_i:=\cX_\iota$.

\pfitemref{TN2b}
Similar, now with $\kk_\iota=\kk_i+1$ by \eqref{jkk}, and using the second
alternative in \eqref{ji3} which gives
\begin{align}\label{job20}
\cN_i:=\cX_\iota=\frac{a_ir_{i\iota}}{\kk_\iota}\cX_i=\frac{a_i}{\kk_i+1}\cX_i.
\end{align}

The final sentence follows from \refTs{TC} and \ref{TC-}, which yield
$\cX_\iota>0$ a.s.
\end{proof}

\begin{proof}[Proof of \refT{TN}]
We add a dummy colour $\iota$ as in the proof of \refT{TNC}, and note that
if the original urn $\cU$ satisfies \ref{A-8} or is balanced, then the same
holds for the new urn $\cUp$. 
(Note that dummy colours with activity 0 are ignored in \refD{DB}.)

By \refT{T1}, \refT{T1-}, or \refT{T1--} together with \refL{LBB5},
the conclusions 
of \refT{T1} hold for $\cUp$,
except that $\hcX_\iota=0$ is possible unless we have \ref{A-5} or \ref{A-8}.
(However, our assumptions yield $\cX_0>0$ \as{} in all cases.) 

We argue similarly to the proof of \refT{TNC}, now using \eqref{t1b} or
\eqref{t1c} for the dummy colour $\iota$ in $\cUp$,
together with \eqref{qk} and \eqref{jkk0}--\eqref{jkk}.
This yields  
\as{} convergence to $\hcN_i:=\hcX_\iota$ in \eqref{tna}, \eqref{tnb}, or
\eqref{tnc} (depending on $\glx_i$ and $\hgl$);
note that $\hgl\ge\glx_i$,  
that \eqref{gam} yields $\gam_\iota=\gam_i$ in \ref{TNa} and
$\gam_\iota=\kk_\iota=\kk_i+1$ in \ref{TNb},
and that \eqref{hkk0} shows that $\hkko$ is the same for $\cUp$ as for $\cU$.

Finally, the formulas for $\hcN_i$ follow from
\eqref{job2}--\eqref{job20} 
and
\eqref{hcx} or \eqref{hh11}.
\end{proof}

Alternatively, we may prove \refT{TN} from \refT{TNC} by adding a dummy
colur 0 as in \refS{SpfT1}. In any case, the proof is really based on adding
two dummy colours 0 and $\iota$ to the \ctime{} urn.

\begin{proof}[Proof of \refT{TN2}]
The results for the \ctime{} urn in \ref{TN2a} and \ref{TN2b} follow 
by comparing the results of \refT{TNC} 
with the limits for $X_{i}(t)$ in \refT{TC} or \refT{TC-},
recalling that $\cX_i>0$ \as{} as stated in \refT{TC}.
The result for $N_{ni}/X_{ni}$ in \ref{TN2a}
then follows by \eqref{tn0}  and \eqref{ct}.
Similarly, \eqref{tn2c} and \eqref{tn2d} follow from \eqref{tn2b} and
\eqref{hh6} or \eqref{hh10}.
(Alternatively, the results for $N_{ni}/X_{ni}$ follow by comparing the
results in \refT{TN} and \refT{T1}.)
\end{proof}

\begin{remark}\label{RN2}
  In \refT{TN2}, if we assume only \refAAZC{} but not \ref{A-8}, 
then the  conclusions hold on the event 
$\set{\hcX_i>0}$ for $N_i(t)$ and on
$\set{\hcX_i>0,\cX_0>0}$ for $N_{ni}$.
\end{remark}

\section{Moment convergence}\label{Smoments}

We have so far considered convergence \as, which as always implies
convergence in distribution. 
We consider in this section whether we also have convergence of moments, or
equivalently convergence in $L^p$;
recall the general fact that for a sequence of positive random variables
converging \as{} (as we have here), 
convergence of the $p$th moment is equivalent to convergence in $L^p$, for
any (real) $p>0$, see \eg{} \cite[Theorem 5.5.2]{Gut}.

Unlike earlier sections, there seems to be an important
difference between the \dtime{} and \ctime{} cases.

For a continuous-time urn,
we will see below (\refT{TMC2}) that
we always have convergence in $L^2$
in \refTs{TC} and \ref{TC-};
hence the mean and variance converge in these results.
This  extends to convergence 
in $L^p$ for any $p>2$, and thus convergence of any moment,
assuming a corresponding moment condition for the replacements $\xi_{ij}$
(\refT{TMCp}).

The situation for \dtime{} urns is more complicated.
We will only consider balanced urns, and then 
prove $L^2$-convergence, and thus convergence of mean and variance;
we also extend this to $L^p$ and higher moments under the             
corresponding moment condition for the replacements $\xi_{ij}$
(\refT{TMD}).
It seems likely that this result extends to a wider class of triangular urns.
However, it does \emph{not} hold for all triangular urns.
\refE{ED} gives a simple example where the \as{}
limit does not have a finite mean,
and thus we cannot have even $L^1$-convergence in \refT{T1}.
\begin{remark}
The counterexample in \refE{ED} is rather special (a diagonal urn); 
\cite[Theorem 1.6]{SJ169} shows that for a class of more typical 
unbalanced triangular urns, the \as{} limit in \refT{T1} has moments of all
orders. This does not prove moment convergence, but we see no reason against it,
and we conjecture that for these urns, and many others,
we have convergence in $L^p$ for all $p>0$
(\refP{Pmom2}).
\end{remark}

$L^p$-convergence or moment convergence (usually
as part of a proof of convergence in distribution by the method of moments)
have been proved earlier by different methods
for some \dtime{} \Polya{} urns, as far as we know
all of them balanced.
This includes balanced triangular urns with 
$q=2$ or 3 and
deterministic integer-valued replacements
by  \citet{F:exact}, \cite{Puy} (see \refEs{E2} and \ref{E3}),
and, for $L^2$ only, more general
balanced triangular urns with deterministic  replacements by \citet{BoseDM}
(\refE{EBose}).
Further examples where moment convergence has been shown earlier 
are discussed in
\refEs{E2p1}, 
\ref{Epref}, 
\ref{Epref-},
and
\ref{ERW}.

\subsection{Continuous-time urns}

\begin{theorem}\label{TMC2}
  In \refTs{TC} and \ref{TC-},
the limit \eqref{tc1} holds also in $L^2$.
In particular, mean and variance converge.
\end{theorem}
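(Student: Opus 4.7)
The plan is to observe that the almost sure convergence from \refT{TC} (or \refT{TC-}) is accompanied, free of charge, by a square-integrable majorant that was already produced by the inductive machinery of \refS{S1} (resp.\ \refS{S-}). Once this majorant is in hand, $L^2$ convergence follows instantly from dominated convergence, and convergence of mean and variance is then automatic.

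More precisely, recall that in the inductive step of \refL{LG}, the conclusion \eqref{g4}, namely $\norm{\tXX_i}_2 < \infty$, was obtained together with the almost sure convergence \eqref{g3}; the same bound is produced by \refL{ZLG} under the weaker hypotheses \refAAZC. Now $\kk_i \ge 0$ and, for $t \ge 1$, $(t+1)^{-\kk_i} \ge 2^{-\kk_i} t^{-\kk_i}$, so from the definitions \eqref{j4}--\eqref{j4*},
\begin{align}
  |\tX_i(t)| = t^{-\kk_i} e^{-\glx_i t} X_i(t) \le 2^{\kk_i} \tXX_i,
  \qquad t \ge 1.
\end{align}
Letting $\ttoo$ and using \refT{TC} (or \refT{TC-}), \as{} $|\cX_i| \le 2^{\kk_i} \tXX_i$ as well. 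Hence
\begin{align}
  |\tX_i(t) - \cX_i|^2 \le 2^{2\kk_i+2}\, (\tXX_i)^2
  \qquad \text{a.s., for all } t \ge 1,
\end{align}
and the right-hand side is integrable.

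Since $\tX_i(t) \asto \cX_i$, dominated convergence yields $\E|\tX_i(t) - \cX_i|^2 \to 0$, i.e., $L^2$ convergence, which is the claim. Convergence of mean and variance is an immediate consequence: $L^2$ convergence implies convergence of $\E \tX_i(t)$ and of $\E \tX_i(t)^2$, hence of $\Var \tX_i(t)$.

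There is no real obstacle here; the only step that requires any vigilance is checking that the majorant inequality \eqref{g4} is still available in the setting of \refT{TC-}, where some of the contributing subcolours in the decomposition \eqref{ca1} are controlled via the alternative bounds \eqref{zlm04}, \eqref{zlm-3}, and \eqref{zl1200b} rather than via $\tXX_{i_j}$. This was already taken care of in the proof of \refL{ZLG}, so nothing new needs to be verified.
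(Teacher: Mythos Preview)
Your proof is correct and matches the paper's own argument almost exactly: both invoke \eqref{g4} (via \refL{LG} or \refL{ZLG}) to obtain the bound $|\tX_i(t)|\le 2^{\kk_i}\tXX_i$ for $t\ge1$, and then pass from a.s.\ to $L^2$ convergence. The only cosmetic difference is that you phrase the last step as dominated convergence while the paper phrases it as uniform integrability of $\{|\tX_i(t)|^2:t\ge1\}$; these are equivalent here since a fixed $L^1$ majorant gives uniform integrability.
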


\begin{proof}
    Let $i\in\sQ$.
The proofs of \refTs{TC} and \ref{TC-} show that \eqref{g4} holds. 
Thus, by the definitions \eqref{j4}--\eqref{j4*}, 
\begin{align}
\sup_{t\ge1}  \bigabs{\tX_i(t)}^2
\le \bigabs{2^{\kk_i}\tXX_i}^2 \in L^1.
\end{align}
Hence, the collection $\set{ \abs{\tX_i(t)}^2:t\ge1}$ is uniformly integrable,
and consequently the \as{} convergence $\tX_i(t)\to\cX_i$ implies
convergence in $L^2$
\cite[Theorems 5.4.4 and 5.5.2]{Gut}.
\end{proof}

\refT{TMC2} extends to $L^p$ for all $p\ge2$ as follows.
The proof is based on the arguments in \refS{S1} combined with the
Burkholder--Davis--Gundy inequalities which enable us to replace $L^2$
estimates by $L^p$ estimates. The details are quite long, however, so we
postpone them to \refApp{ALp}.

\begin{theorem}\label{TMCp}
Let $p\ge2$.
In \refTs{TC} and \ref{TC-},
assume also $\xi_{ij}\in L^p$ for all $i,j\in\sQ$.
Then, for every $i\in\sQ$, 
the limit \eqref{tc1} holds also in $L^p$.
Moreover, $\tXX_i\in L^p$.
\end{theorem}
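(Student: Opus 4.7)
The strategy is to upgrade the proof of \refT{TMC2} from $L^2$ to $L^p$ by establishing, for every $i\in\sQ$, the analogue
\begin{align*}
\bignorm{\tXX_i}_p<\infty
\end{align*}
of \eqref{g4}. Once this bound is in hand, the argument of \refT{TMC2} applies verbatim: from $\sup_{t\ge1}\abs{\tX_i(t)}^p\le 2^{p\kk_i}\abs{\tXX_i}^p\in L^1$, the family $\set{\abs{\tX_i(t)}^p:t\ge1}$ is uniformly integrable, so the \as{} convergence $\tX_i(t)\to\cX_i$ guaranteed by \refTs{TC} and \ref{TC-} upgrades to convergence in $L^p$; in particular $\cX_i\in L^p$.

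To obtain the $L^p$ bound on $\tXX_i$, I would re-run the inductive scheme of \refS{S1} (\refLs{LM}, \ref{L12}, \ref{LG}) with the $L^2$ estimates \eqref{nn5}--\eqref{nn6} replaced by the Burkholder--Davis--Gundy inequality: for any local martingale $M$ with $M(0)=0$,
\begin{align*}
\E M^*(t)^p\le C_p\,\E [M,M]_t^{p/2}.
\end{align*}
In the single-colour case (\refL{LM}), $[M,M]_t=x_0^2+\sum_{T_k\le t}e^{-2\gl_iT_k}\eta_k^2$ with $(\eta_k)$ i.i.d.\ copies of $\xi_{ii}\in L^p$; combining BDG with a Rosenthal-type inequality for the $(p/2)$-th power of this sum and the intensity formula of \refL{L9+} gives $\bignorm{\sup_te^{-\gl_it}X_i(t)}_p<\infty$. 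In the induction step (\refL{L12}), keep the decomposition \eqref{jzz}: $Z_4$ is controlled pathwise by $\tXX_j$ as in \eqref{j41}; $Z_2$ and $Z_3$ are square-integrable martingales whose quadratic variations \eqref{j21}, \eqref{j33} are handled by BDG together with \refL{L9+} and the inductive $L^p$ bound on $\tXX_j$; for $Z_1$, condition on $(T_k,\eta_k)\xoo$ and use that $Z_1=\sum_kZ_1\kkk$ is then a sum of independent martingales each of whose quadratic variation is bounded in $L^{p/2}$ by Step~1, so BDG applied conditionally reduces matters to a sum $\sum_k\indic{T_k\le t}e^{-2\gl_iT_k}\eta_k$, which is bounded exactly as in the $L^2$ proof. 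Finally, the three cases $\glx_j<2\gl_i$, $\glx_j\ge2\gl_i$ with $\glx_j>0$, and $\glx_j=\gl_i=0$ from \refStep{stepZ123b} of the proof of \refL{L12} carry over unchanged: the time localization \eqref{j53} and the geometric summability \eqref{j55} now yield $\bignorm{\tZZ_\ell}_p<\infty$. \refL{LG} is then combined by \eqref{g66} and the triangle inequality in $L^p$.

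The main obstacle is the $L^p$ control of the quadratic-variation sum in $Z_1$: conditionally on the driving colour $j$, one must bound a sum of squared jumps with a random number of terms, and a straightforward Minkowski argument in $L^{p/2}$ loses the sharp exponential rate needed to match \eqref{j54}. The resolution is to keep the BDG estimate \emph{conditional} on $(T_k,\eta_k)\xoo$ and $(Y_k)$, so that the outer expectation only involves first moments of the jump sums, to which \refL{L9+}\ref{L9+b} and the inductive bound $\bignorm{\tXX_j}_p<\infty$ apply directly; the remaining algebra is identical to the $L^2$ case. For urns with subtractions covered by \refT{TC-}, one further needs $L^p$ analogues of \refLs{ZLM}, \ref{ZL12}, \ref{ZL1200}, proved in the same way, noting that the extra factors of $t$ appearing in the $\gl_i=0$ estimates are absorbed by the polynomial normalization in \eqref{j4z*}. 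These are the technical ingredients deferred to \refApp{ALp}.
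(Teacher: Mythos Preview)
Your overall strategy is right and matches the paper: once $\tXX_i\in L^p$ is established for every colour, the uniform-integrability argument of \refT{TMC2} carries over verbatim. The difficulty is entirely in proving $\tXX_i\in L^p$, and here your treatment of $Z_1$ has a genuine gap.

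For $p>2$, after BDG you must control
\[
\E[Z_1,Z_1]_t^{p/2}=\E\Bigpar{\sumk[Z_1\kkk,Z_1\kkk]_t}^{p/2},
\]
and the subadditivity $(\sum a_k)^{p/2}\le\sum a_k^{p/2}$ that drives the $p\le2$ argument now goes the wrong way. Your proposed fix---conditioning on $(T_k,\eta_k)$ and ``$(Y_k)$'' so that ``the outer expectation only involves first moments''---does not work: conditioning on the processes $Y_k$ makes $Z_1$ measurable and BDG vacuous; conditioning only on $(T_k,\eta_k)$ still leaves you with the $(p/2)$-th moment of a sum of conditionally independent terms, which no conditioning reduces to a first moment. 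The same issue hits $Z_2$ and $Z_3$: \refL{L9+} gives only the \emph{first} moment of their quadratic variations, not the $(p/2)$-th. Your claim that the three cases of \refStep{stepZ123b} ``carry over unchanged'' is also off: the thresholds shift with the exponent (e.g.\ $\glx_j<2\gl_i$ becomes $\glx_j<q\mu$).

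The paper (\refApp{ALp}) resolves this in two moves. First, a general lemma (\refL{L12p0}) bounds sums $\sumk\indic{T_k\le t}e^{-\mu T_k}\zeta_k$ in $L^q$ by \emph{dyadic induction on the exponent}: the quadratic variation of the centred sum is again of the same form with $\mu\mapsto2\mu$ and $\zeta_k\mapsto\zeta_k^2$, so $L^q$ reduces to $L^{q/2}$, and after $\ceil{\log_2 q}$ steps one reaches $q\le1$ where subadditivity and \refL{L9+} apply. Second, the paper \emph{abandons the four-term decomposition} \eqref{jzz} for the $L^p$ bound and uses instead the pathwise inequality
\[
e^{-\gl_i t}X_i(t)\le\sumk\indic{T_k\le t}e^{-\gl_i T_k}\zeta_k,
\qquad
\zeta_k:=\sup_{s\ge0}e^{-\gl_i s}Y_k(s),
\]
a single sum of the type handled by \refL{L12p0}, with $\zeta_k\in L^p$ by \refL{LMp99}. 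This sidesteps $Z_1$ altogether. (The four-term decomposition \emph{is} reused in the paper, but only for the $1<p<2$ part of \refT{Tp}, where the subadditivity inequality holds.)
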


\begin{remark}\label{Rmomp}
In \refApp{ALp}, we further show that \refT{TMCp}
holds for any $p>1$, also without the $L^2$ condition \ref{A2}.
The proofs below then show that the same holds 
for all $L^p$ results in this section.
\end{remark}

\subsection{Balanced discrete-time urns}

\begin{theorem}\label{TMD}
Consider a balanced triangular urn statisfying \refAA{} or \refAAZC.
\begin{romenumerate}
  
\item \label{TMD1}
In \refTs{T1} and \ref{T1-}--\ref{T1--},
the limit \eqref{t1b} 
or \eqref{t1c}
holds also in $L^2$.
In particular, mean and variance converge.
\item \label{TMD2}
Moreover, if $p\ge2$ and $\xi_{ij}\in L^p$ for all $i,j\in\sQ$,
then, for every $i\in\sQ$, 
the limit \eqref{t1b} or \eqref{t1c} holds also in $L^p$.
Hence all moments of order $\le p$ converge.
\end{romenumerate}
\end{theorem}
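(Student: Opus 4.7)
Since the standing assumption \ref{A2} gives $\xi_{ij}\in L^2$, Part~\ref{TMD1} is the special case $p=2$ of Part~\ref{TMD2}, so we focus on the latter. Fix $p\ge2$, assume $\xi_{ij}\in L^p$ for all $i,j\in\sQ$, and fix $i\in\sQ$. By \refL{LBB} the balance equals $\hgl$; the case $\hgl=0$ is trivial by \refR{RB0}, so we assume $\hgl>0$ and focus on \eqref{t1b}.

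The plan rests on three ingredients: the continuous-time $L^p$ convergence from \refT{TMCp}; the independence $(\bX_n)\perp(\TT_n)$ for balanced urns (\refL{LB}); and the fact that, after extending the urn by the dummy colour~$0$ of \refS{SpfT1} (which preserves balance since $a_0=0$), $\cX_0$ is a function of $(\TT_n)$ alone, hence independent of the $(\bX_n)$-measurable $\hcX_i$. Introduce the bounded process $\bar\tX_j(t):=(t+1)^{-\kk_j}e^{-\glx_j t}X_j(t)\le\tXX_j$. Applying \refT{TMCp} to the extended urn yields $\bar\tX_i(t)\to\cX_i$ and $\bar\tX_0(t)\to\cX_0$ in $L^p$ (by dominated convergence, since each is dominated by $\tXX_j\in L^p$); combined with $\TT_n\asto\infty$, the same convergences hold at $t=\TT_n$.

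Let $Z_n:=X_{ni}/(n^{\glx_i/\hgl}(\log n)^{\gam_i})$. By \refL{LB}, $X_{ni}$ is independent of $\TT_n$, so the $p$th moment of $\bar\tX_i(\TT_n)=(\TT_n+1)^{-\kk_i}e^{-\glx_i\TT_n}X_{ni}$ factorises as $\E[(\TT_n+1)^{-p\kk_i}e^{-p\glx_i\TT_n}]\cdot\E X_{ni}^p$. Using $\kk_i=\gam_i+\hkk\glx_i/\hgl$ together with $\bar\tX_0(\TT_n)=(\TT_n+1)^{-\hkk}e^{-\hgl\TT_n}\cdot n$ (from $X_0(\TT_n)=n$), one rewrites
\[
\E\bigsqpar{(\TT_n+1)^{-p\kk_i}e^{-p\glx_i\TT_n}}\cdot n^{p\glx_i/\hgl}(\log n)^{p\gam_i}
=\E\Bigsqpar{\Bigpar{\frac{\log n}{\TT_n+1}}^{p\gam_i}\bar\tX_0(\TT_n)^{p\glx_i/\hgl}}.
\]
Since $\glx_i\le\hgl$, $\bar\tX_0(\TT_n)^{p\glx_i/\hgl}\to\cX_0^{p\glx_i/\hgl}$ in $L^{\hgl/\glx_i}$, while the explicit Yule moments from \refL{LBT} give $((\log n)/(\TT_n+1))^{p\gam_i}\to\hgl^{p\gam_i}$ almost surely and with uniform bounds in every $L^r$; Hölder then yields joint $L^{1+\eps}$-boundedness of the product, so the above expectation converges to $\hgl^{p\gam_i}\E\cX_0^{p\glx_i/\hgl}$. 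Combined with $\E\bar\tX_i(\TT_n)^p\to\E\cX_i^p$,
\[
\E Z_n^p=\frac{\E\bar\tX_i(\TT_n)^p}{\E\bigsqpar{(\TT_n+1)^{-p\kk_i}e^{-p\glx_i\TT_n}}\cdot n^{p\glx_i/\hgl}(\log n)^{p\gam_i}}\to\frac{\E\cX_i^p}{\hgl^{p\gam_i}\E\cX_0^{p\glx_i/\hgl}}.
\]

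Finally, by \eqref{hcx} we have $\cX_i=\hgl^{\gam_i}\cX_0^{\glx_i/\hgl}\hcX_i$; combined with $\hcX_i\perp\cX_0$, this yields $\E\cX_i^p=\hgl^{p\gam_i}\E\cX_0^{p\glx_i/\hgl}\cdot\E\hcX_i^p$, and hence $\E Z_n^p\to\E\hcX_i^p<\infty$. Since $Z_n\asto\hcX_i$ by \refTs{T1}, \ref{T1-}, and \ref{T1--}, and both sides are positive, convergence of $p$th moments upgrades to $L^p$ convergence by \cite[Theorem 5.5.2]{Gut}, proving Part~\ref{TMD2}. The main technical obstacle is establishing the joint uniform integrability of $((\log n)/(\TT_n+1))^{p\gam_i}$ (which blows up for small $\TT_n$ when $\gam_i>0$) and $\bar\tX_0(\TT_n)^{p\glx_i/\hgl}$ (whose exponent can be as large as $p$ when $\glx_i=\hgl$), which rests on the explicit moment asymptotics of the Yule waiting times $\TT_n$ from \refL{LBT}.
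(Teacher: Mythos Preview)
Your approach is correct in outline and genuinely different from the paper's, but one step is asserted rather than proved.

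\textbf{The paper's route.} Instead of computing $\E Z_n^p$, the paper establishes uniform integrability of $(Z_n^p)$ directly via a conditioning trick. Pick $c>0$ with $\P(\cX_0>c)>\tfrac12$ and set $\cE_n:=\{e^{-\hgl\TT_n}n>c\}$ (recall $\hkk=0$ for balanced urns by \refL{LBB}, so $\gam_i=\kk_i$). On $\cE_n$ one has $\TT_n\le t_n:=\hgl^{-1}(\log n+C)$, and a direct comparison of normalisations gives $Z_n^p\le C\,Y_n$ where $Y_n:=\bigl|X_{ni}/\bigl((1+\TT_n)^{\kk_i}e^{\glx_i\TT_n}\bigr)\bigr|^p\le(\tXX_i)^p\in L^1$. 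Thus $(Z_n^p\mid\cE_n)$ is dominated by $(CY_n\mid\cE_n)$, which is UI since $\P(\cE_n)>\tfrac12$. But by \refL{LB}, $Z_n$ is independent of the $(\TT_n)$-measurable event $\cE_n$, so $(Z_n^p\mid\cE_n)\eqd Z_n^p$, and $(Z_n^p)$ itself is UI. Combined with the a.s.\ convergence, this gives $L^p$.

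\textbf{Your route, and the gap.} You push \refL{LB} further: factorise $\E\bar\tX_i(\TT_n)^p$ into a $\TT_n$-factor and $\E X_{ni}^p$, and use the pleasant observation that $\hcX_i\perp\cX_0$ (since $\cX_0=\lim e^{-\hgl\TT_n}n$ is $(\TT_n)$-measurable while $\hcX_i$ is $(\bX_n)$-measurable) to identify the limit as $\E\hcX_i^p$. This is valid, but the uniform $L^r$ bound on $A_n:=((\log n)/(\TT_n+1))^{p\gam_i}$ is \emph{not} furnished by \refL{LBT}, which only identifies the law of $(\TT_n)$; it says nothing about negative moments. The bound does follow from what you already have: since $e^{-\hgl\TT_n}n=\bar\tX_0(\TT_n)\le\tXX_0$, one gets $\TT_n\ge\hgl^{-1}(\log n-\log\tXX_0)$; splitting on $\{\tXX_0\le n^{1/2}\}$ (where $\TT_n\ge(\log n)/(2\hgl)$, hence $A_n\le(2\hgl)^{p\gam_i}$) versus its complement (where $A_n\le(\log n)^{p\gam_i}$ and $\P(\tXX_0>n^{1/2})\le Cn^{-p/2}$) yields $\sup_n\E A_n^r<\infty$ for every $r>0$. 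With this in hand your H\"older step goes through when $\glx_i<\hgl$; when $\glx_i=\hgl$, \refL{LBB} forces $\kk_i=\gam_i=0$, so $A_n\equiv1$ and dominated convergence by $(\tXX_0)^p$ suffices. Note that the justification again rests on $\tXX_0\in L^p$ from \refT{TMCp}, not on explicit Yule moments.

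The paper's conditioning argument is shorter precisely because it sidesteps all of this: a single high-probability event on which $Z_n$ is dominated, plus independence, is enough.
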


As said above,
this has been shown earlier in some special cases and examples,
in particular \cite{F:exact}, 
\cite{Puy} ($q=2,3$), and
\cite{BoseDM} ($p=2$);
see also the examples in \refS{Sex}.

\begin{proof}
Part \ref{TMD1} is a special case of \ref{TMD2}, so we show only the latter.

Note first that \ref{A-6} holds by \refL{LB1}, 
and that \refL{LBB} shows that $\cX_0>0$ a.s.;
thus the conclusion \eqref{t1b}  or \eqref{t1c} of \refT{T1} holds
by \refT{T1} or  \ref{T1--}. (Or \refT{T1-} when it is applicable.)

Let $c>0$ be so small that $\P(X_0>c)>\frac12$.
Our assumptions imply (as in the proof of \refTs{T1-} and \ref{T1--})
that \eqref{hh3} holds, by the proof in \refS{SpfT1}.
This implies, by our choice of $c$,
\begin{align}\label{ume6}
\P\Bigpar{\TT_n^{-\kk_0}e^{-\hgl \TT_n} n >c}>\frac12
\end{align}
for all large $n$. By decreasing $c$ 
(or just by ignoring some small $n$ in the sequel),
we may assume that \eqref{ume6} holds for all $n\ge1$.
Let $\cE_n$ denote the event 
\begin{align}\label{cen}
  \cE_n:=
\bigset{\TT_n^{-\kk_0}e^{-\hgl \TT_n} n >c},
\end{align}
so that \eqref{ume6} reads $\P(\cE_n)>\frac12$.

Let $i\in\sQ$. 
By \refT{TMCp}, $\tXX_i\in L^p$.
Hence, by \eqref{ct} and \eqref{j4*}, 
\begin{align}\label{ume7}
Y_n:=  \lrabs{\frac{X_{ni}}{\bigpar{1+\TT_n}^{\kk_i} e^{\glx_i\TT_n}}}^p
=  \lrabs{\frac{X_{i}(\TT_n)}{\bigpar{1+\TT_n}^{\kk_i} e^{\glx_i\TT_n}}}^p
\le (\tXX_i)^p\in L^1.
\end{align}
Consequently, the sequence $Y_n$ is \ui.
It follows from this and \eqref{ume6} that the sequence of conditioned
random variables
$(Y_n\mid\cE_n)$
also is \ui.
We consider two cases:

\pfCaseY1{$\hgl>0$}
In this case, \refL{LBB} shows, using \eqref{kk01} and \eqref{gam}, 
that $\kko=\hkk=0$
and   $\gam_i=\kk_i$.
Hence, the event  $\cE_n$ means
${e^{-\hgl \TT_n} n >c}$, 
and thus $\TT_n \le t_n:=\hgl\qw (\log n+C)$.
Consequently, 
on the event $\cE_n$ we have (for $n\ge2$)
\begin{align}\label{ui1}
Z_n:=
\lrabs{\frac{X_{ni}}{ n^{\glx_i/\hgl}\log^{\gam_i} n }}^p
=
\lrabs{\frac{X_{ni}}{ n^{\glx_i/\hgl}\log^{\kk_i} n }}^p
\le
C\lrabs{\frac{X_{ni}}{t_n^{\kk_i} e^{\glx_i t_n}}}^p 
\le C Y_n
\end{align}
and thus
the uniform integrability of $(Y_n\mid\cE_n)$ 
implies uniform integrability of 
$(Z_n\mid\cE_n)$.
However, by \refL{LB} and \eqref{cen}, $X_{ni}$ is
independent of the event $\cE_n$; hence, so is $Z_n$ and thus
$(Z_n\mid\cE_n)\eqd Z_n$.
Consequently,  the sequence $Z_n$ is \ui.

In other words,
the \lhs{} of \eqref{t1b} is uniformly
$p$th power integrable. Hence, the \as{} convergence in \eqref{t1b} implies
convergence also in $L^p$.

\pfCaseY2{$\hgl=0$}
This is similar.
In this case, \eqref{cen} means $\TT_n\le C n^{1/\kko}$.
We now define 
\begin{align}\label{ui2}
Z_n:=\lrabs{\frac{X_{ni}}{n^{\kk_i/\hkko}}}^p,
\end{align}
and note again that $Z_n$ is independent of $\cE_n$.
Since $0\le \glx_i\le\hgl=0$ we have $\glx_i=0$, and 
$\hkko=\kko$ by \eqref{kk0}; hence \eqref{ui2} and \eqref{ume7} show that
on $\cE_n$, we have $Z_n\le C Y_n$.
Consequently, we have again
$Z_n\eqd(Z_n\mid\cE_n) \le (CY_n\mid\cE_n)$, and it follows
that $Z_n$ is uniformly integrable.
Hence the \as{} convergence \eqref{t1c} holds also in $L^p$.
\end{proof}

As said above,
\refE{ED} shows that \refT{TMD} does not extend to all  triangular
urns.
However, it seems likely that it extends to many unbalanced urns; we leave
this as an open problem.

\begin{problem}\label{Pmom2}
 Find more general conditions (including also some unbalanced urns)
for convergence in  $L^2$ or $L^p$ in \refTs{T1} and \ref{T1-}.
\end{problem}

\subsection{Moments for drawn colours}
The results above on convergence in $L^2$, and thus convergence of mean and
variance, apply to the number of drawn balls with a given colour, $N_{ni}$
and $N_i(t)$, since as shown in the proofs in \refS{Sdraw}, they can be
regarded as $X_{n\iota}$ and $X_\iota(t)$ for an extended urn with a dummy colour
$\iota$ added. 
Hence we obtain:
\begin{theorem}\label{TMQC} 
In \refT{TNC}, the \as{} limit \eqref{tnc1} or \eqref{tnc2} 
holds also in $L^2$.
Moreover, if $p\ge2$ and $\xi_{ij}\in L^p$ $\forall i,j\in\sQ$, then the
limit holds also in $L^p$.
\end{theorem}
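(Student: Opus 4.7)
The plan is to reduce \refT{TMQC} to \refT{TMCp} by the same dummy-colour device used in the proof of \refT{TNC}. Recall that in that proof, for each fixed colour $i\in\sQ$ we enlarged the colour set to $\sQp=\sQ\cup\set\iota$ by introducing a colour $\iota$ with $a_\iota:=0$, $\xi_{\iota j}:=0$ for all $j\in\sQp$, $\xi_{i\iota}:=1$, $\xi_{j\iota}:=0$ for $j\neq i$, and $X_{0\iota}:=0$; this yields a coupled triangular urn $\cUp$ in which the identities \eqref{qk} hold, together with \eqref{jkk0}--\eqref{jkk}.

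The key observation is that all added replacements for the new colour $\iota$ are deterministic and bounded (equal to $0$ or $1$), so the extended urn $\cUp$ inherits whatever $L^p$ moment condition the original urn has on its $\xi_{ij}$'s. Moreover, $\cUp$ is triangular and satisfies \refAA{} (resp.\ \refAAZC) whenever $\cU$ does, exactly as noted in the proof of \refT{TNC}. Hence \refT{TMCp} applies to $\cUp$ for every $p\ge2$ with $\xi_{ij}\in L^p$ for all $i,j\in\sQ$.

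Applying \refT{TMCp} to the colour $\iota$ in $\cUp$ yields that
\begin{align*}
t^{-\kk_\iota}e^{-\glx_\iota t}X_\iota(t)\longrightarrow \cX_\iota
\qquad\text{in }L^p,
\end{align*}
and translating this back via \eqref{qk} together with \eqref{jkk0}--\eqref{jkk} and the identifications $\cN_i=\cX_\iota$ established in \eqref{job2} and \eqref{job20}, we obtain the $L^p$ version of \eqref{tnc1} when $\glx_i>0$ and of \eqref{tnc2} when $\glx_i=0$. Taking $p=2$ (which requires only the standing assumption \ref{A2}) gives the first assertion, and arbitrary $p\ge2$ under the corresponding moment hypothesis gives the second.

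There is no real obstacle here beyond verifying that the dummy extension preserves the hypotheses of \refT{TMCp}; this is essentially automatic, since the only new replacement entry is the constant $\xi_{i\iota}=1$. (If one wishes to invoke the stronger version mentioned in \refR{Rmomp}, the same argument gives the result for every $p>1$ without assuming \ref{A2}.)
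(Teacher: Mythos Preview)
Your proof is correct and follows exactly the paper's approach: the paper's own proof is the single line ``By \refT{TMC2} or \ref{TMCp} applied to $X_\iota(t)$,'' which is precisely the dummy-colour reduction you spelled out in detail.
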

\begin{proof}
  By \refT{TMC2} or \ref{TMCp} applied to $X_\iota(t)$.
\end{proof}

\begin{theorem}\label{TMQ} 
In \refT{TN}, if the urn is balanced, then
the \as{} limit \eqref{tna}, \eqref{tnb}, or \eqref{tnc} 
holds also in $L^2$.
Moreover, if $p\ge2$ and $\xi_{ij}\in L^p$ $\forall i,j\in\sQ$, then the
limit holds also in $L^p$.
\end{theorem}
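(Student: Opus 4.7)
The plan is to reduce the statement to \refT{TMD} by the same dummy-colour device used in the proof of \refT{TN}. Specifically, for a fixed colour $i\in\sQ$ with $a_i>0$, we form the extended urn $\cUp$ on $\sQp=\sQ\cup\set\iota$ by setting $a_\iota:=0$, $X_\iota(0):=0$, $\xi_{\iota j}:=0$ for all $j\in\sQp$, $\xi_{i\iota}:=1$, and $\xi_{j\iota}:=0$ for $j\neq i$; then $N_{ni}=X_{n\iota}$. The new replacements are deterministic and bounded, so $\xi_{k\ell}\in L^p$ for all $k,\ell\in\sQp$, and $\cUp$ inherits \refAA{} or \refAAZC{} from $\cU$.

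The key observation is that $\cUp$ is again balanced with the same balance $\gbal$: since $a_\iota=0$, the activity contribution of balls of colour $\iota$ is nil, and \eqref{ume1} for $\cUp$ reduces to \eqref{ume1} for $\cU$ (the convention in \refD{DB} only involves colours with positive activity, and $\sum_{j\in\sQp}a_j\xi_{kj}=\sum_{j\in\sQ}a_j\xi_{kj}$ for every $k\in\sQ$ with $a_k>0$). We may therefore apply \refT{TMD} to $\cUp$ and conclude that $X_{n\iota}$, after the appropriate normalization given by \refT{T1} applied to $\cUp$, converges in $L^p$.

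It remains to check that this normalization coincides with the one in \refT{TN} for $N_{ni}$. From \eqref{jkk0}--\eqref{jkk} we have $\glx_\iota=\glx_i$ and $\kk_\iota=\kk_i$ if $\glx_i>0$, while $\kk_\iota=\kk_i+1$ if $\glx_i=0$; moreover $\hgl$ and $\hkko$ are the same for $\cU$ and $\cUp$ (since $\gl_\iota=0$ and adding $\iota$ does not change \eqref{hgl}, \eqref{hkk}, or \eqref{hkk0}, the latter because $a_\iota=0$). A direct computation using \eqref{gam} then gives $\gam_\iota=\gam_i$ in case \ref{TNa}, $\gam_\iota=\kk_i+1$ in case \ref{TNb}, and $\kk_\iota/\hkko=(\kk_i+1)/\hkko$ in case \ref{TNc}; these match precisely the three normalizations in \eqref{tna}, \eqref{tnb}, and \eqref{tnc}.

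No serious obstacle is expected; the only point that needs care is the bookkeeping described above (balance preservation under the dummy colour, and matching of exponents across the three cases). With these in hand, \refT{TMD}\ref{TMD2} applied to $X_{n\iota}$ in $\cUp$ yields the desired $L^p$ convergence of $N_{ni}$, and the $L^2$ case is a specialisation to $p=2$.
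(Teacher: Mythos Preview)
Your proposal is correct and follows exactly the paper's approach: apply \refT{TMD} to the dummy colour $\iota$ in the extended urn $\cUp$, using that $N_{ni}=X_{n\iota}$. The paper's proof is a single line (``By \refT{TMD} applied to $X_{n\iota}$''), relying on the fact that the proof of \refT{TN} already established that $\cUp$ is balanced and that the normalizations match; you simply unpack these verifications explicitly.
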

\begin{proof}
  By \refT{TMD} applied to $X_{n\iota}$.
\end{proof}

\section{Rates of convergence?}\label{Smer}
For classical \Polya{} urns
(\refE{Eclassical}),
the rate of convergence for convergence in distribution in \eqref{jw2} or
\eqref{limbeta} has been studied, for several different metrics; see
\cite{SJ345} and the references there.
As noted in \cite[Remark 1.4]{SJ345}, the rate of a.s.\ convergence is
slower, and is the same as in the law of large numbers for \iid{} Bernoulli
variables, which is given by the law of iterated logarithm.

For other triangular urns,
we are not aware of any similar results on rates of convergence; 
however, \cite{F:exact} gives upper bounds  
for the rate of convergence of moments and in a local limit theorem,
for some balanced triangular urns with deterministic replacements.
(Irreducible, and thus non-triangular, balanced urns with $q=2$
and deterministic replacements
are studied in \cite{KuNeininger}.)

\begin{problem}
  Study rates of convergence in \eg{} \eqref{t1b} and \eqref{tc1},
both for the \as{} convergence and for convergence in distribution.
\end{problem}

Note that this problem
is closely related to the problem
studying fluctuations from the limit,  mentioned in \refR{Rfluct}.

\section{Examples}\label{Sex}

We consider several examples, many of which have been treated earlier from
different perspectives.
The purpose is to illustrate both the theorems above and (some of) 
their relations to earlier literature.
We generally label the colours by $1,\dots,q$, and then
assume $\xi_{ij}=0$ when $i<j$.
We assume that the initial composition $\bX_0$ is deterministic.
We write for convenience $x_i:=X_{i0}=X_i(0)$
and $\bx:=(x_i)_1^q=\bX_0$. 
We denote the total number of balls in the urn after $n$ draws by
$|\bX_n|:=\sumiq X_{ni}$.

When $q=2$ we sometimes also call the
colours \emph{white} and \emph{black} (in this order;
thus a black draw may add only black balls:
``there is no escape from a black hole'').
We then may write
$W_n:=X_{n1}$, $B_n:=X_{n2}$, $W(t):=X_1(t)$, $B(t):=X_2(t)$,
$w_0:=x_1=X_{10}$, $b_0:=x_2=X_{20}$. 

We usually describe the urns using the replacement matrix $(\xi_{ij})_{i,j=1}^q$
where the rows are the replacement vectors. 
(See \refR{Rmatrix}.)

In all our examples, all activities $a_i=1$. 
Thus
\begin{align}
\gl_{i}=r_{ii}=\E\xi_{ii}, \qquad i\in\sQ=\setq.  
\end{align}

\begin{example}\label{Eclassical}
The classical \Polya{} urn has balls of $q$ colours; when a ball is drawn it is
replaced together with a fixed number $b>0$ balls of the same colour.
Hence, the replacement matrix is deterministic and diagonal, with entries $b$
on the diagonal. This urn is obviously balanced, and we have
$\gl_i=\glx_i=b$ and $\kk_i=0$ for every colour $i$.

This urn model was studied (for $q=2$)
already by \citet{Markov1917},
\citet{EggPol} and \citet{Polya}.
See also \eg{}
\citet[Chapter 4]{JohnsonKotz} and \citet{Mahmoud}.

For this urn (as for any diagonal urn),
in the continuous-time version, the different colours evolve independently,
and each colour is version of the Yule process.
More precisely, $X_i(t)/b$ is a Yule process started with $x_i/b$
individuals, where each individual gets children at rate $b$;
thus $X_i(t/b)/b$  is a Yule process with the standard rate 1.
(This is a classical branching process if $x_i/b$ is an integer, and in
general a CB process.)
It is well-known that in this case $e^{-bt}X_i(t)/b\dto \gG(x_i/b,1)$
and thus 
\begin{align}\label{jw0}
e^{-bt}X_i(t)\asto\cX_i\in\gG(x_i/b,b);   
\end{align}
furthermore,
$\cX_1,\dots,\cX_q$ are independent, since the processes $X_i(t)$ are
independent. 
This is an example of \refT{TC}. Moreover, \eqref{hh8}--\eqref{hcx}, 
or \eqref{el1} where now $\sQx=\sQ$,
show together with \eqref{el3} that 
\begin{align}\label{jw1}
  \frac{X_{in}}n\asto\hcX_i:=b\frac{\cX_i}{\sumjq\cX_j}.
\end{align}
It follows 
that
the vector of proportions converges:
\begin{align}\label{jw2}
  \frac{\bX_{n}}{|\bX_n|}
\asto \frac{1}{b}\hbcX
:=\frac{1}{b}\bigpar{\hcX_1,\dots,\hcX_q}
=\frac{\xpar{\cX_1,\dots,\cX_q}}{\sumjq\cX_j},
\end{align}
where, as a consequence of \eqref{jw0}, the limit vector $b\qw\hbcX$
has a Dirichlet distribution
with parameter $\bx/b$.
In particular, each marginal converges \as{} to a Beta distributed variable:
\begin{equation}\label{limbeta}
\frac{X_{ni}}{|\bX_n|}\asto 
b\qw\hcX_i\sim
B\Bigpar{\frac{x_i}b,\sum_{j\neq i}\frac{x_j}{b}}.
\end{equation}

These results are all well known; the limit \eqref{limbeta} with convergence
in distribution was shown for $q=2$ already in 
\cite{Markov1917} and \cite{Polya},
and for general $q$ 
in \cite{BlackwellKendall1964} (in a special case)
and \cite{Athreya1969};
see also \cite[Section 6.3.3]{JohnsonKotz}  
and \cite{Mahmoud}.
Furthermore,
a.s.\ convergence has been shown by a number of
methods, for example in
\cite{BlackwellKendall1964} 
and \cite{Athreya1969}. 
\end{example}

\begin{example}\label{ED}
A \emph{diagonal} \Polya{} urn has $\xi_{ij}=0$ for $i\neq j$; in other
words, all added balls have the same colour as the drawn ball. 
This is a generalization of the classical \Polya{} urn in \refE{Eclassical},
but now the
diagonal elements $\xi_{ii}$ can be random, and they may have different
distributions.
A.s.\ convergence for this urn
has been shown, under weak technical
conditions, by \citet{Athreya1969};
see also \citet[Theorem 4]{Aguech}.

Consider for simplicity the case when the replacements are deterministic,
and assume to avoid trivialities that $r_{ii}=\xi_{ii}>0$ for every $i\in\sQ$.

As in \refE{Eclassical}, in the continuous-time urn, the colours evolve as
independent Yule processes, now with possibly different rates $\gl_i=\xi_{ii}$.
Hence, generalizing \eqref{jw0},
\begin{align}\label{sw0}
e^{-\gl_i t}X_i(t)\asto\cX_i\in\gG(x_i/\gl_i,\gl_i),   
\end{align}
with all $\cX_i$ independent.

Consider the simplest case: $q=2$, and assume 
$\gl_1=\ga$ and $\gl_2=\gd$ with $\ga>\gd>0$.
Then $\glx_i=\gl_i$, $\hgl=\ga$, $\kk_i=\hkk=\gam_i=0$ ($i=1,2$).
It follows from \eqref{hh8}--\eqref{hcx} and \eqref{el3} (or directly from
\eqref{sw0}) 
that 
\begin{align}\label{sw1}
n^{-\gd/\ga} X_{n2}\asto
  \hcX_2=\ga^{\gd/\ga}\frac{\cX_2}{\cX_1^{\gd/\ga}}.
\end{align}

Note that if $\cX\sim\gG(a,b)$, then its moments (for arbitrary real $r$)
are given by
\begin{align}\label{sw2}
  \E\cX^r = 
  \begin{cases}
b^r\gG(a+r)/\gG(a)<\infty, & -a<r<\infty,
\\
\infty, &r\le     -a.
  \end{cases}
\end{align}
In particular, since $\cX_1$ and $\cX_2$ are independent,
it follows from \eqref{sw0}--\eqref{sw2} that for $r>0$,
\begin{align}\label{sw3}
  \E\hcX_2^r<\infty 
  \iff   \E\cX_1^{-r\gd/\ga}<\infty 
  \iff r\gd/\ga < \xfrac{x_1}{\ga}
  \iff r < \xfrac{x_1}{\gd}.
\end{align}
Consequently, $\hcX_2$ does not have finite moments of all orders.
In particular, we cannot always have (finite) moment convergence in \eqref{sw1}.
Taking, for example, $\ga=2$, $\gd=1$, and $x_1=x_2=1$ we see that not even
the mean $\E\hcX_2$ is finite; hence we cannot have convergence in $L^1$ or
$L^2$ in \refT{T1}.

As far as we know, it is an open problem to find asymptotics of moments 
$\E X_{n2}^r$ for general $r>0$ in this (simple) example.
\end{example}

\begin{example}\label{E2}
  Consider a two-colour urn with a deterministic replacement matrix
  \begin{align}
\matrixx{\gd&\gam\\0&\ga}.     
  \end{align}
(We have chosen a notation agreeing with
\cite{SJ169}, although colours are taken in different order there and thus
the matrices are written differently.)
This urn (and special cases of it) have been studied in many papers; in
particular, \cite{SJ169} gives a detailed study of limits in distribution.
The balanced case $\ga=\gd+\gam$ with integers $\ga,\gam,\gd$
is studied by very different methods 
(generating functions)
in \cite{Puy} and
\cite{F:exact}. 
A.s.\ convergence has been shown in special cases in
\cite{Gouet89,Gouet93,BoseDM1,BoseDM} ($\ga=\gd+\gam$), and
\cite{Aguech} ($\ga\ge\gd$).

Suppose that $\gd>0$, $\gam>0$, $w_0=x_1>0$, and $b_0=x_2\ge0$;
suppose also either $\ga\ge0$, or $\ga=-1$ together with
$\gam\in\bbZ_+$ and $x_2\in\bbZgeo$.
Then the urn satisfies 
\refAA{} if $\ga\ge0$,
and \refAAZZ{} for all  $\ga$.
Hence,
\refTs{T1} and \ref{TC} apply if $\ga\ge0$, and 
\refTs{T1-} and \ref{TC-} apply for any $\ga$;
consequently, the conclusions of 
\refTs{T1} and \ref{TC} hold for all cases.

We have $\gl_1=\gd$ and $\gl_2=\ga$.
Furthermore, since $\gam>0$, we have $1\to2$.
(Thus 1 is the only minimal colour.)
Hence, $\glx_1:=\gd$, $\glx_2:=\ga\bmax\gd$ and thus
$\hgl=\glx_2=\ga\lor\gd$; 
furthermore $\kk_1=0$ while
$\kk_2=1$ when $\ga=\gd$ and $\kk_2=0$ otherwise.
We consider several cases.

\xcase{$\ga<\gd$}\label{E2<} 

Then $\glx_1=\glx_2=\hgl=\gd>0$; 
furthermore, $\kk_1=\kk_2=0=\hkk$, and \eqref{gam} yields
$\gam_1=\gam_2=0$. Consequently, \refT{T1}\ref{T1+} or \refT{T1-}
yields
\begin{align}\label{e20}
\frac{X_{ni}}n\asto\hcX_i, \qquad i=1,2. 
\end{align}
Furthermore, 1 is the only leader, and thus \refT{Tone} shows that 
$\hcX_1$ and $\hcX_2$
are constants. To find them, we can use \refL{LC}. By \refL{LNDB}
(simplifying the notation), $\cX_i=c_i\cX_1$, where obviously $c_1=1$, 
and \refL{LC} gives the eigenvalue equation
\begin{align}\label{e2a}
\xpar{c_1,c_2}\matrixx{\gd&\gam\\0&\ga}
= \gd \xpar{c_1,c_2},
\end{align}
i.e., $\gam+\ga c_2=\gd c_2$, 
with the solution
$c_2=\xqfrac{\gam}{\gd-\ga}$. In other words,
\begin{align}\label{e2b}
  \cX_2=\frac{\gam}{\gd-\ga}\cX_1.
\end{align}
This follows also directly from \refL{L12} and \eqref{ji3}.

If we add a dummy colour 0 as in \refS{SpfT1}, then  \eqref{el3} and
\eqref{e2b}  yield
\begin{align}\label{e2c}
  \cX_0=\gd\qw(\cX_1+\cX_2)=\frac{1}{\gd}\frac{\gam+\gd-\ga}{\gd-\ga}\cX_1.
\end{align}
Hence, by \eqref{hcx},
\begin{align}\label{e2d}
  \hcX_1&=\frac{\cX_1}{\cX_0}=\frac{\gd(\gd-\ga)}{\gam+\gd-\ga},
\\\label{e2e}
  \hcX_2&=\frac{\cX_2}{\cX_0}=\frac{\gd\gam}{\gam+\gd-\ga}.
\end{align}
Consequently, as \ntoo{} we have
\begin{align}\label{e2f}
\frac{X_{n1}}{n}&\asto \hcX_1= \frac{\gd(\gd-\ga)}{\gam+\gd-\ga},
\\\label{e2g}
\frac{X_{n2}}{n}&\asto\hcX_2=\frac{\gd\gam}{\gam+\gd-\ga}.
\end{align}
This is in agreement with \cite[Theorem 1.3(i)-(iii) and Lemma 1.2]{SJ169}, 
which give the asymptotic distribution of the difference $X_{ni}-n\hcX_i$
divided by the correct normalization factor,
which implies
(and is much more precise than) 
convergence in probability in \eqref{e2f}--\eqref{e2g}.
(The normalization factor is 
$n\qq$ for $\ga<\gd/2$, $(n\log n)\qq$ for $\ga=\gd/2$ and
$n^{\ga/\gd}$ for $\gd/2<\ga<\gd$. Moreover,
the distribution is
asymptotically normal for $\ga\le\gd/2$, but not for $\gd/2<\ga<\gd$.
See \cite{SJ169} for details.)

\xcase{$\ga=\gd$}\label{E2=}

Then $\glx_1=\glx_2=\hgl=\gd>0$; 
furthermore, $\kk_1=0$ and $\kk_2=1=\hkk$, and thus
\eqref{gam} yields
$\gam_1=-1$ and $\gam_2=0$. Consequently, 
\refT{T1}\ref{T1+}  yields
\begin{align}\label{e22a}
  \frac{X_{n1}}{n/\log n}&\asto \hcX_1,
\\\label{e22b}
  \frac{X_{n2}}{n}&\asto \hcX_2.
\end{align}
Furthermore, also in this case, 1 is the only leader, and thus \refT{Tone}
shows that  $\hcX_1$ and $\hcX_2$ are constants. 
However, unlike \refCase{E2<}, $\gl_2=\glx_2$ and thus 2 is now a subleader.
Again,  \refL{LNDB} shows that $\cX_2=c_2\cX_1$, where \eqref{subl} in
\refL{LC} immediately yields
\begin{align}\label{e22c}
  c_2=\frac{a_1r_{12}}{\kk_2}c_1=\gam.
\end{align}
Furthermore, \eqref{el2} now yields $\sQx=\set{2}$, and thus \eqref{el3}
yields
\begin{align}\label{e22d}
\cX_0=\hgl\qw\cX_2
=\gd\qw\cX_2
.\end{align}
Consequently, recalling \eqref{hcx}, the limits in \eqref{e22a}--\eqref{e22b}
are
\begin{align}\label{e22e}
  \hcX_1&=\hgl\frac{\cX_1}{\cX_0}
=\gd^2\frac{\cX_1}{\cX_2}
=\frac{\gd^2}{c_2}
=\frac{\gd^2}{\gam},
\\\label{e22f}
\hcX_2&=\frac{\cX_2}{\cX_0}=\gd.
\end{align}
This is in agreement with the result on the asymptotic distribution
in \cite[Theorem 1.3(iv) and Lemma 1.2]{SJ169}, 
which implies convergence in probability in \eqref{e22a}--\eqref{e22b}.

\xcase{$\ga>\gd$}\label{E2>}

Then $0<\glx_1=\gd<\glx_2=\ga=\hgl$;
furthermore, $\kk_1=\kk_2=0=\hkk$, and thus
$\gam_1=\gam_2=0$. Consequently, 
\refT{T1}\ref{T1+}  yields
(see also \cite{Aguech})
\begin{align}\label{e23a}
  \frac{X_{n1}}{n^{\gd/\ga}}&
\asto \hcX_1,
\\\label{e23b}
  \frac{X_{n2}}{n}&
\asto \hcX_2.
\end{align}
In this case, both colours 1 and 2 are leaders; hence \refT{Tac} shows that
$\cX_1$ and $\cX_2$ are \abscont, also jointly.
Furthermore, \refT{TEL}\ref{TEL<} shows that $\hcX_1$ is \abscont,
while \refT{Tone} shows that $\hcX_2$ is deterministic.
We have again $\sQx=\set2$, which by \eqref{el3} now yields
\begin{align}\label{e23d}
\cX_0=\hgl\qw\cX_2
=\ga\qw\cX_2
.\end{align}
Hence, \eqref{hcx} yields
\begin{align}\label{e23e}
  \hcX_1&=\frac{\cX_1}{\cX_0^{\gd/\ga}}
=\ga^{\gd/\ga}\frac{\cX_1}{\cX_2^\gda}
,\\\label{e23f}
\hcX_2&=\frac{\cX_2}{\cX_0}=\ga.
\end{align}
However, the formula \eqref{e23e} for $\hcX_1$ does not seem to be of
much use to find the distribution of $\hcX_1$.
This distribution was found by other methods in 
\cite[Theorem 1.3(v)]{SJ169}, which yields convergence in distribution of
$X_{n1}/n^{\gd/\ga}$; the \as{} convergence in \eqref{e23a} is a stronger
result, and the distribution of $\hcX_1$ is 
thus the limit distribution found in 
\cite{SJ169}.
This limit distribution is characterized in \cite{SJ169}, but no simple form
is known in general; it is shown in \cite[Theorem 1.6]{SJ169} that
$\hcX_1$ has moments of all orders, and a complicated integral formula is
given for these moments.
Except in the balanced case below, it is, as far as we know, an open problem
whether moments converge in \eqref{e23a} (to these limits) or not.

\xcase{$\ga=\gd+\gam$} \label{E2b}

The balanced case of the two-colour urn studied here
is $\ga=\gd+\gam$; by our assumption $\gam>0$, this is a special case of
Case \ref{E2>}, and thus \eqref{e23a}--\eqref{e23f} hold.
In this special case, $\hcX_1$ can be characterized by its moments, for
which there is a simple formula,
see
(for integers $\ga,\gam,\gd,r$)
\cite[Theorem 2.9]{Puy}, 
\cite[Proposition 17]{F:exact},
and (for the general case)
\cite[Theorem 1.7]{SJ169}:
\begin{align}\label{e24a}
    \E\hcX_1^r
=
\gd^r\frac{\Gamma\bigpar{(x_1+x_2)/\ga}\Gamma(x_1/\gd+r)}
 {\Gamma(x_1/\gd)\Gamma\bigpar{(x_1+x_2+r\gd)/\ga}},
\qquad r>0. 
\end{align}
It follows that the \mgf{} $\E e^{t\hcX_1}$ is finite for all real $t$,
and thus the moments \eqref{e24a} (even for integer $r$)  
determine the distribution.
Moreover,
\refT{TMD} shows that all moments converge in \eqref{e23a} (to the limits
\eqref{e24a}); this was earlier shown in
\cite{F:exact,Puy} 
in the case that the replacements $\gd,\gam,\ga$ are integers.

If we further assume $x_2=0$, so we start only with white balls, then
$\hcX_1$ has a density function that can be expressed using the density function
of a Mittag-Leffler distribution with parameter $\gda$, or a $\gda$-stable
distribution, see \cite[Theorem 1.8]{SJ169}
and \cite[Proposition 16]{F:exact}. 

The \ctime{} processes $W(t)$ and $B(t)$ 
are studied by \cite{ChenMahmoud},
which includes 
our \refT{TC} for this balanced urn.
\end{example}

\begin{example}\label{E2X}
Generalizing \refE{E2},
consider a general triangular
two-colour urn $\cU$ with random replacement matrix
\begin{align}
\matrixx{\xi_{11}&\xi_{12}\\0&\xi_{22}}.   
\end{align}
Such urns have been studied by \citet{Aguech},
who proved (among other results) 
the existence of a.s.\ limits under some assumptions
(including our \ref{A+}, $\E\xi_{22}\ge\E\xi_{11}$, and an unnecessary
independence assumption).
We extend this result as follows.

Assume $\xi_{11},\xi_{12},\xi_{22}\in L^2$ (Condition \ref{A2}), 
and let $\gd:=\E\xi_{11}$, $\gam:=\E\xi_{12}$, $\ga:=\E\xi_{22}$,
so that $\smatrixx{\gd&\gam\\0&\ga}$ is the mean replacement matrix.
Suppose that \ref{A-5} holds, that $\xi_{11}\ge0$ a.s.,
and that, as in \refE{E2},
$\gd>0$, $\gam>0$, $w_0=x_1>0$, and $b_0=x_2\ge0$.
Then the urn satisfies 
\refAA{} if $\xi_{22}\ge0$ a.s.,
and \refAAZZ{} in any case.
Hence, \refTs{T1-} and \ref{TC-} apply, and thus the conclusions of \refTs{T1}
and \ref{TC} hold for all $\ga$.

As in \refS{Smean}, we
let $\cU'$ denote
the mean urn  with replacement matrix $\smatrixx{\gd&\gam\\0&\ga}$; 
this urn is of the type in \refE{E2}.
As discussed in \refS{Smean}, all parameters  $\gl_i,\glx_i,\hgl,\dots$
are the same for $\cU$ and $\cU'$, and thus all results are qualitatively
the same for the two urns.
Hence, all results in \refE{E2} hold for the urn $\cU$ with random
replacements too, except any assertions on the precise distributions
of the limits (including the moment formula \eqref{e24a}, which as shown in
\refE{E2p} below is \emph{not}  valid in general for random replacements).
Note, however, that by \refT{Tmean},
in all cases in \refE{E2} with a deterministic limit $\hcX_i$ for the mean
urn, we have the same limit for the urn $\cU$.
\end{example}

\begin{example}\label{E2p}
Consider a two-colour urn $\cU$ with the random replacement matrix 
\begin{align}\label{e2p}
\matrixx{\xi&1-\xi\\0&1},   
\end{align}
where $\xi\in\Be(p)$ for some $p\in(0,1)$.
In other words, if we draw a white ball, we add another ball that is white
with probability $p$, and otherwise black; if we draw a black ball we always
add another black ball. (As usual, we also always return the drawn ball.)
Note that this urn is balanced, in spite of replacements being random.

This urn appears in several applications, see \refEs{E2p1} and \ref{ERW}
for two of them.

This urn is a special case of \refE{E2X}, and 
a.s.\ convergence for the urn follows from 
\refT{T1}.
Moment convergence follows from \refT{TMD}.

The mean replacement matrix is
\begin{align}\label{e2pmean}
(r_{ij})_{i,j=1}^2=\matrixx{p&1-p\\0&1}.  
\end{align}
By \refS{Smean}, the asymptotic behaviour of the urn $\cU$ is qualitatively the
same as for the mean urn $\cU'$ with the replacement matrix \eqref{e2pmean},
which is an instance of \refE{E2}, more precisely the balanced
\refCase{E2b}.
In particular, since \eqref{e23f} shows that $\hcX_2=\ga=1$ is constant for
the mean urn $\cU'$,
the same holds for the original urn $\cU$ by \refT{Tmean}.

We may also relate the urn $\cU$ to the mean urn $\cU'$ in another, more
direct, way.
Conditioned on the contents $(X_{n1},X_{n2})$ of the urn at time $n$,
the probability that the next added ball is white is, letting $\zeta$ be the
colour of the drawn ball,
\begin{align}\label{e2p1}
  \P\bigpar{\zeta=1\mid X_{n1},X_{n2}}\cdot p
+   \P\bigpar{\zeta=2\mid X_{n1},X_{n2}}\cdot 0
=\frac{pX_{n1}}{X_{n1}+X_{n2}}.
\end{align}
Hence, if we define
\begin{align}\label{e2p2}
  Y_{n1}&:=pX_{n1},
\\\label{e2p3}
  Y_{n2}&:=(1-p)X_{n1}+X_{n2},
\end{align}
and note that $Y_{n1}+Y_{n2}=X_{n1}+X_{n2}$, we see from \eqref{e2p1} that
we may
regard the \emph{added} ball in $\cU$ as the \emph{drawn} ball in an urn
with composition $\bY_n=(Y_{n1},Y_{n2})$. Adding a white ball to $(X_{n1},X_{n2})$
(i.e., increasing $X_{n1}$ by 1) means by \eqref{e2p2}--\eqref{e2p3} adding
$(p,1-p)$ to $(Y_{n1},Y_{n2})$, while adding a black ball to $(X_{n1},X_{n2})$
means adding $(0,1)$ to $(Y_{n1},Y_{n2})$. 
Consequently, the stochastic process $(\bY_n)_{n\ge0}$ describes a \Polya{}
urn with the 
replacement matrix 
$\smatrixx{p&1-p\\0&1}$,
which is the same as \eqref{e2pmean} for the mean urn $\cU'$ above. 
Note, however, that the
initial conditions now are, by \eqref{e2p2}--\eqref{e2p3},
\begin{align}\label{um1}
  y_1=px_1,\qquad y_2=(1-p)x_1+x_2.
\end{align}
By \refE{E2}, we have
\begin{align}\label{e2p4}
  Y_{n1}/n^{p}\asto \hcY_1
\end{align}
where the limit by \eqref{e24a} has moments, recalling \eqref{um1},
\begin{align}\label{e2p5}
    \E\hcY_1^r
=
p^r\frac{\Gamma\xpar{y_1+y_2}\,\Gamma(y_1/p+r)}
 {\Gamma(y_1/p)\,\Gamma\xpar{y_1+y_2+rp}}
=
p^r\frac{\Gamma\xpar{x_1+x_2}\,\Gamma(x_1+r)}
 {\Gamma(x_1)\,\Gamma\xpar{x_1+x_2+rp}}.
\end{align}
Hence, by \eqref{e2p2}, we have in the urn $\cU$ 
with replacements \eqref{e2p}
\begin{align}\label{e2p4x}
  X_{n1}/n^{p}\asto \hcX_1
\end{align}
with 
$\hcX_1=p\qw\hcY_1$, and thus
\begin{align}\label{e2p6}
    \E\hcX_1^r
=
p^{-r}\E\hcY_1^r=
\frac{\Gamma\xpar{x_1+x_2}\,\Gamma(x_1+r)}
 {\Gamma(x_1)\,\Gamma\xpar{x_1+x_2+rp}},
\qquad r\ge0.
\end{align}

By comparing \eqref{e2p4x} with \eqref{e24a} for the mean urn, we see that
the means 
$\E\hcX_1$ are the same for the two urns, while for the second moment,
\eqref{e2p6} and \eqref{e24a} yield, for the urn $\cU$ and its mean urn
$\cU'$, respectively,
\begin{align}
\E  \hcX_1^2 = x_1(x_1+1)\frac{\gG(x_1+x_2)}{\gG(x_1+x_2+2p)},
&&&
\E  \hcX_1^2 = x_1(x_1+p)\frac{\gG(x_1+x_2)}{\gG(x_1+x_2+2p)}.
\end{align}
The variance is thus larger for the urn $\cU$ with random replacement.
(Perhaps not surprisingly.)
This shows that an urn and its mean urn in general have different asymptotic
distributions, 
although the qualitative behaviour is the same as shown in \refS{Smean}.
\end{example}

\begin{example}\label{E2p1}
One example where the  urn in \refE{E2p} with replacement matrix
\eqref{e2p} appears is that
it describes the size of the root cluster
for  bond percolation (with parameter $p$)
on the random recursive tree
(with vertices in the root cluster coloured
white and all other vertices black, and
the initial vector $(1,0)$).
(See the argument in the generalization \refE{Epref} below.
The root cluster is studied by other methods in
\cite{BaurBertoin2015}, \cite{Kursten2016}, \cite{Baur}, \cite{DesmaraisHW}.)
In this case we obtain
\eqref{e2p4x} where
\eqref{e2p6} yields
\begin{align}\label{ML}
\E\hcX_1^r = \frac{\gG(r+1)}{\gG(1+rp)},
\qquad r\ge0,  
\end{align}
which means that $\hcX_1$ has a 
Mittag-Leffler distribution;
this was proved by \citet{BaurBertoin2015} (using other methods).
\end{example}

\begin{example}\label{Epref}
\citet{Baur} and
\citet{DesmaraisHW} considered (among other things)
the root cluster in bond percolation on a preferential attachment tree,
generalizing \refE{E2p1}.
The tree is defined as follows, for a real parameter $\ga$.
Construct the rooted tree $\cT_n$ with $n$ vertices
recursively, starting with  $\cT_1$ being just the root and adding vertices
one by one;  
each new vertex is attached
to a parent $v$ chosen among existing vertices with probability proportional
to $\ga d(v)+1$, where $d(v)$ is the current outdegree of $v$.
We also  perform bond percolation, and let each edge by \emph{active}
with probability $p\in(0,1)$, independently of all other edges.
(Note that both active and passive edges are counted in the outdegree.)
A vertex is active if it is connected to the root by a path of active edges.
Let $Z_n$ be the number of active vertices in $\cT_n$.

The case $\ga=0$ gives the random recursive tree in \refE{E2p1}.
We consider here the case $\ga\ge0$ (as assumed in \cite{Baur}), 
and study the modifications for $\ga<0$
in the following example.

We model this process by an urn 
with two colours, where each vertex $v$ contributes $\ga d(v)+1$ balls,
which are white if the urn is active and black otherwise.
To find the parent of the next vertex corresponds to drawing a ball from the
urn; if the ball is white then the parent is active and the new vertex
becomes active with probability $p$ (and otherwise passive); if the ball is
black then the parent is passive and the new vertex always becomes passive.
Since the outdegree of the parent increases by 1, we add $\ga$ balls of the
same colour as the drawn ball, plus one ball for the new vertex which is
white if the new vertex is active, i.e., with probability $p$ if the drawn
ball is white, and otherwise black.
Hence, the urn is a  \Polya{} urn with random replacement matrix
\begin{align}\label{e2q}
\matrixx{\ga+\xi&1-\xi\\0&\ga+1},   
\end{align}
where $\xi\in\Be(p)$. We start with 1 active vertex, and thus the urn starts
with a single white ball, i.e., $\bx=(1,0)$.

If $\ga=0$ (the random recursive tree), we get again \eqref{e2p}, as
discussed in \refE{E2p1}. 
In general, unlike \refE{E2p1}, the number of active vertices $Z_n$ is not
directly reflected by the contents of the urn.
However, if $N_{n1}$ is the number of drawn white balls, then this is the
number of vertices that get an active parent;
hence $N_{n1}$ is the total outdegree of the active vertices, and therefore
the number of white balls in the urn is
\begin{align}\label{qa}
  W_n=X_{n1} = \ga N_{n1} + Z_n.
\end{align}

The replacement matrix \eqref{e2q} shows that the 
urn is of the type in \refE{E2X}; furthermore, it is balanced with balance
$\ga+1$. We have $\glx_1=\gl_1=\E\xi_{11}=\ga+p$ and $\hgl=\gl_2=\ga+1$;
further $\kk_1=\kk_2=0$.
\refT{T1} yields
\begin{align}\label{qb}
  \frac{X_{n1}}{n^{(\ga+p)/(\ga+1)}}\asto\hcX_1.
\end{align}
Moreover, \refT{TN2} yields
\begin{align}\label{qc}
\frac{ N_{n1}}{X_{n1}}\asto \frac{1}{\glx_1}=\frac{1}{\ga+p},
\end{align}
and thus \eqref{qa} yields
\begin{align}\label{qd}
\frac{Z_n}{X_{n1}}
= \frac{X_{n1}-\ga N_{n1}}{X_{n1}}\asto 1-\frac{\ga}{\ga+p}
=\frac{p}{\ga+p}.
\end{align}
Hence, \eqref{qb} yields
\begin{align}\label{qe}
\frac{Z_n}{n^{(\ga+p)/(\ga+1)}}
\asto
\cZ:=\frac{p}{\ga+p}\hcX_1,
\end{align}
where the limit is in $(0,\infty)$ a.s.
The limits \eqref{qb} and \eqref{qe} hold also in $L^r$ for any $r<\infty$
by \refTs{TMD} and \ref{TMQ}; hence all moments converge.

This complements
\citet[Proposition 4.1]{Baur}, who shows $L^2$-convergence in
\eqref{qe}
and gives the first two moments of the
limit
(by different but related methods),
and 
\citet{DesmaraisHW} who
prove  convergence of all moments in \eqref{qe}
and give a recursion for the moments of the limit.
(The distribution of the limit is not known explicitly.)
\end{example}

\begin{example}\label{Epref-}
In \refE{Epref}, we assumed $\ga\ge0$. However, the results extend easily
to the case $\ga<0$. 
(This case was included in \cite{DesmaraisHW}.)
In this case, as is well known, we must have $\ga=-1/d$
with 
$d\ge 1$ an integer; the random tree $\cT_n$ then is a 
random $d$-ary recursive tree \cite[Section 1.3.3]{Drmota}.
(The case $d=1$ is trivial, and we assume $d\ge2$.)
In this case the replacements \eqref{e2q} do not satisfy \ref{A-5}, since
$\ga+\xi$ may take the non-integer negative value $-1/d$. 
This is can be remedied as in \refR{RA-5} 
by multiplying the number of white balls by $d$ and adjusting the activity
$a_1$, but in the present case we find it simpler to multiply the number of
balls of both colours by $d$ and keep the activities $a_i=1$. 
This gives the replacement matrix
\begin{align}\label{e2qd}
\matrixx{d\xi-1&d-d\xi\\0&d-1},   
\end{align}
with the initial state $\bx=(d,0)$.
In \eqref{qa}, we have to replace $X_{n1}$ by $X_{n1}/d$, which yields
\begin{align}\label{jepp}
  Z_n=\frac{X_{n1}+N_{n1}}d.
\end{align}

We have $\glx_1=\gl_1=\E\xi_{11}=dp-1$, so \ref{A-7} requires $dp>1$.
(In fact, it is easily seen that if $dp\le1$, then  $Z_n\asto Z_\infty<\infty$
\cite{DesmaraisHW}.)
Assuming $dp>1$, 
this  urn satisfies \refAAZ.
It does not satisfy \ref{A-8}, but \refL{LBB5} shows that $\cX_0>0$ a.s.,
and thus we obtain by \refT{T1--}
the \as{} limit \eqref{qe} again
 (with $\hcX_1$ replaced by $\hcX_1/d$); 
this can be written
\begin{align}\label{qe-}
\frac{Z_n}{n^{(dp-1)/(d-1)}}
\asto
\cZ:=\frac{p}{dp-1}\hcX_1.
\end{align}
Moment convergence, earlier shown by 
\cite{DesmaraisHW},
follows from \refTs{TMD} and \ref{TMQ}.
\end{example}

\begin{example}\label{Eprefk}
 In \refE{Epref}, we studied the number $Z_n$ of vertices in the preferential
 attachment tree $\cT_n$ such that the path to the root contains only active
 vertices. More generally, let $\Zk_n$ be the number of vertices such that
 this path contains exactly $k\ge0$ passive edges.
For fixed $k$, this can be treated similarly, with an urn 
with $q\ge k+2$ colours $1,\dots,q$,
where vertices
with $j$ passive edges on the path to the root are represented by colour
$\max(j+1,q)$.
For example, for $q=3$, this leads to an urn with replacement
matrix 
\begin{align}\label{e3q}
\matrixx{\ga+\xi&1-\xi&0\\
0&\ga+\xi&1-\xi\\
0&0&\ga+1},   
\end{align}
where $\xi\in\Be(p)$ as above. (It does not matter whether we write this
with the same $\xi$ on both rows or not; recall \refR{Rmatrix}.)
For a general $q\ge2$ we have 
$\xi_{ii}=\ga+\xi$ and $\xi_{i,i+1}=1-\xi$ for $1\le i\le q-1$,
$\xi_{qq}=\ga+1$, 
and all other $\xi_{ij}=0$.
The urn is balanced with balance $\gbal=\ga+1$.
We assume for simplicity $\ga\ge0$; the case $\ga<0$ can be treated as in
\refE{Epref-}.

We find $\gl_1=\dots =\gl_{q-1}=\ga+p$ and $\gl_{q}=\ga+1$, and thus
$\glx_1=\dots=\glx_{q-1}=\ga+p$, $\hgl=\glx_q=\ga+1$, 
$\kk_i=i-1$ for $1\le i\le q-1$,
and $\kk_q=0$.
We have in analogy with \eqref{qa}, $\Zx{k}_n=X_{n,k+1}-\ga N_{n,k+1}$
for $k\le q-2$.
\refT{T1} applies and shows together with \refT{TN2}
as in \refE{Epref},
an \as{} limit. Since $\kk_{k+1}=k$, we now get
\begin{align}\label{qe2}
\frac{\Zx{k}_n}{n^{(\ga+p)/(\ga+1)}\log^k n}
\asto\cZx{k}:=\frac{p}{\ga+p}\hcX_{k+1}.
\end{align}
Furthermore, 
we obtain from \eqref{hsubl} and induction,
since $r_{i,i+1}=\E(1-\xi)=1-p$
for every $i\le q-1$, 
\begin{align}
  \cZx{k}=\frac{1}{k!}\parfrac{1-p}{\ga+1}^k\cZx0
,\end{align}
where $\cZx0$ equals $\cZ$ in \eqref{qe}.
Consequently, the numbers $\Zx{k}_n$ for different $k$ are asymptotically
proportional.
\end{example}

\begin{example}\label{ERW}
  Another example where the  urn in \refE{E2p} with replacement matrix
  \eqref{e2p} appears is for elephant random walks with delays.
In a standard elephant random walk (ERW), the elephant takes steps 
$Y_n\in\set{\pm1}$;
after an initial step $Y_1$, the elephant (which remembers the entire walk)
chooses one of the preceding steps, uniformly at random, and then randomly
either (with probability $p$) repeats it, or (with probability $q$)
takes a step in the opposite direction. (Here $q=1-p$.)
As noted by \citet{BaurBertoin2016} 
(to which we refer for details and further references), 
this may be modelled by a (non-triangular)
\Polya{} urn, with one white ball for each step $+1$ and one black ball for
each step $-1$ taken so far; the replacement matrix is
\begin{align}
  \label{erw0}
\matrixx{\xi&1-\xi\\1-\xi&\xi} 
\end{align}
with $\xi\in\Be(p)$.
Hence results for the ERW follows from known results for irreducible
\Polya{} urns \cite{BaurBertoin2016}. (We have nothing to add here.)

In the \emph{elephant random walk with delays}
\cite{HKL2010,GutS2022,GutS2021}, the elephant has a third possibility:
with probability $r$ it makes a step 0 (i.e., stays put), regardless of the
remembered step. (Now $p+q+r=1$; we assume $p,q,r>0$.)
This can be modelled by a 3-colour urn, with colours representing steps $+1$,
$-1$, and $0$, and replacement matrix
\begin{align}
  \matrixx{\zeta_1&\zeta_2&\zeta_3\\
\zeta_2&\zeta_1&\zeta_3\\
0&0&1},
\end{align}
where $(\zeta_1,\zeta_2,\zeta_3)$ is a random vector with exactly one
component 1 and the others 0, and $(P[\zeta_i=1])_{i=1}^3=(p,q,r)$.
This \Polya{} urn is neither triangular nor irreducible, but it may be
regarded as a combination of two such urns (cf.\ \refR{Rnontri2})
as follows. Let as before $Y_n\in\set{\pm1,0}$ be the $n$th step, and let
$Z_n:=|Y_n|\in\set{0,1}$; $Z_n$ thus just records whether the elephant moves
or stays put. 
(The process $Z_n$  is called \emph{Bernoulli elephant random walk} in
\cite{GutS-BERW};  it has also been studied in  \cite{Bercu} and, for
somewhat different reasons, in \cite{HKL2014}.)
As noted by \cite[V.C]{BaurBertoin2016},
the process $(Z_n)$ can be modelled by a \Polya{} urn with
$q=2$ and one white ball for each step $\pm1$ and one black ball for each
step $0$ so far; the replacement matrix is 
\begin{align}
  \matrixx{1-\zeta_3&\zeta_3\\0&1}.
\end{align}
This is the urn in \refE{E2p} with $\xi=1-\zeta_3\in\Be(1-r)$.
The number of non-zero steps up to time $n$ is $W_n$, the number of white
balls in the urn. 
By conditioning  on  $Z_1:=|Y_1|$, we
may assume that $Z_1$ is deterministic. Moreover, the case $Z_1=0$ is trivial,
with $Y_n=Z_n=0$ for all $n\ge1$; hence we may assume $Z_1=1$,
and thus the urn starts with 1 white ball.
(\cite{GutS2021,GutS2023} use a different initial condition with
$Z_1\sim\Be(1-r)$.)
Then the urn is the same as in \refE{E2p1}, and \refT{T1} yields
\begin{align}\label{erw1}
  W_n/n^{1-r}\to\hcX_1,
\end{align}
as shown by other methods in \cite[Theorem 3.1]{GutS2021};
furthermore, 
\cite[Lemma 2.1]{Bercu} and
\cite[Theorem 5.1]{GutS2023}
show  that
\eqref{ML} holds (with $p$ replaced by $1-r$ and $r$ by $s$, say), 
and thus $\hcX_1$ has a Mittag-Leffler distribution
(as we saw in \refE{E2p1}).
Moreover, \cite{Bercu} and \cite{GutS-BERW} 
show that moment convergence holds in \eqref{erw1};
this also follows from \refT{TMD}.

Conditioned on the number $W_n$, the position of the
elephant is the same as for a standard ERW with $W_n$ steps,
and thus the limit results in \cite{Bercu} and \cite{GutS2023} for the ERW
with delays 
may easily be obtained by combining \eqref{erw1} and 
the results for the standard ERW obtained by \cite{BaurBertoin2016}
from the urn \eqref{erw0}; we leave the details to the reader.
\end{example}

\begin{example}\label{E3}
The triangular urn with $q=3$ and balanced deterministic replacements
(with all entries integers $\ge0$)
\begin{align}
  \matrixx{\ga&\gb&\gs-\ga-\gb\\
0&\gd&\gs-\gd\\
0&0&\gs}
\end{align}
was studied by \citet[Section 2.5]{Puy} and 
\citet[Section 10]{F:exact}; the results include convergence in distribution
(after normalization),
and, in some cases, 
convergence of all moments with explicit
formulas for moments of the limits.

In particular, they show 
(in our notation and correcting several typos) 
that
if $\ga>\gd>0$ and $\gb>0$, then
\begin{align}\label{e3a}
  X_{n2}/n^{\ga/\gs} \dto\hcX_2
\end{align}
where the limit has moments 
\begin{align}\label{um2}
  \E \hcX_2^\ellr
= \parfrac{\ga\gb}{\ga-\gd}^\ellr 
\frac{\gG(x_1/\ga+\ellr)\gG(|\bx|/\gs)}
{\gG(x_1/\ga)\gG((|\bx|+\ellr\ga)/\gs)}
.\end{align}
In this case, we have $\gs\ge \ga+\gb>\ga>\gd$ and thus 
$\glx_1=\glx_2=\gl_1=\ga$, $\gl_2=\gd$, $\hgl=\glx_3=\gl_3=\gs$, 
$\hkk=\kk_i=0$, $\gam_i=0$ ($i=1,2,3$). 
Hence, \refT{T1} yields \eqref{e3a} with the stronger convergence a.s. 
Moreover,
the leaders are 1 and 3,
and \refL{LhC} yields $\hcX_2=\hc_{21}\hcX_1$, where \refL{LhC} and \eqref{eq4}
show that $(1,\hc_{21})$ is a left eigenvector
of $\smatrixx{\ga&\gb\\0&\gd}$,
with eigenvalue $\ga$.
Consequently, $\hc_{21}=\gb/(\ga-\gd)$ and
\begin{align}\label{um3}
  \hcX_2 =\frac{\gb}{\ga-\gd}\hcX_1.
\end{align}
Furthermore, as noted in \cite{F:exact}, we may in this urn
be partially colour-blind and merge colours 2 and 3; then
$(X_{n1},X_{n2}+X_{n3})$ is a 2-colour urn with replacement matrix
$\smatrixx{\ga&\gs-\ga\\0&\gs}$; hence the moments $\E \hcX_1^r$ are given
by \eqref{e24a}, where now $\ga$ and $\gd$ are replaced by $\gs$ and $\ga$,
and $x_1+x_2$ is replaced by $|\bx|=x_1+x_2+x_3$, i.e.,
\begin{align}\label{e24a3}
    \E\hcX_1^r
=
\ga^r\frac{\Gamma\bigpar{|\bx|/\gs}\Gamma(x_1/\ga+r)}
 {\Gamma(x_1/\ga)\Gamma\bigpar{(|\bx|+r\ga)/\gs}},
\qquad r\ge0. 
\end{align}
Thus, \eqref{um2} follows by \eqref{um3}. 

Similarly, in the case $\ga=\gd>0$ and $\gb>0$, 
\cite{Puy} and \cite{F:exact} show
(again correcting several typos) 
\begin{align}\label{e3b}
  X_{n2}/\xpar{n^{\ga/\gs}\log n} \dto\hcX_2
\end{align}
where the limit has moments 
\begin{align}\label{um5}
  \E \hcX_2^\ellr
= \parfrac{\ga\gb}{\gs}^\ellr  
\frac{\gG(x_1/\ga+\ellr)\gG(|\bx|/\gs)}
{\gG(x_1/\ga)\gG((|\bx|+\ellr\ga)/\gs)}
.\end{align}
In this case 
$\glx_1=\glx_2=\gl_1=\gl_2=\ga$, $\kk_1=0$ and
$\kk_2=1$, and as above  $\hgl=\glx_3=\gl_3=\gs$
and $\hkk=\kk_3=0$.
Hence,
\refT{T1} yields \eqref{e3b} with convergence a.s.
Moreover, \refL{LhC} and \eqref{hsubl} yield
\begin{align}\label{um6}
  \hcX_2=\frac{\gb}{\gs}\hcX_1,
\end{align}
which yields \eqref{um5} by \eqref{e24a3}.

\cite{F:exact} and \cite{Puy} further prove moment convergence 
in \eqref{e3a} and \eqref{e3b}, which also follows from \refT{TMD}.

Note that  \cite{F:exact} and \cite{Puy}
assume the replacements to 
be integers, while we obtain the results above also for non-integer 
replacements.

A.s.\ convergence in \eqref{e3a} and \eqref{e3b} 
follows also from 
\cite{BoseDM}, see \refE{EBose}.  
\end{example}

\begin{example}\label{EBose}
  \citet{BoseDM} study a rather general class of balanced
triangular urns 
with  deterministic $\bxi_i$; thus $\xi_{ij}=r_{ij}$.
They show \as{} convergence of $X_{ni}$ suitably normalized, as in our
\refT{T1}. 

We introduce some of the notation from \cite{BoseDM}.
The colours are \setq{} (where they write $q=K+1$), 
and the replacement matrix is triangular,
so \eqref{q1}--\eqref{q3} hold with the natural order $<$.
The diagonal entries $r_{ii}=\gl_{ii}$ are denoted $r_i$, and the positions
of the
weak maxima in the sequence $r_1,\dots,r_q$ are denoted $i_1,\dots,i_{J+1}$.
Thus $r_{i_1}\le r_{i_2}\le \dots\le r_{i_{J+1}}$, and $r_k< r_{i_j}$ when
$i_j<k<i_{j+1}$.
Since the urn is balanced,
\refL{LBB} and \refR{RBB} show that 
$r_q=\gl_q=\hgl$ is
a maximum, and thus $i_{J+1}=q$. Clearly, $i_1=1$.
The \emph{$j$th block} of colours is $\set{i_j,\dots,i_{j+1}-1}$.
(In \cite{BoseDM}, the replacements are normalized by $\gl_q=\hgl=1$, which can
be assumed without loss of generality. 
It is also assumed that initially there is 1 ball in the urn, \ie, 
$\sum_ix_i=1$; 
this seems to be a mistake since one cannot in general
normalize both to 1 simultaneously.) 

\cite{BoseDM} says that the colours are arranged in \emph{increasing order} if
for every $k\in(i_j,i_{j+1})$ (with $1\le j\le J$), there exists $m\in[i_j,k)$
such that $r_{mk}>0$. Using our terminology, this is easily seen to be
equivalent to: If $k\in(i_j,i_{j+1})$, then $k$ is a descendant of $i_j$.
\cite[Proposition 2.1]{BoseDM} shows that in every balanced triangular urn,
the colours can be rearranged in increasing order. 
[Sketch of proof: 
Construct the blocks in backwards order. In each step find a colour $i$ (to be
labelled $i_j$) with $\gl_i$ maximal among the remaining colours; let the
next block consist of $i$ and all its remaining descendants.
Order this block in a suitable way, with $i$ first, and place it before the
previously constructed blocks. Repeat with the remaining colours.]

The main result of \cite{BoseDM} 
further assumes \cite[(2.2)]{BoseDM}, which says 
that for every $j=1,\dots,J$, there
exists $m\in[i_j,i_{j+1})$ such that $r_{m,i_{j+1}}>0$. In our terminology,
  and assuming (as in \cite{BoseDM}) that the colours are in natural order,
  this is equivalent to $i_{j+1}\succ i_j$.
It follows that the assumptions in \cite{BoseDM} 
(i.e.,  increasing order and their (2.2)) imply that 
$i_1,\dots,i_{J+1}$ are precisely 
the colours $i$ with $\glx_i=\gl_i$, \ie{}
our leaders and subleaders
(see \eqref{lead2} and \eqref{sublead}),
and that the leaders $\nu$ have distinct $\glx_\nu$.
Moreover,
for each leader $\nu=i_j$, the subleaders in $\sD_\nu$ 
(see \eqref{sD})
are $i_{j+1},\dots,i_{j+\ell}$ where 
$\ell=\ell_j\ge0$ is the largest integer with $\gl_{i_{j+\ell}}=\gl_{i_j}$;
these subleaders form a chain in the partial order $\prec$, and thus
$\kk_{i_{j+k}}=k$ for $k=0,\dots,\ell$.
The set $D_\nu^\kk$ is an interval $[i_j,i_{j+1})$ from a (sub)leader
to the next.

The \as{} convergence in the main result  \cite[Theorem 3.1]{BoseDM} now
follows from  \refT{T1}, \refL{LC}, and \refT{TEL}.
Moreover, \cite[Remark 3.3]{BoseDM} says that it is clear from their proof
that
\as{} convergence holds
also without their assumptions of increasing order and their (2.2);
\cite{BoseDM} 
thus essentially states our \refT{T1} for the case of a balanced urn with
deterministic replacements.

Furthermore,
\cite{BoseDM} also shows convergence in $L^2$, which is extended to $L^p$
for any $p$ by our \refT{TMD}.
\end{example}

\begin{example}\label{EcX=0}
  Consider an urn with $q=4$ and replacement matrix 
  \begin{align}
    \matrixx{\ga\eta_1-1&0&\eta_2&0\\
0& \gb\eta_1-1 & \eta_2&0\\
0 & 0 &\gam &0\\
0&0&0&\gd},
  \end{align}
where $\ga,\gb,\gam,\gd$ are positive integers and $\eta_1,\eta_2\in\Be(1/2)$
are independent.
We have $\gl_1=\ga/2-1$, $\gl_2=\gb/2-1$, $\gl_3=\gam$, and $\gl_4=\gd$.
Suppose that $\gl_4=\gl_1>\gl_2>\gl_3>0$, and start with $\bX_0=(1,1,0,1)$.

This urn satisfies \refAAZ, but not \ref{A-8}. 
We have $\hgl=\glx_1=\glx_3=\glx_4=\gl_1$,  $\glx_2=\gl_2$, and  $\kk_i=0$
$\forall i$.
\refT{TC-} shows that \eqref{tc1} holds for all colours $i$,
with 
\begin{align}
  e^{-\glx_i t}X_i(t)\asto \cX_i.
\end{align}
Furthermore, 
$\cX_4>0$ a.s., as is seen
by considering only balls of colour 4;
hence, by \eqref{el3}, 
$\cX_0=\hgl\qw(\cX_1+\cX_4)\ge\hgl\qw\cX_4>0$ a.s.

Furthermore, 3 is a follower of the leader 1, and 
\refL{LNDB} with \refR{Rdep-} yields
$\cX_3=c\cX_1$ for some $c>0$. (In fact, $c=1/2(\gl_1-\gl_3)$, by the same
argument as for \eqref{e2b}.)
Note also that $\cX_1$ and $\cX_2$ are independent.

It is obvious that colours 1 and 2 both may die out in a few draws, and that
if they do, they may or may not first generate a ball of colour 3.
If they do not die out, then \as{} $\cX_1>0$  and $\cX_2>0$ respectively,
see \refR{Rdie}.

Consequently, the following cases can appear, all with positive
probabilities:
\begin{romenumerate}
\item 
$\cX_1>0$ and then $\cX_3>0$ and thus
$X_3(t)$ grows at rate $e^{\glx_3t}=e^{\gl_1 t}$.
Similarly,
by \refT{T1--} and \eqref{hcx}, $X_{n3}/n\asto\hcX_3>0$.

\item 
$\cX_1=0$ but $\cX_2>0$; then $\cX_3=0$, but by considering the urn with
colour 2, 3, and 4 only (after 1 has died out), it follows that
$e^{-\gl_2}X_3(t)\asto \cX_3':=c'\cX_2>0$ for some $c'>0$, and thus
$X_3(t)$ grows at rate $e^{\gl_2 t}$; 
similarly
$X_{n3}/n^{\gl_2/\gl_1}\asto \hcX_3'>0$.

\item 
$\cX_1=\cX_2=0$ and both $X_1(t)$ and $X_2(t)$ die out,
but at least one of them first gets a ball of colour 3 as offspring.
Then, by considering only balls of colour 3 and 4 (when the others have died
out),
$e^{-\gl_3}X_3(t)\asto \cX_3''>0$, and thus
$X_3(t)$ grows at rate $e^{\gl_3 t}$;
similarly
$X_{n3}/n^{\gl_3/\gl_1}\asto \hcX_3''>0$.

\item 
$\cX_1=\cX_2=0$, and both  $X_1(t)$ and $X_2(t)$ die out
without producing an offspring of colour 3.
Then, $X_3(t)=0$ for all $t$ and thus $X_{3n}=0$ for all $n$.
\end{romenumerate}

This example shows that in a case when $\cX_i=0$ with positive probability,
it may still be possible to find precise limit results, but  different 
limit results may hold in different subcases.
It seems that this can be done very generally on a case by case basis,
but as said in \refR{RcX=0} we do not attempt any general statement.
\end{example}

\begin{example}\label{E+-}
An interesting counterexample is given by
the replacement matrix 
\begin{align}
\matrixx{0&\phantom+1\\0&\pm1},  
\end{align}
where $\xi_{22}=\pm1$  denotes a random variable with 
$\P(\xi_{22}=1)=\P(\xi_{22}=-1)=\frac12$.
In words: when we draw a white ball, it is replaced together with a black ball;
when we draw a black ball, we toss a coin and then (with probability
$\frac12$ each) either remove the ball or replace it together with another
black ball. 

We have $r_{22}=\E\xi_{22}=0$, and thus $\gl_1=\gl_2=0=\glx_1=\glx_2=\hgl$, 
$\kk_1=0$, $\kk_2=1$, $\hkk=1$, $\hkko=2$.
Note that this example is excluded from \refTs{T1-}--\ref{TC-} since 
\ref{A-7} does not hold.
(However,
\ref{A0}--\ref{A2}, \ref{A-5}, and \ref{A-6} hold, provided $x_1>0$ and
$x_2\in\bbZgeo$.)
We will see that, in fact, we do \emph{not} have \as{} convergence
of $B_n/n\qq=X_{n2}/n\qq$ and $B(t)/t=X_2(t)/t$ as \eqref{t1c} and
\eqref{tc1} would give; however, these converge in distribution. 

Consider first a continuous time urn with only black balls. 
This is a branching process where balls live an $\Exp(1)$ time and then
randomly either die or are split into two.
Let $Y(t)$ denote such an urn with black balls, starting with $Y(0)=1$,
and denote its \pgf{} by (for $|z|\le1$, say)
\begin{align}\label{e+-1}
  g_t(z):=\E z^{Y(t)}.
\end{align}
Then the Kolmogorov backward equation 
\cite[Theorem 12.22]{Kallenberg}
yields, for $t>0$,
\begin{align}\label{e+-2}
  \frac{\partial}{\partial t} g_t(z) = \frac12\bigpar{1+g_t(z)^2}-g_t(z)
=\frac12\bigpar{1-g_t(z)}^2
\end{align}
with $g_0(z)=z$, which has the solution
\begin{align}\label{e+-3}
  g_t(z) = 1-\Bigpar{\frac{1}{1-z}+\frac{t}2}\qw
=\frac{t(1-z)+2z}{t(1-z)+2}
=\frac{t}{t+2}+\frac{2}{t+2}\cdot\frac{\frac{2}{t+2}z}{1-\frac{t}{t+2}z}.
\end{align}
Consequently, $Y(t)$ has a modified geometric distribution:
$\P(Y(t)=0)=g_t(0)=\frac{t}{t+2}$ and the conditional distribution
$\bigpar{Y(t)\mid Y(t)>0}$ is $ \Gei\bigpar{\frac{2}{t+2}}$.
In particular, $\P(Y(t)=0)\to1$ as \ttoo; since 0 is an absorbing state, it
follows that \as{} $Y(t)=0$ for sufficiently large $t$; in other words,
$Y(t)$ dies out. (This follows 
also since $Y(t)$ is a time-changed simple random walk, absorbed at 0.)
A simple calculation yields $\E Y(t)=1$ and $\Var Y(t)=t$, in accordance
with \eqref{zlm00}--\eqref{zlm02} and \eqref{zlmgb}.
Note that $\bigpar{t\qw Y(t)\mid Y(t)>0}\dto \Exp\bigpar{\frac12}$ as \ttoo.
Roughly speaking, for large $t$, $Y(t)$ is non-zero with probability
$\approx 2/t$, and if it is, it is of order $t$.

Now consider the two-colour urn above, and assume that we start with
$\bX_0=(1,0)$, \ie, 1 white ball. 
The number of white balls is constant for this urn, and thus $W(t)=1$ for
all $t$ in the continuous-time urn.
This means that white balls are drawn according to a Poisson process $\Xi$
with
constant rate 1. If the times they are drawn are $(T_k)\xoo$, then we have
\begin{align}\label{e+-4}
  B(t)=\sum_{T_k\le t} Y_k(t-T_k),
\end{align}
where $Y_k$ are independent copies of the one-colour process $Y(t)$,
independent also of $(T_k)\xoo$.
Consequently, for $z\in\oi$ say,
 we have
 \begin{align}\label{e+-5}
   \E \bigpar{z^{B(t)}\mid \Xi}
&= \prod_{T_k\le t} g_{t-T_k}(z) = \exp\Bigpar{\sum_{T_k\le t} \log
   g_{t-T_k}(z)}
\notag\\&
= \exp\Bigpar{\intot \log g_{t-u}(z)\dd\Xi(u)}
 .\end{align}
Hence, by a standard formula for Poisson processes 
\cite[Lemma 12.2]{Kallenberg},
 \begin{align}\label{e+-6}
   \E {z^{B(t)}}&
= \E\exp\Bigpar{\intot \log g_{t-u}(z)\dd\Xi(u)}
= \exp\Bigpar{\intot \bigpar{g_{s}(z)-1}\dd s}
\notag\\&
= \exp\Bigpar{-\intot \Bigpar{\frac{1}{1-z}+\frac{s}2}\qw\dd s}
= \exp\Bigpar{-2\Bigpar{\log\Bigpar{\frac{1}{1-z}+\frac{t}2}
-\log\Bigpar{\frac{1}{1-z}}}}
\notag\\&
=\Bigpar{1+\frac{t}{2}(1-z)}\qww
=\Bigpar{\frac{\frac{2}{t+2}}{1-\frac{t}{t+2}z}}^2
 .\end{align}
Consequently, $B(t)$ has a negative binomial distribution
$\NBi\bigpar{2,\frac{2}{t+2}}$. 
In particular, 
\begin{align}
  \label{jbe}
\E B(t)=t, 
\end{align}
which also follows directly from \eqref{e+-4}.

It follows from \eqref{e+-6} that 
as \ttoo, 
for any $s>0$,
\begin{align} \label{e+-7}
\E e^{-sB(t)/t} 
=\Bigpar{1+\frac{t}{2}(1-e^{-s/t})}\qww
\to \Bigpar{1+\frac{s}{2}}\qww
\end{align}
and thus
$B(t)/t$ converges in distribution to a Gamma distribution:
\begin{align}\label{e+-8}
  t\qw B(t)\dto \gG(2,\tfrac12),
\qquad\text{as } \ttoo
.\end{align}
However, we will see that $B(t)/t$ does \emph{not} converge \as, which shows
that \refT{TC-} does not extend to this example.

To see this, we extend \eqref{e+-8} to
process convergence.
We claim that as $t\to\infty$, we have 
  \begin{align}\label{jb1}
    t\qw B(tx)\dto \cB(x):= \tfrac14 \BESQ^{4}(x)
\qquad \text{in $D\ooo$},
  \end{align}
where $\BESQ^4(x)$ denotes a squared 4-dimensional Bessel process 
\cite[Chapter XI]{RY}. Recall that 
\begin{align}\label{besq4}
\BESQ^4(x)  =|\cW(x)|^2 = \sum_{i=1}^4 \cW_i(x)^2,
\end{align}
where $\cW_1(x),\dots,\cW_4(x)$ are independent standard Brownian motions
(Wiener processes), and $\cW(x):=\bigpar{\cW_1(x),\dots,\cW_4(x)}$  thus is a
4-dimensional Brownian motion. Hence,
$\frac{1}4\BESQ^4(x)\sim\gG(2,x/2)$, in accordance with \eqref{e+-8}. 
Furthermore, \as{} $\BESQ^4(x)>0$ for every $x>0$.

The proof of \eqref{jb1} is somewhat technical and is given in \refApp{Ajb},
where we also extend the result to other initial values $(w_0,b_0)$,
see \refT{Tjb}.

Suppose now that $B(t)/t\asto \cZ$ for some random variable $\cZ$; then
$\cZ\sim\gG(2,\frac12)$ by \eqref{e+-8}.
Moreover, for every $r>0$, we would have
\begin{align}\label{jb3}
t\qw\bigpar{ B(rt)-rB(t)}=r\Bigpar{\frac{B(rt)}{rt}-\frac{B(t)}t}
\asto r(\cZ-\cZ)=0.
\end{align}
However, the process convergence in \eqref{jb1} implies finite dimension
convergence, and in particular
\begin{align}\label{jb4}
t\qw\bigpar{ B(rt)-rB(t)}
\dto \frac{1}{4}\bigpar{\BESQ^4(r)-r\BESQ^4(1)}.
\end{align}
Hence, \eqref{jb3} would imply
$\BESQ^4(r)=r\BESQ^4(1)$ \as, for every $r>0$, which obviously is false
(even for a single $r\neq1$).
This contradiction proves the claim that $B(t)/t$ does not converge a.s.
Hence, the convergence in \eqref{e+-8} holds
in distribution but not a.s.

We use \eqref{jb1} to derive corresponding results for the discrete-time urn.
Let, as above,
$\TT_n$ be the $n$th time that a ball is drawn, and 
let $N(t)$ be the total number of draws up to time $t$;
thus $N(\TT_n)=n$.
Since all balls are drawn with intensity 1, 
\begin{align}\label{jb5}
\tN(t):=N(t)-\intot\bigpar{W(s) +B(s)}\dd s 
=N(t)-t-\intot B(s)\dd s
\end{align}
is a local martingale with $\tN(0)=0$, and
it follows as in the proof of \refL{LtN}
(now using \eqref{jbe}) that $\tN(t)$ is a martingale.
In particular $\E\tN(t)=0$, and thus by \eqref{jb5} and \eqref{jbe}
  \begin{align}\label{jb5a}
\E N(t)
=t+ \E\intot B(s)\dd s
=t+ \intot\E B(s)\dd s
= t + t^2/2.
\end{align}

Furthermore, all jumps are $+1$ and thus the quadratic variation is by
\eqref{nn4} 
\begin{align}\label{jb6}
  [\tN,\tN]_t
=\sum_{0<s\le t}\gD N(s)
=N(t).
\end{align}
Consequently,
by Doob's inequality \eqref{nn6} and \eqref{jb5a},
\begin{align}\label{jb7}
  \E \tN^*(t)^2 
\le C \E   [\tN,\tN]_t
=C\E N(t)
= Ct + Ct^2.
\end{align}
In particular, $\tN^*(t)/t^2\pto0$ as \ttoo, 
which together with \eqref{jb5} implies that
\begin{align}\label{jb8}
  N(tx)/t^2 -\intox t\qw B(ty)\dd y&
= t^{-2} N(tx)-t^{-2}\int_0^{tx} B(s)\dd s
= t^{-2} \tN(tx)+t\qw x
\pto 0
\end{align}
in $D\ooo$, and consequently \eqref{jb1} implies
\begin{align}\label{jb9}
  N(tx)/t^2 \dto \cV(x):=\intox \cB(y)\dd y,
\end{align}
in $D\ooo$, 
jointly with \eqref{jb1}.
Note that $\cV(x)$ is a continuous stochastic process which strictly
increases from $\cV(0)=0$ to $\infty$.
Define $\tau$ by 
\begin{align}\label{jb0}
\cV(\tau)=1; 
\end{align}
thus $\tau$ is random with $0<\tau<\infty$ a.s.
It follows easily from \eqref{jb9}  (we omit the details) that, jointly with
\eqref{jb1},
\begin{align}\label{jc1}
  \TT_n/\sqrt n \pto \tau
\end{align}
and as a consequence
\begin{align}\label{jc2}
  B_n/\sqrt n = B(\TT_n)/\sqrt n 
\dto \cB(\tau).
\end{align}
This proves convergence in distribution of $B_n/\sqrt n$.
The limit $\cB(\tau)$ is determined by \eqref{jb1}, \eqref{jb9}, and
\eqref{jb0}; unfortunately we do not know any simpler description of the
limit distribution, and we leave
it as an open problem to find one.

For the discrete-time urn,
we thus have convergence in distribution of 
$B_n/n^{1/2}$.
(The exponent $\xfrac12$ equals $\kk_2/\hkk_0$, just as in
\refT{T1}\ref{T10} although we cannot apply that theorem.) 
However, we do not have convergence \as{} in \eqref{jc2}.
In fact, if $B_n/n\qq\asto \hcZ$ for some $\hcZ$, then
for every $x>0$, as \ttoo,
\begin{align}\label{jc3}
  \frac{B(xt)^2}{N(xt)\phantom{{}^2}}
= \frac{B_{N(xt)}^2}{N(xt)}
\asto \hcZ^2,
\end{align}
while the joint convergence of \eqref{jb1} and \eqref{jb9} implies
\begin{align}\label{jc4}
  \frac{B(xt)^2}{N(xt)\phantom{{}^2}}
=  \parfrac{B(xt)}{t}^2\cdot\frac{t^2}{N(xt)}
\dto \frac{\cB(x)^2}{\cV(x)}
\end{align}
in $D(0,\infty)$, \ie, in $D[a,b]$ for every $0<a<b<\infty$.
Consequently, \eqref{jc3} would imply that \as{}
$\cB(x)^2/\cV(x)=\hcZ^2$ for every $x>0$, and thus \as
\begin{align}\label{jc5}
  \intox\cB(y)\dd y = \cV(x) = \hcZ\qww \cB(x)^2,
\qquad x>0.
\end{align}
But this impossible, for example because \eqref{jc5} would imply that
$\cB(x)$,
and thus $\BESQ^4(x)$,
is differentiable (and, moreover, linear). This contradiction shows
that $B_n/\sqrt n$ does not converge a.s.
\end{example}

\begin{example}\label{E--}
A counterexample somewhat similar to \refE{E+-} is given by
the replacement matrix 
\begin{align}
  \matrixx{0&\phantom+1\\0&-1} .
\end{align}
In words: when we draw a white ball, it is replaced together with a black ball;
when we draw a black ball, we discard it.

We have $\gl_1=0$ and $\gl_2=-1$, and thus $\glx_1=\glx_2=0$.
Note that this example too  is excluded from \refT{TC-} since 
\ref{A-7} does not hold.
(However, again \ref{A0}--\ref{A2}, \ref{A-5}, and \ref{A-6} hold, provided
$x_1>0$ and $x_2\in\bbZgeo$.)
We will see that, as in \refE{E+-},  
$B(t)$ converges in distribution but \emph{not} a.s.

Suppose that the urn starts with a single white ball, \ie, $\bX_0=(1,0)$. 
Then, for the
continuous-time urn, $W(t)=1$ for all $t$, and thus white balls are drawn
according to a Poisson process with constant rate 1.
At each draw in this Poisson process, we add a black ball. Black balls live
an exponential time with mean 1, and then disappear.
Consequently, $B(t)$, the number of black balls, is a birth-death
process where the birth rate is constant 1 and the death rate equals the
number of particles. (Thus $\mu_k=1$ and $\gl_k=k$ in the standard notation.)

Let $T_k$ be the time of the $k$th white draw, and $L_k$ the life-length of the
black ball that then is added. Then the pairs $(T_k,L_k)$ form a Poisson process
in $\bbR_+^2$ with rate $e^{-y}\dd x\dd y$. The number of black balls at
time $t$ is
\begin{align}
  B(t)=\sumk\indic{T_k\le t} \indic{L_k> t-T_k}
=\sumk\indic{(T_k,L_k)\in D_t}
\end{align}
where $D_t:=\set{(x,y)\in\bbR^2:0<x\le t,\,y>t-x}$.
Consequently, $B(t)$ has a Poisson distribution $\Po(\mu(t))$,
where
\begin{align}
\mu(t)=\int_{D_t}e^{-y}\dd x \dd y  
=\intot\int_{t-x}^\infty e^{-y}\dd y \dd x
=\intot e^{x-t} \dd x
= 1-e^{-t}.
\end{align}

As \ttoo, we thus have convergence in distribution $B(t)\dto \Po(1)$.
Obviously, we cannot have convergence a.s., since $B(t)$ jumps $+1$ at every
$T_k$, and $T_k\to\infty$ as $k\to\infty$.

$B_n$ does not converge in distribution as \ntoo{}
for a simple parity reason: since $B_n$
changes by $\pm1$ at each draw, we have $B_n\equiv n \pmod2$.
However, $B_n$ is a 
Markov chain (since $W_n$ is constant), 
it is irreducible,
and it is easily
seen that the expected time to return to 0 is finite;
hence the Markov chain $B_n$ is  positive recurrent.
The period is 2, and 
and it follows  that  the two subsequences $B_{2n}$ and $B_{2n+1}$ 
converge in distribution to some limits, say
$\hB\even$ and $\hB\odd$.
The mixture (with equal weights) of $\hB\even$ and $\hB\odd$ has a stationary
distribution for the Markov chain, and it is easily found that the limit
distributions are given by
\begin{align}
  \P(\hB\even=k)
&=\frac{k+1}{k!}e^{-1}\indic{k \text{ is even}},
\\
  \P(\hB\odd=k)
&=\frac{k+1}{k!}e^{-1}\indic{k \text{ is odd}}.
\end{align}
These can be described as
$\Po(1)$ rounded up to nearest even or odd integer, respectively.

Obviously, we do not have convergence a.s., even for these subsequences.
\end{example}

\begin{example}\label{ELlogL}
  Consider a diagonal urn with $q=2$ and replacement matrix
  $\smatrixx{\xi_{11}&0\\0&\xi_{22}}$ as in \cite{Athreya1969}; cf.\ the
  deterministic case in \refE{ED}.
Assume  that $\xi_{11},\xi_{22}\in\bbZgeo$ a.s., 
that $\E\xi_{11}=\E\xi_{22}=1$, and that
$\E\xi_{22}^2<\infty$
but $\E\xi_{11}\log\xi_{11}=\infty$, so that \ref{A2} does \emph{not} hold.
(We may simply take $\xi_{22}=1$ a.s.)
Then the stochastic processes 
$X_1(t)$ and $X_2(t)$ are independent Markov branching processes, and
\cite[Theorem III.7.2]{AN} shows that, as \ttoo,
\begin{align}\label{ellogl1}
  e^{-t}X_1(t)\asto 0.
\end{align}
Hence, \eqref{tc1} holds with $\cX_1=0$ a.s., while \refT{TC} (applied to
colour 2 only), or  \cite[Theorem III.7.2]{AN} again, shows that
\eqref{tc1} holds also for $i=2$ with $\cX_2>0$ a.s.
It follows that $X_1(t)/X_2(t)\asto0$ as \ttoo, and thus
$X_{n1}/X_{n2}\asto0$ as \ntoo.
It follows easily that \eqref{t1b} holds, which in this case is
$X_{ni}/n\asto\hcX_i$, with $\hcX_1=0$ and $\hcX_2=1$;
we thus have 
 $\cX_1=\hcX_1=0$, in contrast to \refTs{T1} and \ref{TC}.

This shows that the main results in the present paper  do not hold without
assuming at least $\E\xi_{ij}\log\xi_{ij}<\infty$. 
We have for convenience assumed the stronger second moment condition \ref{A2}, 
but as said in \refR{Rmom}, we conjecture that it can be weakened.
\end{example}

\begin{ack}
  I thank Allan Gut for help with references.
\end{ack}

\appendix
\section{Absolute continuity and conditioning}\label{AA}
In this appendix we state three general lemmas on absolute continuity of
distributions and conditioning.
We find them intuitively almost obvious, but only almost,
and since we do not know any references,
we provide complete proofs. 
For two measures $\mu$ and $\gl$ on the same space, we let $\mu\ll\gl$
denote that $\mu$ is absolutely continuous with respect to $\gl$, \ie, that
$\gl(B)=0\implies \mu(B)=0$.

We recall some further standard definitions:

A measure space $(\fX,\cX)$ is a \emph{Borel space} if it is (or is
isomorphic to) a Borel set in a complete separable metric space with its
Borel \gsf,
see \cite[Appendix A1]{Kallenberg}.
This includes, for example, $\bbR$, $\bbR^n$, and the function
space $D\ooo$; moreover, any finite or countable product of Borel spaces is a
Borel space.

If 
$(\fX,\cX)$ and $(\fY,\cY)$ are two measurable spaces, then a
\emph{probability kernel} from $\fX$ to $\fY$ is a mapping
$\mu:\fX\times\cY\to\oi$ such that 
$B\mapsto \mu(x,B)$ is a probability measure on $(\fY,\cY)$ for every fixed
$x\in \fX$, and furthermore
$x\mapsto \mu(x,B)$ is measurable on $(\fX,\cX)$ for every fixed $B\in \cY$, 
see \cite[p.~20]{Kallenberg}.

If $X$ and $Y$ are random variables with values in measurable spaces
$(\fX,\cX)$ and $(\fY,\cY)$, respectively,  then a \emph{regular conditional
  distribution} of $X$, given $Y$, is a probability kernel
$\mu$ from $\fY$ to $\fX$ such that for any fixed $B\in\cX$,
\begin{align}\label{aa1}
  \mu(Y,B)=\P\sqpar{X\in B\mid Y}
\qquad \text{a.s.},
\end{align}
see \cite[p.~106--107]{Kallenberg}.
It follows that for any measurable $f:\cX\to\ooox$,
\begin{align}\label{aa11}
\E\sqpar{f(X)\mid Y}=  \int_{\fX}f(x)\mu(Y,\ddx x)
\qquad \text{a.s.}
\end{align}
Similarly \cite[(7) on p.~108]{Kallenberg}, for any measurable
$f:\cX\times\cY\to\ooox$, 
\begin{align}\label{aa12}
\E f(X,Y)= \E \int_{\fX}f(x,Y)\mu(Y,\ddx x).
\end{align}

If $(\fX,\cX)$ is a Borel space, then a such a
regular conditional distribution $\mu$ exists, and the probability
measure $\mu(y,\cdot)$ is $\cL(Y)$-\aex{} unique 
(in the standard sense that two different such kernels are equal for
$\cL(Y)$-\aex{} $y\in \cY$)
\cite[Theorem 6.3]{Kallenberg}. 

\begin{lemma}\label{LX}
Let $X,Y,Z$ be random variables taking values in Borel spaces $(\fX,\cX)$,
$(\fY,\cY)$, $(\fZ,\cZ)$, respectively, and suppose that $Z=\gf(Y)$ for some
measurable function $\gf:\fY\to \fZ$.  
Let $\gl$ be a measure on $\fX$, and 
suppose that 
the regular conditional distribution $\mu(y,\cdot)$ of $X$ given $Y$ 
is absolutely continuous with respect to $\gl$ for $\cL(Y)$-\aex{} $y\in\fY$.
Then
the regular conditional distribution $\mu'(z,\cdot)$ of $X$ given $Z$ 
is absolutely continuous with respect to $\gl$ for $\cL(Z)$-\aex{} $z\in\fZ$.
\end{lemma}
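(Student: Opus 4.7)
The plan is to build the conditional distribution $\mu'$ from $\mu$ by averaging along the fibres of $\gf$, using the regular conditional distribution of $Y$ given $Z$, and then observe that this averaging preserves absolute continuity with respect to $\gl$.

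First, since $(\fY,\cY)$ is a Borel space, there exists a regular conditional distribution $\nu(z,\cdot)$ of $Y$ given $Z$. Let $N:=\set{y\in\fY:\mu(y,\cdot)\not\ll\gl}$; by hypothesis $\P(Y\in N)=0$, hence the identity $\E\nu(Z,N)=\P(Y\in N)=0$ shows that
\begin{align*}
N'_1:=\set{z\in\fZ:\nu(z,N)>0}
\end{align*}
is a $\cL(Z)$-null set. For $z\notin N'_1$, the measure $\nu(z,\cdot)$ is concentrated on $\fY\setminus N$.

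Next, define the probability kernel
\begin{align*}
\widetilde\mu(z,B):=\int_\fY\mu(y,B)\,\nu(z,\ddx y),\qquad z\in\fZ,\;B\in\cX.
\end{align*}
Using $Z=\gf(Y)$, so that $\sigma(Y,Z)=\sigma(Y)$, and applying the tower property together with \eqref{aa11}, I get for each fixed $B\in\cX$
\begin{align*}
\P(X\in B\mid Z)=\E\sqpar{\P(X\in B\mid Y)\mid Z}
=\E\sqpar{\mu(Y,B)\mid Z}=\widetilde\mu(Z,B)
\qquad\text{a.s.,}
\end{align*}
so $\widetilde\mu(z,B)=\mu'(z,B)$ for $\cL(Z)$-a.e.\ $z$ (with an exceptional null set depending on $B$).

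The main obstacle is to promote this to equality of $\widetilde\mu(z,\cdot)$ and $\mu'(z,\cdot)$ as measures, on a single null set of $z$. This is where the Borel space structure of $(\fX,\cX)$ is used: $\cX$ is countably generated by a $\pi$-system $\set{C_n}\xoo$. For each $n$ there is a null set $M_n$ outside which $\widetilde\mu(z,C_n)=\mu'(z,C_n)$; taking $N'_2:=\bigcup_n M_n$, still a null set, we find that for $z\notin N'_2$ the two probability measures $\widetilde\mu(z,\cdot)$ and $\mu'(z,\cdot)$ agree on the $\pi$-system $\set{C_n}$, hence on $\cX$.

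Finally, setting $N':=N'_1\cup N'_2$ (still $\cL(Z)$-null), for any $z\notin N'$ and any $B\in\cX$ with $\gl(B)=0$, the definition of $N$ gives $\mu(y,B)=0$ for every $y\notin N$, and $\nu(z,N)=0$, so
\begin{align*}
\mu'(z,B)=\widetilde\mu(z,B)=\int_{\fY\setminus N}\mu(y,B)\,\nu(z,\ddx y)=0.
\end{align*}
Thus $\mu'(z,\cdot)\ll\gl$ for $\cL(Z)$-a.e.\ $z$, completing the proof.
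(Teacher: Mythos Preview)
Your argument is correct and follows the same core idea as the paper: write the conditional distribution of $X$ given $Z$ as the kernel composition $\widetilde\mu(z,\cdot)=\int_\fY\mu(y,\cdot)\,\nu(z,\ddx y)$ and observe that this mixture inherits absolute continuity with respect to $\gl$. The paper's execution is slightly cleaner: it first redefines $\mu(y,\cdot)$ on a $\cL(Y)$-null set so that $\mu(y,\cdot)\ll\gl$ for \emph{every} $y$, after which the composition kernel is automatically $\ll\gl$ for every $z$, and no further null-set tracking or countable-generation argument is needed (the composition simply \emph{is} a version of $\mu'$, and a.e.\ uniqueness of regular conditional distributions does the rest). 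One small imprecision in your write-up: the set $N=\set{y:\mu(y,\cdot)\not\ll\gl}$ is not obviously measurable, so forming $\nu(z,N)$ is problematic as stated; however the hypothesis supplies a measurable $\cL(Y)$-null set $N_0\supseteq N$, and replacing $N$ by $N_0$ throughout repairs this with no other change to the argument.
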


\begin{remark}\label{RLX}
  Although \refL{LX} is stated for conditionings on single random variables
  $Y$ and 
  $Z$, it holds also for conditionings on finite or countably infinite sequences
of random variables (taking values in possibly different Borel spaces), 
since such sequences can be regarded as a single
variable in a suitable product space.
\end{remark}

\begin{proof}
  If $B\subset \fX$ is any set with $\gl(B)=0$, then $\mu(Y,B)=0$ a.s., and
  thus, by \eqref{aa1},
  \begin{align}\label{aa2}
    \mu'(Z,B)=\E\sqpar{\etta_B(X)\mid Z}
=\E\bigsqpar{\E\sqpar{\etta_B(X)\mid Y}\mid Z}
=\E\sqpar{\mu(Y,B)\mid Z}
=0
\quad \text{a.s.}
  \end{align}
However, this is for a fixed $B$, while the conclusion of the lemma is that 
\as{} \eqref{aa2} holds simultaneously for every $\gl$-null set $B\subset\fX$.
There is in general an uncountable number of $\gl$-null sets $B\subset\fX$,
and we do not see how to use the argument in \eqref{aa2} to prove the result. 

Instead, we argue as follows.
First, by if necessary changing $\mu(y,\cdot)$ on a $\cL(Y)$-null set
of $y$, we may assume that 
\begin{align}\label{aa21}
  \mu(y,\cdot)\ll\gl \qquad \text{for every $y\in\fY$}.
\end{align}

Let $\nu(z,\cdot)$ be the regular conditional distribution of $Y$ given $Z$.
Then, for any $B\in\cX$, a.s., using \eqref{aa1} and \eqref{aa11},
\begin{align}\label{aa3}
\P[X\in B\mid Z]&=
\E\sqpar{\etta_B(X)\mid Z}
=\E\bigsqpar{\E\sqpar{\etta_B(X)\mid Y}\mid Z}
=\E\sqpar{\mu(Y,B)\mid Z}
\notag\\&
=\int_{\fY}\mu(y,B)\nu(Z,\ddx y)
.\end{align}
This shows that (a version of) the regular conditional distribution $\mu'$
is given by the composition of the kernels $\nu$ and $\mu$ defined by
\begin{align}\label{aa4}
  \mu'(z,B):=\int_{\fY}\nu(z,\ddx y)\mu(y,B),
\qquad B\in\cX;
\end{align}
note that this composition is a probability kernel, see \eg{} the more general
\cite[Lemma 1.41(iii)]{Kallenberg}.

Now, if $\gl(B)=0$, then \eqref{aa21} shows that
$\mu(y,B)=0$ for every $y$, and hence \eqref{aa4} yields
$\mu'(z,B)=0$ for every $z\in\fZ$.
Consequently, $\mu'(z,\cdot)\ll\gl$ for every $z\in\fZ$.
\end{proof}

\begin{lemma}\label{LY}
Let $X$ and $Y$ be random variables taking values in Borel spaces
$(\fX,\cX)$ and $(\fY,\cY)$, respectively.
Let $\gl$ and $\gl'$ be $\gs$-finite measures on $\fX$ and $\fY$,
respectively.
Suppose that $\cL(Y)\ll\gl'$ and 
that the regular conditional distribution $\mu(y,\cdot)$ of $X$ given $Y$ 
satisfies $\mu(y,\cdot)\ll \gl$ for $\cL(Y)$-\aex{} $y\in \fY$.
Then the distribution of $(X,Y)$ in $\fX\times \fY$ is absolutely continuous
with respect to $\gl\times\gl'$.
\end{lemma}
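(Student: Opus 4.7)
The plan is to reduce the absolute continuity of the joint distribution directly to the two hypotheses by passing through sections of a candidate null set, using Tonelli's theorem on the product measure $\gl\times\gl'$.

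First I would record the standard disintegration identity: for any $B\in\cX\otimes\cY$,
\begin{align*}
\P\bigpar{(X,Y)\in B}=\int_\fY \mu(y,B_y)\,\cL(Y)(\ddx y),
\end{align*}
where $B_y:=\set{x\in\fX:(x,y)\in B}$ is the $y$-section of $B$. This follows from \eqref{aa12} applied to $f=\etta_B$. (Measurability of $y\mapsto\mu(y,B_y)$ for $B\in\cX\otimes\cY$ is the standard monotone class fact used in Fubini's theorem for kernels, \eg{} \cite[Lemma 1.41]{Kallenberg}, and I would simply cite it.)

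Next, suppose $B\in\cX\otimes\cY$ satisfies $(\gl\times\gl')(B)=0$. Since $\gl$ and $\gl'$ are $\gs$-finite, Tonelli's theorem gives
\begin{align*}
\int_\fY \gl(B_y)\,\gl'(\ddx y)=(\gl\times\gl')(B)=0,
\end{align*}
so $\gl(B_y)=0$ for $\gl'$-\aex{} $y\in\fY$. The assumption $\cL(Y)\ll\gl'$ then upgrades this to: $\gl(B_y)=0$ for $\cL(Y)$-\aex{} $y$. By the second hypothesis, $\mu(y,\cdot)\ll\gl$ for $\cL(Y)$-\aex{} $y$, and for every such $y$ we infer $\mu(y,B_y)=0$. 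Combining these two almost-everywhere statements, $\mu(y,B_y)=0$ for $\cL(Y)$-\aex{} $y$, and substitution into the displayed disintegration formula gives $\P((X,Y)\in B)=0$, which is exactly $\cL(X,Y)\ll\gl\times\gl'$.

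There is no real obstacle here; the only delicate point is the measurability of $y\mapsto \mu(y,B_y)$ and of $y\mapsto \gl(B_y)$ for a general product-measurable $B$, but both are classical (the second needs $\gs$-finiteness of $\gl$, which is assumed, and both follow from a monotone class argument starting with rectangles). The Borel space assumption on $\fX$ and $\fY$ is only used to guarantee that a regular conditional distribution $\mu$ exists in the first place, so that the hypothesis on $\mu(y,\cdot)$ is meaningful.
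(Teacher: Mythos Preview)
Your proof is correct and follows essentially the same route as the paper's: both use the disintegration \eqref{aa12}, apply Tonelli to conclude $\gl(B_y)=0$ for $\gl'$-\aex{} $y$, upgrade via $\cL(Y)\ll\gl'$, and then invoke $\mu(y,\cdot)\ll\gl$ to kill $\mu(y,B_y)$. The only cosmetic difference is that the paper first modifies the kernel on a $\cL(Y)$-null set so that $\mu(y,\cdot)\ll\gl$ holds for \emph{every} $y$, whereas you track the two ``$\cL(Y)$-\aex'' exceptional sets separately and intersect them; your explicit remark on the measurability of $y\mapsto\mu(y,B_y)$ is a welcome addition.
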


\begin{proof}
By if necessary changing $\mu(y,\cdot)$ on a $\cL(Y)$-null set of $y\in\fY$,
we may assume that \eqref{aa21} holds. 

  Let $B\subset \fX\times \fY$ with $\gl\times\gl'(B)=0$.
For $y\in\fY$, let $B_y:=\set{x\in\fX:(x,y)\in B}$.
Let $A:=\set{y\in\fY:\gl(B_y)>0}$.
By Fubini's theorem,
\begin{align}
  0
=\gl\times\gl'(B)
=\int_{\fX\times\fY}\etta_B(x,y)\dd\gl(x)\dd\gl'(y) 
=\int_{\fY}\gl(B_y)\dd\gl'(y)
\end{align}
and thus 
$\gl(B_y)=0$ for $\gl'$-a.e.\ $y$, i.e.,
$\gl'(A)=0$. Since $\cL(Y)\ll\gl'$, this implies 
\begin{align}\label{aa6}
\P(Y\in A)=0.  
\end{align}

Furthermore, by \eqref{aa12},
\begin{align}\label{aa7}
\P\sqpar{(X,Y)\in B} &=
\E\etta_B(X,Y) 
=\E\int_{\fX} \etta_B(x,Y)\mu(Y,\ddx x)
=\E\int_{\fX} \etta_{B_Y}(x)\mu(Y,\ddx x)
\notag\\&
=\E \mu(Y,B_Y).
\end{align}
If $Y\notin A$, then $\gl(B_Y)=0$, and thus $\mu(y,B_Y)=0$ for every $y$
by \eqref{aa21};
in particular $\mu(Y,B_Y)=0$. By \eqref{aa6}, this shows that $\mu(Y,B_Y)=0$
a.s., and thus \eqref{aa7} yields
$\P\sqpar{(X,Y)\in B}=0$.
\end{proof}

For easy reference, we state also an elementary result on absolute
continuity in $\bbR^d$, as in the main part of the paper this tacitly means
with respect to Lebesgue measure.

\begin{lemma}
  \label{LZ}
Let $T:\bbR^n\to\bbR^m$ be a linear operator,
where $1\le m\le n$.
If $\bX$ is a random vector in $\bbR^n$ with an \abscont{}
distribution, and $T$ is onto, then the distribution of $T(\bX)$ in $\bbR^m$
is \abscont.
\end{lemma}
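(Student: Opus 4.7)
The plan is to reduce everything to the fact that a linear bijection of Euclidean space preserves Lebesgue null sets, combined with Tonelli. Concretely, it suffices to show that the preimage $T^{-1}(B) \subseteq \bbR^n$ is a Lebesgue null set whenever $B \subseteq \bbR^m$ is Lebesgue null, because then
\begin{align*}
\P\bigpar{T(\bX) \in B} = \P\bigpar{\bX \in T^{-1}(B)} = 0
\end{align*}
by the absolute continuity of the law of $\bX$.

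To handle the preimage, I would first use surjectivity of $T$ to pick a linear right inverse $S: \bbR^m \to \bbR^n$ with $TS = I_m$, and set $K := \ker T$, so that $\bbR^n = S(\bbR^m) \oplus K$. Fixing any linear isomorphism $J : \bbR^{n-m} \to K$, define the linear bijection $\psi : \bbR^m \times \bbR^{n-m} \to \bbR^n$ by $\psi(y,z) := Sy + Jz$. A direct computation gives $(T\circ\psi)(y,z) = y$, and hence
\begin{align*}
T^{-1}(B) = \psi\bigpar{B \times \bbR^{n-m}}.
\end{align*}

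Now identify $\bbR^n$ with $\bbR^m \times \bbR^{n-m}$ equipped with the product Lebesgue measure (which agrees with $n$-dimensional Lebesgue measure up to completion). By Tonelli, $B \times \bbR^{n-m}$ is null whenever $B$ is. Since $\psi$ is an invertible linear map, the change-of-variables formula (with Jacobian $|\det\psi|>0$) shows that $\psi$ preserves the class of Lebesgue null sets, so $T^{-1}(B)$ is null in $\bbR^n$, completing the proof.

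There is no serious obstacle here; the only thing to get right is the clean factorization of $T$ through the coordinate projection via the right inverse $S$, which reduces the geometric statement to Tonelli on a product space.
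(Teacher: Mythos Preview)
Your proof is correct and follows essentially the same approach as the paper. The paper's proof is simply a terse version of yours: it says ``by changes of bases, we may assume that $T$ is the projection to the first $m$ coordinates,'' which is exactly what your explicit construction of the isomorphism $\psi$ accomplishes, and then concludes via $\lambda_n(A\times\bbR^{n-m})=0$ just as you do.
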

\begin{proof}
  By changes of bases, we may assume that $T$ is the projection to the first
  $m$ coordinates. 
Let $\gl_d$ denote the Lebesgue measure in $\bbR^d$. 
If $A\subset\bbR^m$ with $\gl_m(A)=0$, then
\begin{align}
  \P\bigpar{T(\bX)\in A} = \P \bigpar{\bX\in A\times \bbR^{n-m}}=0,
\end{align}
since $\gl_n(A\times\bbR^{n-m})=0$.
\end{proof}

\section{$L^p$ theory}\label{ALp}

The proofs in this paper frequently use martingales and $L^2$ theory,
in particular the identity \eqref{nn5}.
In this appendix, we extend the results to $L^p$ estimates for any $p>1$
by combining the arguments in  \refS{S1} with
the Burkholder--Davis--Gundy inequalities
(see \eg{} \cite[Theorem 26.12]{Kallenberg}), 
which say that
if $p\ge1$,
then there exist constants $c=c(p)$ and $C=C(p)$ 
such that for every (\ctime)
local martingale $M(t)$
\begin{align}  \label{BDG}
c \E[M,M]_t^{p/2} \le
\E M^*(t)^p
\le C \E[M,M]_t^{p/2},
\qquad 0\le t\le \infty. 
\end{align}
(All constants in this appendix may depend on the exponent $p$.)
We will mainly use the second inequality.

This extension to $L^p$ leads to two major results.
Using the case $p>2$, we will obtain a proof of \refT{TMCp} (and therefore
\refT{TMD}) showing convergence in $L^p$ and thus
moment convergence  under natural conditions.
Moreover, using the case $1<p<2$, we show that, as said in
\refR{Rmom}, our main results hold also if we weaken the $L^2$ condition
\ref{A2} to $L^p$ for some $p>1$.
More precisely, we will show the following.

\begin{theorem}
  \label{Tp}
\refTs{T1} and \ref{TC}, and their extensions \refTs{T1-}--\ref{TC-},
all hold also if \ref{A2} is replaced by the weaker
\begin{PQenumerate}{\ALnude{p}}
\item \label{Ap}
$\E|\xi_{ij}|^p<\infty$ for all $i,j\in\sQ$ and some $p>1$.
\end{PQenumerate}
\end{theorem}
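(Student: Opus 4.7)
The plan is to revisit the proofs of \refLs{LM}, \ref{L12}, and \ref{LG}, replacing every $L^2$ estimate by an $L^p$ estimate. Since on a probability space $L^p\subseteq L^q$ when $p\ge q$, the case $p\ge 2$ of \ref{Ap} already implies \ref{A2}, so only $p\in(1,2)$ requires new work. The key tool replacing the $L^2$-isometry $\E|M(t)|^2=\E[M,M]_t$ is the second Burkholder--Davis--Gundy inequality \eqref{BDG}, namely $\E M^*(t)^p\le C\E[M,M]_t^{p/2}$. For $p\in(1,2)$ the subadditivity of $x\mapsto x^{p/2}$ on $\ooo$ yields
\[ [M,M]_t^{p/2}=\Bigpar{\sum_{s\le t}|\gD M(s)|^2}^{p/2}\le \sum_{s\le t}|\gD M(s)|^p, \]
so quadratic-variation sums collapse to $p$-th power jump sums, which can be computed by \refL{L9+}\ref{L9+b}.

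Applied to \refL{LM} with $M(t)=e^{-\gl_i t}X_i(t)$, whose jumps at $T_k$ have size $e^{-\gl_i T_k}\eta_k$ with $\eta_k\eqd\xi_{ii}$, this gives
\[ \E\sum_{s\le t}|\gD M(s)|^p= a_i\E|\xi_{ii}|^p\intot e^{-p\gl_i s}\E X_i(s)\dd s=a_i x_0\E|\xi_{ii}|^p\intot e^{(1-p)\gl_i s}\dd s, \]
which is uniformly bounded in $t$ precisely because $p>1$ and $\gl_i>0$ (the trivial cases $\gl_i=0$ or $x_0=0$ are handled directly, as in \refL{LM}). Together with \eqref{BDG} this yields the $L^p$ analogue of \eqref{lm3a}, after which the remainder of \refL{LM} and the non-degeneracy argument of \refL{LM<} proceed unchanged.

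For \refL{L12}, the decomposition \eqref{jzz} is preserved and each of $Z_1,\dots,Z_4$ requires an $L^p$ estimate. The term $Z_4$ involves no martingale and is unaffected. For $Z_1,Z_2,Z_3$, BDG combined with the subadditivity above reduces every estimate to an analogue of \eqref{j51} with the exponent $2\gl_i$ replaced by $p\gl_i$, the $L^2$-norm by the $L^p$-norm, and the factor $\E\tXX_j$ by $\norm{\tXX_j}_p$. The three subcases in \refStep{stepZ123b} become $\glx_j<p\gl_i$, $\glx_j\ge p\gl_i$ with $\glx_j>0$, and $\glx_j=\gl_i=0$; all the key strict inequalities (such as $\glx_j-p\glx_i\le-\glx_j<0$ in the second subcase, using $\glx_i\ge\glx_j$ and $p>1$) continue to hold, and Doob's inequality in its $L^p$ form (valid for $p>1$) replaces the $L^2$ version. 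Thus \eqref{ji2} upgrades to $\norm{\tXX_i}_p<\infty$ while the a.s.\ statement \eqref{ji1} and the identity \eqref{ji3} are obtained verbatim.

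The main obstacle is bookkeeping through the case analysis in \refStep{stepZ123b}, where the threshold $2\gl_i$ is replaced by $p\gl_i$ and every exponential-sum estimate must be verified anew; however, because the original argument only used strict inequalities that are preserved under the relaxation $2\to p$ for any $p>1$, no substantive new ideas are needed. Once \refL{L12} is upgraded, \refL{LG} follows by its original combinatorial decomposition, yielding $\norm{\tXX_i}_p<\infty$ for every $i\in\sQ$. The proofs of \refTs{T1} and \ref{TC} then proceed verbatim, since they invoke the second-moment control on $\tXX_i$ only to secure the existence of the a.s.\ limit, for which $\tXX_i\in L^p$ with any $p>0$ suffices. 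Finally, the arguments of \refS{S-} for urns with subtractions carry over identically once the single-colour estimates in \refL{ZLM} and \refL{ZL12} are re-derived using \eqref{BDG} and the same jump-sum subadditivity, with the critical branching-process estimates in \eqref{zlm02}--\eqref{zlm-2} each receiving their $L^p$ analogue.
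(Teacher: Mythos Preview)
Your proposal is correct and follows essentially the same route as the paper: reduce to $1<p<2$, replace the $L^2$ martingale identity \eqref{nn5} by the Burkholder--Davis--Gundy inequality \eqref{BDG} together with the subadditivity $(\sum x_k)^{p/2}\le\sum x_k^{p/2}$, and rerun the case analysis of \refStep{stepZ123b} with the threshold $2\gl_i$ replaced by $p\gl_i$. The only notable difference is in the handling of $Z_1$: the paper passes through both directions of \eqref{BDG} and a separate quantitative single-colour bound (\refL{LMp99}) to control $\sum_k |Z_1\kkkx(t)|^p$, whereas your direct jump-sum subadditivity (applied to \emph{all} jumps of $Z_1$, including the internal jumps of each subprocess $Y_k$) yields a double sum that can be estimated by two nested applications of \refL{L9+}; this is slightly more elementary and gives the same analogue of \eqref{j51}.
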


Also other  results in this paper,
for example the results on the drawn colours in \refS{Sdraw},  hold if
\ref{A2} is replaced by 
\ref{Ap}, provided we replace any $L^2$-norms by  $L^p$ norms $\norm{\;}_p$;
see also \refR{Rmomp} for
the results on moments in \refS{Smoments}.
We leave the details to the reader.

We will basically follow the arguments in the main part of the paper,
replacing $L^2$ estimates by $L^p$ estimates, but sometimes the details of
the arguments will differ. Moreover, we have chosen to first focus on obtaining
the $L^p$ estimates, leading to the proof of \refT{TMCp} (partly because
this seems to be of greater interest for applications); 
we then return to the arguments yielding \as{} convergence and \refT{Tp}.
As before, we argue in several steps.

\subsection{A single colour not influenced by others}\label{SS11p}

We begin with a colour $i$ that is not influenced by any other
(i.e., $i\in\sQmin$), and prove
an $L^p$-version of \refLs{LM} and \ref{ZLM}.

\begin{lemma}\label{LMp}
Assume \ref{A0}--\ref{A3}, \ref{A-5} (or \ref{A+}),
and \ref{Ap} for some $p>1$.
  Let $i\in\sQmin$, and assume 
  \begin{align}\label{ALMp}
 \text{either}\quad   
i\notin\sQm \text{ (i.e., $\xi\iii\ge0$ a.s.)}
\quad\text{or}\quad
\gl_i>0.
  \end{align}
Then
\begin{romenumerate}
  
\item \label{LMp1}
The martingale $e^{-\gl_i t}X_i(t)$ is $L^p$-bounded, and 
thus the \as{} limit \eqref{lm1} holds for some limit $\cX_i$,
and
\begin{align}\label{lmp1}
\tXX_i:=  \sup_{t\ge 0} \bigset{e^{-\gl_i t} X_i(t)}\in L^p.
\end{align}

\item \label{LMp2}
Let $(T_k)\xoo$ be the times a ball of colour $i$ is drawn, and
let $\eta_k$ be the number of balls of colour $i$ that are added at
time $T_k$.
Let $0<q\le p$, and
let $f:\bbR\to\bbR$ be a function such that $\E |f(\xi_{ii})|^q<\infty$.
Finally, let $\mu>0$ be such that 
$(1\bmin q)\mu>\gl_i$.
Then (with \as{} convergent sum)
\begin{align}\label{lmp2}
  \sumk e^{-\mu T_k}f(\eta_k) \in L^q.
\end{align}
\end{romenumerate}
\end{lemma}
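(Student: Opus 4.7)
The two parts of the lemma interlock: in (ii), decomposing $S(t)$ into its compensator and a compensated martingale, the former is bounded by $\tXX_i$ from (i), whereas the latter requires a BDG estimate whose quadratic-variation term is exactly of the form needed to supply, via (ii) applied to $f(x)=x^2$, the analogous BDG estimate for $M(t)=e^{-\gl_i t}X_i(t)$ in (i). I would therefore prove both statements simultaneously by a bootstrap in which the exponent doubles at each step. The degenerate case $\gl_i=0$ forces $\xi\iii=0$ a.s.\ under $i\notin\sQm$ and trivializes the conclusion, so throughout I may assume $\gl_i>0$.

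First I would prove (ii) in the range $0<q\le 1$ from concavity alone. After reducing to $f\ge0$ via $f=f^+-f^-$, the inequality $(\sum a_k)^q\le\sum a_k^q$ gives $S^q\le\sum_k e^{-q\mu T_k}f(\eta_k)^q$, and \refL{L9+}(ii) together with $\E X_i(s)=e^{\gl_i s}x_0$ bounds the expectation by a finite constant, since the hypothesis $(1\bmin q)\mu=q\mu>\gl_i$ ensures the integral $\int_0^\infty e^{(\gl_i-q\mu)s}\dd s$ converges. Next, for (i) in the range $1<p\le 2$, I would apply BDG to $M(t)$, whose quadratic variation is $[M,M]_t=x_0^2+\sum_{T_k\le t}e^{-2\gl_i T_k}\eta_k^2$; since $p/2\le 1$, subadditivity gives $[M,M]_t^{p/2}\le x_0^p+\sum e^{-p\gl_i T_k}|\eta_k|^p$, and \refL{L9+}(ii) bounds the expectation uniformly in $t$; monotone convergence then yields $\tXX_i\in L^p$.

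Stage three would handle (ii) for $1<q\le 2$. Writing $S(t)=C_S(t)+M_S(t)$ with $C_S(t)=a_i\E f(\xi_{ii})\int_0^t e^{-\mu s}X_i(s)\dd s$, the identity $e^{-\mu s}X_i(s)=e^{(\gl_i-\mu)s}M(s)$ and $\mu>\gl_i$ give $C_S(\infty)\le C\tXX_i$, so $C_S(\infty)\in L^q$ by Stage~2; for $M_S$, BDG with $q/2\le 1$ reduces the estimate back to Stage~1. Finally, for $p>2$, I would induct on the dyadic scale. Assuming (i) and (ii) at exponent $p/2$, I apply (ii) to $f(x)=x^2$, $\mu=2\gl_i$, $q=p/2$ (valid since $\E|\xi_{ii}|^p<\infty$ and $(1\bmin p/2)\cdot 2\gl_i>\gl_i$) to obtain $\sum e^{-2\gl_i T_k}\eta_k^2\in L^{p/2}$, hence $[M,M]_\infty\in L^{p/2}$; BDG then delivers (i) at exponent $p$. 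Stage~3's decomposition then extends (ii) to all $q\le p$, provided the BDG bound on $M_S$ at exponent $q>2$ — now $\E[M_S,M_S]^{q/2}_\infty$ with $q/2>1$ — is handled by recursively invoking (ii) at exponent $q/2$ with $f^2$ in place of $f$.

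The delicate point will be the recursion in the last sentence: at exponent $q\in(2,p]$, bounding the compensated martingale part of (ii) requires (ii) itself, but at the strictly smaller exponent $q/2$ with the function $f^2$, whose $(L^{q/2})$-norm equals $\norm{f(\xi_{ii})}_q$ and is therefore finite by hypothesis. Verifying the hypothesis $(1\bmin q/2)\cdot 2\mu>\gl_i$ at each recursive step is automatic from $\mu>\gl_i$ when $q/2\ge 1$ and from the original hypothesis on $\mu$ when $q/2<1$; the key is simply that each recursion strictly halves the exponent, so finitely many steps suffice.
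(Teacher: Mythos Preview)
Your proposal is correct and follows essentially the same route as the paper: reduce to $\gl_i>0$, establish (ii) for $q\le1$ by concavity and \refL{L9+}, bootstrap (i) via BDG on $M(t)=e^{-\gl_i t}X_i(t)$ using (ii) with $f(x)=x^2$, and close the loop for (ii) at $q>1$ by splitting $S$ into a compensator controlled by $\tXX_i$ and a martingale whose BDG bound recurses at exponent $q/2$. The only cosmetic difference is that the paper decomposes $f=(f-\E f)+\E f$ and treats the mean-zero and constant pieces as two separate martingales, whereas you handle the compensated sum $M_S$ in one shot; your packaging is slightly cleaner but the argument is the same.
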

 
The statement in \ref{LMp1} that the martingale is  $L^p$-bounded, is 
(since we have $p>1$)
by Doob's inequality equivalent to \eqref{lmp1}, but we state both for
emphasis. Moreover, the statements are equivalent to 
$\cX_i:=\lim_\ttoo e^{-\gl_i t}X_i(t)\in L^p$.
Note also that the definition of $\tXX_i$ in \eqref{lmp1} agrees with
\eqref{j4*} since $\kk_i=0$ and $\glx_i=\gl_i$ when $i\in\sQmin$.
Similarly, \eqref{ALMp} is equivalent to \ref{A-7} for $i$,
but for later use we prefer the form \eqref{ALMp}.

\begin{proof}
\resetstepx
If $\gl_i\le0$, then by \eqref{ALMp} we have $i\notin\sQm$ and thus
$\xi\iii\ge0$ \as; consequently $\xi\iii=0$ \as{} and $X_i(t)$ is constant
so the results are trivial (with an empty sum in \eqref{lmp2}).
We thus assume $\gl_i>0$.

\stepx{\ref{LMp2} for $q=1$}\label{LMp-I}
Since $\eta_k\eqd\xi\iii$ and is independent of $T_k$, we have
by \refL{L9+}\ref{L9+b},
letting \ttoo,
\begin{align}
  \E \sumk e^{-\mu T_k}\bigabs{f(\eta_k)}
= a_i\E|f(\xi_{ii})|\intoo e^{-\mu s}\E X_i(s)\dd s,
\end{align}
which is finite by \eqref{cb4} and the assumption $\mu>\gl_i$.

\stepx{\ref{LMp2} for $q<1$}\label{LMp-II}
We have
\begin{align}
  \Bigpar{\sumk e^{-\mu T_k}\bigabs{f(\eta_k)}}^q
\le \sumk e^{-q\mu T_k}\bigabs{f(\eta_k)}^q \in L^1,
\end{align}
by \refStep{LMp-I} applied to $|f|^q$ and $q\mu$.

\stepx{If \ref{LMp1} holds for some $p>1$, then \ref{LMp2} holds for all
  $q\le p$}\label{LMp-III}
We have already proved the case $q\le1$, so we may assume $q>1$.
In particular, $\E|f(\xi_{ii})|<\infty$.
Furthermore, by induction (on $\ceil{\log_2 q}$), we may assume that
\ref{LMp2} holds if $q$ is replaced by $q/2$.

We consider first two special cases, and then the general one.

\pfcase{$\E f(\xi\iii)=0$}
In this case,
\begin{align}\label{C7}
M(t):=  \sumk \indic{T_k\le t}e^{-\mu T_k}f(\eta_k)
\end{align}
is a local martingale with $M(0)=0$, since each $T_k$ is a stopping time and
$f(\eta_k)$ has mean 0 and is independent of  $\cF_{T_k}$.
(Cf.\ $Z_2$ in the proof of \refL{L12}.) 
The quadratic variation is by \eqref{nn4} (\cf{} \eqref{j21})
\begin{align}\label{feb12}
[M,M]_t =  \sumk \indic{T_k\le t}e^{-2\mu T_k}f(\eta_k)^2.
\end{align}
We have $f(\xi\iii)^2\in L^{q/2}$, so by the induction hypothesis, we have
$[M,M]_\infty\in L^{q/2}$. (If $q/2<1$, note that $(q/2)2\mu=q\mu>\mu>\gl_i$.)
Consequently, \eqref{BDG} shows that $M$ is an $L^q$-bounded martingale, which
yields \eqref{lmp2}.

\pfcase{$f=c$ is a constant}
It suffices to consider the case $c=1$.
We now define
\begin{align}\label{feb10}
M(t):= \sumk \indic{T_k\le t}e^{-\mu T_k}-\intot e^{-\gl_is}a_iX_i(s)\dd s
\end{align}
and note that $M(t)$ is a local martingale
(Cf.\ $Z_3$ in the proof of \refL{L12} and \eqref{j31}.) 
The quadratic variation is  (\cf{} \eqref{j33})
\begin{align}
[M,M]_t =  \sumk \indic{T_k\le t}e^{-2\mu T_k}.
\end{align}
Hence, in this case too, the induction hypothesis yields 
$[M,M]_\infty\in L^{q/2}$, and thus, \eqref{BDG} shows that $M(t)$ is an
$L^q$-bounded martingale.
Furthermore,
\begin{align}\label{pj1}
\sumk e^{-\mu T_k}= M(\infty) + \intoo e^{-\mu s}a_iX_i(s)\dd s.
\end{align}
By assumption, \ref{LMp1} holds, and thus $\tXX_i\in L^p\subseteq L^q$ by
\eqref{lmp1}.
Furthermore, since $\mu>\gl_i$,
\begin{align}\label{pj2}
  \intoo e^{-\mu s}X_i(s)\dd s \le 
\intoo e^{-\mu s+\gl_i s}\tXX_i\dd s = (\mu-\gl_i)\qw \tXX_i
\in L^q.
\end{align}
This and \eqref{pj1} show that $\sum_k e^{-\mu T_k}\in L^q$, which is
\eqref{lmp2} in this case.

\pfcase{General $f$}
We use the decomposition $f(x)=\bigpar{f(x)-\E f(\xi\iii)} + \E f(\xi\iii)$
and the two preceding cases.

\stepx{\ref{LMp1} holds for all $p>1$}
By induction (on $\ceil{\log_2p}$), we may for $p>2$ assume that
\ref{LMp1} holds if $p$ is replaced by $p/2$.

As in the proof of \refL{LM}, we let $M(t):=e^{-\gl_it}X_i(t)$, so that
$M(t)$ is a martingale with quadratic variation \eqref{lm4}:
\begin{align}\label{pj4}
  [M,M]_t
= X_i(0)^2+\sumk \indic{T_k\le t} e^{-2\gl_iT_k}\eta_k^2.
\end{align}
If $1<p\le2$, 
we apply \ref{LMp2} with  $q=p/2\le1$;
this case holds by \refStep{LMp-I} or \refStep{LMp-II}. 
If $p>2$,
we apply \ref{LMp2} with  $q=p/2$ and $p$ replaced by $p/2>1$;
this case holds by the induction hypothesis and \refStep{LMp-III}.
In both cases,
we take  $f(x)=x^2$ and note that $\E |f(\xi\iii)|^{q}=\E|\xi\iii|^p<\infty$.
Hence, taking $\mu:=2\gl_i$, \eqref{lmp2} shows that $[M,M]_\infty\in L^{p/2}$.
Consequently, \eqref{BDG} shows that $M$ is an $L^p$-bounded martingale 
and that
\eqref{lmp1} holds. 
This completes the proof.
\end{proof}

We will also need a quantitative version of \refL{LMp}\ref{LMp1}.
\begin{lemma}\label{LMp99}
  Under the assumptions of \refL{LMp}, 
we have
\begin{align}\label{lmp99}
\Bignorm{ \sup_{t\ge 0} \bigabs{e^{-\gl_i t} X_i(t)}}_p\le C \ceil{X_i(0)},
\end{align}
where $C$ does not depend on $X_i(0)$.
\end{lemma}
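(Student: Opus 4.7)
The plan is to reduce the estimate to the case $X_i(0)=1$ by exploiting the branching (additivity) property of the underlying CB process (or, when $i\in\sQm$, classical Markov branching process), and then invoking the qualitative $L^p$-boundedness already established in \refL{LMp}\ref{LMp1}.

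Write $x_0:=X_i(0)$ and $n:=\ceil{x_0}$. First I couple $X_i$ with a process $\overline{X}_i$ having integer initial value $n$. When $i\in\sQm$, condition \ref{A-5b} forces $x_0\in\bbZgeo$, so $n=x_0$ and I simply take $\overline{X}_i:=X_i$. When $i\notin\sQm$, we have $\xi\iii\ge0$ a.s., and CB additivity lets me realise $\overline{X}_i(t)=X_i(t)+Z(t)$, where $Z$ is an independent copy of the same CB dynamics started with $Z(0)=n-x_0\in\oio$; since $\xi\iii\ge0$ implies $Z(t)\ge0$ a.s., we have $X_i(t)\le\overline{X}_i(t)$ for every $t\ge0$. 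Branching (additivity in the initial value for the CB process, or independence of the $n$ initial lineages for the classical branching process) then yields the representation
\begin{align*}
\overline{X}_i(t)=\sum_{k=1}^{n}Y_k(t),
\end{align*}
where $Y_1,\dots,Y_n$ are i.i.d.\ copies of the process started with unit initial value.

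Set $W_k:=\sup_{t\ge0}e^{-\gl_i t}Y_k(t)$; by \refL{LMp}\ref{LMp1} applied with initial value $1$, the $W_k$ are i.i.d.\ elements of $L^p$, and the constant $C:=\norm{W_1}_p$ depends only on $p$, $a_i$, $\gl_i$ and $\cL(\xi\iii)$, not on $x_0$. Subadditivity of the supremum combined with the triangle inequality in $L^p$ then gives
\begin{align*}
\Bignorm{\sup_{t\ge0}\bigpar{e^{-\gl_i t}X_i(t)}}_p
\le\Bignorm{\sum_{k=1}^{n} W_k}_p
\le n\norm{W_1}_p = C\ceil{x_0},
\end{align*}
as required. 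The only point that needs a little care is the case $\gl_i\le0$, where the branching argument is largely vacuous; but hypothesis \eqref{ALMp} then forces $i\notin\sQm$ and $\xi\iii=0$ a.s., so $X_i(t)\equiv x_0\le\ceil{x_0}$ and the bound is immediate (with $C=1$). The main conceptual content is thus the observation that all the $x_0$-dependence is already absorbed into the trivial factor $n=\ceil{x_0}$ produced by the decomposition into unit-initial-value copies.
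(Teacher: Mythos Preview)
Your proof is correct and follows essentially the same approach as the paper: decompose the process with integer initial value $\ceil{x_0}$ into $\ceil{x_0}$ independent unit-initial-value copies via the branching (CB additivity) property, apply \refL{LMp}\ref{LMp1} to each copy, and combine with Minkowski's inequality; the non-integer case is handled by the same coupling (adding an independent copy started from $\ceil{x_0}-x_0$). Your separate treatment of the trivial case $\gl_i\le0$ and your explicit observation that $i\in\sQm$ forces $x_0\in\bbZgeo$ are minor elaborations, but the argument is otherwise identical to the paper's.
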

\begin{proof}
  It ought to be straightforward to keep track of the norms of all quantities
  in the proof of \refL{LMp}, but it seems simpler to argue as follows.
First, if $X_i(0)=m$ is an integer, then the process $X_i(t)$ can be seen as
the sum of $m$ independent and identially distributed processes
$X_i\kkk(t)$,
$k=1,\dots,m$, each started with $X_i\kkk(0)=1$ (but otherwise the same as
$X_i(t)$).
Hence, 
\begin{align}
 \sup_{t\ge 0} \bigabs{e^{-\gl_i t} X_i(t)}
\le 
\sum_{k=1}^m  \sup_{t\ge 0} \bigabs{e^{-\gl_i t} X_i\kkk(t)},
\end{align}
and \eqref{lmp99} follows by \eqref{lmp1} and Minkowski's inequality.

If $X_i(0)$ is not an integer, 
let $X_i'(t)$ be an independent copy of $X_i(t)$ started with
$X_i'(0)=\ceil{X_i(0)}-X_i(0)$. Then $X_i(t)+X'_i(t)$ is a copy of the same
process started with $\ceil{X_i(0)}$.
Since $0\le X_i(t)\le X_i(t)+X'_i(t)$, the result follows from the special
case just treated.
\end{proof}

\subsection{A colour only produced by one other colour}\label{SS12p}

Consider now the situation in \refLs{L12} and \ref{ZL12}, with two colours
$i$ and $j$ such that $\sP_i=\set{j}$, and also $X_i(0)=0$.
We fix such $i$ and $j$ throughout this subsection. (We may repeat the
assumptions for emphasis.)

As in \refSS{SS12}, let $0<T_1<T_2<\dots$ be the times that a ball of
colour $j$ is drawn,
and let 
$\cF_{T_k}$ be the corresponding \gsf{s}.

Recall the notation \eqref{j4*}.
Define also, for $\mu\in\bbR$,
\begin{align}\label{kkx}
  \kkx(\mu):=
  \begin{cases}
    \kk_j,&\mu<\glx_j,\\
    \kk_j+1,&\mu=\glx_j,\\
    0,&\mu>\glx_j.
  \end{cases}
\end{align}
Note that, by \eqref{kk}, $\kk_i=\kkx(\gl_i)$. 
(We are mainly interested in the case $\mu=\gl_i$, but we use induction
to prove \refL{L12p0} below, and we will then need more general $\mu$.)

\begin{lemma}\label{LZ4}
Let $\mu\in\bbR$ and define
\begin{align}\label{pq41}
V(t)&:= \intot  e^{-\mu s}X_j(s)\dd s
.\end{align}
Then
\begin{align}
\label{pq42}
\tVV&:=
\sup_{t\ge0}\bigcpar{ (t+1)^{-\kkx(\mu)}e^{-(\glx_j-\mu)_+t}|V(t)|}
\le C\tXX_j
.\end{align}
\end{lemma}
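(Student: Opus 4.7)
The plan is to estimate $V(t)$ directly from the pointwise bound built into the definition of $\tXX_j$. Since $(s+1)^{-\kk_j}e^{-\glx_j s}X_j(s)\le \tXX_j$ for every $s\ge 0$, we immediately get
\begin{align*}
|V(t)|\le \tXX_j\intot (s+1)^{\kk_j}e^{(\glx_j-\mu)s}\dd s,
\end{align*}
so the whole task reduces to bounding this deterministic integral and comparing the result with $(t+1)^{\kkx(\mu)}e^{(\glx_j-\mu)_+ t}$. No probability is used at this stage: the claim is really a calculus lemma.

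I would split into the three cases dictated by \eqref{kkx}. If $\mu>\glx_j$, the integrand decays exponentially, so $\intot(s+1)^{\kk_j}e^{(\glx_j-\mu)s}\dd s\le\intoo(s+1)^{\kk_j}e^{(\glx_j-\mu)s}\dd s=C<\infty$, matching $(\glx_j-\mu)_+=0$ and $\kkx(\mu)=0$. If $\mu=\glx_j$, the integrand is $(s+1)^{\kk_j}$, so the integral is at most $C(t+1)^{\kk_j+1}$, matching $(\glx_j-\mu)_+=0$ and $\kkx(\mu)=\kk_j+1$. If $\mu<\glx_j$, write $\gb:=\glx_j-\mu>0$; a single integration by parts (or induction on $\kk_j$) gives the standard bound
\begin{align*}
\intot (s+1)^{\kk_j}e^{\gb s}\dd s\le \frac{(t+1)^{\kk_j}e^{\gb t}}{\gb}+\frac{\kk_j}{\gb}\intot(s+1)^{\kk_j-1}e^{\gb s}\dd s\le C(t+1)^{\kk_j}e^{\gb t},
\end{align*}
matching $(\glx_j-\mu)_+=\gb$ and $\kkx(\mu)=\kk_j$.

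Combining the three cases with the pointwise inequality above yields $|V(t)|\le C\tXX_j (t+1)^{\kkx(\mu)}e^{(\glx_j-\mu)_+ t}$ uniformly in $t\ge 0$, so taking the supremum gives \eqref{pq42}. There is no genuine obstacle here; the only subtlety is making sure the three definitions of $\kkx(\mu)$ in \eqref{kkx} are precisely the exponents produced by the calculus, which is why I would do the cases separately rather than try to write a single formula.
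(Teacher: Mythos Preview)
Your proposal is correct and follows essentially the same approach as the paper: bound $X_j(s)\le (s+1)^{\kk_j}e^{\glx_j s}\tXX_j$ via \eqref{j4*}, then estimate the resulting deterministic integral $\intot (s+1)^{\kk_j}e^{(\glx_j-\mu)s}\dd s$ in the three cases determined by the sign of $\glx_j-\mu$. The paper's own proof is just the two-line summary of exactly this computation.
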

\begin{proof}
 By \eqref{j4*} we have,
considering the three cases in \eqref{kkx} separately,
\begin{align}\label{pq94}
V(t) 
\le 
 \intot  (s+1)^{\kk_j}e^{(\glx_j-\mu) s} \tXX_j\dd s 
\le C\tXX_j (t+1)^{\kkx(\mu)}e^{(\glx_j-\mu)_+t},
\end{align}
and \eqref{pq42} follows.
\end{proof}

\begin{lemma}\label{L12p0}
Assume \ref{A0}--\ref{A3}, \ref{A-5} (or \ref{A+}),
and \ref{Ap} for some $p>1$. 
Suppose that
$i,j\in\sQ$ are such that $\sP_i=\set{j}$ and $X_i(0)=0$,
and suppose also that
\begin{align}\label{j2p}
\tXX_j\in L^p.
\end{align}
Let $(\zeta_k)\xoo$ be a sequence of random variables with the same
distribution such that $\zeta_k$ is independent of $\cF_{T_k}$.
Let $\mu\in\bbR$ be such that
\begin{align}\label{xp1}
  \begin{cases}
    \mu\ge0,& \text{if }i\notin\sQm,\\
  \mu\lor\glx_j>0,& \text{if }i\in\sQm,\\
  \end{cases}
\end{align}
and let
\begin{align}\label{pq1}
  Z(t):=
\sumk \indic{T_k\le t} e^{-\mu T_k}\zeta_k.
\end{align}
If $1< q\le p$ and
$\E|\zeta_k|^q<\infty$,
then
\begin{align}\label{pq2}
\tZZ:=
\sup_{t\ge0}\bigcpar{ (t+1)^{-\kkx(\mu)}e^{-(\glx_j-\mu)_+t}|Z(t)|}
\in L^q.  
\end{align}
\end{lemma}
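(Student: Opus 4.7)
The plan is to separate $Z(t)$ into a centered local-martingale part and a compensator, bound each in $L^q$ via Burkholder--Davis--Gundy, and use induction on $\lceil\log_2 q\rceil$ to extend past exponent~$2$. Set $c:=\E\zeta_1$ (finite since $q>1$) and write
\begin{align}
Z(t)=\bar Z(t)+cN^\mu(t), \quad \bar Z(t):=\sum_k \indic{T_k\le t}e^{-\mu T_k}(\zeta_k-c), \quad N^\mu(t):=\sum_k \indic{T_k\le t}e^{-\mu T_k},
\end{align}
and decompose further $N^\mu(t)=M^\mu(t)+a_jV(t)$, with $V$ as in \refL{LZ4} and $M^\mu$ the local-martingale part (a martingale by the argument of \refL{LtN}). \refL{LZ4} gives $\sup_{t\ge0}\bigcpar{(t+1)^{-\kkx(\mu)}e^{-(\glx_j-\mu)_+t}|a_jV(t)|}\le C\tXX_j\in L^p\subseteq L^q$, so the compensator is handled, and it remains to bound the two local martingales $\bar Z$ and $M^\mu$ in the same weighted sup norm. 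Their quadratic variations $[\bar Z,\bar Z]_t=\sum_k\indic{T_k\le t}e^{-2\mu T_k}(\zeta_k-c)^2$ and $[M^\mu,M^\mu]_t=\sum_k\indic{T_k\le t}e^{-2\mu T_k}$ are themselves sums of the form \eqref{pq1}, with parameter $2\mu$ and summand $(\zeta_k-c)^2$ or $1$.

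The central estimate to be established is, for each local martingale $M\in\set{\bar Z,M^\mu}$,
\begin{align}\label{plan-key}
\E[M,M]_t^{q/2}\le C(t+1)^{\kkx(q\mu)}e^{(\glx_j-q\mu)_+t}.
\end{align}
For $1<q\le2$, the subadditivity $(\sum x_k)^{q/2}\le\sum x_k^{q/2}$ (valid since $q/2\le1$) together with \refL{L9+}\ref{L9+b} reduces the left-hand side to $C\int_0^t e^{-q\mu s}\E X_j(s)\dd s$, which \refL{LZ4} applied at parameter $q\mu$ bounds by the right-hand side of \eqref{plan-key}. For $q>2$, we argue by induction on $\lceil\log_2 q\rceil$: apply the lemma itself to $[\bar Z,\bar Z]_t$, viewed as an instance of \eqref{pq1} with parameters $\tilde\mu:=2\mu$, $\tilde\zeta_k:=(\zeta_k-c)^2\in L^{q/2}$, and $\tilde q:=q/2\in(1,q)$. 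A routine check shows that $\tilde\mu$ still satisfies \eqref{xp1}, so the induction hypothesis yields an $L^{q/2}$ bound on the appropriately weighted sup of $[\bar Z,\bar Z]_t$, which entails a version of \eqref{plan-key} (possibly slightly weaker polynomially) sufficient for the next step.

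Given the $L^{q/2}$ estimate on $[M,M]_t$, the $L^q$ bound on $\tZZ$ is obtained by a three-regime split mirroring Cases~\ref{step5-i'}--\ref{step5-iii'} in the proof of \refL{L12}. When $q\mu>\glx_j$, the right-hand side of \eqref{plan-key} is bounded in $t$, so $\E[M,M]_\infty^{q/2}<\infty$ and BDG makes $M$ an $L^q$-bounded martingale with $M^*(\infty)\in L^q$; here $\kkx(\mu)=0=(\glx_j-\mu)_+$, so no further weighting is needed. When $q\mu\le\glx_j$ and $\glx_j>0$, apply Doob's $L^q$ inequality and BDG on unit intervals $[n,n+1]$ and combine with \eqref{plan-key}; the resulting $n$th term decays exponentially at rate at least $(q-1)\glx_j/2>0$, hence is summable. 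The remaining edge case $\mu=\glx_j=0$ requires doubling intervals $[2^{n-1},2^n]$ as in case~\ref{step5-iii'} of the proof of \refL{L12}: after Doob and BDG the $n$th summand is bounded by $C2^{-n(\kk_j+1)(q-1)}$, summable precisely because $q>1$.

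The main obstacle will be the bookkeeping in the dyadic step, since the four exponents $\kkx(\mu)$, $(\glx_j-\mu)_+$, $\kkx(q\mu)$, $(\glx_j-q\mu)_+$ must combine correctly across all configurations of $(\mu,q\mu,\glx_j)$. Hypothesis \eqref{xp1} plays two distinct roles: it keeps $\tilde\mu=2\mu$ admissible across the induction step, and it excludes the degenerate configuration $\mu=\glx_j=0$ with $i\in\sQm$, for which the conclusion is known to fail by \refEs{E+-} and~\ref{E--}.
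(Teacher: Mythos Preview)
Your approach is essentially identical to the paper's: the same decomposition into centered part, constant part, and compensator $a_jV(t)$, the same BDG-based estimate on quadratic variations, the same induction on $\lceil\log_2 q\rceil$ for $q>2$, and the same three-regime split (the paper's Cases \ref{step5-i'}--\ref{step5-iii'} in $L^p$ form). One small slip: in the first regime ``$q\mu>\glx_j$'' you claim $\kkx(\mu)=0=(\glx_j-\mu)_+$, which is false in general (take $q=3$, $\mu=1$, $\glx_j=2$); the conclusion still holds simply because the weight $(t+1)^{-\kkx(\mu)}e^{-(\glx_j-\mu)_+t}\le1$ always, so $\tZZ\le M^*(\infty)\in L^q$.
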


\resetCase
\resetcase
\resetcasex

\begin{proof}
The proof is similar to \refStep{stepZ123b} in the proof of \refL{L12}
(which essentially is the case $q=2$
of the present lemma), now using \eqref{BDG}.

By induction (on $\ceil{\log_2q}$), we may for $q>2$ assume that the lemma
holds for $q/2$. 

\pfCase{$\E\zeta_k=0$}\label{CQ1}
In this case, $Z(t)$ is a local martingale, for the same reason as $M(t)$ in
\eqref{C7}. Its quadratic variation is, by \eqref{nn4} again,
\begin{align}\label{pq3}
  [Z,Z]_t = \sumk \indic{T_k\le t} e^{-2\mu T_k}|\zeta_k|^2.
\end{align}
We consider two subcases.

\pfcase{$1< q\le2$}
By \eqref{BDG}, together with
\eqref{pq3} and the independence of $\zeta_k$ and $T_k$, we have, since
$q/2\le1$,
\begin{align}\label{pq11}
  \E Z^*(t)^q&\le 
C \E [Z,Z]_t^{q/2}
= C \E\Bigpar{\sumk \indic{T_k\le t} e^{-2\mu T_k}|\zeta_k|^2}^{q/2}
\notag\\&
\le C\E\sumk \indic{T_k\le t} e^{-q\mu T_k}|\zeta_k|^q
.\end{align}
Hence, \refL{L9+}\ref{L9+b} and \eqref{j4*} yield
\begin{align}\label{pq12}
  \E Z^*(t)^q& 
\le C a_j \E \intot e^{-q\mu s}X_j(s)\dd s
\le C \E\tXX_j \intot (s+1)^{\kk_j}e^{(\glx_j-q\mu)s}\dd s.
\end{align}
In the sequel, we allow constants $C$ to depend on $\norm{\tXX_j}_p$
(which is finite by assumption); hence we may absorb $\E\tXX_j$ into $C$ in
\eqref{pq12}. 

We consider three subsubcases:

\pfcasexx{$\glx_j< q\mu$}
In this case, we may take $t=\infty$ in \eqref{pq12}
and obtain $Z^*(\infty)\in L^q$. The result \eqref{pq2} follows since
$\tZZ\le Z^*(\infty)$.

\pfcasexx{$\glx_j\ge q\mu$ and $\glx_j>0$}
Define, similarly to \eqref{j53},
\begin{align}\label{pq13}
  \tZc(n)
&:=\sup_{n-1\le t\le n} {(t+1)^{-\kkx(\mu)}e^{-(\glx_j-\mu)t}|Z(t)|}
\le 
C n^{-\kkx(\mu)}e^{-(\glx_j-\mu) n} Z^*(n)
.\end{align}
By \eqref{pq12}, we thus have
\begin{align}\label{pq14}
\E  \tZc(n)^q&
\le 
C n^{-q\kkx(\mu)}e^{-q(\glx_j-\mu) n} 
  \int_0^n  (s+1)^{\kk_j}e^{(\glx_j-q\mu)s} \dd s
\notag\\&
\le 
C n^{1+\kk_j-q\kkx(\mu)}e^{(-q(\glx_j-\mu)+\glx_j-q\mu) n}
=C n^{1+\kk_j-q\kkx(\mu)}e^{-(q-1)\glx_j n}
.\end{align}
We have $(q-1)\glx_j>0$,
and thus \eqref{pq14} implies
\begin{align}\label{pq16}
  \E (\tZZ)^q \le \E \sumn  \tZc(n)^q <\infty,
\end{align}
which shows \eqref{pq2}.

\pfcasexx{$\glx_j\ge q\mu$ and $\glx_j\le0$}
Then also $\mu\le0$, and thus $\mu\lor\glx_j\le0$.
By \eqref{xp1}, we must have $i\notin\sQm$ and $\mu=0$, and then also
$\glx_j=0$. 
Hence, $\kk(\mu)=\kk_j+1$ by \eqref{kkx}. 
We define, similarly to \eqref{j3c1},
\begin{align}\label{pq17}
  \tZcc(n)
&:=\sup_{2^{n-1}\le t\le 2^n}t^{-\kkx(\mu)}e^{-(\glx_j-\mu) t} |Z(t)|
=\sup_{2^{n-1}\le t\le 2^n}t^{-\kk_j-1} |Z(t)|
.\end{align}
Similarly to \eqref{pq14}, it follows from \eqref{pq12} that
\begin{align}\label{pq18}
\E \tZcc(n)^q&
\le C 2^{-q(\kk_j+1)n}\int_0^{2^n} (s+1)^{\kk_j}\dd s
\le C 2^{(1-q)(\kk_j+1)n}
\le C 2^{-(q-1)n}
.\end{align}
Hence, \eqref{pq2} follows by
\begin{align}\label{pq19}
  \E (\tZZ)^q \le \E Z^*(1)^q+\E \sumn  \tZcc(n)^q <\infty,
\end{align}
since $\E Z^*(1)^q<\infty$ by \eqref{pq12}.

\pfcase{$q>2$}
By \eqref{BDG}, together with \eqref{pq3} 
and the induction hypothesis 
(for $q/2$ and $2\mu$, using $\E|\zeta_k^2|^{q/2}<\infty$)
\begin{align}\label{pq21}
  \E Z^*(t)^q \le C\E [Z,Z]_t^{q/2}
\le C\bigpar{ (t+1)^{\kkx(2\mu)}e^{(\glx_j-2\mu)_+t}}^{q/2}
.\end{align}

Again, we consider three subsubcases:
\resetcasex

\pfcasexx{$\glx_j<2\mu$}
Then $\kkx(2\mu)=0$ by \eqref{kkx}, and thus we may let \ttoo{} in \eqref{pq21}
and obtain $\E Z^*(\infty)^q<\infty$, which yields \eqref{pq2} since
$\tZZ\le Z^*(\infty)$.

\pfcasexx{$\glx_j\ge2\mu$ and $\glx_j>0$}
Define again $\tZc(n)$ by \eqref{pq13}. Then, by \eqref{pq21},
\begin{align}\label{pq22}
  \E  \tZc(n)^q&
\le 
C n^{-q\kkx(\mu)+\frac{q}{2}\kkx(2\mu)}e^{(-q(\glx_j-\mu)+\frac{q}{2}(\glx_j-2\mu)) n}
\notag\\&
\le
C n^{-q\kkx(\mu)+\frac{q}{2}\kkx(2\mu)}e^{-\frac{q}{2}\glx_j n}
.\end{align}
Hence, we have again \eqref{pq16}, and thus \eqref{pq2}.

\pfcasexx{$\glx_j\ge2\mu$ and $\glx_j\le0$}
Again, by \eqref{xp1}, 
we must have $i\notin\sQm$ and $\mu=0$, and then also
$\glx_j=0$. Hence, $\kkx(\mu)=\kkx(2\mu)=\kk_j+1$.
Define again $\tZcc(n)$ by \eqref{pq17}. Then, by \eqref{pq21},
\begin{align}
  \E  \tZcc(n)^q&
\le 
C 2^{(-q(\kk_j+1)+\frac{q}{2}\kkx(2\mu))n}
=
C 2^{-\frac{q}2(\kk_j+1)n}
.\end{align}
Consequently, \eqref{pq19} holds, and thus \eqref{pq2}.

\pfCase{$\zeta_k=c$ is a constant}\label{CQ2}
We may assume $c=1$.

Let $V(t)$ be as in \eqref{pq41} and define
\begin{align}
\label{pq91}
  M(t)&:=
Z(t)-a_jV(t)
=
\sumk \indic{T_k\le t} e^{-\mu T_k}
-a_j\intot  e^{-\mu s}X_j(s)\dd s.
\end{align}
Then $M(t)$ is a local martingale by the same argument as for 
\eqref{feb10} (i.e., as for $Z_3$ in the
proof of \refL{L12}), and its quadratic variation is
(\cf{} \eqref{j33})
\begin{align}\label{pq92}
 [M, M]_t=
\sumk \indic{T_k\le t} e^{-2\mu T_k}.
\end{align}
This is the same as in \eqref{pq3}, except for the factor $|\zeta_k|^2$ there
(or the same if we choose $\zeta_k=\pm1$).
Consequently, the argument in \refCase{CQ1} yields
\begin{align}\label{pq93}
\tMM:=
\sup_{t\ge0}\bigcpar{ (t+1)^{-\kkx(\mu)}e^{-(\glx_j-\mu)_+t}|M(t)|}
\in L^q.  
\end{align}
Furthermore, \eqref{pq42} and the assumption \eqref{j2p} yield
$\tVV\in L^p\subseteq L^q$.
Consequently, \eqref{pq2} follows by \eqref{pq91}.

\pfCase{General $\zeta_k$}\label{CQ3}
The result follows from Cases \ref{CQ1} and \ref{CQ2} by the decomposition
$\zeta_k=(\zeta_k-\E\zeta_k)+\E\zeta_k$.
\end{proof}

\begin{lemma}\label{L12p1}
Assume \ref{A0}--\ref{A3}, \ref{A-5} (or \ref{A+}),
and \ref{Ap} for some $p>1$.
Suppose that
$i,j\in\sQ$ are such that $\sP_i=\set{j}$ and $X_i(0)=0$, and 
suppose also that \eqref{ALMp} holds.
If\/
$\tXX_j\in L^p$, 
then
$\tXX_i\in L^p$.
\end{lemma}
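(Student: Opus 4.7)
The plan is to follow the argument in the proof of \refL{L12}, substituting each $L^2$ estimate with the $L^p$ analogues from \refSs{SS11p}--\ref{SS12p}. Using the decomposition \eqref{jzz}--\eqref{jxz*} applied to colour $i$, we have
\begin{align*}
\tXX_i \le \tZZ_1 + \tZZ_2 + C\tZZ_3 + C\tZZ_4,
\end{align*}
so by Minkowski's inequality it suffices to show that each $\tZZ_\ell \in L^p$ for $\ell = 1,2,3,4$. Throughout, the hypothesis \eqref{xp1} of \refL{L12p0} for $\mu = \gl_i$ holds because \eqref{ALMp} is assumed: either $i \notin \sQm$ and then $\gl_i \ge 0$, or $i \in \sQm$ and then $\gl_i > 0$.

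First I would dispose of $\tZZ_4$, $\tZZ_3$, and $\tZZ_2$. For $\tZZ_4$, note that $Z_4(t) = \intot e^{-\gl_i s} X_j(s)\dd s$ by \eqref{jz4}, so \refL{LZ4} applied with $\mu = \gl_i$ yields $\tZZ_4 \le C\tXX_j \in L^p$; here one uses that $\kkx(\gl_i) = \kk_i$ and $(\glx_j - \gl_i)_+ = \glx_i - \gl_i$ (easily verified from \eqref{kk} and \eqref{kkx} by considering the three cases according to the sign of $\gl_i - \glx_j$), so that the normalisations in \eqref{j4z*} and \eqref{pq42} match. For $\tZZ_3$, apply \refL{L12p0} with $\zeta_k \equiv 1$, $\mu = \gl_i$, and $q = p$. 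For $\tZZ_2$, apply \refL{L12p0} with $\zeta_k := \eta_k - r_{ji}$; this sequence is iid, independent of $\cF_{T_k}$, and lies in $L^p$ by \ref{Ap}.

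The main obstacle is the branching term $\tZZ_1$, since $Z_1$ involves the sub-processes $Y_k$ started at the random initial condition $\eta_k$. Recall from \eqref{j11} that $Z_1(t) = \sum_k \indic{T_k \le t} e^{-\gl_i T_k} M_k(t - T_k)$, where $M_k(s) := e^{-\gl_i s} Y_k(s) - \eta_k$ and $Y_k$ is the single-colour process for colour $i$ started from $Y_k(0) = \eta_k$, independent of $\cF_{T_k}$. I would define
\begin{align*}
\zeta_k := \sup_{s \ge 0} |M_k(s)| \le \sup_{s \ge 0} \bigl(e^{-\gl_i s} Y_k(s)\bigr) + \eta_k.
\end{align*}
Then the $\zeta_k$ are iid and independent of $\cF_{T_k}$. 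Applying \refL{LMp99} conditionally on $\eta_k$ to the single-colour process $Y_k$ (whose hypothesis \eqref{ALMp} is exactly ours) and then taking unconditional expectations using $\E\eta_k^p = \E|\xi_{ji}|^p < \infty$ yields $\E\zeta_k^p \le C\E\ceil{\eta_k}^p < \infty$, so $\zeta_k \in L^p$. Since $|Z_1(t)| \le \sum_k \indic{T_k \le t} e^{-\gl_i T_k} \zeta_k$ termwise, a further application of \refL{L12p0} (with this $\zeta_k$, $\mu = \gl_i$, $q = p$) yields $\tZZ_1 \in L^p$, and $\tXX_i \in L^p$ follows from the displayed bound. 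The hard part is precisely this reduction for $Z_1$: unlike in the $L^2$ setting, the quadratic variation identity \eqref{nn5} is unavailable, and one must exploit the conditional independence structure of the sub-branching processes to replace each $M_k(t - T_k)$ by its running supremum $\zeta_k$, thereby linearising into the scalar-coefficient setting already handled by \refL{L12p0}.
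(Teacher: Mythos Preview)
Your proof is correct, but takes a longer route than the paper's. You use the four-term decomposition \eqref{jzz} from \refL{L12}, bounding $\tZZ_4$ via \refL{LZ4} and $\tZZ_2,\tZZ_3$ via \refL{L12p0}, and then handle the branching piece $\tZZ_1$ by replacing each $M_k(t-T_k)$ with its running supremum so that \refL{L12p0} applies again. The paper instead bypasses the decomposition altogether: since $X_i(0)=0$ and every $Y_k\ge0$, the identity \eqref{j3} gives directly
\[
e^{-\gl_i t}X_i(t)=\sumk\indic{T_k\le t}e^{-\gl_i T_k}\,e^{-\gl_i(t-T_k)}Y_k(t-T_k)
\le\sumk\indic{T_k\le t}e^{-\gl_i T_k}\zeta_k,
\]
with $\zeta_k:=\sup_{s\ge0}e^{-\gl_i s}Y_k(s)$, and a single application of \refL{L12p0} (with the same $\mu=\gl_i$ and the same use of \refL{LMp99} to get $\zeta_k\in L^p$) then yields $\tXX_i\in L^p$. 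Your route reuses the $L^2$ machinery of \refL{L12} piece by piece; the paper exploits the nonnegativity of the $Y_k$ to collapse everything into one sum, which is shorter and avoids ever splitting off the $Z_2,Z_3,Z_4$ terms. Both rely on the same two tools (\refL{LMp99} and \refL{L12p0}) and the same identification $\kkx(\gl_i)=\kk_i$, $(\glx_j-\gl_i)_+=\glx_i-\gl_i$.
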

 
\begin{proof}
 We use again the decomposition \eqref{j3}, where we recall
that $Y_k(t)$ denote copies of the  one-colour
process in \refSS{SS11}, and that the process $Y_k(t)$ is independent of 
$\cF_{T_k}$.
Let 
\begin{align}\label{pp1}
\zeta_k:=
\sup_{t\ge0}\bigset{e^{-\gl_i t}Y_k(t)}
.\end{align}
Then \eqref{j3} implies
\begin{align}\label{pp2}
e^{-\gl_i t} X_i(t)
&=\sumk  \indic{T_k\le t}e^{-\gl_i  T_k}\cdot{e^{-\gl_i(t-T_k)}Y_k(t-T_k)}
\notag\\&
\le
\sumk \indic{T_k\le t} e^{-\gl_i T_k} \zeta_k
=:Z(t)
.\end{align}

Let $\eta_k=Y_k(0)$, and recall that $\eta_k\eqd\xi_{ji}\in L^p$.
Then, conditioning on $\eta_k$,   
\refL{LMp99} applies to $Y_k(t)$ 
and shows that
\begin{align}
  \E \bigpar{\zeta_k^p\mid \eta_k} \le C \ceil{\eta_k}^p \le C(\eta_k+1)^p.
\end{align} 
Consequently, using \ref{Ap},
\begin{align}
  \E \zeta_k^p \le C \E(\eta_k+1)^p =\E(\xi_{ji}+1)^p<\infty.
\end{align}

We apply \refL{L12p0} to the sum $Z(t)$
in \eqref{pp2}, taking $q=p$ and $\mu=\gl_i$.
Note that then
\eqref{xp1} holds: if $i\notin\sQm$ then $\xi_{ii}\ge 0$ a.s.,
and thus $\mu=\gl_i\ge0$; if $i\in\sQm$ then  $\mu=\gl_i>0$ by our assumption
\eqref{ALMp}.
Hence, \eqref{pq2} holds.
Furthermore, $\kkx(\mu)=\kk_i$ and $(\glx_j-\mu)_+=(\glx_i-\gl_i)$,
as is easily verified by considering the three cases in \eqref{kkx} (and
after \eqref{jzz}) separately.
Hence,  \eqref{j4*}, \eqref{pp2}, and  \eqref{pq2} yield
\begin{align}
\tXX_i   
=\sup_{t\ge0}\bigset{(t+1)^{-\kk_i}e^{-(\glx_i-\gl_i)t}\cdot e^{-\gl_i t}X_i(t)}
\le  \tZZ
\in L^p
.\end{align}
\end{proof}

\subsection{The general case for a single colour}\label{SSLGp}
We now consider any colour $i\in \sQ$.

\begin{lemma}\label{LGp}
Assume \ref{A0}--\ref{A3}, either 
$\bigl($\ref{A-5} and \ref{A-7}$\bigr)$  
or \ref{A+},
and \ref{Ap} for some $p>1$.
Let $i\in\sQ$, and assume that for every $j\in\sP_i$,
we have
$\tXX_j\in L^p$.
Then
$\tXX_i\in L^p$.
\end{lemma}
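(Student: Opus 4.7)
The plan is to mirror the proof of \refL{LG} (and its extension \refL{ZLG}), replacing the $L^2$ estimates by the $L^p$ estimates already established in \refLs{LMp} and \ref{L12p1}. Split the colour $i$ into subcolours $i_0$ (for initial balls of colour $i$ and their descendants) and, for each $j\in\sP_i$, a subcolour $i_j$ tracking balls of colour $i$ produced by drawings of colour $j$. Then the decomposition \eqref{ca1} holds, and by the same computation as for \eqref{g66},
\begin{align*}
  \tXX_i \;\le\; C\,\tXX_{i_0} + C\sum_{j\in\sP_i}\tXX_{i_j}.
\end{align*}
By Minkowski's inequality it therefore suffices to prove $\tXX_{i_0}\in L^p$ and $\tXX_{i_j}\in L^p$ for every $j\in\sP_i$, since $|\sP_i|<\infty$.

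Now $i_0$ is minimal, while $\sP_{i_j}=\{j\}$ and $X_{i_j}(0)=0$. In the setting assumed (either \ref{A+}, or \ref{A-5} together with \ref{A-7}), I verify that \eqref{ALMp} holds for each of these subcolours and apply \refL{LMp} to $i_0$ and \refL{L12p1} to $i_j$. Under \ref{A+}, $\sQm=\emptyset$ so \eqref{ALMp} is automatic. Under \ref{A-5}+\ref{A-7}, if $i\notin\sQm$ then $i_0,i_j\notin\sQm$ and we are done; the delicate case is $i\in\sQm$, where $i_0,i_j\in\sQm$ and we need $\gl_{i_0}=\gl_{i_j}=\gl_i>0$. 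This follows from \ref{A-7} only when $\glx_i=\gl_i$; otherwise $\gl_i\le 0<\glx_i=\glx_j$ for some $j\in\sP_i$, and then \eqref{ALMp} genuinely fails.

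Thus the main obstacle is the case $i\in\sQm$ with $\gl_i\le 0<\glx_i$. Here I would argue as in the proof of \refL{ZL1200}: perturb $\xi_{ii}$ by adding an independent small Bernoulli shift to build a dominating urn $\chX_i$ with the modified parameter $\chgl_i\in(0,\glx_i)$, to which \refL{L12p1} \emph{does} apply (in the modified urn the hypothesis \eqref{ALMp} holds, and the parents are unchanged so $\tXX_j\in L^p$ persists). The coupling $X_{i_j}(t)\le \chX_{i_j}(t)$ then gives
\begin{align*}
\Bignorm{\sup_{t\ge0}\bigcpar{e^{-\glx_i t}X_{i_j}(t)}}_p \;\le\; \Bignorm{\sup_{t\ge0}\bigcpar{e^{-\glx_i t}\chX_{i_j}(t)}}_p\;<\;\infty,
\end{align*}
and similarly for $i_0$ (dominating by a small-birth-rate branching process, or by the same modified urn). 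Since $\glx_{i_j}=\glx_i$ and $\kk_{i_j}\le\kk_i$ in this case, and $e^{-\glx_i t}\le C(t+1)^{-\kk_i}e^{-\glx_i t}\cdot t^{\kk_i}$ is harmless (cf.\ \eqref{g55}), these bounds yield $\tXX_{i_j}\in L^p$, and likewise $\tXX_{i_0}\in L^p$. Combining with Minkowski concludes the proof.

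I expect the bookkeeping of exponents when passing from $e^{-\glx_i t}X_{i_j}(t)$ to $\tXX_{i_j}$ (involving the factor $(t+1)^{-\kk_i}$) to be the only nontrivial calculation beyond invoking the preceding lemmas; this is handled exactly as in \eqref{g55}--\eqref{g66} since the inequality there does not use positivity of the increments.
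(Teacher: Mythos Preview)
Your overall strategy matches the paper's exactly: split into subcolours when \eqref{ALMp} holds and invoke \refL{LMp} and \refL{L12p1}; when $i\in\sQm$ with $\gl_i\le0$, perturb $\xi_{ii}$ upward so that the modified rate is strictly positive but still below $\glx_i$, and dominate. The paper, however, carries out the perturbation more cleanly: it modifies the \emph{whole} colour $i$, observes that the modified urn then falls under Case~1, and checks only the two equalities $\xglx_i=\glx_i$ and $\bar\kk_i=\kk_i$ (both immediate from $0<\bar\gl_i<\glx_i=\max_{j\in\sP_i}\glx_j$); these give $\tXX_i\le\xtXX_i\in L^p$ in one line.

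Your subcolour-by-subcolour version of Case~2 has two slips. First, $\glx_{i_j}=\gl_i\lor\glx_j=\glx_j$, not $\glx_i$; these differ whenever $\glx_j<\glx_i$ for some $j\in\sP_i$. Second, the displayed bound $\sup_{t\ge0}e^{-\glx_i t}\chX_{i_j}(t)\in L^p$ can fail: in the modified urn, \refL{L12p1} only gives $\sup_t(t+1)^{-\chkk_{i_j}}e^{-\chglx_{i_j}t}\chX_{i_j}(t)\in L^p$, and when $\glx_j=\glx_i$ one has $\chglx_{i_j}=\glx_i$ and $\chkk_{i_j}=\kk_j$, so $e^{-\glx_i t}\chX_{i_j}(t)$ carries an uncontrolled factor $(t+1)^{\kk_j}$. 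The fix is to keep the factor $(t+1)^{-\kk_i}$ throughout (using $\kk_j\le\kk_i$ when $\glx_j=\glx_i$), which is exactly what the paper's global approach achieves without any case analysis on the individual parents $j$.
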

\begin{proof}
Since \ref{A+} implies
\ref{A-5} and \ref{A-7}, these hold in any case.
We consider two cases. \resetCase

\pfCase{\eqref{ALMp} holds, \ie{} $i\notin\sQm$ or $\gl_i>0$}\label{LGpC1}
  As in the proof of \refLs{LG} and \ref{ZLG}, we split the colour $i$ into
  subcolours $i_0$ and $i_j$, $j\in\sP_i$.
We then use \refL{LMp}\ref{LMp1} for $i_0$ and \refL{L12p1} for every $i_j$,
and the result  follows by \eqref{g66}.

\pfCase{$i\in\sQm$ and $\gl_i\le0$}\label{LGpC2}
Then \ref{A-7} yields  $\glx_i>0$. 
Hence $\glx_i>\gl_i$, and 
since $\glx_i=\gl_i\lor \max_{j\in\sP_i}\glx_j$,
we have $\sP_i\neq\emptyset$ and
\begin{align}
  \label{feb3}
\glx_i= \max_{j\in\sP_i}\glx_j.
\end{align}

Consider a modification $\xxi_{ii}$ of $\xi_{ii}$
such that $\xxi\iii\ge\xi\iii$ \as, and
$\bar\gl_i:=a_i\E\xxi_{ii}\in (0,\glx_i)$.
(For example, let
$\xxi_{ii}:=\xi_{ii}\lor \txi$ where $\txi\in\set{\pm1}$ 
is independent of $\xi\iii$ and $\P(\txi=1)\in(0,1]$ is chosen suitably.)
Modify the urn by replacing $\xi\iii$ by $\xxi\iii$; this does not affect
any colour $j\prec i$; in particular $X_j(t)$ and all draws of colour $j$
remain the same for every $j\in\sP_i$, 
but at each draw of colour $i$ we may add more balls of
colour $i$; 
hence, letting $\xX_i(t)$ denote the number of balls of colour $i$ in the
modified urn, we have, using an  obvious coupling of the two urns, 
\begin{align}\label{feb1}
\xX_i(t)\ge X_i(t), \qquad t\ge0. 
\end{align}
The modified urn satisfies all our
conditions in the present lemma, and since $\bar\gl_i>0$, the already
proven \refCase{LGpC1} shows that (with obvious notation)
 $\xtXX_i\in L^p$.
Furthermore, $\bar\gl_i<\glx_i$, and thus \eqref{feb3} implies
\begin{align}
  \label{feb4}
\xglx_i:=\bar\gl_i\lor \max_{j\in\sP_i}\glx_j
=\bar\gl_i\lor \glx_i
=\glx_i. 
\end{align}
Similarly, using \eqref{kk}, we have $\bar\kk_i=\kk_i$.
Consequently, the exponents in \eqref{j4*} are the same for $X_i(t)$ and
$\xX_i(t)$, and thus \eqref{feb1} implies
\begin{align}
\tXX_i\le  \xtXX_i.
\end{align}
Since, as just shown, $\xtXX_i\in L^p$, this completes the proof.
\end{proof}

\subsection{$L^p$ bounds and convergence}

\begin{lemma}\label{Lp}
  Assume \ref{A0}--\ref{A3}, either 
$\bigl($\ref{A-5} and \ref{A-7}$\bigr)$  
or \ref{A+},
and \ref{Ap} for some $p>1$.
Then $\tXX_i\in L^p$ for every $i\in\sQ$.
\end{lemma}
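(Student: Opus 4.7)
The plan is to deduce \refL{Lp} by a straightforward induction on the colours, following the pattern already used for \refT{TC} in \refS{STC}. By \refP{PT}, since the urn is triangular, we may fix a total order $<$ on $\sQ$ extending the partial order $\prec$, so that $j\to i$ (and hence $j\in\sP_i$) implies $j<i$. We then prove $\tXX_i\in L^p$ by induction on $i$ in this order.

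For the base case, when $i$ is minimal in $\sQ$, we have $\sP_i=\emptyset$, and $X_i(t)$ evolves as the single-colour process studied in \refSS{SS11p}. Under our assumptions, condition \eqref{ALMp} holds: if \ref{A+} is in force then $\xi_{ii}\ge0$ a.s.\ so $i\notin\sQm$; if instead \ref{A-5} and \ref{A-7} hold, then for minimal $i\in\sQm$ we have $\glx_i=\gl_i$ and \ref{A-7} forces $\gl_i>0$, while for minimal $i\notin\sQm$ the condition holds trivially. Hence \refL{LMp}\ref{LMp1} applies and gives $\tXX_i\in L^p$ (recalling that $\kk_i=0$ and $\glx_i=\gl_i$ for minimal $i$, so the definition \eqref{lmp1} coincides with \eqref{j4*}).

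For the inductive step, fix a non-minimal $i\in\sQ$ and assume $\tXX_j\in L^p$ for every $j<i$. In particular, since every $j\in\sP_i$ satisfies $j\prec i$ and hence $j<i$, the inductive hypothesis gives $\tXX_j\in L^p$ for all $j\in\sP_i$. We may now apply \refL{LGp} to conclude $\tXX_i\in L^p$.

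There is no real obstacle at this stage: the entire difficulty has already been absorbed into \refLs{LMp}, \ref{L12p1}, and \ref{LGp}, whose proofs replace the $L^2$ estimates of \refS{S1} by $L^p$ estimates via the Burkholder--Davis--Gundy inequality \eqref{BDG}. The only delicate point worth double-checking is that the hypothesis \eqref{ALMp} needed in \refL{LGp} (through its reduction to \refL{L12p1} for each subcolour $i_j$) is indeed inherited by the splitting into subcolours $i_0, i_j$ used in the proof of \refL{LGp}; this is handled there by the two-case split (Cases~\ref{LGpC1} and~\ref{LGpC2}) and so requires no additional verification here.
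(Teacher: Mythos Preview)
Your proof is correct and follows the same approach as the paper, which dispatches the lemma in one line: ``By \refL{LGp} and induction on the colour $i$.'' Your separate base-case argument via \refL{LMp}\ref{LMp1} is fine but technically superfluous, since \refL{LGp} already applies to minimal $i$ (the hypothesis on $\sP_i$ being vacuous), and its proof reduces to exactly the same appeal to \refL{LMp}\ref{LMp1}.
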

\begin{proof}
 By \refL{LGp} and induction  on the colour $i$.
\end{proof}

\begin{proof}[Proof of \refT{TMCp}]
By \refL{Lp}, $\tXX_j\in L^p$; hence
it follows, exactly as for the case $p=2$ in \refT{TMC2}, 
that the collection $\bigset{|\tX_i(t)|^p:t\ge1}$ is uniformly integrable, and
thus the convergence \eqref{tc1} holds also in $L^p$.
\end{proof}

Once we have proved \refT{Tp}, the proof of \refT{TMCp} applies to any
$p>1$, as claimed in \refR{Rmomp}.

\subsection{A.s.\ convergence}

We now turn  \refT{Tp}, \ie, that our \as{} convergence results hold
also if \ref{A2} is replaced by \ref{Ap}.
We may assume $1<p<2$, since for $p\ge2$ the assumption
\ref{Ap} implies \ref{A2}, and the
results are already proven.

We begin by 
extending 
\refL{ZLM} to $1<p<2$, 
complementing \refLs{LMp} and \ref{LMp99} for the case 
excluded there when \eqref{ALMp} does not hold.

\begin{lemma}\label{ZLMp}
Assume \ref{A0}--\ref{A3}, \ref{A-5}, and \ref{Ap} for some $p\in(1,2]$.
Let $i\in\sQmin$ and assume $\gl_i\le0$.
Let $x_0:=X_i(0)$.
  \begin{romenumerate}[-10pt]
  \item\label{ZLM0p} 
If\/ $\gl_i=0$, then $X_i(t)$ is a martingale with
    \begin{align}
\label{zlm01p}
      X_i(t)&\asto \cX_i=0,
\qquad\text{as \ttoo},
\\\label{zlm0p}
\E X_i^*(t)^p&\le Cx_0^p+Cx_0 t,
\qquad\text{ for every $t<\infty$}
.    \end{align}
Furthermore, for every $\gd>0$,
\begin{align}\label{zlm04p}
\E \bigpar{\sup_{ t\ge0}\cpar{e^{-\gd t}X_i(t)}}^p&<\infty.
\end{align}

  \item\label{ZLMp-} 
If\/ $\gl_i<0$, then,
with $M(t):=e^{-\gl_it}X_i(t)$, 
    \begin{align}\label{zlm-1p}
      X_i(t)&\asto \cX_i=0,
\qquad\text{as \ttoo},
\\\label{zlm-2p}
\E{X_i(t)^p}&\le C x_0^p e^{\gl_i t} 
,
\\\label{zlm-3p}
\E M^*(t)^p&\le Cx_0^pe^{-(p-1)\gl_it}.
    \end{align}
  \end{romenumerate}
\end{lemma}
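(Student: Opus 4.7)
The plan is to follow the structure of the proof of \refL{ZLM}\ref{ZLMb}\ref{ZLM0}--\ref{ZLM-}, replacing the $L^2$ isometry $\E M(t)^2=\E[M,M]_t$ used there by the Burkholder--Davis--Gundy inequality \eqref{BDG} for $p\in(1,2]$, and passing from squares of jumps to $|\eta_k|^p$ by the elementary subadditivity $\bigl(\sum_k a_k\bigr)^{p/2}\le\sum_k a_k^{p/2}$, valid for $a_k\ge0$ whenever $p/2\le1$. We may assume $i\in\sQm$ (so \ref{A-5b} holds for $i$ and $x_0\in\bbZgeo$): otherwise $\xi_{ii}\ge0$ a.s., which combined with $\gl_i\le0$ forces $\xi_{ii}=0$ a.s., so $X_i(t)\equiv x_0$ and every assertion is immediate (the \as{} vanishing then forcing $x_0=0$). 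The \as{} limits \eqref{zlm01p} and \eqref{zlm-1p} themselves are not sensitive to the moment assumption and hold exactly as in \refL{ZLM} via \refR{RBP}: in the critical and subcritical cases the Markov branching process $X_i(t)$ dies out almost surely.

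Set $M(t):=e^{-\gl_i t}X_i(t)$; as in the proof of \refL{LM}, $M$ is a martingale and \eqref{lm4} gives
\begin{align*}
[M,M]_t = x_0^2 + \sum_{T_k\le t} e^{-2\gl_i T_k}\eta_k^2,
\end{align*}
with $(T_k)$ the draw times of colour $i$ and $(\eta_k)$ \iid{} copies of $\xi_{ii}$ independent of the $T_k$. Applying subadditivity of $x\mapsto x^{p/2}$ and then \refL{L9+}\ref{L9+b} with $f(s)=e^{-p\gl_i s}$ gives
\begin{align*}
\E [M,M]_t^{p/2}
\le x_0^p + a_i\E|\xi_{ii}|^p \int_0^t e^{-p\gl_i s}\E X_i(s)\dd s
= x_0^p + C x_0\int_0^t e^{-(p-1)\gl_i s}\dd s,
\end{align*}
where we used $\E X_i(s)=e^{\gl_i s}x_0$ from \eqref{cb4}.

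In the critical case $\gl_i=0$ this yields $\E[M,M]_t^{p/2}\le x_0^p+Cx_0 t$, and BDG gives $\E X_i^*(t)^p=\E M^*(t)^p\le Cx_0^p+Cx_0 t$, which is \eqref{zlm0p}; then \eqref{zlm04p} follows by the same dyadic-splitting argument as in \eqref{zlm03-4}, namely $\E\bigl(\sup_{t\ge0}e^{-\gd t}X_i(t)\bigr)^p\le\sum_n e^{-p\gd n}\E X_i^*(n+1)^p\le C\sum_n e^{-p\gd n}(n+1)<\infty$. In the subcritical case $\gl_i<0$, the integral is bounded by $Ce^{-(p-1)\gl_i t}$; since $x_0\in\bbZgeo$ we have $x_0\le x_0^p$ (the $x_0=0$ case being trivial) and the constant term $x_0^p$ is dominated by $x_0^p e^{-(p-1)\gl_i t}$, so $\E[M,M]_t^{p/2}\le Cx_0^p e^{-(p-1)\gl_i t}$. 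BDG gives \eqref{zlm-3p}, and \eqref{zlm-2p} then follows from $\E X_i(t)^p\le e^{p\gl_i t}\E M^*(t)^p\le Cx_0^p e^{\gl_i t}$.

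The only substantive point to get right is the subadditivity step, which is what lets us convert $[M,M]_t$ (quadratic in the jumps) into a sum of $|\eta_k|^p$ so that the hypothesis \ref{Ap} can be fed directly into \refL{L9+}\ref{L9+b}; once that observation is in place the rest is a direct translation of the $L^2$ bookkeeping in \refL{ZLM}\ref{ZLMb}.
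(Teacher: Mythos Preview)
Your proposal is correct and follows essentially the same route as the paper: both use the subadditivity $\bigl(\sum a_k\bigr)^{p/2}\le\sum a_k^{p/2}$ (valid since $p/2\le1$) to pass from \eqref{lm4} to a sum of $e^{-p\gl_iT_k}|\eta_k|^p$, then apply \refL{L9+}\ref{L9+b} together with \eqref{cb4}, and finish via the Burkholder--Davis--Gundy inequality \eqref{BDG} and the dyadic splitting of \eqref{zlm03-4}. Your explicit reduction to $i\in\sQm$ at the outset is slightly more careful than the paper, which tacitly works in that case.
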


\begin{proof}
In both cases, $X_i(t)$ is a (sub)critical \ctime{} branching process and
therefore \as{} dies out, see \refR{RBP}, which gives \eqref{zlm01p} and
\eqref{zlm-1p}. 

Recall from \eqref{cb3} that  $M(t):=e^{-\gl_it}X_i(t)$ is a martingale
(this does not require \ref{A2}, only $\E\xi_{ii}<\infty$), and 
hence, or by \eqref{cb4},
\begin{align}\label{mb1}
  \E X_i(t)=e^{\gl_i t}\E M(t)
= e^{\gl_it}M(0)
= x_0e^{\gl_it}.
\end{align}
It follows from \eqref{lm4} that
\begin{align}\label{lm4p}
  [M,M]_t^{p/2}
\le x_0^p+\sumk\indic{T_k\le t}e^{-p\gl_iT_k}\eta_k^p,
\end{align}
and hence, using \refL{L9+}\ref{L9+b} and \eqref{mb1},
\begin{align}\label{lm5p}
\E  [M,M]_t^{p/2}
\le x_0^p+ C\intot e^{-p\gl_is}\E X_i(s) \dd s
=x_0^p+ C x_0\intot e^{-(p-1)\gl_is}\dd s.
\end{align}
If $\gl_i=0$, this yields \eqref{zlm0p} by \eqref{BDG};
then \eqref{zlm04p} follows as in \eqref{zlm03-4}.

If $\gl_i<0$, then \eqref{lm5p} and \eqref{BDG} yield,
recalling that $x_0$ is an integer,
\begin{align}\label{lm6p}
\E M^*(t)^p
\le Cx_0^p+ C x_0 e^{-(p-1)\gl_it}
\le Cx_0^p e^{-(p-1)\gl_it}
\end{align}
and thus
\begin{align}\label{lm7p}
\E X_i(t)^p&
=e^{p\gl_it}\E M^*(t)^p
\le Cx_0^p e^{\gl_it}
,\end{align}
showing \eqref{zlm-3p} and \eqref{zlm-2p}.
\end{proof}

\begin{proof}[Proof of \refT{Tp}]
As said above, we may assume $1<p<2$.
It suffices to prove the \ctime{} versions \refT{TC} and \ref{TC-};
then the proofs of \refTs{T1}, \ref{T1-}, and \ref{T1--} are as before.

By \refL{Lp}, we have $\tXX_i\in L^p$ for every $i\in\sQ$,
but it remains to show that $\tX_i(t)$ converges.
We follow the proof of \refT{TC} (and \refT{TC-}) step by step
in the claims below which extend the limit statements in
the lemmas in \refS{S1} and \refS{S-}
(recall that $L^p$ versions of the $L^2$ estimates there already are given); 
we omit some details.
Assumptions on $\norm{\tXX_j}_2$ are replaced by $\norm{\tXX_j}_p$
(and hold in our case by \refL{Lp}).
We assume in the sequel 
\ref{A0}--\ref{A3}, \ref{A-5} (or \ref{A+}), and \ref{Ap} for some $p\in(1,2]$.
(But not \ref{A-7} unless said so.) 

Note first that \eqref{cb4}--\eqref{cb3} still hold. (In fact, they require
only the first moment $\E\xi_{ii}<\infty$.)

\claim{In \refLs{LM} and \ref{ZLM},
the convergence $\tX_i:=e^{-\gl_it}X_i(t)\asto\cX_i$ still holds.
 We still have $\cX_i>0$ \as{} when $i\notin\sQm$,
and $\P(\cX_i>0)>0$ when
if $i\in\sQm$ and $\gl_i>0$.
}
\begin{proof}
The convergence $\tX_i(t)\asto\cX_i$ for some limit $\cX_i\in L^p$
is shown in \refL{LMp} or \ref{ZLMp}.
Furthermore, if $i\notin\sQm$, then the argument in the proof of
\refL{LM} shows that $\cX_i>0$ a.s.; otherwise, 
if $\gl_i>0$, then $\E\cX_i=X_i(0)>0$ and thus at least $\P(\cX_i>0)>0$.  
\end{proof}

\claim{In \refLs{L12} and \ref{ZL12},
the convergence
$\tX_i(t)\asto\cX_i$  still holds;
furthermore, $\cX_j>0\implies\cX_i>0$ a.s.
}
\begin{proof}
We argue as in the proof of \refLs{L12} and \ref{ZL12}, using again the
decompositions \eqref{jzz} and \eqref{jxz}. 
For $Z_4(t)$, the argument in \eqref{j43}--\eqref{j46=} holds without changes.
Also for $Z_3(t)$ the argument in the proof of \refL{L12} still holds,
since the $L^2$ estimate \eqref{j35} remains valid.

The remaining terms $Z_1(t)$ and $Z_2(t)$ are still local martingales.
For $Z_2(t)$ we have by the definition \eqref{j21}, \eqref{BDG}, 
\refL{L9+}\ref{L9+b},
and \eqref{j4*}
\begin{align}\label{maj21}
\E Z_2^*(t)^p&
\le C \E  [Z_2,Z_2]_t^{p/2}
\le C
\E \sumk \indic{T_k\le t}e^{-p\gl_i T_k}|\eta_k-r_{ji}|^p
\notag\\&
= C \E \intot e^{-p\gl_i s} X_j(s)\dd s
\notag\\&
\le C \E \tXX_j\intot (s+1)^{\kk_j}e^{(\glx_j-p\gl_i) s}\dd s
.\end{align}
We now argue as in \refStep{stepZ123b} of the proof of \refL{L12}, using
\eqref{maj21} instead of \eqref{j51} and $L^p$ instead of $L^2$;
we replace 2 by $p$ in all exponents, and
the cases \ref{step5-i'} and \ref{step5-ii'} are replaced by $\glx_j<p\gl_i$
and ($\glx_j\ge p\gl_i$ and $\glx_j>0$).  
We obtain as before that $\tZ_2(t)\asto\cZ_2$
as \ttoo, where the limit $\cZ_2=0$ except in case \ref{gli>}.

For $Z_1(t)$ we use \eqref{j5a} and both directions of \eqref{BDG} to obtain
\begin{align}\label{ma1}
\E |Z^*_1(t)|^p \le C \E [Z_1,Z_1]_t^{p/2} 
\le C\E\sumk [Z_1\kkk,Z_1\kkk]_t^{p/2}
\le C\E\sumk |Z_1\kkkx(t)|^{p}
.\end{align}
The definition \eqref{j11} yields
\begin{align}\label{ma2}
  \bigabs{Z_1\kkkx(t)}^p
= 
\indic{t\ge T_k}e^{-p\gl_i T_k} |\tY_k^*(t-T_k)|^p,
\end{align}
where
the martingale $\tY_k(t):=e^{-\gl_i t}Y_k(t)-Y_k(0)$ is 
independent of $T_k$,
and $Y_k(t)$ is a copy of the  one-colour
process in \refSS{SS11} and \refL{ZLMp}.
Thus \eqref{ma1} and \refL{L9+}\ref{L9+b}
yield
\begin{align}\label{ma3}
\E |Z^*_1(t)|^p &
\le C \E \sumk \indic{t\ge T_k}e^{-p\gl_i T_k} \tY_k^*(t)^p
\notag\\&
= C\E\bigsqpar{\tY_1^*(t)^p} \E \intot e^{-p\gl_i s} X_j(s)\dd s.
\end{align}
We separate three cases, as in the proof of \refL{ZL12}:
\begin{PXenumerate}
\item $i\notin\sQm$ or $\gl_i>0$:
This is condition \eqref{ALMp}; thus \refL{LMp99} applies 
to $Y_k(t)$
conditionally
on $\eta_k$, which gives
\begin{equation}\label{mc1}
  \E \bigsqpar{\tY_k^*(t)^p\mid\eta_k}\le C \ceil{\eta_k}^p
,\end{equation}
and consequently
\begin{equation}\label{mc2}
  \E \bigsqpar{\tY_1^*(t)^p}\le C
.\end{equation}
Thus \eqref{ma3} yields the same estimate \eqref{maj21} as for $Z_2^*(t)$,
and the argument after \eqref{maj21} applies to $Z_1$ too and yields
$\tZ_1(t)\asto\cZ_1$ for some $\cZ_1$, 
with $\cZ_1=0$ except in case \ref{gli>}.

\item $i\in\sQm$ and $\gl_i=0$:
(As in the proof of \refL{ZL12}, the assumptions then yield
$\glx_j=\glx_i>0$.)
In this case,
$\tY_k(t)=Y_k(t)-Y_k(0)$ where $Y_k(t)$ is a copy of $X_i(t)$ in
\refL{ZLMp}. Thus, \eqref{zlm0p} yields, 
recalling that now $\eta_k$ is an integer,
for $t\ge1$,
\begin{equation}\label{mc3}
  \E \bigsqpar{\tY_k^*(t)^p\mid\eta_k}\le C \eta_k^pt
,\end{equation}
and consequently
\begin{equation}
  \E \bigsqpar{\tY_1^*(t)^p}\le Ct
.\end{equation}
This and \eqref{ma3} yield, still for $t\ge1$,
\begin{align}\label{ma3b}
\E |Z^*_1(t)|^p &
\le C t \E \intot e^{-p\gl_i s} X_j(s)\dd s
\le C t \E\tXX_j\intot (s+1)^{\kk_j}e^{(\glx_j-p\gl_i)s}\dd s
\notag\\&
\le C t^{\kk_j+1}e^{(\glx_j-p\gl_i)t}.
\end{align}
This differs from \eqref{maj21} by an extra factor $t$, but
the argument after \eqref{maj21} 
(in this case modifications of \eqref{j53}--\eqref{j56}) 
still works and yields
$\tZ_1(t)\asto0$.

\item $\gl_i<0$:
(The assumptions yield $\glx_j=\glx_i>0$ in this case too.)
In this case,
we condition on $T_k$ and $\eta_k$ and then use \eqref{ma2}
and \eqref{zlm-3p}, yielding
\begin{align}\label{ma2a}
\E\bigsqpar{ \abs{Z_1\kkkx(t)}^p\mid T_k,\eta_k}&
= 
\indic{t\ge T_k}e^{-p\gl_i T_k} \E\bigsqpar{|\tY_k^*(t-T_k)|^p\mid T_k,\eta_k}
\notag\\&
\le 
C\indic{t\ge T_k}e^{-p\gl_i T_k} \eta_k^p e^{-(p-1)\gl_i(t-T_k)}
\notag\\&
=
C\indic{t\ge T_k}e^{-\gl_i T_k}  e^{-(p-1)\gl_it}\eta_k^p
.\end{align}
Hence, \eqref{ma1}, \refL{L9+}\ref{L9+b}, and \eqref{j4*} yield
\begin{align}\label{mb3}
\E |Z^*_1(t)|^p &
\le C \E \sumk \E\bigsqpar{ \abs{Z_1\kkkx(t)}^p\mid T_k,\eta_k}
\notag\\&
\le C e^{-(p-1)\gl_it}\E\sumk \indic{t\ge T_k}e^{-\gl_i T_k}\eta_k^p
\notag\\&
=
C e^{-(p-1)\gl_it}\E\intot e^{-\gl_i s}X_j(s)\dd s
\notag\\&
\le
C e^{-(p-1)\gl_it}\E\tXX_j\intot (s+1)^{\kk_j}e^{(\glx_j-\gl_i) s}\dd s
\notag\\&
\le C (t+1)^{\kk_j}e^{(\glx_j-p\gl_i)t}
.\end{align}
With $\tZcl(n)$ defined in \eqref{j53},
we again modify \eqref{j54} using $p$th powers, now using \eqref{mb3} instead
of \eqref{j51}, and it follows similarly to \eqref{j55} that 
\begin{align}
\E \sumn \tZcl(n)^p<\infty.  
\end{align}
Hence, $\tZ_1\asto0$. 
\end{PXenumerate}

Finally, in all cases,
it follows from \eqref{jxz} that $\tX_i(t)\asto\cX_i$.
Furthermore, we have
$\cX_j>0\implies \cX_i>0$ \as{} by
the argument in \refStep{stepVI} of \refL{L12}, if necessary
modified as in the proof of \refL{ZL12}.
\end{proof}

\claim{In  \refL{ZL1200},
$e^{-\gd t}X_i(t)\asto0$  still holds.
}
\begin{proof}
  The same as for \refL{ZL1200}.
\end{proof}

\claim{In \refLs{LG} and \ref{ZLG},
$\tX_i\asto\cX_i$  still holds, and so do the claims on $\cX_i>0$.
}\label{cl4}
\begin{proof}
  The same as for \refLs{LG} and \ref{ZLG}, using the claims above.
Note that the assumptions now include \ref{A-7}.
\end{proof}

To complete the proof of \refT{Tp},
we now obtain by induction from \ref{cl4} above that
for every $i\in\sQ$ we have
$\tX_i\to\cX_i$ as \ttoo, \ie, \eqref{tc1}.
This proves that \refTs{TC} and \ref{TC-} hold with \ref{A2} replaced by
\ref{Ap}, which as said above completes the proof.
\end{proof}

\section{Proof of \eqref{jb1}}\label{Ajb}
In this appendix we prove \eqref{jb1} in \refE{E+-}, using results from
Markov process theory.

Suppose, more generally, that the \Polya{} urn in \refE{E+-}
starts with $w_0=\ga>0$ white balls
and $b_0\ge0$ black balls.
Thus $W(t)=\ga$ for all $t\ge0$. 
(We do not have to assume that $\ga$ is an integer, although we must have
$b_0\in\bbZgeo$, since we allow subtractions.)
The stochastic process $B(t)$ is a time-homogeneous pure-jump Markov process
on $\bbZgeo$ with jumps
\begin{align}
  \label{ja1}
  \begin{cases}
    +1& \text{with intensity}\quad \ga+\frac12B(t),
\\
    -1& \text{with intensity}\quad \frac12B(t).
  \end{cases}
\end{align}
We define for any real $\ell>0$ the scaled process
\begin{align}\label{ja2}
  \tB_\ell(t):=\ell\qw B(\ell t).
\end{align}
 It follows from \eqref{ja1} that $\tB_\ell$ 
is a pure-jump Markov process with jumps
\begin{align}
  \label{ja3}
  \begin{cases}
    +1/\ell& \text{with intensity}\quad 
\ell\bigpar{\ga+\frac12B(\ell t)}
= \ga\ell+\frac{\ell^2}2\tB_\ell(t),
\\
    -1/\ell& \text{with intensity}\quad 
\frac{\ell^2}2\tB_\ell(t).
  \end{cases}
\end{align}
In other words, the generator $\cA_\ell$ of 
the Markov process
$\tB_\ell$ is given by,
see \eg{} \cite[Theorem 19.23]{Kallenberg},
\begin{align}
  \label{ja4}
\cA_\ell f(x) =
\ga\ell\bigpar{f(x+\ell\qw)-f(x)}
+ x\frac{\ell^2}{2}
\bigpar{f(x+\ell\qw)+f(x-\ell\qw)-2f(x)}.
\end{align}
Since $B(t)$ takes its values in $\bbZgeo$, 
$\tB_\ell(t)$ is a Markov process on the state space
$\ell\qw\bbZgeo=\set{0,\ell\qw,2\ell\qw,\dots}$.
For technical reasons, we extend it to a pure-jump Markov process on $\ooo$
by defining the 
intensities to be as in \eqref{ja3} whenever $\tB_\ell(t)\ge\ell\qw$;
otherwise, if $\tB_\ell(t)=b<1/\ell$,
we jump $+1/\ell$ with intensity $\ga\ell$ and
$\pm b$ with intensities $\frac{\ell^2}2 b$ each.
(This makes no difference if we start with an integer number of black balls.)
The generator of the extended process is 
\begin{align}
  \label{ja5}
\cA_\ell f(x) =
\ga\ell\bigpar{f(x+\ell\qw)-f(x)}
+ x\frac{\ell^2}{2}
\bigpar{f(x+h_{\ell,x})+f(x-h_{\ell,x})-2f(x)},
\end{align}
where $h_{\ell,x}:=\ell\qw\bmin x$.

Let $C^2\ooo$ be the space of all
continuous functions $f$  on $\ooo$ that have two
continuous derivatives in $(0,\infty)$ with $f'$ and
$f''$ extending continuously to $\ooo$.
Let further
\begin{align}\label{ja6}
\cC:=\bigset{f\in C^2\ooo: f(x),f'(x),f''(x)=O(e^{-\eps x})
\text{ for some $\eps>0$}}.  
\end{align}
It follows from \eqref{ja5} and Taylor's formula that if $f\in\cC$, then
as $\ell\to\infty$,
\begin{align}\label{ja7}
  \cA_\ell f(x) \to \cA f(x):= \ga f'(x) + \frac{x}2 f''(x)
\end{align}
uniformly in $x\in\ooo$, and thus in the space $C_0\ooo$.
Note that $4\cA= 2x\frac{\ddx^2}{\ddx x^2} + 4\ga \frac{\ddx}{\ddx x}$
is the generator of the squared Bessel process $\BESQ^\gd$ with dimension
$\gd:=4\ga$, see \cite[p.~443 and Proposition VII.(1.7)]{RY}; 
hence $\cA$ is the generator of 
$\BESQ^{4\ga}(t/4)\eqd \frac{1}{4}\BESQ^{4\ga}(t)$
\cite[Proposition XI.(1.6)]{RY}; 
It is well known that $\BESQ^{4\ga}(t)$ is a Feller process on $\ooo$
\cite[p.~442]{RY}, and 
it is easily verified directly
that each $\cA_\ell$ also is a Feller process on $\ooo$.
The transition probabilities $q_t^\gd(x,y)$ of $\BESQ^\gd$
are given explicitly in \cite[XI.(1.4)]{RY}, and a simple calculation
using \cite[(10.29.4)]{NIST} shows that
\begin{align}\label{ja8'}
  \frac{\partial}{\partial x} q_t^\gd(x,y)&
= \frac{1}{2t}\bigpar{q_t^{\gd+2}(x,y)-q_t^\gd(x,y)},
\intertext{and hence}
\label{ja8''}
  \frac{\partial^2}{\partial x^2} q_t^\gd(x,y)&
= \frac{1}{4t^2}\bigpar{q_t^{\gd+4}(x,y)-2q_t^{\gd+2}(x,y)+q_t^\gd(x,y)}.
\end{align}
Since $\BESQ^\gd$ is a Feller process for every $\gd>0$, 
the transition operator $T^\gd_tf(x):=\int q_t^\gd(x,y)f(y)\dd y$ maps
$C_0\ooo$ into itself for every $t>0$, 
and it follows from \eqref{ja8'}--\eqref{ja8''}
that $T^\gd_t$ maps $C_0\ooo$ into $C^2\ooo$; moreover, using also again
the explicit form of $q^\gd_t$ in \cite[XI.(1.4)]{RY}, 
it is easy to see that $T_t$ maps $\cC$ into itself.
Hence, it follows from \cite[Proposition 19.9]{Kallenberg} that $\cC$ is a
core for the generator $4\cA$, and thus also for $\cA$.
Consequently, \eqref{ja7} and \cite[Theorem 19.25]{Kallenberg} show the
following.
\begin{theorem}\label{Tjb}
Let the \Polya{} urn in \refE{E+-}
start with $w_0=\ga>0$ white balls
and $b_0=0$ black balls.
Then, as $\ell\to\infty$, we have 
  \begin{align}\label{tjb}
\ell\qw B(\ell t)
=   \tB_\ell(t)
\dto  \frac{1}4\BESQ^{4\ga}(t)
\qquad \text{in $D\ooo$}.
  \end{align}
\end{theorem}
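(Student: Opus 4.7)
The plan is to prove Theorem~\ref{Tjb} via the standard Markov-process convergence framework: pass from discrete urn dynamics to a limiting diffusion by verifying pointwise generator convergence on a suitable core, and then invoke a Trotter--Kurtz-type theorem (e.g.\ \cite[Theorem 19.25]{Kallenberg}). Since $W(t)\equiv\ga$ is constant, $B(t)$ is itself a one-dimensional pure-jump Markov process on $\bbZgeo$ whose transition intensities are dictated by \eqref{urn}: jumps of $+1$ at rate $\ga+\tfrac12 B(t)$ (draw a white ball, or draw black and flip to $+1$) and jumps of $-1$ at rate $\tfrac12 B(t)$ (draw black and flip to $-1$).

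First, I would introduce the rescaled process $\tB_\ell(t):=\ell\qw B(\ell t)$ and, after extending it to a Feller process on $\ooo$ (to avoid boundary issues near $0$ when starting from non-integer initial values), compute its generator $\cA_\ell$ from the jump rates; Taylor expansion then gives $\cA_\ell f\to \cA f$ for $\cA f(x)=\ga f'(x)+\tfrac{x}{2}f''(x)$, uniformly on $\ooo$, for any smooth rapidly-decaying $f$. Next, I would identify $\cA$ with the generator of $\tfrac14 \BESQ^{4\ga}$: since the generator of $\BESQ^\gd$ is $2x\partial_x^2+\gd\partial_x$, writing $4\cA=2x\partial_x^2+4\ga\,\partial_x$ and using the scaling $\BESQ^\gd(ct)\eqd c\,\BESQ^\gd(t)$ at the level of generators identifies the limit as claimed.

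The main technical obstacle is producing a dense subset $\cC\subset C_0\ooo$ that is a core for $\cA$ and on which the convergence $\cA_\ell\to\cA$ is uniform. I would take $\cC$ to consist of $C^2$ functions whose values and first two derivatives decay at least exponentially at $\infty$; uniform convergence on $\cC$ follows from Taylor expansion together with the uniform bounds on $f''$. To prove $\cC$ is a core, I would use the explicit transition density $q^\gd_t(x,y)$ of $\BESQ^\gd$ \cite[XI.(1.4)]{RY} and the differentiation identities \eqref{ja8'}--\eqref{ja8''} to show that the transition semigroup $T^\gd_t$ preserves $\cC$; then \cite[Proposition 19.9]{Kallenberg} applies. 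A minor side-task is checking that each $\cA_\ell$ generates a Feller semigroup on $\ooo$, which follows from bounded jump rates on compact sets together with non-explosivity (immediate from $\E B(t)=t$, cf.\ \eqref{jbe}).

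Finally, with the core verified and generator convergence in hand, \cite[Theorem 19.25]{Kallenberg} gives $\tB_\ell\dto \tfrac14\BESQ^{4\ga}$ in $D\ooo$, provided the initial distributions converge, which is trivial here since $\tB_\ell(0)=b_0/\ell=0$ for all $\ell$ and $\BESQ^{4\ga}(0)=0$. Specializing to $\ga=1$ and $b_0=0$ recovers \eqref{jb1} of \refE{E+-}, while \refT{Tjb} records the slightly more general statement for arbitrary $\ga>0$ and $b_0=0$; the same argument, with the obvious modification of initial conditions, handles any fixed integer $b_0\ge0$ since $b_0/\ell\to 0$.
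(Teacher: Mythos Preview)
The proposal is correct and follows essentially the same route as the paper: identify $B(t)$ as a pure-jump Markov process, rescale, extend to a Feller process on $\ooo$, show uniform generator convergence $\cA_\ell f\to\cA f=\ga f'+\tfrac{x}{2}f''$ on the class $\cC$ of $C^2$ functions with exponentially decaying derivatives, verify that $\cC$ is a core via the explicit $\BESQ^\gd$ transition densities and \cite[Proposition~19.9]{Kallenberg}, and conclude by \cite[Theorem~19.25]{Kallenberg}. The identification of $\cA$ with the generator of $\tfrac14\BESQ^{4\ga}$ via scaling, and the handling of initial conditions, also match the paper exactly.
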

The squared Bessel process $\BESQ^{4\ga}$
in \eqref{tjb} is the standard one starting at 0.
More generally, if we start the urn with $\ga$ white balls and 
$\gb \ell +o(\ell)$ black balls, then 
the same proof shows that
\eqref{tjb} holds with the initial value
$\frac{1}4\BESQ^{4\ga}(0)=\gb$.

\newcommand\AAP{\emph{Adv. Appl. Probab.} }
\newcommand\JAP{\emph{J. Appl. Probab.} }
\newcommand\JAMS{\emph{J. \AMS} }
\newcommand\MAMS{\emph{Memoirs \AMS} }
\newcommand\PAMS{\emph{Proc. \AMS} }
\newcommand\TAMS{\emph{Trans. \AMS} }
\newcommand\AnnMS{\emph{Ann. Math. Statist.} }
\newcommand\AnnPr{\emph{Ann. Probab.} }
\newcommand\CPC{\emph{Combin. Probab. Comput.} }
\newcommand\JMAA{\emph{J. Math. Anal. Appl.} }
\newcommand\RSA{\emph{Random Structures Algorithms} }
\newcommand\DMTCS{\jour{Discr. Math. Theor. Comput. Sci.} }

\newcommand\AMS{Amer. Math. Soc.}
\newcommand\Springer{Springer-Verlag}
\newcommand\Wiley{Wiley}

\newcommand\vol{\textbf}
\newcommand\jour{\emph}
\newcommand\book{\emph}
\newcommand\inbook{\emph}
\def\no#1#2,{\unskip#2, no. #1,} 
\newcommand\toappear{\unskip, to appear}

\newcommand\arxiv[1]{\texttt{arXiv}:#1}
\newcommand\arXiv{\arxiv}

\newcommand\xand{and }
\renewcommand\xand{\& }

\def\nobibitem#1\par{}

\end{document}